\numberwithin{equation}{section}
\newtheorem{thmm}{Theorem}
\newtheorem{thmmm}{Theorem}
\newtheorem{conj}[thmmm]{Conjecture}
\newtheorem*{conja}{Conjecture}
\newtheorem{thm}{Theorem}[section]
\newtheorem{lemma}[thm]{Lemma}
\newtheorem{prop}[thm]{Proposition}
\newtheorem{cor}[thm]{Corollary}
\newtheorem{assume}{Assumption}
\theoremstyle{remark}
\newtheorem{rem}[thm]{Remark}
\newcommand{\ind}{\mathbf 1}
\newcommand{\C}{\mathbb C}
\newcommand{\E}{\mathbb E}
\newcommand{\N}{\mathbb N}
\newcommand{\R}{\mathbb R}
\newcommand{\Z}{\mathbb Z}
\newcommand{\FM}{\mathfrak M}
\newcommand{\LG}{\mathcal G}
\newcommand{\LL}{\mathcal L}
\newcommand{\lvl}{\mathtt C}
\newcommand{\LT}{\mathcal T}
\newcommand{\TT}{\mathtt T}
\newcommand{\hol}{\mathsf{hol}}
\newcommand{\ED}{\operatorname{ED}}
\newcommand{\Int}{\operatorname{Int}}
\newcommand{\Lift}{\operatorname{Lift}}
\newcommand{\bpi}{\boldsymbol{\pi}}
\newcommand{\Var}{\operatorname{Var}}
\newcommand{\Diff}{\mathtt D}
\newcommand{\bic}{\mathfrak b}
\newcommand{\Triv}{\operatorname{Triv}}
\newcommand{\sgn}{\operatorname{sign}}
\newcommand{\ax}{\mathsf A}
\newcommand{\tub}{\operatorname{Tube}}
\newcommand{\diam}{\operatorname{diam}}
\newcommand{\enc}{\operatorname{enc}}
\newcommand{\Loop}{\operatorname{Loop}}
\title{An equivalence between gauge-twisted and topologically conditioned scalar Gaussian free fields}
\author{Titus Lupu}
\address {CNRS and LPSM, UMR 8001,
Sorbonne Université,
4 place Jussieu,
75252 Paris cedex 05,
France}
\email
{titus.lupu@sorbonne-unversite.fr}
\begin{document}

\maketitle

\begin{abstract}
We study on the metric graphs two types of scalar Gaussian free fields (GFF),
the usual one and the one twisted by a $\{-1,1\}$-valued gauge field.
We show that the latter can be obtained, up to an additional deterministic transformation, 
by conditioning the first on a topological event.
This event is that all the sign clusters of the field should be trivial for the gauge field, that is to say should not contain cycles with holonomy $-1$.
We also express the probability of this topological event as a ratio of two determinants of Laplacians to the power $1/2$, the usual Laplacian and the gauge-twisted Laplacian.
As an example, this gives on annular planar domains the probability that no sign cluster of the metric graph GFF surrounds the inner hole of the domain.

Based on our result on the metric graph, and on previous works by Werner and Cai-Ding on the clusters of the metric graph GFF in high dimension,
we formulate an intensity doubling conjecture.
According to it, if the space dimension is high enough,
the cycles in the sign clusters of the metric graph GFF converge in the scaling limit to a Brownian loop soup of intensity parameter $\alpha=1 = 2\times \dfrac{1}{2}$,
which is the double of the intensity parameter appearing in isomorphism theorems.
\end{abstract}

\section{Introduction}
\label{Sec intro}

In this article we consider two types of scalar Gaussian free fields (GFF),
the usual one and the one twisted by a $\{-1,1\}$-valued gauge field, 
and observe that the second is essentially obtained from the first by conditioning on a topological (more precisely homotopical) event.

We will work on an abstract finite electrical network
$\LG=(V,E)$ endowed with conductances
$C(x,y)=C(y,x)>0$ for $\{x,y\}\in E$.
The set of vertices $V$ will be divided into two parts,
$V_{\rm int}$ and $V_{\partial}$,
with $V_{\rm int}$ being considered as the interior vertices, 
and $V_{\partial}$ as the boundary.
The discrete GFF $\phi$ with $0$ boundary conditions on $V_{\partial}$
is given by the distribution
\begin{equation}
\label{Eq Z}
\dfrac{1}{Z}
\exp
\Big(
-\dfrac{1}{2}
\sum_{\{ x,y\}\in E} C(x,y)(\varphi(y)-\varphi(x))^{2}
\Big)
\prod_{z\in V_{\rm int}} d\varphi(z).
\end{equation}

Further, in the language of the gauge theory,
we consider $\{ -1,1\}$ as our gauge group,
and take a gauge field $\sigma\in \{ -1,1\}^{E}$.
The $\sigma$-twisted GFF $\phi_{\sigma}$ with $0$ boundary conditions on $V_{\partial}$ has for distribution
\begin{equation}
\label{Eq Z sigma}
\dfrac{1}{Z_{\sigma}}
\exp
\Big(
-\dfrac{1}{2}
\sum_{\{ x,y\}\in E} C(x,y)(\sigma(x,y)\varphi(y)-\varphi(x))^{2}
\Big)
\prod_{z\in V_{\rm int}} d\varphi(z).
\end{equation}
The gauge field $\sigma$ corresponds to disorder operators in the language of the
Ising model \cite{KadanoffCeva71DisorderIsing}.

To see the relation between $\phi$ and $\phi_{\sigma}$
one has to look at the level of metric graphs.
The metric graph $\widetilde{\LG}$ associated to $\LG$
is obtained by replacing each discrete edge $\{x,y\}$
by a continuous line of length $C(x,y)^{-1}$ joining $x$ and $y$.
The discrete GFF $\phi$ has a natural extension $\tilde{\phi}$ to the metric graph
$\widetilde{\LG}$,
which is a continuous Gaussian random field satisfying a Markov property.
This extension has been introduced in \cite{Lupu2016Iso}.
The field $\tilde{\phi}$, unlike $\phi$,
is known to satisfy a certain number exact identities, including for instance the probabilities of crossings.
These relations have been explored in the articles
\cite{Lupu2016Iso,LupuWerner2016Levy,DrewitzPrevostRodriguez22PTRF,
DrewitzPrevostRodriguez21CriticalExp};
see Section \ref{Subsec metric} for details.
The twisted discrete GFF $\phi_{\sigma}$
also has a natural extension $\tilde{\phi}_{\sigma}$ to the metric graph 
$\widetilde{\LG}$.
It is introduced in this paper in Section \ref{Subsec subdivision}.
Unlike $\tilde{\phi}$, the field $\tilde{\phi}_{\sigma}$
is not continuous in general,
and has one discontinuity point inside each $e\in E$
for which $\sigma(e) = -1$.
However, the absolute value $\vert \tilde{\phi}_{\sigma}\vert$ is a continuous field on the whole $\widetilde{\LG}$,
since the discontinuities of $\tilde{\phi}_{\sigma}$
consist in switching to the opposite sign by keeping the same absolute value.
By taking a double cover of $\widetilde{\LG}$
induced by the gauge field $\sigma$, 
one can extend $\tilde{\phi}_{\sigma}$ to a continuous field on the double cover.
This is explained in Section \ref{Subsec cov metric}.

Let $\LT_{\sigma}$ denote the subset of continuous functions on 
$\widetilde{\LG}$, 
made of functions $f$ such that for every connected component $U$ of the non-zero set $\{ f\neq 0\}$, $U$ does not contain loops of holonomy $-1$ for $\sigma$.
For the notion of holonomy in this setting
(product of the values of $\sigma$ along the edges of the loop), 
we refer to Section \ref{Subsec gauge}.
But let us give an example.
Consider a planar annular domain (one hole), such as depicted on
Figures \ref{Fig non triv ann} and \ref{Fig ex ann}.
One can consider a gauge field $\sigma$ on this annular domain that gives a holonomy $-1$ to loops that turn an odd number of times around the inner hole,
and holonomy $1$ to other loops; see Figure \ref{Fig non triv ann}.
Then $f\in \LT_{\sigma}$ if and only if
no connected component of $\{ f\neq 0\}$ surrounds the inner hole;
see Figure \ref{Fig ex ann}.

It is easy to see that $\vert \tilde{\phi}_{\sigma}\vert \in \LT_{\sigma}$
a.s., and this is proved in Lemma \ref{Lem T sigma}
by relying on the extension of $\tilde{\phi}_{\sigma}$
to the double cover of $\widetilde{\LG}$.
Our main result is the following.

\begin{thmmm}
\label{Thm main pres}
Let be a gauge field $\sigma\in\{-1,1\}^{E}$.
Then
\begin{equation}
\label{Eq ratio partition func}
\mathbb{P}(\tilde{\phi}\in\LT_{\sigma})
= \dfrac{Z_{\sigma}}{Z},
\end{equation}
where $Z$ and $Z_{\sigma}$ are the partition functions appearing in
\eqref{Eq Z} and \eqref{Eq Z sigma}.
Moreover, conditionally on the event
$\{\tilde{\phi}\in\LT_{\sigma}\}$,
the field $\vert \tilde{\phi}\vert$ has the same distribution as
$\vert\tilde{\phi}_{\sigma}\vert$.
\end{thmmm}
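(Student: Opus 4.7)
My plan is to construct an explicit measurable map $\Psi$ that takes a realization of $\tilde{\phi}$ in $\{\tilde{\phi}\in\LT_{\sigma}\}$, together with a uniform sign choice on each sign cluster, to a configuration with the law of $\tilde{\phi}_{\sigma}$, preserving absolute values and intertwining the two Gaussian densities up to the ratio $Z_{\sigma}/Z$. First I subdivide every edge $e\in E$ with $\sigma(e)=-1$ by inserting its midpoint as a new vertex, obtaining a refined electrical network $\LG'=(V',E')$ with induced gauge field $\sigma'$ that equals $-1$ on exactly one of the two resulting sub-edges of $e$. By Section \ref{Subsec subdivision} this leaves both $\tilde{\phi}$, $\tilde{\phi}_{\sigma}$, and the ratio $Z_{\sigma}/Z$ unchanged, but every $-1$ sub-edge of $\LG'$ is now isolated between two degree-two vertices, which simplifies the local combinatorics of the gauge twist.

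On the event $\tilde{\phi}\in\LT_{\sigma}$, list the connected components $U_{1},\dots,U_{k}$ of $\{\tilde{\phi}\ne 0\}\subset\widetilde{\LG}$. Each $U_{i}$ admits a function $\tau_{i}\colon V'\cap U_{i}\to\{-1,+1\}$, unique up to a global sign, satisfying $\sigma'(v,w)=\tau_{i}(v)\tau_{i}(w)$ for every sub-edge $\{v,w\}\in E'$ fully contained in $U_{i}$. I pick the global sign of each $\tau_{i}$ by an independent fair coin, and define $\Psi(\tilde{\phi})$ as the configuration obtained by multiplying $\tilde{\phi}$ by $\tau_{i}$ at every point of $V'\cap U_{i}$ and extending along each sub-edge by the natural continuous extension on $+1$ sub-edges and with the prescribed midpoint sign jump on $-1$ sub-edges, so that $\Psi(\tilde{\phi})$ has the discontinuity structure built into $\tilde{\phi}_{\sigma}$ in Section \ref{Subsec subdivision}. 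By construction $|\Psi(\tilde{\phi})|=|\tilde{\phi}|$ pointwise.

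The core step is a density matching. For every sub-edge $\{v,w\}\in E'$ fully in some $U_{i}$, the identity $\sigma'(v,w)=\tau_{i}(v)\tau_{i}(w)$ together with $\tau_{i}(v)^{2}=1$ gives
\[
(\tilde{\phi}(w)-\tilde{\phi}(v))^{2}=(\sigma'(v,w)\Psi\tilde{\phi}(w)-\Psi\tilde{\phi}(v))^{2},
\]
so the contributions of such sub-edges to the Hamiltonians \eqref{Eq Z} and \eqref{Eq Z sigma} coincide pointwise under $\Psi$. For sub-edges whose Brownian bridge hits zero, the endpoint absolute values are preserved by $\Psi$ and the conditional bridge-hits-zero density depends, after integrating out the midpoint sign-flip position on $\sigma=-1$ sub-edges, only on these absolute values, so the bridge contributions also match. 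Combined with the prefactors $1/Z$ and $1/Z_{\sigma}$, these identities give $\Psi_{*}(\mathbf{1}_{\{\tilde{\phi}\in\LT_{\sigma}\}}\cdot\mathrm{Law}(\tilde{\phi}))=(Z_{\sigma}/Z)\cdot\mathrm{Law}(\tilde{\phi}_{\sigma})$, using Lemma \ref{Lem T sigma} to identify the range of $\Psi$ with the support of $\mathrm{Law}(\tilde{\phi}_{\sigma})$. Taking total masses proves \eqref{Eq ratio partition func}, and since $|\Psi(\tilde{\phi})|=|\tilde{\phi}|$, conditioning on $\{\tilde{\phi}\in\LT_{\sigma}\}$ gives the claimed equality in distribution of $|\tilde{\phi}|$ and $|\tilde{\phi}_{\sigma}|$.

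The hardest part I anticipate is the bridge-level bookkeeping on $\sigma=-1$ sub-edges whose Brownian bridge hits zero, in particular verifying that the unsigned law of the ordinary bridge from $a$ to $b$ governing $\tilde{\phi}$ matches, after the natural midpoint sign-flip, the unsigned law of the unfolded bridge governing $\tilde{\phi}_{\sigma}$ on that sub-edge. This reduces to a one-edge Gaussian identity, but combining it cleanly with the cluster-wise change of variables across the whole network, and handling the interplay between $\tau_{i}$-sign choices on distinct clusters incident to the same sub-edge and the boundary conditions on $V_{\partial}$, is where the substantive work lies.
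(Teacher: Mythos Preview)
Your bicoloring-and-sign-flip map $\Psi$ is the right combinatorial object, and is essentially the transformation $\TT_{\sigma,\varepsilon}$ that the paper builds in Section~\ref{Subsec topo}. But the ``density matching'' step has a genuine gap, and it lies earlier than the bridge bookkeeping you flag at the end.

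The pointwise identity $(\tilde\phi(w)-\tilde\phi(v))^{2}=(\sigma'(v,w)\,\Psi\tilde\phi(w)-\Psi\tilde\phi(v))^{2}$ holds only on sub-edges $\{v,w\}$ lying inside a single cluster $U_{i}$, where $\sigma'(v,w)=\tau_{i}(v)\tau_{i}(w)$ by construction. On a sub-edge whose bridge hits zero the endpoints lie in \emph{different} clusters $U_{i},U_{j}$ with unrelated local gauges $\tau_{i},\tau_{j}$ and independent coin flips; the two quadratic terms do not agree in general. Your proposed remedy --- that the conditional bridge-hits-zero law depends only on the endpoint absolute values --- concerns the Brownian bridge piece, not the vertex Hamiltonian $\exp(-\tfrac12\sum C(v,w)(\cdot)^{2})$, which still contains the mismatched contributions from straddling edges. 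So the equality $\Psi_{*}(\mathbf{1}_{\LT_{\sigma}}\cdot\mathrm{Law}(\tilde\phi))=(Z_{\sigma}/Z)\,\mathrm{Law}(\tilde\phi_{\sigma})$ is not obtained by a pointwise change of variables in any density. There is also a structural obstacle: the laws of $\tilde\phi$ and $\tilde\phi_{\sigma}$ on $\widetilde\LG$ are mutually \emph{singular} when $E_{\sigma,-}\neq\emptyset$ (one is continuous at the midpoints $x_{e}^{\rm m}$, the other is not), so no common reference density exists at the level you are working.

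The paper supplies two ingredients your outline is missing. First, it removes an $\varepsilon$-neighbourhood of each midpoint $x_{e}^{\rm m}$, $e\in E_{\sigma,-}$, obtaining a cut metric graph $\widetilde\LG_{\sigma,\varepsilon}$ on which both $\tilde\phi$ and $\tilde\phi_{\sigma}$ are absolutely continuous with respect to a common free-endpoint GFF $\tilde\phi_{\sigma,\varepsilon}$, with explicit Radon--Nikodym derivatives depending only on the finitely many boundary values $\tilde\varphi(x^{\pm}_{e,\varepsilon})$ (Lemma~\ref{Lem abs cont}). Second --- and this is the step that replaces your failed edge-by-edge matching --- Lemma~\ref{Lem same law} shows that the sign-flip $\TT_{\sigma,\varepsilon}$ \emph{preserves the law} of the reference field $\tilde\phi_{\sigma,\varepsilon}$; the proof uses the fact, which you never invoke, that conditionally on $|\tilde\phi_{\sigma,\varepsilon}|$ the cluster signs are i.i.d.\ uniform \cite[Lemma~3.2]{Lupu2016Iso}. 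Once this distributional invariance is in hand, comparing the explicit densities turns the $(\tilde\varphi(x^{+}_{e,\varepsilon})-\tilde\varphi(x^{-}_{e,\varepsilon}))^{2}$ factor for $\tilde\phi$ into the $(\tilde\varphi(x^{+}_{e,\varepsilon})+\tilde\varphi(x^{-}_{e,\varepsilon}))^{2}$ factor for $\tilde\phi_{\sigma}$ on the event $\LT_{\sigma,\varepsilon}$, and sending $\varepsilon\to 0$ finishes.
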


To obtain the field $\tilde{\phi}_{\sigma}$,
rather than just the absolute value $\vert\tilde{\phi}_{\sigma}\vert$,
from the field $\tilde{\phi}$ conditioned on $\tilde{\phi}\in\LT_{\sigma}$,
one has to additionally apply a deterministic sign flipping procedure
across the discontinuity points.
This is explained in Corollary \ref{Cor bicolor tilde phi}.

The identity \eqref{Eq ratio partition func} is thus a newcomer to the
family of exact identities known to be satisfied by $\tilde{\phi}$.
We would like to emphasize that Theorem \ref{Thm main pres}
does not require at all the graph $\LG$ to be planar.
However, for planar graphs the subset $\LT_{\sigma}$
is simpler to interpret;
see Section \ref{Sec interpret} and in particular \ref{Subsec dim geq 3}.

\begin{figure}
\includegraphics[scale=0.48]{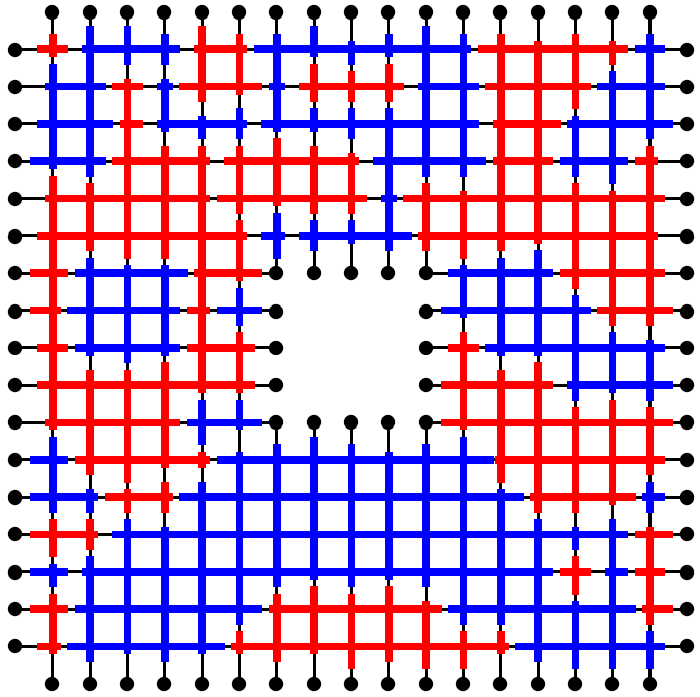}
\qquad
\includegraphics[scale=0.48]{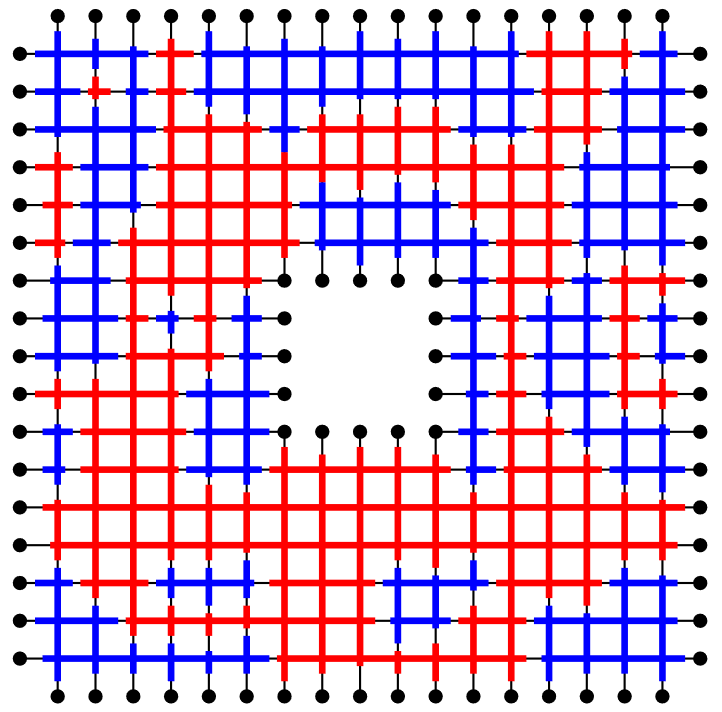}
\caption{
Conceptual depiction of the metric graph GFF $\tilde{\phi}$
on an annular domain.
The black dots represent the boundary
$V_{\partial}$.
The positive, resp. negative values of the fields are
in red, resp. blue.
Left: no sign cluster of $\tilde{\phi}$ surrounds the inner hole of the annulus.
Right: there is a sign cluster (in red) surrounding the inner hole.}
\label{Fig ex ann}
\end{figure}

We provide three different proofs for Theorem \ref{Thm main pres}.
The first proof is presented in Section \ref{Subsec topo}.
It proceeds entirely though Gaussian computations at the metric graph level,
and in particular does not rely on isomorphism identities between free fields and Markovian paths.
The second proof, presented in Section \ref{Subsec iso topo},
involves an isomorphism identity for $\tilde{\phi}_{\sigma}$
on the metric graph.
The third proof relies on the relation between the metric graph GFF and the FK-Ising random cluster model that has been observed in \cite{LupuWernerIsing}.
Actually, a result analogous to our Theorem \ref{Thm main pres} holds for the FK-Ising model (Theorem \ref{Thm disord Ising}).
The latter has been communicated to us by Marcin Lis (TU Wien, Vienna)
after the prepublication of the first version of this paper.
We are grateful to Marcin Lis for this.

We further derive some consequences of Theorem \ref{Thm main pres}
in the continuum limit in dimension 2.
First, Theorem \ref{Thm main pres} gives an expression for the probability that a certain CLE$_{4}$-type loop ensemble on an annular domain contains only contractible loops.
The same probability can be obtained by a different method, 
through the use of Schramm-Sheffield level lines of the 2D continuum GFF.
It is however non-obvious at a first glance that the two expressions obtained by two different methods are actually equal.
In Section \ref{Subsubsec Proba ann} we check that the two are indeed equal,
and this involves a remarkable computation with Jacobi Theta functions.
Then, in Section \ref{Subsubsec MS annulus} we derive a version of the Miller-Sheffield coupling, that involves on one hand the annular CLE$_{4}$
conditioned on contractibility,
and on the other hand the continuum gauge-twisted GFF.
We also describe how the renormalized hyperbolic cosine and the renormalized even powers of the 2D continuum GFF transform under topological conditioning. 

We also consider the case of higher dimensions, and in particular formulate an intensity doubling conjecture for high dimensions. Here we will state it only informally.
We refer to Section \ref{Sec conj high dim} for the precise statement and the heuristic behind.
\begin{conja}[Intensity doubling]
Assume that the space dimension is $d>8$.
Then the cycles in the sign clusters of the metric graph GFF are described in the scaling limit by a Brownian loop soup of intensity parameter 
$\alpha=1 = 2\times \dfrac{1}{2}$,
which is the double of the intensity parameter appearing in isomorphism theorems
(Theorems \ref{Thm Le Jan iso} and \ref{Thm Lupu iso}).
\end{conja}
The intensity doubling might also hold as soon as $d>6$.

\medskip

Let us mention some other works where the ratio
$Z_{\sigma}/Z$ as \eqref{Eq ratio partition func} appeared in a probabilistic context.
In \cite{BrugCamiaLis18winding} in a planar setting, the quantities of type $Z_{\sigma}/Z$ appeared as correlations of the Kramers-Wannier dual of the discrete GFF.
Again in a planar context in \cite{BenesLawlerViklund16LERW}, 
quantities of form $Z^{2}/Z_{\sigma}^{2}$
entered the expression of the probability that a loop-erased random walk goes through a particular edge.

Let us also mention that the effect of the gauge field has been already studied
on the fermionic side, the GFF being of course the Euclidean bosonic free field.
The Euclidean fermionic free field (without gauge field) is related to spanning trees and wired spanning forests; see \cite{FermionTree04}.
If one adds a gauge field $\sigma\in\{-1,1\}^{E}$,
then one gets the cycle-rooted spanning forests introduced by Kenyon \cite{Kenyon2011CRSF}.
These are spanning sub-graphs where a connected component can contain one cycle
of holonomy $-1$.
So, remarkably, while on the fermionic side the effect of a gauge field is to add
cycles of holonomy $-1$,
on the bosonic side the effect is to remove all possible cycles of holonomy $-1$.

\medskip

This article is organized as follows. 
Section \ref{Sec prelim} consists of preliminaries where we recall some background
that is maybe not common knowledge. 
In Section \ref{Subsec gauge} we recall the notions of gauge field,
gauge equivalence and holonomy in our particular setting
where the gauge group is $\{ -1,1\}$.
Section \ref{Subsec GFF gauge} deals with the discrete GFFs, 
the usual one and the gauge twisted.
Section \ref{Subsec paths} deals with random walk representations of these fields.
Section \ref{Subsec metric} recalls the method of the metric graph
and some results for the metric graph GFF.

In Section \ref{Sec results} we present the results of this article and
provide the corresponding proofs.
In Section \ref{Subsec subdivision} we introduce the natural
extrapolation $\tilde{\phi}_{\sigma}$ 
of the gauge-twisted GFF $\phi_{\sigma}$ to the metric graph.
In Section \ref{Subsec double cover} we consider the double cover of the discrete graph induced by the gauge field and observe that the usual discrete GFF
and the gauge twisted one are projections on two orthogonal subspaces
of the discrete GFF on the double cover.
In Section \ref{Subsec cov metric} we do the same at the level of the metric graph,
which in particular provides us a continuous extension of $\tilde{\phi}_{\sigma}$
to the double cover of the metric graph.
In Section \ref{Subsec topo} we give a more detailed statement of 
Theorem \ref{Thm main pres} and then prove it through direct Gaussian computations.
In Section \ref{Subsec iso topo} we provide an alternative proof of 
Theorem \ref{Thm main pres} through the metric graph version of the isomorphism identity between  $\tilde{\phi}_{\sigma}$ and Markovian loop soups 
due to Kassel and Lévy \cite{KasselLevy16CovSym}.
We also give a stronger version for this isomorphism identity.

In Section \ref{Sec rel Ising} we relate the topological conditioning for the metric graph GFF with the topological conditioning in FK-Ising random cluster model.
In Section \ref{Subsec intro Ising} we recall the spin Ising model, the FK-Ising model,
and the Edwards-Sokal coupling between the two. 
In Section \ref{Subsec disord Ising} we present the analogue of
Theorem \ref{Thm main pres} in the Ising setting.
In Section \ref{Subsec rel Ising GFF} we recall the relation between the Ising setting and the GFF setting.
In Section \ref{Subsec 3rd proof} we give the third proof of Theorem 
\ref{Thm main pres}, where it appears as a consequence of the analogous result for
FK-Ising.

Section \ref{Sec interpret} is a discussion around Theorem \ref{Thm main pres},
where we present some interpretations and implications.
In Section \ref{Subsec annular domain} we explain how metric graph annular domains naturally appear.
In Section \ref{Subsec annular continuum} we derive some consequences 
of Theorem \ref{Thm main pres} on 2D continuum annular domains
in the scaling limit.
In Section \ref{Subsec many holes} we briefly mention what Theorem \ref{Thm main pres}
provides for planar continuum domains with multiple holes, and what it does not.
In Section \ref{Subsec dim geq 3} we explain how the non-planar setting differs from the planar setting, and what Theorem \ref{Thm main pres} provides for the non-planar setting.

In Section \ref{Sec conj high dim} we present our intensity doubling conjecture for high dimensions and our reasoning behind it.

\section{Preliminaries}
\label{Sec prelim}

\subsection{On gauge fields, holonomy and gauge equivalence}
\label{Subsec gauge}

Let $\LG=(V,E)$ be a finite connected undirected graph.
We assume that there are no self-loops or multi-edges.
We also assume that the set of vertices consists of two disjoint parts,
$V=V_{\rm int}\cup V_{\partial}$,
$V_{\rm int}\cap V_{\partial}=\emptyset$,
with both $V_{\rm int}$ and $V_{\partial}$ non-empty.
We see $V_{\rm int}$ as the interior vertices and
$V_{\partial}$ as boundary vertices.
%We will denote by $E_{\rm int}$ the following subset of $E$:
%\begin{displaymath}
%E_{\rm int} = 
%\{
%\{ x,y\}\in E \vert x,y\in V_{\rm int}
%\}.
%\end{displaymath}
%That is to say, $E_{\rm int}$ is the subset of interior edges,
%not adjacent to $V_{\partial}$.
%Let be the subgraph  $\LG_{\rm int}=(V_{\rm int},E_{\rm int})$.
%We also assume that $\LG_{\rm int}$ is connected.
Each edge $\{x,y\}\in E$ is endowed with a conductance
$C(x,y)=C(y,x)>0$.
Thus, $\LG$ is an electrical network.

In this paper, a \textit{gauge field} will mean
a family $(\sigma(e))_{e\in E}\in\{ -1,1\}^{E}$.
This is the simplest case when the gauge group is $\{ -1,1\}$.
We will also use the notation
$\sigma(x,y) = \sigma(\{ x,y\})$,
when $\{x,y\}\in E$.
Given an other collection of signs
$(\hat{\sigma}(x))_{x\in V}\in \{ -1,1\}^{V}$,
this time above the vertices, 
it induces a \textit{gauge transformation}
$\sigma\mapsto \hat{\sigma}\cdot\sigma$,
where $\hat{\sigma}\cdot\sigma\in \{ -1,1\}^{E}$
is the gauge field defined by
\begin{displaymath}
(\hat{\sigma}\cdot\sigma)(x,y) = 
\hat{\sigma}(x)\sigma(x,y)\hat{\sigma}(y).
\end{displaymath}
Two gauge fields $\sigma,\sigma'\in \{ -1,1\}^{E}$
are said to be \textit{gauge-equivalent} if there is
$\hat{\sigma}\in \{ -1,1\}^{V}$ such that
$\sigma'=\hat{\sigma}\cdot\sigma$.
A gauge field $\sigma\in \{ -1,1\}^{E}$
is said \textit{trivial} if it is gauge-equivalent
to the gauge field with value $1$ on every edge.

Given $\wp=(x_{1},x_{2},\dots,x_{n})$ a discrete nearest-neighbor path
in $\LG$, 
the \textit{holonomy} of $\sigma\in\{ -1,1\}^{E}$ 
along $\wp$ is the product
\begin{displaymath}
\hol^{\sigma}(\wp) = 
\sigma(x_{1},x_{2})\sigma(x_{2},x_{3})
\dots \sigma(x_{n-1},x_{n}).
\end{displaymath}
If the nearest-neighbor path $\wp$ with finitely many jumps is parametrized by
continuous time, then $\hol^{\sigma}(\wp)$ is the holonomy along the
discrete skeleton of $\wp$.

\begin{lemma}
\label{Lem hol gauge equive}
\begin{enumerate}
\item Assume that two gauge fields $\sigma,\sigma'\in \{ -1,1\}^{E}$
are gauge-equivalent.
Then for every
loop (i.e. closed path) 
$\wp=(x_{1},x_{2},\dots ,x_{n-1},x_{n},x_{1})$ in $\LG$,
$\hol^{\sigma}(\wp)=\hol^{\sigma'}(\wp)$.
\item Conversely, assume that there is $x_{1}\in V$
such that for every
loop $\wp=(x_{1},x_{2},\dots ,x_{n-1},x_{n},x_{1})$ rooted in $x_{1}$,
$\hol^{\sigma}(\wp)=\hol^{\sigma'}(\wp)$.
Then $\sigma$ and $\sigma'$ are gauge equivalent.
\item In particular, a gauge field $\sigma\in \{ -1,1\}^{E}$ is
trivial if and only if for every loop $\wp$,
$\hol^{\sigma}(\wp)=1$.
\end{enumerate}
\end{lemma}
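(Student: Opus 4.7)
The plan is to handle the three parts of the lemma essentially in order, with part (2) being the substantive one and parts (1) and (3) reducing to short calculations.

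For part (1), I would simply expand the definition. If $\sigma' = \hat\sigma\cdot\sigma$ and $\wp = (x_1,\dots,x_n,x_1)$ is a loop, then
\begin{displaymath}
\hol^{\sigma'}(\wp) = \prod_{i=1}^{n} \hat\sigma(x_i)\sigma(x_i,x_{i+1})\hat\sigma(x_{i+1}) \quad (\text{with } x_{n+1}=x_1).
\end{displaymath}
Since $\{-1,1\}$ is abelian and each vertex of $\wp$ appears as the endpoint of two consecutive edges, each factor $\hat\sigma(x_i)$ occurs an even number of times and squares to $1$, so the product collapses to $\hol^{\sigma}(\wp)$.

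The core of the lemma is part (2). Set $\tau(x,y) := \sigma(x,y)\sigma'(x,y)$; the hypothesis reads $\hol^{\tau}(\wp)=1$ for every loop rooted at $x_1$, and we want $\hat\sigma\in\{-1,1\}^{V}$ with $\hat\sigma(x)\hat\sigma(y)=\tau(x,y)$ for all $\{x,y\}\in E$. Using connectedness of $\LG$, for each $x\in V$ choose an arbitrary path $\wp_{x_1\to x}$ from $x_1$ to $x$ and define $\hat\sigma(x_1)=1$, $\hat\sigma(x)=\hol^{\tau}(\wp_{x_1\to x})$. Well-definedness follows from the assumption: any two such paths concatenated (one reversed) form a loop rooted at $x_1$, and since $\tau$ is symmetric under edge-reversal, this loop has holonomy $1$, so the two candidate values of $\hat\sigma(x)$ coincide. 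To verify the edge condition on an edge $\{x,y\}$, extend $\wp_{x_1\to x}$ by the step $(x,y)$ to obtain a path to $y$; the hypothesis lets us replace $\hat\sigma(y)$ by this new value, giving $\hat\sigma(y)=\hat\sigma(x)\tau(x,y)$, hence $\hat\sigma(x)\hat\sigma(y)=\tau(x,y)$ as required.

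Part (3) then follows at once: applying (1) and (2) with $\sigma'\equiv 1$ shows that $\sigma$ is trivial iff $\hol^{\sigma}(\wp)=1$ for every loop rooted at $x_1$, and the restriction to a specific root is harmless, because any loop $\wp'$ at an arbitrary vertex $y$ can be conjugated by a path from $x_1$ to $y$ into a loop at $x_1$ without changing its holonomy (the path and its reverse contribute the same factor, whose square is $1$). The main obstacle is the well-definedness argument in part (2); everything else is bookkeeping made easy by the fact that $\{-1,1\}$ is abelian and that $\sigma(x,y)=\sigma(y,x)$, which together force holonomies to be invariant under conjugation and path-reversal.
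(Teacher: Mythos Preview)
Your proof is correct and follows essentially the same route as the paper's. The only cosmetic difference is that in part (2) you package the product $\sigma\sigma'$ into a single gauge field $\tau$, whereas the paper writes $\hat\sigma(x)=\hol^{\sigma}(\wp^{x_1,x})\hol^{\sigma'}(\wp^{x_1,x})$ directly; these are the same definition, and the well-definedness and edge-verification arguments are identical.
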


\begin{proof}
(1) If $\sigma' = \hat{\sigma}\cdot\sigma$, then
for every path $\wp=(x_{1},x_{2},\dots ,x_{n-1},x_{n},x_{n+1})$,
\begin{displaymath}
\hol^{\sigma'}(\wp) = \hat{\sigma}(x_{1})\hol^{\sigma}(\wp)\hat{\sigma}(x_{n+1}).
\end{displaymath}
In particular, if $x_{n+1}=x_{1}$, $\hol^{\sigma'}(\wp)=\hol^{\sigma}(\wp)$.

(2) For each $x\in V$, take a path $\wp^{x_{1},x}$ from $x_{1}$ to $x$
in $\LG$ and set
\begin{displaymath}
\hat{\sigma}(x)=\hol^{\sigma}(\wp^{x_{1},x})\hol^{\sigma'}(\wp^{x_{1},x}).
\end{displaymath}
The value of $\hat{\sigma}(x)$ does not depend on the particular choice of the
path $\wp^{x_{1},x}$.
Indeed, if $\bar{\wp}^{x_{1},x}$ is an other path from $x_{1}$ to $x$,
then one can concatenate $\bar{\wp}^{x_{1},x}$
with the time reversal $\overleftarrow{\wp^{x_{1},x}}$ of 
$\wp^{x_{1},x}$,
so as to get a loop $\bar{\wp}^{x_{1},x}\wedge \overleftarrow{\wp^{x_{1},x}}$
from $x_{1}$ to $x_{1}$, and then
\begin{displaymath}
\hol^{\sigma}(\wp^{x_{1},x})\hol^{\sigma}(\bar{\wp}^{x_{1},x})
=\hol^{\sigma}(\bar{\wp}^{x_{1},x}\wedge \overleftarrow{\wp^{x_{1},x}})
=\hol^{\sigma'}(\bar{\wp}^{x_{1},x}\wedge \overleftarrow{\wp^{x_{1},x}})
=\hol^{\sigma'}(\wp^{x_{1},x})\hol^{\sigma'}(\bar{\wp}^{x_{1},x}).
\end{displaymath}
With $\hat{\sigma}$ defined in this way, we have that
$\sigma'=\hat{\sigma}\cdot\sigma$.
Note that $\hat{\sigma}$ is not the only element of
$\{ -1,1\}^{V}$ to relate
$\sigma$ and $\sigma'$ through induced gauge transformation.
With $\LG$ being connected, there are exactly two such 
elements of $\{ -1,1\}^{V}$, $\hat{\sigma}$ and $-\hat{\sigma}$.
\end{proof}

Figures \ref{Fig triv ann}, \ref{Fig non triv ann} and \ref{Fig 2 holes}
provide examples of gauge-equivalent gauge fields together with the corresponding gauge transformation.

\begin{figure}
\includegraphics[scale=0.3]{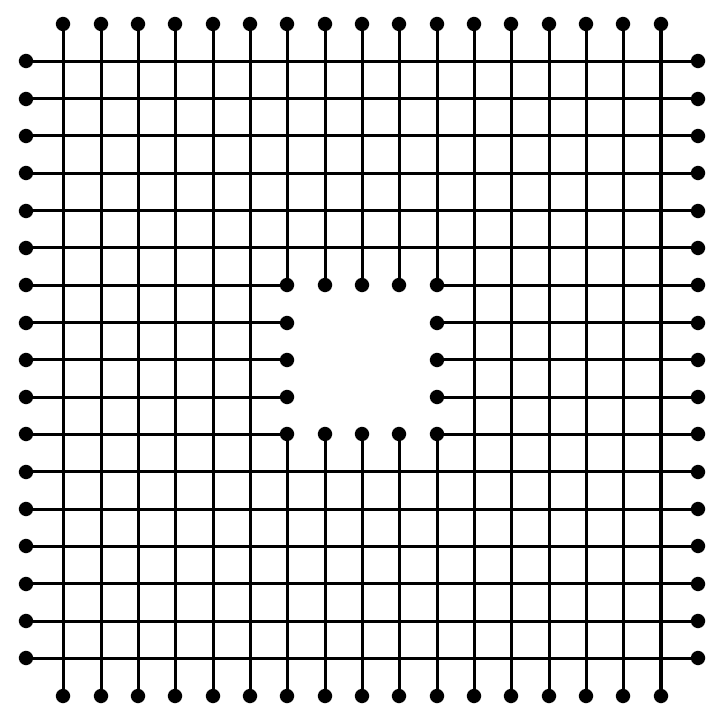}
\qquad
\includegraphics[scale=0.3]{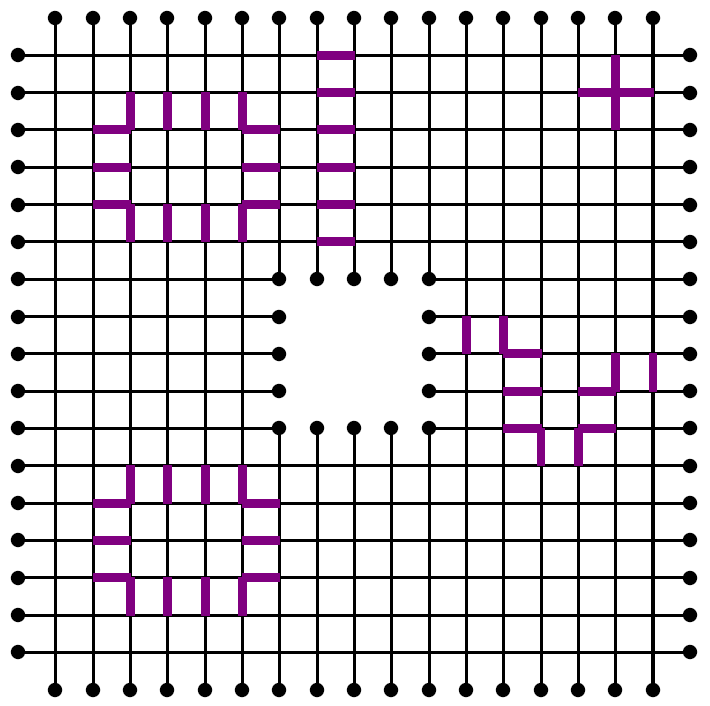}

\vspace{0.5cm}

\includegraphics[scale=0.3]{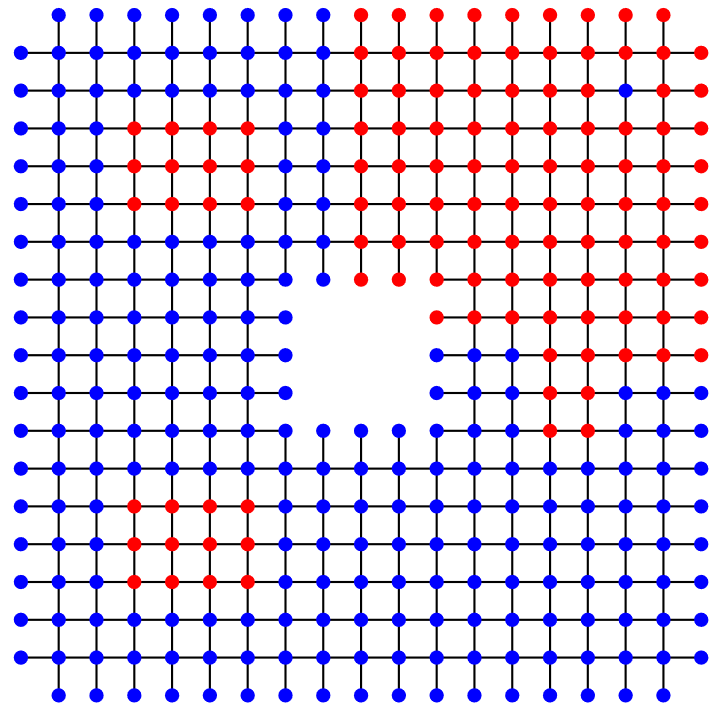}
\caption{Top: two trivial gauge fields. The edges with sign $-1$ are in violet.
Bottom: a gauge transformation relating the above gauge fields. In red are the $+1$ vertices and in blue the $-1$ vetices. The black dots represent the boundary
$V_{\partial}$.}
\label{Fig triv ann}
\end{figure}

\begin{figure}
\includegraphics[scale=0.3]{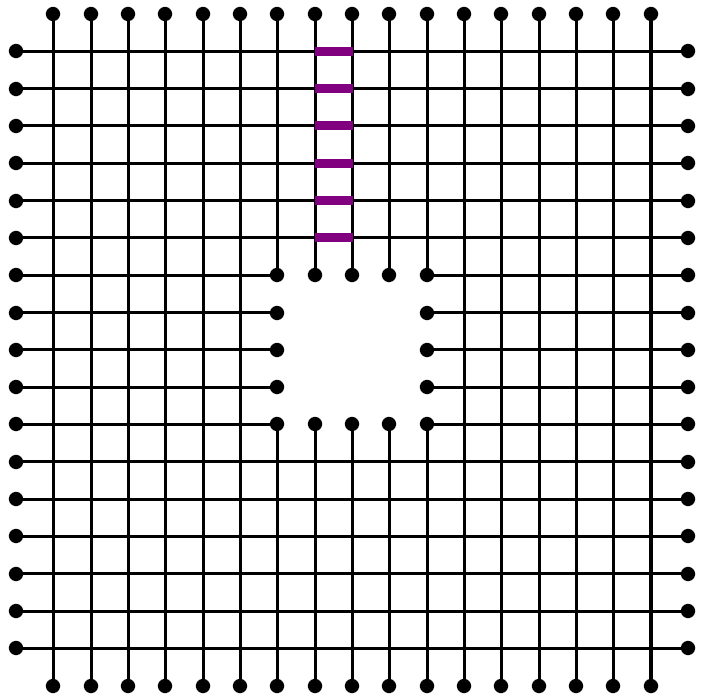}
\qquad
\includegraphics[scale=0.3]{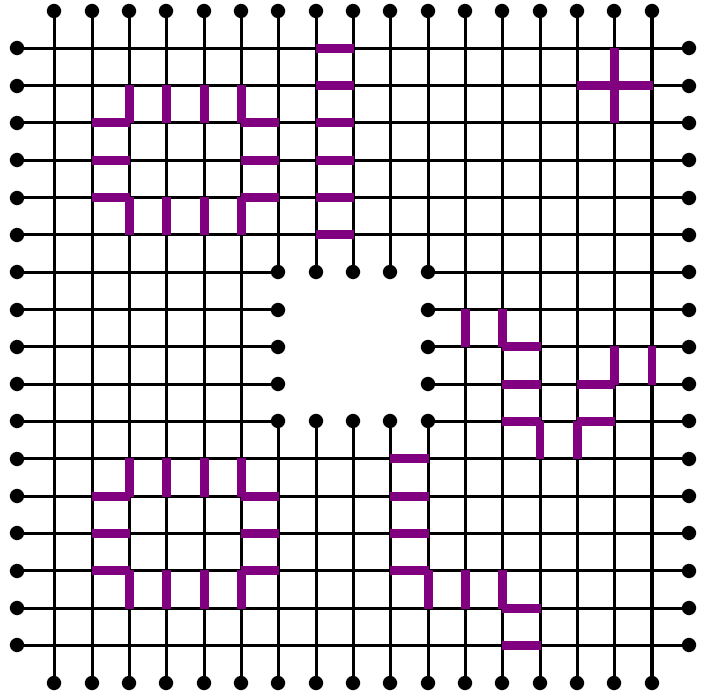}

\vspace{0.5cm}

\includegraphics[scale=0.32]{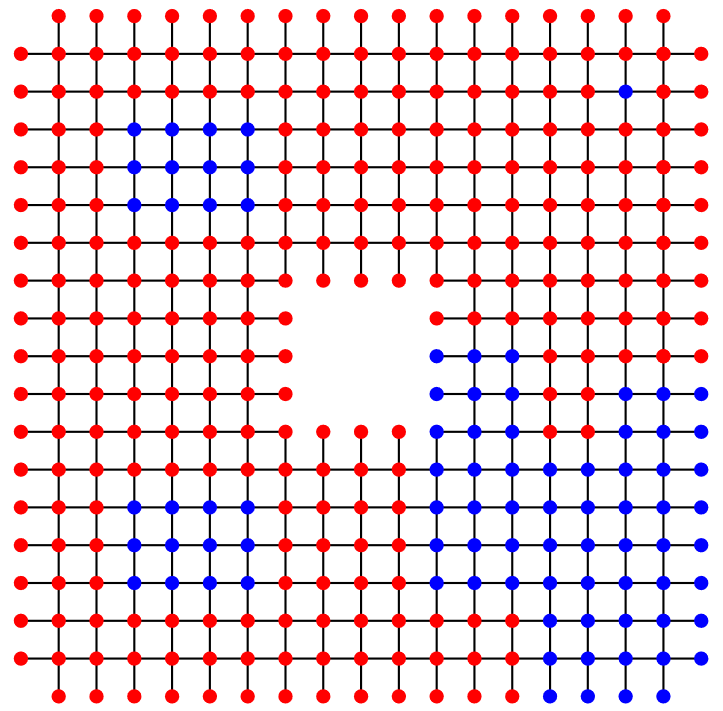}
\caption{Top: two non-trivial and gauge-equivalent gauge fields. 
The edges with sign $-1$ are in violet.
The holonomy of a loop is $-1$ to the power the number of turns it performs around the inner hole.
Bottom: a gauge transformation relating the above gauge fields. In red are the $+1$ vertices and in blue the $-1$ vetices.
The black dots represent the boundary $V_{\partial}$.}
\label{Fig non triv ann}
\end{figure}

\begin{figure}
\includegraphics[scale=0.3]{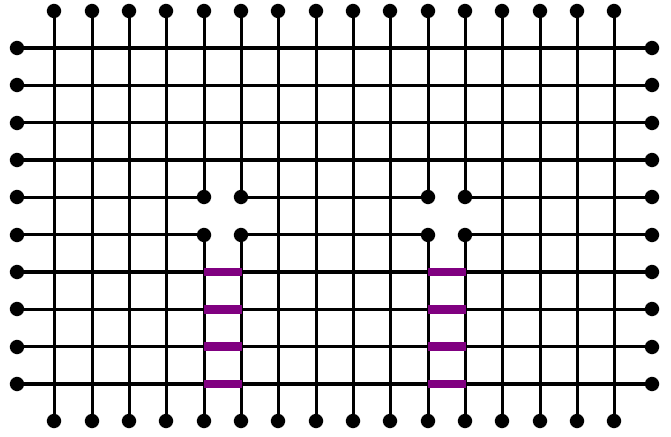}
\qquad
\includegraphics[scale=0.3]{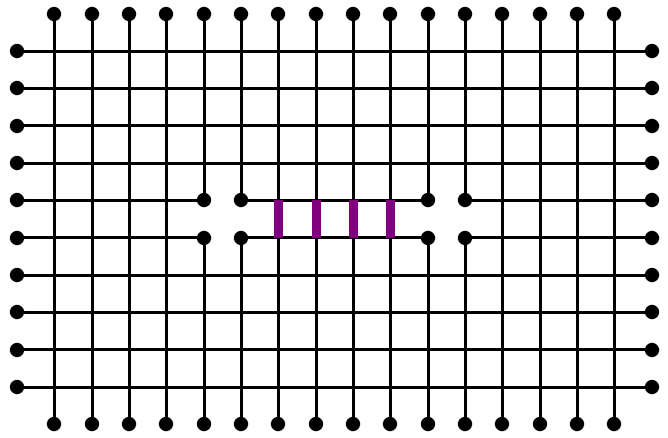}

\vspace{0.5cm}

\includegraphics[scale=0.32]{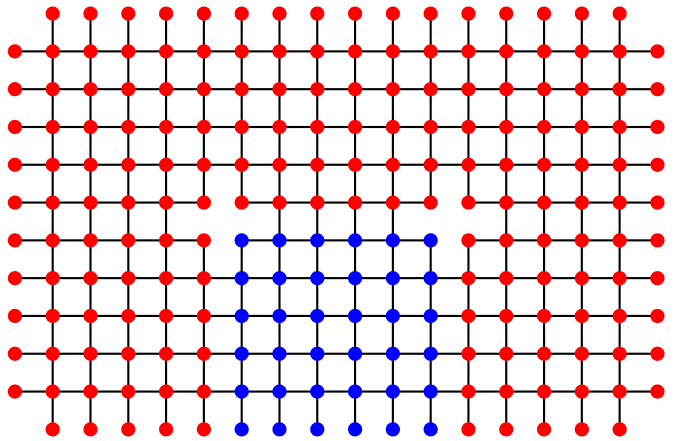}
\caption{Top: two non-trivial and gauge-equivalent gauge fields. 
The edges with sign $-1$ are in violet.
In the holonomy of a loop one multiplies by $-1$ each time one surrounds one of the two holes but not both.
Bottom: a gauge transformation relating the above gauge fields. In red are the $+1$ vertices and in blue the $-1$ vetices.
The black dots represent the boundary $V_{\partial}$.}
\label{Fig 2 holes}
\end{figure}

\subsection{Discrete scalar GFF twisted by a gauge field}
\label{Subsec GFF gauge}

Let $\Delta_{\LG}$ denote the discrete Laplacian on $\LG$:
\begin{displaymath}
(\Delta_{\LG} f)(x) = 
\sum_{y\sim x} C(x,y)(f(y)-f(x)),
\qquad x\in V,
\end{displaymath}
where $y\sim x$ means that $y$ is a neighbor of $x$,
i.e. $\{ x,y\}\in E$.
Let $(G(x,y))_{x,y\in V}$ be the Green's function of 
$-\Delta_{\LG}$ with $0$ boundary conditions on $V_{\partial}$.

Further, if $\sigma\in\{ -1,1\}^{E}$ is a gauge field,
the discrete Laplacian twisted by $\sigma$ is
\begin{displaymath}
(\Delta_{\LG,\sigma} f)(x) = 
\sum_{y\sim x} C(x,y)(\sigma(x,y)f(y)-f(x)),
\qquad x\in V.
\end{displaymath}
%where $\sigma(x,y)=1$ if $\{x,y\}\in E\setminus E_{\rm int}$.
The twisted operator $-\Delta_{\LG,\sigma}$ is still non-negative
in the sense that for every $f\in\R^{V}$,
\begin{displaymath}
\mathcal{E}_{\sigma}(f,f) := 
\sum_{x\in V}(-\Delta_{\LG,\sigma} f)(x)f(x)=
\sum_{\{ x,y\}\in E} C(x,y)(\sigma(x,y)f(y)-f(x))^{2}\geq 0.
\end{displaymath}
Moreover, if $f$ is a function such that
$\mathcal{E}_{\sigma}(f,f)=0$ and $f$ is $0$ on $V_{\partial}$,
then $f$ is uniformly $0$ on $V$.
Indeed, one can see this by induction on the graph distance of a vertex $x\in V$
from $V_{\partial}$.
Therefore, one can consider the inverse of the restriction of the operator
$-\Delta_{\LG,\sigma}$ to the functions that are $0$ on $V_{\partial}$.
This is the gauge-twisted Green's function $(G_{\sigma}(x,y))_{x,y\in V}$
with $0$ boundary conditions on $V_{\partial}$.
It is defined by
\begin{displaymath}
\forall x\in V, \forall y\in V_{\partial},~~G_{\sigma}(x,y)=0,
\end{displaymath}
and
\begin{displaymath}
\forall x_{0}\in V_{\rm int}, \forall x\in  V_{\rm int},~~
\sum_{y\sim x} C(x,y)(\sigma(x,y)G(x_{0},y)-G(x_{0},x))
= -\ind_{x=x_{0}}.
\end{displaymath}
The function $G_{\sigma}$ is symmetric: 
$G_{\sigma}(x,y)=G_{\sigma}(y,x)$.
However, unlike for the usual Green's function $G(x,y)$,
some of the values $G_{\sigma}(x,y)$ may be negative.
Still, the operator induced by $G_{\sigma}$ is non-negative: 
for every $f\in\R^{V}$,
\begin{displaymath}
\sum_{x,y\in V} f(x) G_{\sigma}(x,y) f(y) \geq 0.
\end{displaymath}
Further, if $\sigma,\sigma'\in \{ -1,1\}^{E}$
are two gauge-equivalent gauge fields, with a gauge transformation between
$\sigma$ and $\sigma'$ induced by a $\hat{\sigma}\in \{ -1,1\}^{V}$,
then for every $x,y\in V$,
\begin{displaymath}
G_{\sigma'}(x,y) = \hat{\sigma}(x) G_{\sigma}(x,y) \hat{\sigma}(y).
\end{displaymath}

The discrete scalar \textit{Gaussian free field} (\textit{GFF})
on $\LG$ with boundary conditions $0$ on $V_{\partial}$ is
is the random Gaussian field $(\phi(x))_{x\in V}$,
with $\phi(x) = 0$ for $x\in V_{\partial}$, 
and with the probability distribution
\begin{equation}
\label{Eq phi}
\dfrac{1}{Z}
\exp
\Big(
-\dfrac{1}{2}
\sum_{\{ x,y\}\in E} C(x,y)(\varphi(y)-\varphi(x))^{2}
\Big)
\prod_{z\in V_{\rm int}} d\varphi(z).
\end{equation}
The expectation of $\phi$ is $0$.
The covariance function of $\phi$ is the Green's function
$(G(x,y))_{x,y\in V}$.

Further, given $\sigma\in\{ -1,1\}^{E}$ a gauge field,
the GFF twisted by $\sigma$ is the random Gaussian field 
$(\phi_{\sigma}(x))_{x\in V}$,
with $\phi_{\sigma}(x) = 0$ for $x\in V_{\partial}$, 
and with the probability distribution
\begin{equation}
\label{Eq phi sigma}
\dfrac{1}{Z_{\sigma}}
\exp
\Big(
-\dfrac{1}{2}
\sum_{\{ x,y\}\in E} C(x,y)(\sigma(x,y)\varphi(y)-\varphi(x))^{2}
\Big)
\prod_{z\in V_{\rm int}} d\varphi(z).
\end{equation}
The expectation of $\phi_{\sigma}$ is $0$.
Actually, $\phi_{\sigma}\stackrel{(\text{law})}{=}-\phi_{\sigma}$.
The covariance function of $\phi_{\sigma}$ is the gauge-twisted Green's function
$(G_{\sigma}(x,y))_{x,y\in V}$.
If $\sigma,\sigma'\in \{ -1,1\}^{E}$
are two gauge-equivalent gauge fields, with a gauge transformation between
$\sigma$ and $\sigma'$ induced by a $\hat{\sigma}\in \{ -1,1\}^{V}$,
then
\begin{equation}
\label{Eq cov GFF gauge transfo}
(\phi_{\sigma'}(x))_{x\in V}
\stackrel{(\text{law})}{=}
(\hat{\sigma}(x)\phi_{\sigma}(x))_{x\in V}.
\end{equation}
In particular, if the gauge field $\sigma$ is trivial, 
then the field $\phi_{\sigma}$ can be obtained via a deterministic transformation
from the usual GFF $\phi$.

\subsection{Measures on paths}
\label{Subsec paths}

Let $(X_{t})_{t\geq 0}$ be the nearest-neighbor Markov jump process on $\LG$
with the jump rates given by the conductances $C(x,y)$.
Let $T_{V_{\partial}}$ denote the first hitting time of $V_{\partial}$.
We will denote by $p(t,x,y)$ the transition probabilities of the killed process
$(X_{t})_{0\leq t\leq T_{V_{\partial}}}$, so that
\begin{displaymath}
\sum_{y\in V_{\rm int}}p(t,x,y) = \mathbb{P}_{x}(T_{V_{\partial}}> t).
\end{displaymath}
Since the process is symmetric, $p(t,x,y)=p(t,y,x)$.
Moreover,
\begin{displaymath}
\int_{0}^{+\infty} p(t,x,y) dt = G(x,y).
\end{displaymath}
Given $x,y\in V_{\rm int}$ and $t>0$,
let $\mathbb{P}^{x,y}_{t}$ denote the bridge probability measure from $x$ to $y$ of duration $t$, where one conditions on $T_{V_{\partial}}> t$.
The \textit{excursion measure} from $x$ to $y$ is
\begin{displaymath}
\mu^{x,y}(d\wp) = 
\int_{0}^{+\infty}\mathbb{P}^{x,y}_{t}(d\wp) p(t,x,y) dt.
\end{displaymath}
So $\mu^{x,y}$ is a measure on nearest-neighbor paths from $x$ to $y$,
parametrized by continuous time, of finite duration.
The total mass of $\mu^{x,y}$ is $G(x,y)$.
The measure $\mu^{y,x}$ is the image of $\mu^{x,y}$ by time-reversal.
The induced measure on discrete skeletons is
\begin{displaymath}
\mu^{x,y}(x\rightarrow x_{1}\rightarrow x_{2}\rightarrow\dots \rightarrow x_{n-1}
\rightarrow y) = 
\dfrac{C(x,x_{1}) C(x_{1},x_{2})\dots C(x_{n-2},x_{n-1}) C(x_{n-1},y)}
{W(x)W(x_{1}) W(x_{2}) \dots W(x_{n-1})W(y)},
\end{displaymath}
where
\begin{equation}
\label{Eq W sum}
W(z) = \sum_{w\sim z} C(z,w).
\end{equation}

Consider now $\sigma\in\{ -1,1\}^{E}$ a gauge field.
Kassel and Lévy showed in \cite[Theorem 5.1]{KasselLevy16CovSym} the following.

\begin{thm}[Kassel-Lévy, \cite{KasselLevy16CovSym}]
\label{Thm Green hol gauge}
For every $x,y\in V_{\rm int}$,
\begin{displaymath}
G_{\sigma}(x,y) = 
\int \hol^{\sigma}(\wp)
\mu^{x,y}(d\wp).
\end{displaymath}
In other words,
\begin{displaymath}
\dfrac{G_{\sigma}(x,y)}{G(x,y)} =
\mathbb{E}[\hol^{\sigma}(\wp^{x,y})],
\end{displaymath}
where $\wp^{x,y}$ is a random excursion from $x$ to $y$ distributed according to
the probability measure  $\mu^{x,y}/G(x,y)$.
In particular, for every $x\in V_{\rm int}$,
\begin{displaymath}
\int \hol^{\sigma}(\wp)
\mu^{x,x}(d\wp) = G_{\sigma}(x,x)>0.
\end{displaymath}
\end{thm}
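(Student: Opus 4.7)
The plan is to verify that, for fixed $y \in V_{\rm int}$, the function $H(x,y) := \int \hol^{\sigma}(\wp)\, \mu^{x,y}(d\wp)$ solves the same linear system as $G_{\sigma}(\cdot, y)$: vanishing on $V_{\partial}$ and satisfying $(-\Delta_{\LG,\sigma} H(\cdot, y))(x) = \ind_{x=y}$ on $V_{\rm int}$. By the invertibility of the restricted twisted Laplacian noted in Section \ref{Subsec GFF gauge}, this forces $H = G_{\sigma}$.

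The central step is a first-jump decomposition of $\mu^{x,y}$. Starting from $x \in V_{\rm int}$, one splits the excursion into the constant path (no jumps, total mass $1/W(x)$, nonzero only when $x = y$, and carrying trivial holonomy $+1$) and paths with at least one jump. Conditioning on the first jump going to a neighbor $z$, the remainder is an excursion distributed according to $\mu^{z,y}$, and crucially the holonomy factorizes multiplicatively as $\hol^{\sigma}(\wp) = \sigma(x,z)\cdot \hol^{\sigma}(\wp')$. Integrating the first holding time against the total jump rate $W(x)$ and summing over neighbors $z \sim x$ yields
\begin{displaymath}
H(x,y) = \frac{\ind_{x=y}}{W(x)} + \frac{1}{W(x)}\sum_{z \sim x} C(x,z)\sigma(x,z) H(z,y),
\end{displaymath}
where jumps into $V_{\partial}$ contribute nothing since the process is killed there and cannot reach $y \in V_{\rm int}$; equivalently, one sets $H(z,y) = 0$ for $z \in V_{\partial}$.

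Multiplying by $W(x)$ and rearranging gives $(\Delta_{\LG,\sigma} H(\cdot, y))(x) = -\ind_{x=y}$ on $V_{\rm int}$, which together with the boundary condition is precisely the defining equation of $G_{\sigma}(\cdot, y)$. Strict positivity $G_{\sigma}(x,x) > 0$ then follows since $G_{\sigma}$ is the covariance kernel of the non-degenerate Gaussian field $\phi_{\sigma}$ given by \eqref{Eq phi sigma}. The only genuinely delicate point is careful sign-tracking through the Markov decomposition, ensuring that the $\sigma(x,z)$ acquired at the first jump lands in exactly the slot demanded by $\Delta_{\LG,\sigma}$; beyond that, the argument mirrors the classical proof that $G(x,y) = \int \mu^{x,y}(d\wp)$, with the holonomy simply inserting the correct sign at each traversed edge.
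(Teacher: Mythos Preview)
Your proof is correct. The first-jump decomposition you carry out is exactly right: the weight formula for $\mu^{x,y}$ on discrete skeletons given in the paper makes the identity
\[
\mu^{x,y}(x\to z\to\cdots\to y)=\frac{C(x,z)}{W(x)}\,\mu^{z,y}(z\to\cdots\to y)
\]
immediate, the holonomy splits as $\sigma(x,z)$ times the holonomy of the remainder, and the no-jump contribution at $x=y$ has mass $1/W(x)$ with holonomy $+1$. Absolute convergence is not an issue since $|\hol^{\sigma}|\equiv 1$ and $\mu^{x,y}$ has finite total mass $G(x,y)$. Your concluding appeal to invertibility of the restricted twisted Laplacian and to non-degeneracy of $\phi_{\sigma}$ for $G_{\sigma}(x,x)>0$ is also fine.

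There is nothing to compare against in this paper: the statement is quoted from Kassel--L\'evy \cite{KasselLevy16CovSym} as background and no proof is given here. Your argument is the natural elementary route and would serve perfectly well as a self-contained proof.
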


The measure on continuous time random walk loops introduced 
by Le Jan \cite{LeJan2010LoopsRenorm,LeJan2011Loops} is
\begin{equation}
\label{Eq mu loop}
\mu^{\rm loop}(d\wp)
=
\sum_{x\in V_{\rm int}}
\int_{0}^{+\infty}\mathbb{P}^{x,x}_{t}(d\wp) p(t,x,x) \dfrac{dt}{t}
= \dfrac{1}{T(\wp)}\sum_{x\in V_{\rm int}} \mu^{x,x}(d\wp),
\end{equation}
where $T(\wp)$ denotes the duration of a path $\wp$.
There are two types of loops, those that visit at least two different vertices,
and those that stay in one vertex and do not perform jumps.
For $n\geq 2$ jumps, the measure induced on discrete skeletons is
\begin{displaymath}
\mu^{\rm loop}
(x_{1}\rightarrow x_{2}\rightarrow\dots \rightarrow x_{n-1}\rightarrow x_{n}
\rightarrow x_{1}) = 
\dfrac{1}{n}\dfrac{C(x_{1},x_{2})\dots C(x_{n-1},x_{n})C(x_{n},x_{1})}
{W(x_{1})W(x_{2})\dots W(x_{n-1})W(x_{n})}.
\end{displaymath}
So this is the same measure on discrete-time loops as the one that
appeared in \cite{LawlerFerreras07RWLoopSoup} and
\cite[Chapter 9]{LawlerLimic2010RW}.
The total mass of the loops that visit at least two vertices is
\begin{equation}
\label{Eq loop det}
\mu^{\rm loop}(\{\text{Loops that visit at least 2 vertices}\})
=
\log\Big(
\det((G(x,y))_{x,y\in V_{\rm int}})
\prod_{x\in V_{\rm int}} W(x)
\Big);
\end{equation}
see \cite[Equation (2.5)]{LeJan2011Loops}.
Besides the loops that visit at least two vertices,
$\mu^{\rm loop}$ also puts weight on loops that stay in one vertex and do not jump.
For every $x\in V_{\rm int}$, the induced measure on the duration of loops that only stay in $x$ is
\begin{displaymath}
e^{-t/G(x,x)} \dfrac{dt}{t}.
\end{displaymath}

Given $\sigma\in\{ -1,1\}^{E}$ a gauge field,
the (signed) measure on loops twisted by $\sigma$ is
\begin{displaymath}
\mu^{\rm loop}_{\sigma}(d\wp)
=
\hol^{\sigma}(\wp)
\mu^{\rm loop}(d\wp).
\end{displaymath}
The measure $\mu^{\rm loop}_{\sigma}$ is signed and its total variation measure is
$\mu^{\rm loop}$.
The measure $\mu^{\rm loop}_{\sigma}$ is the same for the whole
gauge-equivalence class of $\sigma$.
The signed measure $\mu^{\rm loop}_{\sigma}$ can be decomposed as
\begin{displaymath}
\mu^{\rm loop}_{\sigma} =\mu^{\rm loop}_{\sigma ,+} - 
\mu^{\rm loop}_{\sigma ,-},
\end{displaymath}
where $\mu^{\rm loop}_{\sigma ,+}$ is the restriction of
$\mu^{\rm loop}_{\sigma}$ to loops $\wp$ with
$\hol^{\sigma}(\wp)=1$, and 
$\mu^{\rm loop}_{\sigma ,-}$ is the restriction of
$\mu^{\rm loop}_{\sigma}$ to loops $\wp$ with
$\hol^{\sigma}(\wp)= -1$.
According to \cite[Corollary 3.7]{KasselLevy16CovSym},
a formula similar to \eqref{Eq loop det} also holds in the presence of a gauge field:
\begin{equation}
\label{Eq loop det sigma}
\mu^{\rm loop}_{\sigma}(\{\text{Loops that visit at least 2 vertices}\})
=
\log\Big(
\det((G_{\sigma}(x,y))_{x,y\in V_{\rm int}})
\prod_{x\in V_{\rm int}} W(x)
\Big).
\end{equation}
So in particular, one gets the following.

\begin{cor}
\label{Cor ratio det}
The following identity holds:
\begin{displaymath}
\dfrac{\det((G_{\sigma}(x,y))_{x,y\in V_{\rm int}})}
{\det((G(x,y))_{x,y\in V_{\rm int}})}
=
\exp\big(-2\mu^{\rm loop}(\{\text{Loops } \wp \text{ with } \hol^{\sigma}(\wp)=-1\})\big).
\end{displaymath}
\end{cor}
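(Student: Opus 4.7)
The plan is to obtain the identity directly by subtracting the log-determinant formula \eqref{Eq loop det} from its twisted analogue \eqref{Eq loop det sigma}. The factors $\prod_{x\in V_{\rm int}}W(x)$ are identical on both sides and cancel, which yields
\begin{displaymath}
\log\dfrac{\det((G_{\sigma}(x,y))_{x,y\in V_{\rm int}})}{\det((G(x,y))_{x,y\in V_{\rm int}})}
= \mu^{\rm loop}_{\sigma}(A) - \mu^{\rm loop}(A),
\end{displaymath}
where $A$ denotes the set of loops visiting at least two vertices. So the whole task reduces to rewriting the right-hand side in terms of the mass of loops of holonomy $-1$.

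For this I would use the signed decomposition $\mu^{\rm loop}_{\sigma} = \mu^{\rm loop}_{\sigma,+} - \mu^{\rm loop}_{\sigma,-}$ introduced just above the statement, together with the observation that, since $\hol^{\sigma}$ takes values in $\{-1,1\}$ and $\mu^{\rm loop}_{\sigma}$ has $\mu^{\rm loop}$ for total variation, one also has $\mu^{\rm loop} = \mu^{\rm loop}_{\sigma,+} + \mu^{\rm loop}_{\sigma,-}$ on the $\sigma$-algebra generated by $A$. Subtracting the two decompositions gives
\begin{displaymath}
\mu^{\rm loop}_{\sigma}(A) - \mu^{\rm loop}(A) = -2\,\mu^{\rm loop}_{\sigma,-}(A) = -2\,\mu^{\rm loop}\big(\{\wp\in A : \hol^{\sigma}(\wp)=-1\}\big).
\end{displaymath}
Loops that stay at a single vertex and do not jump have trivial holonomy by convention (the product over an empty set of edges is $1$), so they never contribute to the set $\{\hol^{\sigma}=-1\}$, and restricting to $A$ versus to all loops does not change the right-hand side. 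Exponentiating then yields the claimed identity.

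I do not expect a genuine obstacle here: the corollary is essentially bookkeeping once \eqref{Eq loop det} and \eqref{Eq loop det sigma} are taken as inputs. The only point that deserves a line of justification is the equality $\mu^{\rm loop} = \mu^{\rm loop}_{\sigma,+} + \mu^{\rm loop}_{\sigma,-}$, which follows from the pointwise identity $|\hol^{\sigma}(\wp)|=1$ and the definition of the twisted loop measure as $\hol^{\sigma}(\wp)\,\mu^{\rm loop}(d\wp)$.
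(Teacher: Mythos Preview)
Your proposal is correct and follows essentially the same route as the paper: subtract \eqref{Eq loop det} from \eqref{Eq loop det sigma}, use the decomposition $\mu^{\rm loop}_{\sigma}-\mu^{\rm loop}=-2\mu^{\rm loop}_{\sigma,-}$ on the set of loops visiting at least two vertices, and then observe that loops with holonomy $-1$ necessarily visit at least two vertices so the restriction can be dropped. The paper's writeup is slightly terser but the logic is identical.
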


\begin{proof}
By combining the formulas \eqref{Eq loop det} and
\eqref{Eq loop det sigma}, one obtains
\begin{eqnarray*}
\log\Big(\dfrac{\det((G_{\sigma}(x,y))_{x,y\in V_{\rm int}})}
{\det((G(x,y))_{x,y\in V_{\rm int}})}\Big)
&=&
(\mu^{\rm loop}_{\sigma}-\mu^{\rm loop})
(\{\text{Loops that visit at least 2 vertices}\})
\\&=&
-2\mu^{\rm loop}_{\sigma ,-}(\{\text{Loops that visit at least 2 vertices}\}).
\end{eqnarray*}
Further, a loop $\wp$ with $\hol^{\sigma}(\wp)=-1$ has to visit at least two vertices. Thus,
\begin{displaymath}
\mu^{\rm loop}_{\sigma ,-}(\{\text{Loops that visit at least 2 vertices}\})
=
\mu^{\rm loop}(\{\text{Loops } \wp \text{ with } \hol^{\sigma}(\wp)=-1\}).
\qedhere
\end{displaymath}
\end{proof}

Given a path $\wp$ on $\LG$ parametrized by continuous time,
$\ell^{x}(\wp)$ will denote its total time spent at vertex $x$, $x\in V$.
Given $\alpha>0$,
$\LL^{\alpha}$ will denote the Poisson point process of loops with intensity
measure $\alpha \mu^{\rm loop}$.
This is the continuous time \textit{random walk loop soup}
\cite{LeJan2010LoopsRenorm,LeJan2011Loops}.
For $x\in V$, $\ell^{x}(\LL^{\alpha})$ will denote its \textit{occupation field} 
in $x$:
\begin{displaymath}
\ell^{x}(\LL^{\alpha}) = \sum_{\wp\in \LL^{\alpha}} \ell^{x}(\wp).
\end{displaymath}
For the particular value $\alpha=1/2$, this occupation field is Gaussian.
More precisely one has the Le Jan's isomorphism.

\begin{thm}[Le Jan,\cite{LeJan2010LoopsRenorm,LeJan2011Loops}]
\label{Thm Le Jan iso}
For $\alpha=1/2$, the following identity in law holds:
\begin{displaymath}
(\ell^{x}(\LL^{1/2}))_{x\in V}
\stackrel{(\text{law})}{=}
\Big(\dfrac{1}{2}\phi(x)^{2}\Big)_{x\in V},
\end{displaymath}
where $\phi$ is the GFF \eqref{Eq phi}.
\end{thm}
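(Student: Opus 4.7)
The plan is to match the Laplace transforms of the two nonnegative fields on $V_{\rm int}$. Since both $(\ell^{x}(\LL^{1/2}))_{x\in V_{\rm int}}$ and $(\tfrac12\phi(x)^{2})_{x\in V_{\rm int}}$ are nonnegative random fields, the equality of $\chi\mapsto\E[\exp(-\sum_{x}\chi(x)\,\cdot\,)]$ for all $\chi\in\R_{\geq 0}^{V_{\rm int}}$ forces equality in joint law. Fix such a $\chi$ and write $M_{\chi}$ for multiplication by $\chi$ on $\R^{V_{\rm int}}$. Completing the square in the Gaussian density \eqref{Eq phi} yields immediately
\[
\E\Big[\exp\Big(-\tfrac12\sum_{x\in V_{\rm int}}\chi(x)\phi(x)^{2}\Big)\Big] = \Big(\frac{\det G_{\chi}}{\det G}\Big)^{1/2},
\]
where $G_{\chi}=(-\Delta_{\LG}+M_{\chi})^{-1}$ and both determinants are over $V_{\rm int}\times V_{\rm int}$.

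For the loop-soup side, Campbell's exponential formula applied to the Poisson point process $\LL^{1/2}$, whose intensity is $\tfrac12\mu^{\rm loop}$, gives
\[
\E\Big[\exp\Big(-\sum_{x\in V_{\rm int}}\chi(x)\ell^{x}(\LL^{1/2})\Big)\Big] = \exp\Big(-\tfrac12\int\bigl(1-e^{-\sum_{x}\chi(x)\ell^{x}(\wp)}\bigr)\,\mu^{\rm loop}(d\wp)\Big).
\]
Using the decomposition \eqref{Eq mu loop} and Feynman--Kac, $\int e^{-\sum_{y}\chi(y)\ell^{y}(\wp)}\,\mathbb{P}^{x,x}_{t}(d\wp)\,p(t,x,x) = p^{\chi}(t,x,x)$, where $p^{\chi}(t,\cdot,\cdot)$ is the transition kernel of the walk killed at $V_{\partial}$ and, in addition, at rate $\chi(y)$ while at $y$; equivalently it is the diagonal kernel of $e^{t(\Delta_{\LG}-M_{\chi})}$ on $V_{\rm int}$. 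Summing over $x$ recasts the loop integral as $\int_{0}^{\infty}\operatorname{tr}\bigl(e^{t\Delta_{\LG}}-e^{t(\Delta_{\LG}-M_{\chi})}\bigr)\,dt/t$. The Frullani identity $\int_{0}^{\infty}(e^{-at}-e^{-bt})\,dt/t=\log(b/a)$, applied eigenvalue by eigenvalue to the symmetric positive operators $-\Delta_{\LG}$ and $-\Delta_{\LG}+M_{\chi}$ on $V_{\rm int}$, then produces
\[
\int\bigl(1-e^{-\sum_{x}\chi(x)\ell^{x}(\wp)}\bigr)\,\mu^{\rm loop}(d\wp)=\log\frac{\det G}{\det G_{\chi}},
\]
which is the $\chi$-twisted extension of \eqref{Eq loop det}. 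Substituting back gives $(\det G_{\chi}/\det G)^{1/2}$ for the loop-soup Laplace transform, matching the GFF side and finishing the proof.

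The main obstacle is making the trace identity rigorous. Each trace $\operatorname{tr}(e^{t\Delta_{\LG}})$ tends to $|V_{\rm int}|$ as $t\to 0^{+}$, so individually $\int_{0}^{\infty}\operatorname{tr}(e^{t\Delta_{\LG}})\,dt/t$ diverges; however, the difference is $t\operatorname{tr}(M_{\chi})+O(t^{2})$ near $0$ by a Taylor expansion of the two exponentials, hence integrable against $dt/t$, while decay at $+\infty$ is ensured by the strict positivity of the Dirichlet spectrum (which holds because $V_{\partial}\neq\emptyset$ and $\LG$ is connected). A minor bookkeeping point is the trivial (non-jumping) loops of $\mu^{\rm loop}$, whose duration at each vertex $x$ has density $e^{-t/G(x,x)}\,dt/t$: the factor $1-e^{-\chi(x)t}\sim\chi(x)t$ in the integrand renders their contribution finite, and they are already accounted for in the identification with $p^{\chi}(t,x,x)$.
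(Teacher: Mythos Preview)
Your argument is correct; this is essentially Le Jan's original Laplace-transform proof. Note, however, that the paper does not give its own proof of this statement: Theorem~\ref{Thm Le Jan iso} is quoted as background from \cite{LeJan2010LoopsRenorm,LeJan2011Loops} and used without proof, so there is nothing in the paper to compare your approach against.
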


Now take $\sigma\in\{ -1,1\}^{E}$ a gauge field.
In \cite[Theorem 6.6]{KasselLevy16CovSym},
Kassel and Lévy proved the following extension of Le Jan's isomorphism.

\begin{thm}[Kassel-Lévy, \cite{KasselLevy16CovSym}]
\label{Thm KL Le Jan gauge}
Denote by $\LL^{1/2}_{\sigma ,+}$, resp. $\LL^{1/2}_{\sigma ,-}$,
a Poisson point process with intensity measure
$\frac{1}{2}\mu^{\rm loop}_{\sigma ,+}$, resp.
$\frac{1}{2}\mu^{\rm loop}_{\sigma ,-}$.
Recall that $\phi_{\sigma}$ denotes the gauge-twisted GFF
\eqref{Eq phi sigma}.
Take $\phi_{\sigma}$ and $\LL^{1/2}_{\sigma ,-}$
to be independent.
Then the following identity in law holds
\begin{equation}
\label{Eq KL Le Jan gauge}
(\ell^{x}(\LL^{1/2}_{\sigma ,+}))_{x\in V}
\stackrel{(\text{law})}{=}
\Big(\dfrac{1}{2}\phi_{\sigma}(x)^{2}
+\ell^{x}(\LL^{1/2}_{\sigma ,-})\Big)_{x\in V}.
\end{equation}
In particular, the field $\phi_{\sigma}^{2}$ is stochastically
dominated bu the field $\phi^{2}$.
\end{thm}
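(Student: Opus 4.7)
\emph{Plan.} The natural strategy is to match the joint Laplace transforms of the two sides of \eqref{Eq KL Le Jan gauge} and then deduce the stochastic domination from the coupling afforded by the loop-soup representation. For a nonnegative test vector $\lambda\in\R_{\geq 0}^{V_{\rm int}}$, let $\Lambda$ denote the diagonal matrix with entries $\lambda(x)$. Campbell's formula applied to the Poisson loop soup $\LL^{1/2}_{\sigma,\pm}$ yields
\begin{displaymath}
\E\Bigl[\exp\Bigl(-\sum_x\lambda(x)\ell^x(\LL^{1/2}_{\sigma,\pm})\Bigr)\Bigr]
= \exp\Bigl(-\tfrac{1}{2}\int\bigl(1-e^{-\sum_x\lambda(x)\ell^x(\wp)}\bigr)\,\mu^{\rm loop}_{\sigma,\pm}(d\wp)\Bigr),
\end{displaymath}
while the Gaussian integral formula with covariance $G_{\sigma}$ gives $\E[\exp(-\tfrac{1}{2}\sum_x\lambda(x)\phi_\sigma(x)^{2})] = \det(I+\Lambda G_\sigma)^{-1/2}$. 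Using the assumed independence of $\phi_\sigma$ and $\LL^{1/2}_{\sigma,-}$ and cancelling the common $\mu^{\rm loop}_{\sigma,-}$ factor, the equality of Laplace transforms reduces to the single scalar identity
\begin{displaymath}
\int \bigl(1 - e^{-\sum_x \lambda(x)\ell^x(\wp)}\bigr)\,\mu^{\rm loop}_{\sigma}(d\wp) \;=\; \log\det\bigl(I + \Lambda G_\sigma\bigr),
\end{displaymath}
where on the left $\mu^{\rm loop}_{\sigma}=\mu^{\rm loop}_{\sigma,+}-\mu^{\rm loop}_{\sigma,-}$ is the signed holonomy-twisted loop measure.

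\emph{Proof of the scalar identity.} Let $H_\sigma$ denote $-\Delta_{\LG,\sigma}$ restricted to functions vanishing on $V_\partial$, so that $H_\sigma^{-1}=G_\sigma$. By Theorem~\ref{Thm Green hol gauge} the semigroup kernel of $H_\sigma$ on $V_{\rm int}$ is the holonomy-weighted heat kernel $p_t^\sigma(x,y)=\E_x[\hol^\sigma(X_{[0,t]})\ind_{X_t=y,\,T_{V_\partial}>t}]$, and adding $\Lambda$ to $H_\sigma$ inserts the Feynman-Kac factor $\exp(-\sum_x\lambda(x)\ell_t^x)$ inside this expectation. Applying the Frullani-type identity
\begin{displaymath}
\log\det\bigl(I + \Lambda G_\sigma\bigr) = \int_{0}^{+\infty}\dfrac{dt}{t}\bigl(\mathrm{tr}\,e^{-tH_\sigma}-\mathrm{tr}\,e^{-t(H_\sigma+\Lambda)}\bigr)
\end{displaymath}
and expressing each trace as a diagonal sum of the corresponding Feynman-Kac kernel, the right-hand side becomes precisely $\int(1-e^{-\sum_x\lambda(x)\ell^x(\wp)})\,\mu^{\rm loop}_\sigma(d\wp)$ once we recognise the definition \eqref{Eq mu loop} of $\mu^{\rm loop}$ weighted by $\hol^\sigma$.

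\emph{Stochastic domination.} Since $\mu^{\rm loop}_{\sigma,+}+\mu^{\rm loop}_{\sigma,-}=\mu^{\rm loop}$ and the two summands are mutually singular (they live on disjoint subsets of loops, distinguished by holonomy), the independent superposition $\LL^{1/2}_{\sigma,+}\cup\LL^{1/2}_{\sigma,-}$ has the law of $\LL^{1/2}$. Hence $\ell^x(\LL^{1/2}_{\sigma,+})\leq \ell^x(\LL^{1/2})$ pointwise, and combining this with the coupling in \eqref{Eq KL Le Jan gauge} and Theorem~\ref{Thm Le Jan iso} produces a coupling in which $\tfrac{1}{2}\phi_\sigma(x)^{2}\leq\tfrac{1}{2}\phi(x)^{2}$ simultaneously for every $x\in V$, which is the claimed stochastic domination.

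\emph{Main obstacle.} The delicate point is justifying the trace-integral manipulation cleanly: the logarithmic divergence at $t\to 0$ must be tamed by taking the difference of traces before integrating (the difference is $O(t)$ by Duhamel), and one must check that constant single-vertex loops, which always carry holonomy $+1$, contribute identically to $\mu^{\rm loop}$ and $\mu^{\rm loop}_\sigma$, so that the semigroup identity indeed reduces to the desired integral against the signed measure $\mu^{\rm loop}_\sigma$. Once this bookkeeping is in place, the remaining steps are a standard Laplace-transform computation.
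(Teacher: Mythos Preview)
The paper does not give its own proof of this statement: Theorem~\ref{Thm KL Le Jan gauge} is quoted as \cite[Theorem~6.6]{KasselLevy16CovSym} and used as a black box. Your Laplace-transform argument is the natural and correct route, and it is essentially the same strategy as in the original Kassel--L\'evy proof (and in Le~Jan's proof of Theorem~\ref{Thm Le Jan iso}): reduce to the determinantal identity $\int(1-e^{-\sum_x\lambda(x)\ell^x(\wp)})\,\mu^{\rm loop}_\sigma(d\wp)=\log\det(I+\Lambda G_\sigma)$ via Campbell's formula on the Poisson side and the Gaussian integral on the GFF side, then verify that identity through the Frullani representation of $\log\det$ and the Feynman--Kac expression for $e^{-t(H_\sigma+\Lambda)}$.

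One small point on the stochastic-domination paragraph. Your sentence ``combining this with the coupling in~\eqref{Eq KL Le Jan gauge}'' is slightly too quick: \eqref{Eq KL Le Jan gauge} is an identity in law, and the copy of $\LL^{1/2}_{\sigma,-}$ appearing on its right-hand side is not independent of the quantity $\tfrac12\phi_\sigma^2+\ell(\LL^{1/2}_{\sigma,-})$ itself, so you cannot directly add it back to reconstitute $\ell(\LL^{1/2})$ within the same coupling. The clean fix is either to argue by transitivity of stochastic order (first $\tfrac12\phi_\sigma^2\preceq \ell(\LL^{1/2}_{\sigma,+})$ from \eqref{Eq KL Le Jan gauge}, then $\ell(\LL^{1/2}_{\sigma,+})\preceq \ell(\LL^{1/2})\stackrel{d}{=}\tfrac12\phi^2$ from the inclusion of loop soups and Theorem~\ref{Thm Le Jan iso}), or to introduce a fresh independent copy of $\LL^{1/2}_{\sigma,-}$ to superpose. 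Either way the conclusion stands.
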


The identity \eqref{Eq KL Le Jan gauge} tells that in some sense
the field $\frac{1}{2}\phi_{\sigma}^{2}$ is distributed as the occupation
field of a Poisson point process with intensity measure
$\frac{1}{2}\mu^{\rm loop}_{\sigma}$.
However, the measure $\frac{1}{2}\mu^{\rm loop}_{\sigma}$ is signed,
unless the gauge field $\sigma$ is trivial,
and thus cannot be an intensity for a Poisson.
Actually, the loops $\wp$ with $\hol^{\sigma}(\wp)=-1$ have to be counted
negatively.

\subsection{GFF on metric graphs}
\label{Subsec metric}

Here we will briefly recall the notion of the GFF on metric graphs.
For details we refer to \cite{Lupu2016Iso}.

The \textit{metric graph} associated to the electrical network $\LG$,
which we will denote by $\widetilde{\LG}$,
is obtained by replacing each edge $e=\{x,y\}\in E$
by a continuous line segment $I_{e} = I_{\{x,y\}}$,
with endpoints $x$ and $y$.
Moreover, $\widetilde{\LG}$ is endowed with a metric,
by setting the length of $I_{e}$ to be
$C(x,y)^{-1}$
(i.e. the resistance, the inverse of the conductance),
and with the corresponding length measure, which we will denote by
$\tilde{m}$. So $\widetilde{\LG}$ is a continuous and connected metric space.

The discrete GFF $\phi$ on $\LG$ \eqref{Eq phi} can be interpolated to a continuous Gaussian field $\tilde{\phi}$.
The restriction of $\tilde{\phi}$ to the vertices $V$ is the discrete GFF $\phi$.
Conditionally on $\phi$, the values of $\tilde{\phi}$ along an edge-line
$I_{\{x,y\}}$ are distributed as a standard one-dimensional Brownian bridge
between $\phi(x)$ and $\phi(y)$ of length $C(x,y)^{-1}$,
with values on different edge-lines being independent.
The field $\tilde{\phi}$ is the \textit{metric graph GFF}.
It satisfies a strong Markov property when cutting not only along the vertices,
but also inside edge-lines;
see \cite[Section 3]{Lupu2016Iso}.
The covariance of $\tilde{\phi}$ is given by the extension of the Green's function
to $\widetilde{\LG}\times\widetilde{\LG}$,
which we will still denote $G(x,y)$.

The Markov jump process $(X_{t})_{t\geq 0}$ on $\LG$ can be extended to a continuous Markov diffusion process on $\LG$,
which we will denote by $(\widetilde{X}_{t})_{t\geq 0}$.
Inside an edge-line $I_{e}$, $\widetilde{X}_{t}$ behaves as a one-dimensional Brownian motion, and once the process reaches a vertex $x\in V$,
it performs Brownian excursions inside each adjacent edge-line.
See \cite[Section 2]{Lupu2016Iso} for details.
In order to fit with the isomorphism identities,
we normalize $\widetilde{X}_{t}$ so that inside every edge-line
$I_{e}$ it behaves like a Brownian motion with quadratic variation $2 dt$.
Just as a one-dimensional Brownian motion,
the process $(\widetilde{X}_{t})_{t\geq 0}$ admits a family of local times
$(\ell^{x}_{t}(\widetilde{X}))_{x\in \widetilde{\LG}, t\geq 0}$,
continuous in $(x,t)$, characterized by
\begin{displaymath}
\int_{0}^{t} f(\widetilde{X}_{s})\, ds =
\int_{\widetilde{\LG}} f(x) \ell^{x}_{t}(\widetilde{X})\,
\tilde{m}(dx).
\end{displaymath}
Consider the continuous additive functional $A(t)$ and its inverse
$A^{-1}(t)$:
\begin{equation}
\label{Eq CAF}
A(t) = \sum_{x\in V} \ell^{x}_{t}(\widetilde{X}),
\qquad 
A^{-1}(t) = \inf\{ s\geq 0 \vert A(s)>t\}.
\end{equation}
Then $(\widetilde{X}_{A^{-1}(t)})_{t\geq 0}$ is distributed as the Markov jump process $(X_{t})_{t\geq 0}$.

Let $\widetilde{T}_{V_{\partial}}$ denote the first time  $\widetilde{X}_{t}$
hits $V_{\partial}$.
By construction, $A(\widetilde{T}_{V_{\partial}}) = T_{V_{\partial}}$.
Considered the process 
$(\widetilde{X}_{t})_{0\leq t\leq \widetilde{T}_{V_{\partial}}}$
killed upon hitting $V_{\partial}$.
Let $\tilde{p}(t,x,y)$ be the transition densities of the killed process,
so that
\begin{displaymath}
\int_{\widetilde{\LG}}\tilde{p}(t,x,y)\,\tilde{m}(dy) =
\mathbb{P}_{x}(\widetilde{T}_{V_{\partial}}>t).
\end{displaymath}
For $x, y\in \widetilde{\LG}\setminus V_{\partial}$ and $t>0$,
let $\widetilde{\mathbb{P}}^{x,y}_{t}$ denote the bridge probability measure
from $x$ to $y$ in time $t$, where one conditions on 
$\widetilde{T}_{V_{\partial}}>t$.
The measure on loops on the metric graph is
\begin{equation}
\label{Eq mu metric}
\tilde{\mu}^{\rm loop}(d\wp) =
\int_{\widetilde{\LG}}
\int_{0}^{+\infty} \widetilde{\mathbb{P}}^{x,x}_{t}(d\wp)
\tilde{p}(t,x,x)\dfrac{dt}{t}\,\tilde{m}(dx).
\end{equation}
For $\alpha>0$, $\widetilde{\mathcal{L}}^{\alpha}$
will denote the Poisson point process of loops on $\widetilde{\LG}$
of intensity $\alpha\tilde{\mu}^{\rm loop}$.
This is the \textit{metric graph loop soup}.
The loops in $\widetilde{\mathcal{L}}^{\alpha}$ can be divided into two:
those that visit vertices in $V$ and those that only stay in the interior of
an edge-line.
If one takes the trace on $V$ of the loops in $\widetilde{\mathcal{L}}^{\alpha}$ 
that intersect $V$,
by applying the time change $A^{-1}$ \eqref{Eq CAF},
one gets the continuous time random walk loop soup 
$\mathcal{L}^{\alpha}$,
actually up to a rerooting of the loops.
See  \cite[Section 2]{Lupu2016Iso} for details.
Given $\wp\in\widetilde{\mathcal{L}}^{\alpha}$ and $x\in \widetilde{\LG}$,
$\ell^{x}(\wp)$ will denote the cumulative local time of $\wp$ in $x$.
The \textit{occupation field} of $\widetilde{\mathcal{L}}^{\alpha}$ is
\begin{displaymath}
\ell^{x}(\widetilde{\mathcal{L}}^{\alpha})=
\sum_{\wp \in \widetilde{\mathcal{L}}^{\alpha}}\ell^{x}(\wp).
\end{displaymath}
For $x\in V$, we have that
$\ell^{x}(\widetilde{\mathcal{L}}^{\alpha})=
\ell^{x}(\mathcal{L}^{\alpha})$.

Next we will consider the clusters of loops in $\widetilde{\mathcal{L}}^{\alpha}$.
Two loops $\wp,\wp'\in \widetilde{\mathcal{L}}^{\alpha}$ 
belong to the same \textit{cluster}
if they are connected by a finite chain of loops in 
$\widetilde{\mathcal{L}}^{\alpha}$.
We will also see a cluster $\mathcal{C}$ of $\widetilde{\mathcal{L}}^{\alpha}$
as a subset of $\widetilde{\LG}$ by taking the union of the ranges of loops
forming the cluster.
The clusters of $\widetilde{\mathcal{L}}^{\alpha}$ are exactly the connected components of
$\{x\in\widetilde{\LG}\vert \ell^{x}(\widetilde{\mathcal{L}}^{\alpha})>0\}$.
In \cite[Theorem 1]{Lupu2016Iso} is shown that for $\alpha=1/2$,
there is a one to one correspondence between the clusters of
$\widetilde{\mathcal{L}}^{1/2}$ and the sign components of $\tilde{\phi}$.

\begin{thm}[Lupu, \cite{Lupu2016Iso}]
\label{Thm Lupu iso}
Take $\widetilde{\mathcal{L}}^{1/2}$ a metric graph loop soup on 
$\widetilde{\LG}$ of parameter $\alpha=1/2$.
For each cluster $\mathcal{C}$ additionally sample a conditionally independent sign
$\zeta(\mathcal{C})$ with
\begin{displaymath}
\mathbb{P}(\zeta(\mathcal{C})=1\vert \widetilde{\mathcal{L}}^{1/2})=
\mathbb{P}(\zeta(\mathcal{C})=-1\vert \widetilde{\mathcal{L}}^{1/2})=1/2.
\end{displaymath}
For $x\in \widetilde{\LG}$ such that
$\ell^{x}(\widetilde{\mathcal{L}}^{1/2})>0$,
let $\mathcal{C}(x)$ denote the cluster of $\widetilde{\mathcal{L}}^{1/2}$
containing $x$.
Then the field
\begin{displaymath}
\big(\zeta(\mathcal{C}(x))\sqrt{2\ell^{x}(\widetilde{\mathcal{L}}^{1/2})}\big)
_{x\in \widetilde{\LG}}
\end{displaymath}
is distributed as a metric graph GFF $\tilde{\phi}$.
\end{thm}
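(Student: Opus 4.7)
The plan is to build a direct coupling of the metric graph loop soup $\widetilde{\mathcal L}^{1/2}$ and the metric graph GFF $\tilde\phi$ so that the occupation field equals $\tfrac12 \tilde\phi^2$ identically, and the sign of $\tilde\phi$ on each cluster is an independent fair coin. First I would apply Le Jan's isomorphism (Theorem \ref{Thm Le Jan iso}) to couple the discrete loop soup $\mathcal L^{1/2}$ with the discrete GFF $\phi$ so that $\ell^x(\mathcal L^{1/2}) = \tfrac12 \phi(x)^2$ for every $x\in V$ almost surely. Extend $\mathcal L^{1/2}$ to $\widetilde{\mathcal L}^{1/2}$ in the canonical way, adding (i) for each discrete loop, independent Brownian excursions inside each traversed edge-line (via the time change $A^{-1}$ from \eqref{Eq CAF}), and (ii) an independent Poisson point process of Brownian excursions strictly inside each $I_e$, whose intensity comes from the loops of $\tilde\mu^{\rm loop}$ that never reach $V$.

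Next I would analyze each edge-line $I_e = I_{\{x,y\}}$ separately, conditionally on $\phi|_V$. On the GFF side, the Markov property of $\tilde\phi$ makes $\tilde\phi|_{I_e}$ a Brownian bridge from $\phi(x)$ to $\phi(y)$ of length $C(x,y)^{-1}$, independent across edges. On the loop soup side, the restriction to $I_e$ of $\ell^{\cdot}(\widetilde{\mathcal L}^{1/2})$ is the sum of (a) the occupation field of finitely many Brownian excursion bridges joining $x$ and $y$ inside $I_e$, one for every discrete edge-traversal, and (b) the occupation field of the intra-edge Poisson excursions. The key identity to establish is then a Ray--Knight-type statement on a single interval: the squared half of a Brownian bridge from $a$ to $b$ equals, in law, the occupation field of a squared-Bessel-0 bridge from $a^2$ to $b^2$ plus an independent Poisson measure of squared Brownian excursions above $0$, with intensity matching $\tilde\mu^{\rm loop}$ restricted to $I_e$. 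Combined with Le Jan's iso at the endpoints, this Ray--Knight identity pins down the joint law of $(\tilde\phi^2, \ell^{\cdot}(\widetilde{\mathcal L}^{1/2}))$ on the whole metric graph.

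With the identity $\ell^{x}(\widetilde{\mathcal L}^{1/2}) = \tfrac12 \tilde\phi(x)^{2}$ now a.s., the zero set of $\tilde\phi$ coincides with the complement of the support of $\ell^{\cdot}(\widetilde{\mathcal L}^{1/2})$. Since $\tilde\phi$ is continuous, its sign components are exactly the connected components of $\{\tilde\phi\neq 0\}$, which match the clusters of $\widetilde{\mathcal L}^{1/2}$. To recover the law of the signs I would use the conditional symmetry $\tilde\phi \stackrel{(\text{law})}{=} -\tilde\phi$ applied separately to each connected component of $\{\tilde\phi\neq 0\}$: conditional on $|\tilde\phi|$ and on the cluster partition, the sign of $\tilde\phi$ on each cluster is an i.i.d.\ fair coin. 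This is exactly the distribution of $\sigma(\mathcal C(x))\sqrt{2\ell^{x}(\widetilde{\mathcal L}^{1/2})}$.

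The hard part is clearly the single-edge Ray--Knight identity: showing that squaring a Brownian bridge produces, in distribution, a squared-Bessel-0 bridge plus an independent Poisson measure of excursion local times with precisely the intensity of the in-edge part of $\tilde\mu^{\rm loop}$. I would approach it by excursion theory away from $0$ for the bridge (Itô-style decomposition of $\tilde\phi$ on $I_e$) together with a Laplace-transform computation to check that the intensity of the resulting excursion Poisson measure matches $\tilde\mu^{\rm loop}$ on loops contained in $I_e$, whose Brownian-bridge transition density $\tilde p(t,x,x)$ is explicit. Everything else is bookkeeping combining this with Le Jan's iso at vertices.
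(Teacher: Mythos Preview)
The paper does not prove this theorem; it is cited from \cite{Lupu2016Iso}, with only the one-sentence gloss following the statement: ``The result above comes from an application of the Le Jan's isomorphism (Theorem~\ref{Thm Le Jan iso}) on the metric graph level and the use of the intermediate value property of the continuous field $\tilde{\phi}$.'' Your plan matches that sketch and the original argument in \cite{Lupu2016Iso}: establish the metric-graph Le Jan identity $\ell^{x}(\widetilde{\mathcal L}^{1/2})=\tfrac12\tilde\phi(x)^{2}$, use continuity of $\tilde\phi$ to identify loop-soup clusters with sign components, and recover the signs as conditionally i.i.d.\ fair coins from the componentwise symmetry $\tilde\phi\stackrel{(\text{law})}{=}-\tilde\phi$ (this last step is precisely \cite[Lemma~3.2]{Lupu2016Iso}, which the present paper also invokes in the proof of Lemma~\ref{Lem same law}).

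Your route to the metric-graph Le Jan identity---start from the discrete isomorphism and then prove a single-edge Ray--Knight statement---is one concrete way to carry out the ``metric graph level'' step the paper alludes to. An alternative, closer to the paper's phrasing, is simply to note that Le Jan's isomorphism holds verbatim on $\widetilde{\LG}$ by passing to the limit through the subdivisions $\LG^{(N)}$ of Section~\ref{Subsec subdivision}; your edge-wise decomposition is essentially the content of that limit, and has the advantage of producing an explicit coupling rather than just an identity in law.
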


The result above comes from an application of the Le Jan's isomorphism 
(Theorem \ref{Thm Le Jan iso})
at the metric graph level and the use of the intermediate value property of the
continuous field $\tilde{\phi}$.

Beside the exact correspondence between the sign components of $\tilde{\phi}$
and the clusters of $\widetilde{\mathcal{L}}^{1/2}$,
the metric graph GFF $\tilde{\phi}$ satisfies many other exact solvability
properties that one does not get for the discrete GFF $\phi$.
Next we give a brief list of these exact indentities,
and further in Section \ref{Sec results} we will prove a new one,
related to a special kind of topological conditioning;
see Theorem \ref{Thm main 1}.
\begin{itemize}
\item As observed in \cite[Proposition 5.2]{Lupu2016Iso},
given $x,y\in \widetilde{\LG}$,
the probability that $x$ and $y$ belong to the same sign component of
$\tilde{\phi}$, 
or alternatively to the same cluster of $\widetilde{\mathcal{L}}^{1/2}$,
equals
\begin{displaymath}
\mathbb{E}\big[
\operatorname{sign}(\tilde{\phi}(x))\operatorname{sign}(\tilde{\phi}(y))
\big]
=
\dfrac{2}{\pi}\arcsin\Big(
\dfrac{G(x,y)}{\sqrt{G(x,x)G(y,y)}}
\Big).
\end{displaymath}
\item If $V_{\partial}$ is divided into 3 parts
$V_{\partial,0}$, $V_{\partial,1}$ and $V_{\partial,2}$,
with $V_{\partial,0}$ allowed to be empty,
and if the boundary condition of a metric graph GFF $\tilde{\phi}$
is strictly positive on $V_{\partial,1}$ and $V_{\partial,2}$,
and zero on $V_{\partial,0}$ (rather than zero on the whole $V_{\partial}$),
then there is an explicit formula for the existence of a positive crossing
from $V_{\partial,1}$ to $V_{\partial,2}$:
\begin{displaymath}
\mathbb{P}\Big(
V_{\partial,1}\stackrel{\tilde{\phi}>0}{\longleftrightarrow} V_{\partial,2}
\Big) =
1- 
\exp\Big(
-2\sum_{x\in V_{\partial,1}}\sum_{y\in V_{\partial,2}}
\tilde{\phi}(x) H(x,y) \tilde{\phi}(y)
\Big),
\end{displaymath}
where $H(x,y)$ is the boundary Poisson kernel on $V_{\partial}\times V_{\partial}$.
This is \cite[Formula (18)]{LupuWerner2016Levy}.
If the boundary condition mixes values of different sign, no analogous explicit formula is known.
\item In the articles \cite{DrewitzPrevostRodriguez22PTRF,DrewitzPrevostRodriguez21CriticalExp}
the authors provide the exact law for the effective conductance (called capacity there) between the boundary $V_{\partial}$ and
the connected component containing $x_{0}$ of the level set
\begin{displaymath}
\{x\in \widetilde{\LG} \vert \tilde{\phi}(x)\geq h\}
\end{displaymath}
($x_{0}\in \widetilde{\LG}$ and $h\geq 0$ fixed).
This also can be derived from \cite[Proposition 5]{LupuWerner2016Levy}.
\item The Lévy transformation for the one-dimensional Brownian motion
can be extended to the general metric graph GFFs, as explained in
\cite{LupuWerner2016Levy}.
\end{itemize}

\section{Gauge-twisted GFF on metric graph, double cover and equivalence to topological conditioning}
\label{Sec results}

\subsection{Gauge-twisted GFF and subdivision of edges}
\label{Subsec subdivision}

Consider the electrical network $\LG=(V,E)$ as in 
Section \ref{Subsec gauge}.
For $N\in\N\setminus\{0\}$, we will denote by
$\LG^{(N)}=(V^{(N)},E^{(N)})$ the electrical network obtained from 
$\LG=(V,E)$ by subdividing each edge $e\in E$ into $N$.
In this way, $\LG^{(1)} = \LG$.
In general,
\begin{displaymath}
\vert E^{(N)}\vert = N \vert E\vert,
\qquad
\vert V^{(N)}\vert = \vert V \vert + (N-1) \vert E\vert .
\end{displaymath}
Moreover, if $N$ is a divisor of $N'$, then
$V^{(N)}$ is naturally a subset of $V^{(N')}$.
In particular, we always have $V\subset V^{(N)}$.
Given $e\in E$, we will denote $E^{(N)}(e)$ the subset of $E^{(N)}$
made of edges obtained by subdivision of $e$.
We also denote by $V^{(N)}(e)$ the endpoints of edges in $E^{(N)}(e)$.
So $\vert E^{(N)}(e)\vert = N$,
$\vert V^{(N)}(e)\vert = N+1$ and
$\vert V^{(N)}(e)\setminus V\vert = N-1$.
We endow $\LG^{(N)}$ with the following conductances:
for every $e\in E$ and $e'\in E^{(N)}(e)$,
$C^{(N)}(e') = N C(e)$.
Let be the energy
\begin{displaymath}
\mathcal{E}^{(N)}(f,f) = 
\sum_{\{x,y\}\in E^{(N)}}C^{(N)}(x,y)(f(y)-f(x))^{2}.
\end{displaymath}
We also see the electrical network
$\LG^{(N)}$, more precisely $V^{(N)}$,
as a subset of the metric graph
$\widetilde{\LG}$.
In this way, $V^{(N)}(e)$ is a set of $N+1$ points on $I_{e}$ with equal spacing
$(N C(e))^{-1}$, which includes the two endpoints of $e$.

If $(\phi^{(N)}(x))_{x\in V^{(N)}}$ is a GFF on $\LG^{(N)}$,
then it's restriction to $V=V^{(1)}$ is a GFF on $\LG$.
Indeed,
\begin{displaymath}
\mathcal{E}^{(N)}(f,f) = \mathcal{E}^{(1)}(f_{\vert V},f_{\vert V}) 
+ N(N-1)\sum_{e\in E}C(e)\sum_{i=1}^{N}(f(x_{i})-f(x_{i-1})-(f(x_{N})-f(x_{0}))/N)^{2},
\end{displaymath}
where $x_{0}, x_{1},\dots, x_{N}$ is the ordered set of points in $V^{(N)}(e)$.
Moreover, if $(\tilde{\phi}(x))_{x\in\widetilde{\LG}}$ is a metric graph GFF,
then its restriction $\tilde{\phi}_{\vert V^{(N)}}$ is distributed as a GFF 
$\phi^{(N)}$ on $\LG^{(N)}$.
So, in a sense, $\tilde{\phi}$ is the limit of $\phi^{(N)}$ as $N\to +\infty$.

Now, given a gauge field $\sigma\in\{ -1,1\}^{E}$,
we similarly want a natural extension of a $\sigma$-twisted GFF $\phi_{\sigma}$ to the subdivided graphs
$\LG^{(N)}$, and ultimately to the metric graph $\widetilde{\LG}$.
So, given $\sigma\in\{ -1,1\}^{E}$, we will denote by
$\sigma^{(N)}$ the following element of $\{ -1,1\}^{E^{(N)}}$.
If $e\in E$ and $\sigma(e)=1$,
then for every $e'\in E^{(N)}(e)$, $\sigma^{(N)}(e')=1$.
However, if $e\in E$ and $\sigma(e)= -1$, 
then among the edges in $E^{(N)}(e)$ there is exactly one with sign $-1$ under 
$\sigma^{(N)}$,
all the $N-1$ other having sign $+1$ (so there are actually choices to make for 
$\sigma^{(N)}$,
$N$ choices for each $e\in E$ with $\sigma(e)= -1$).

\begin{prop}
\label{Prop sigma N}
Let $N\in\N\setminus\{0\}$, $\sigma\in \{ -1,1\}^{E}$
and $\sigma^{(N)}\in\{ -1,1\}^{E^{(N)}}$ as above.
Let $(\phi^{(N)}_{\sigma^{(N)}}(x))_{x\in V^{(N)}}$ be the $\sigma^{(N)}$-twisted GFF on 
$\LG^{(N)}$ with $0$ boundary conditions on $V_{\partial}\subset V\subset V^{(N)}$.
Then its restriction to $V$ is distributed as $(\phi_{\sigma}(x))_{x\in V}$,
the $\sigma$-twisted GFF on $\LG$ with $0$ boundary conditions.
\end{prop}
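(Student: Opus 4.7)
The plan is to pin down the marginal distribution of $\phi^{(N)}_{\sigma^{(N)}}$ on $V$ by Gaussian integration over the interior subdivision vertices $V^{(N)}\setminus V$. Both the target law $\phi_{\sigma}$ and the restriction $\phi^{(N)}_{\sigma^{(N)}}|_{V}$ are centered Gaussian fields vanishing on $V_{\partial}$, so it suffices to check that after integrating out the interior subdivision vertices in \eqref{Eq phi sigma} (written on $\LG^{(N)}$ with gauge $\sigma^{(N)}$), the resulting density in the variables $(\varphi(x))_{x\in V}$ matches \eqref{Eq phi sigma} on $\LG$ with gauge $\sigma$, up to an irrelevant multiplicative constant.

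The computation decouples across original edges $e\in E$, because conditionally on $(\varphi(x))_{x\in V}$ the interior subdivision vertices in distinct sets $V^{(N)}(e)\setminus V$ are independent. Fix $e=\{x,y\}\in E$ with ordered subdivision points $x_{0}=x, x_{1},\dots, x_{N}=y$, and write $\sigma_{i}=\sigma^{(N)}(\{x_{i-1},x_{i}\})$. If $\sigma(e)=1$ then all $\sigma_{i}$ equal $1$ and one is integrating out a classical untwisted Gaussian chain of conductance $NC(e)$, which produces the standard effective conductance $C(e)$ between $\varphi(x)$ and $\varphi(y)$. If $\sigma(e)=-1$ then exactly one $\sigma_{i}$ equals $-1$, and I would apply the local change of variable $\psi_{i}=(\sigma_{1}\cdots\sigma_{i})\varphi(x_{i})$ for $1\le i\le N-1$, keeping $\varphi(x)$ and $\varphi(y)$ fixed. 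Using $\sigma_{i}^{2}=1$ and $\sigma_{1}\cdots\sigma_{N}=\sigma(e)$, a direct computation gives
\begin{equation*}
NC(e)\sum_{i=1}^{N}(\sigma_{i}\varphi(x_{i})-\varphi(x_{i-1}))^{2}
=NC(e)\Bigl[(\psi_{1}-\varphi(x))^{2}+\sum_{i=2}^{N-1}(\psi_{i}-\psi_{i-1})^{2}+(\sigma(e)\varphi(y)-\psi_{N-1})^{2}\Bigr],
\end{equation*}
so the twisted chain becomes an ordinary Gaussian bridge between the boundary values $\varphi(x)$ and $\sigma(e)\varphi(y)$. Since the change of variable has Jacobian $\pm 1$, integrating out $\psi_{1},\dots,\psi_{N-1}$ produces the effective energy contribution $C(e)(\sigma(e)\varphi(y)-\varphi(x))^{2}$, which is exactly the $\sigma$-twisted energy term attached to edge $e$ in \eqref{Eq phi sigma}.

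Summing over all $e\in E$ reassembles the full $\sigma$-twisted density on $V$, proving the proposition. The step I expect to require the most care is the algebraic verification above: one must check that the local gauge change absorbs every intermediate sign and leaves only the residual sign $\sigma(e)$ attached to $\varphi(y)$, which is what converts the subdivided twisted chain into an untwisted chain with twisted boundary condition. As a more conceptual alternative one could equivalently show $G^{(N)}_{\sigma^{(N)}}(x,y)=G_{\sigma}(x,y)$ for $x,y\in V$ via Theorem \ref{Thm Green hol gauge}, using that the projection of a walk on $\LG^{(N)}$ onto its visits of $V$ is (by the natural time change) distributed as a walk on $\LG$, and that a one-dimensional parity argument forces each traversal of the chain $V^{(N)}(e)$ from $x$ to $y$ to contribute exactly the factor $\sigma(e)=\prod_{i=1}^{N}\sigma_{i}$ to the holonomy, while interior excursions that return to their starting vertex contribute $+1$.
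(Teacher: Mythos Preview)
Your argument is correct and is essentially the same as the paper's: your cumulative sign $\sigma_{1}\cdots\sigma_{i}$ is precisely the paper's $\tilde{\sigma}_{i}$, and your change of variables $\psi_{i}=\tilde{\sigma}_{i}\varphi(x_{i})$ is exactly what the paper uses to obtain its orthogonal energy decomposition \eqref{Eq decomp edge}. The only cosmetic difference is that the paper writes the decomposition explicitly as ``target edge energy plus centered bridge increments'' and reads off the marginal from there, whereas you change variables first and then invoke the standard resistors-in-series Gaussian integration; these are two presentations of the same computation.
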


\begin{proof}
Consider the energy
\begin{displaymath}
\mathcal{E}^{(N)}_{\sigma}(f,f) = 
\sum_{\{x,y\}\in E^{(N)}}C^{(N)}(x,y)(\sigma^{(N)}(x,y)f(y)-f(x))^{2}.
\end{displaymath}
Let $e\in E$ and let $x_{0}, x_{1},\dots, x_{N}$ be the ordered set of points in $V^{(N)}(e)$.
For $i\in \{ 1,\dots, N\}$, let be
\begin{displaymath}
\tilde{\sigma}_{i} = 
\sigma^{(N)}(x_{0},x_{1})\dots\sigma^{(N)}(x_{i-1},x_{i}).
\end{displaymath}
Then
\begin{multline}
\label{Eq decomp edge}
\sum_{i=1}^{N}C^{(N)}(x_{i-1},x_{i})
(\sigma^{(N)}(x_{i-1},x_{i})f(x_{i})-f(x_{i-1}))^{2}
= C(x_{0},x_{N})(\sigma(x_{0},x_{N})f(x_{N})-f(x_{0}))^{2}
\\
+
N(N-1)C(x_{0},x_{N})
\sum_{i=1}^{N}(\tilde{\sigma}_{i}f(x_{i})-\tilde{\sigma}_{i-1}f(x_{i-1})
-(\sigma(x_{0},x_{N})f(x_{N})-f(x_{0}))/N)^{2}.
\end{multline}
This induces a decomposition of the energy $\mathcal{E}^{(N)}_{\sigma}$.
This implies that $\phi^{(N)}_{\sigma^{(N)}}$ restricted to
$V$ is distributed as $\phi_{\sigma}$.
Moreover, 
the fields 
\begin{displaymath}
(\tilde{\sigma}_{i}\phi^{(N)}_{\sigma^{(N)}}(x_{i}) + 
i(\sigma(x_{0},x_{N})\phi^{(N)}_{\sigma^{(N)}}(x_{N})
-\phi^{(N)}_{\sigma^{(N)}}(x_{0}))/N)_{1\leq i\leq N-1, e \in E},
\end{displaymath}
are independent from $\phi^{(N)}_{\sigma^{(N)}}$ restricted to $V$,
with also independence across $e\in E$.
\end{proof}

In the sequel, $\sigma\in \{ -1,1\}^{E}$ and,
for the sake of simplicity,
$N$ will be odd and $\sigma^{(N)}$ is chosen such that
for $e\in E$ with $\sigma(e)= -1$, 
the middle edge in $E^{(N)}(e)$ has the sign $-1$ under $\sigma^{(N)}$.
We will define a random Gaussian field $\tilde{\phi}_{\sigma}$ on the metric
graph $\widetilde{\LG}$ as follows.
We consider the following independent objects.
\begin{itemize}
\item The twisted discrete GFF $(\phi_{\sigma}(x))_{x\in V}$.
\item For every edge $e\in E$, and independent Brownian bridge
$(W_{e}(u))_{0\leq u \leq C(e)^{-1}}$ of length $C(e)^{-1}$, from $0$ to $0$.
\end{itemize}
We will also chose for each edge $e\in E$ and arbitrary orientation and denote
its endpoints $e_{-}$ and $e_{+}$. 
For $u\in [0,C(e)^{-1}]$, $x_{e,u}$ will denote the point of $I_{e}$ at distance 
$u$ from $e_{+}$.
We define 
$(\tilde{\phi}_{\sigma}(x))_{x\in \widetilde{\LG}}$ as follows.
\begin{itemize}
\item For $x\in V$, $\tilde{\phi}_{\sigma}(x)=\phi_{\sigma}(x)$.
\item If $e\in E$ and $\sigma(e)=1$,
then for every $u\in (0,C(e)^{-1})$, 
\begin{equation}
\label{Eq tilde phi sigma 1}
\tilde{\phi}_{\sigma}(x_{e,u}) = W_{e}(u) + 
C(e)(u\phi_{\sigma}(e_{-})+ (C(e)^{-1}-u)\phi_{\sigma}(e_{+})).
\end{equation}
\item If $e\in E$ and $\sigma(e)= -1$,
then for $u\in (0,C(e)^{-1}/2)$,
\begin{equation}
\label{Eq tilde phi sigma 2}
\tilde{\phi}_{\sigma}(x_{e,u}) = W_{e}(u) + 
C(e)(-u\phi_{\sigma}(e_{-})+ (C(e)^{-1}-u)\phi_{\sigma}(e_{+})),
\end{equation}
and for $u\in (C(e)^{-1}/2,C(e)^{-1})$,
\begin{equation}
\label{Eq tilde phi sigma 3}
\tilde{\phi}_{\sigma}(x_{e,u}) = -W_{e}(u) + 
C(e)(u\phi_{\sigma}(e_{-}) - (C(e)^{-1}-u)\phi_{\sigma}(e_{+})).
\end{equation}
The value for $u=C(e)^{-1}/2$ is not specified and can be chosen arbitrary.
\end{itemize}
Defined in this way, the field $\tilde{\phi}_{\sigma}$ is discontinuous
in the middle of each $I_{e}$ for 
$e\in \{e\in E\vert \sigma(e)= -1\}$, with
\begin{displaymath}
\lim_{u\to (C(e)^{-1}/2)_{-}}\tilde{\phi}_{\sigma}(x_{e,u})
= -
\lim_{u\to (C(e)^{-1}/2)_{+}}\tilde{\phi}_{\sigma}(x_{e,u}).
\end{displaymath}
However, the absolute value 
$\vert \tilde{\phi}_{\sigma}\vert$ extends to a continuous field on
$\widetilde{\LG}$.
Note that the law of $\tilde{\phi}_{\sigma}$ does not depend on the arbitrary choice of orientations for the edges, since
$\phi_{\sigma} \stackrel{(\text{law})}{=} - \phi_{\sigma}$ and
$W_{e} \stackrel{(\text{law})}{=} -W_{e}$.

\begin{prop}
\label{Prop sigma N metric}
For every $N$ odd,
the restriction of $\tilde{\phi}_{\sigma}$ to $V^{(N)}$ is distributed as
$\phi^{(N)}_{\sigma^{(N)}}$.
\end{prop}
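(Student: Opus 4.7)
The plan is to reduce the proposition to a per-edge statement by exploiting conditional independence across edges. By the construction of $\tilde{\phi}_{\sigma}$, the restriction $\tilde{\phi}_{\sigma}\vert_{V}$ is exactly $\phi_{\sigma}$, which by Proposition \ref{Prop sigma N} has the same law as $\phi^{(N)}_{\sigma^{(N)}}\vert_{V}$. Hence it suffices to show that, conditional on the values on $V$, the law of $(\tilde{\phi}_{\sigma}(x))_{x\in V^{(N)}(e)\setminus V}$ matches the law of $(\phi^{(N)}_{\sigma^{(N)}}(x))_{x\in V^{(N)}(e)\setminus V}$ for each edge $e\in E$, with independence across edges. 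Independence across edges is automatic on both sides: for $\tilde{\phi}_{\sigma}$ it comes from the independence of the bridges $W_{e}$, and for $\phi^{(N)}_{\sigma^{(N)}}$ it follows from the per-edge decomposition \eqref{Eq decomp edge} established in Proposition \ref{Prop sigma N}.

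Fix $e\in E$ and label the subdivision points $x_{0}=e_{+},x_{1},\dots,x_{N}=e_{-}$. When $\sigma(e)=1$, all edges of $E^{(N)}(e)$ carry sign $+1$ under $\sigma^{(N)}$, so conditionally on $\phi_{\sigma}(e_{\pm})$ the vector $(\phi^{(N)}_{\sigma^{(N)}}(x_{i}))_{1\leq i\leq N-1}$ is a discrete Brownian bridge between those two values. The formula \eqref{Eq tilde phi sigma 1} likewise gives the sampled values of a continuous Brownian bridge from $\phi_{\sigma}(e_{+})$ to $\phi_{\sigma}(e_{-})$ at the same equally spaced points, and these two Gaussians coincide in law.

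The real content is the case $\sigma(e)=-1$, which is handled by applying the same sign-flip trick on both sides. On the discrete side, set $\tilde{\sigma}_{i}=\sigma^{(N)}(x_{0},x_{1})\cdots\sigma^{(N)}(x_{i-1},x_{i})$ as in the proof of Proposition \ref{Prop sigma N}. Since $N$ is odd and only the middle edge of $E^{(N)}(e)$ is given sign $-1$ by $\sigma^{(N)}$, we have $\tilde{\sigma}_{i}=+1$ for $i\leq (N-1)/2$ and $\tilde{\sigma}_{i}=-1$ for $i\geq (N+1)/2$. Then $g(x_{i}):=\tilde{\sigma}_{i}\phi^{(N)}_{\sigma^{(N)}}(x_{i})$ turns the twisted energy in \eqref{Eq decomp edge} into the plain discrete Dirichlet energy, so conditionally on $\phi_{\sigma}(e_{\pm})$ the vector $(g(x_{i}))_{0\leq i\leq N}$ is a discrete Brownian bridge from $\phi_{\sigma}(e_{+})$ to $-\phi_{\sigma}(e_{-})$. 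On the continuous side, a direct inspection of \eqref{Eq tilde phi sigma 2}--\eqref{Eq tilde phi sigma 3} shows that
\begin{displaymath}
B(u):=\begin{cases}\tilde{\phi}_{\sigma}(x_{e,u}) & u<C(e)^{-1}/2,\\ -\tilde{\phi}_{\sigma}(x_{e,u}) & u>C(e)^{-1}/2,\end{cases}
\end{displaymath}
coincides with $W_{e}(u)+C(e)\bigl(-u\phi_{\sigma}(e_{-})+(C(e)^{-1}-u)\phi_{\sigma}(e_{+})\bigr)$ on all of $(0,C(e)^{-1})$, which is precisely a Brownian bridge on $[0,C(e)^{-1}]$ from $\phi_{\sigma}(e_{+})$ to $-\phi_{\sigma}(e_{-})$. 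Because $N$ is odd, the midpoint $u=C(e)^{-1}/2$ lies strictly between two consecutive subdivision points, so the assignment $g(x_{i})=B(i/(NC(e)))$ is well defined and the sign pattern of $\tilde{\sigma}_{i}$ matches the sign flip used to build $B$.

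Thus on both sides the family $(g(x_{i}))_{1\leq i\leq N-1}$ is the sampling of the same Brownian bridge at the same equally spaced times, and undoing the sign flip recovers the agreement of $(\phi^{(N)}_{\sigma^{(N)}}(x_{i}))$ and $(\tilde{\phi}_{\sigma}(x_{i}))$ in law. The main obstacle is simply the bookkeeping: one must verify carefully that the sign conventions in \eqref{Eq tilde phi sigma 2}--\eqref{Eq tilde phi sigma 3} are tuned to make $B$ a genuine bridge with the correct endpoints, and that the parity assumption on $N$ together with the placement of the sign-$-1$ edge in the middle of $E^{(N)}(e)$ is exactly what aligns the discrete and continuous sign-flips.
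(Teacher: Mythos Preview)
Your proof is correct and follows the same approach as the paper, which simply states that the result is an immediate consequence of the decomposition \eqref{Eq decomp edge}. You have carefully unpacked what that one-liner means: the per-edge energy decomposition shows that, conditionally on the endpoint values, the sign-flipped vector $(\tilde{\sigma}_{i}\phi^{(N)}_{\sigma^{(N)}}(x_{i}))_{i}$ is a discrete Brownian bridge, and the formulas \eqref{Eq tilde phi sigma 1}--\eqref{Eq tilde phi sigma 3} are tailored so that the analogous sign-flip of $\tilde{\phi}_{\sigma}$ yields the continuous bridge with the same endpoints, whence sampling at the subdivision points matches.
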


\begin{proof}
This is an immediate consequence of the decomposition \eqref{Eq decomp edge}.
\end{proof}

So the metric graph field $\tilde{\phi}_{\sigma}$ appears as the limit of
$\phi^{(N)}_{\sigma^{(N)}}$ as $N\to +\infty$.
However, it is not continuous
(except for $\sigma$ equal to $1$ everywhere). 
To recover continuous fields we will work on a double cover of 
$\widetilde{\LG}$; see Section \ref{Subsec cov metric}.

\subsection{Double covers induced by gauge fields}
\label{Subsec double cover}

Let $\sigma\in\{ -1,1\}^{E}$.
We will introduce a double cover
$\LG^{\rm db}_{\sigma} = (V^{\rm db}, E^{\rm db}_{\sigma})$
of the graph $\LG$, which is as follows.
$V^{\rm db}$ consist of two copies of $V$,
$V^{\rm db} = V_{1}\cup V_{2}$, and the projection
$\bpi_{\sigma} : V^{\rm db} \rightarrow V$ induces a bijection
between $V_{1}$ and $V$ and between $V_{2}$ and $V$.
The set of edges $E^{\rm db}_{\sigma}$ is as follows.
Let $\{ x,y\}\in E$ and let $x_{1},y_{1}\in V_{1}$ and
$x_{2},y_{2}\in V_{2}$ such that
$\bpi_{\sigma}(x_{1})=\bpi_{\sigma}(x_{2})=x$ and
$\bpi_{\sigma}(y_{1})=\bpi_{\sigma}(y_{2})=y$.
\begin{itemize}
\item If $e\in E$ and $\sigma(e)=1$, 
then $\{x_{1},y_{1}\}, \{x_{2},y_{2}\} \in E^{\rm db}_{\sigma}$.
\item If $e\in E$ and $\sigma(e)= -1$, then
$\{x_{1},y_{2}\}, \{x_{2},y_{1}\} \in E^{\rm db}_{\sigma}$.
\end{itemize}
See Figure \ref{Fig double cover} for an example.
We also have a projection
$\bpi_{\sigma} : E^{\rm db}_{\sigma} \rightarrow E$ such that
\begin{displaymath}
\bpi_{\sigma}(\{\hat{x},\hat{y}\})
=\{\bpi_{\sigma}(\hat{x}),\bpi_{\sigma}(\hat{y})\}.
\end{displaymath}
In this way $\bpi_{\sigma}$ is a graph covering map from $\LG^{\rm db}_{\sigma}$
to $\LG$.
Given $x_{1}\in V_{1}$ and $x_{2}\in V_{2}$ such that
$\bpi_{\sigma}(x_{1})=\bpi_{\sigma}(x_{2})$,
we will define $\psi(x_{1})=x_{2}$ and 
$\psi(x_{2})=x_{1}$. 
In this way, $\psi$ is a bijection from $V^{\rm db}$ to $V^{\rm db}$.
It is also a graph automorphism of $\LG^{\rm db}_{\sigma}$,
that is to say if $\{\hat{x},\hat{y}\}\in E^{\rm db}_{\sigma}$
then $\{\psi(\hat{x}),\psi(\hat{y})\}\in E^{\rm db}_{\sigma}$.
The map $\psi$ is also a covering automorphism,
because $\bpi_{\sigma}\circ\psi = \bpi_{\sigma}$.

\begin{figure}
\belowbaseline[0pt]{
\includegraphics[scale=0.48]{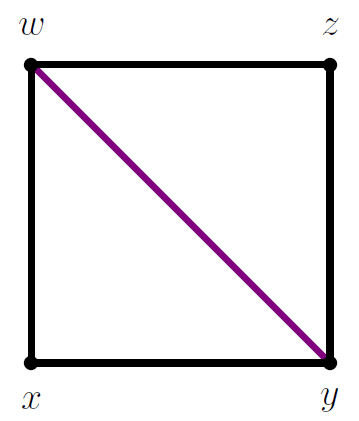}}
\qquad
\belowbaseline[0pt]{
\includegraphics[scale=0.48]{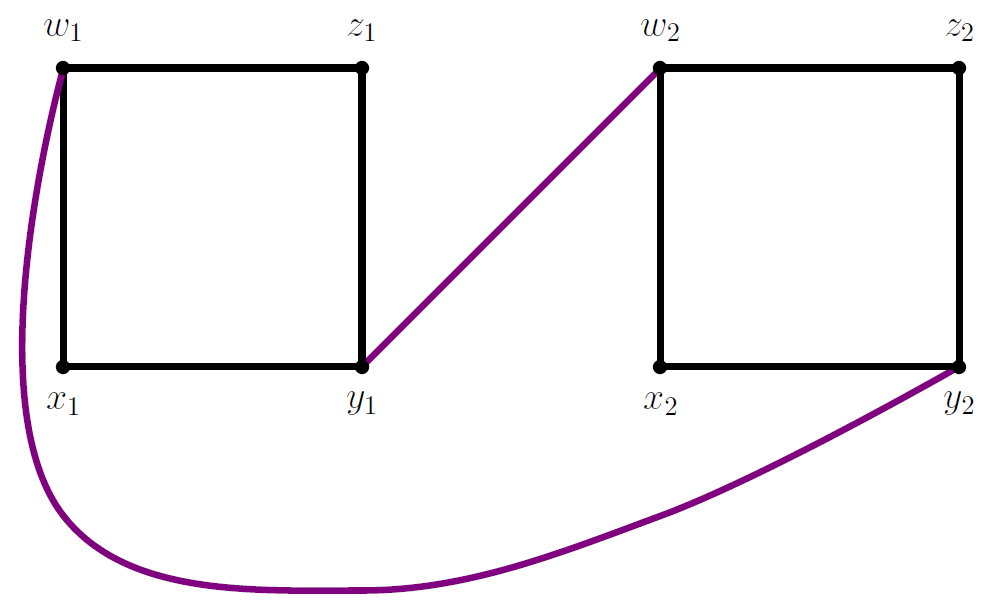}}
\caption{Left: a graph with 4 vertices and 5 edges. Right: its double cover.
The edges in violet correspond to $\sigma(e) = -1$.}
\label{Fig double cover}
\end{figure}

\begin{lemma}
\label{Lem cov hol}
Let $\wp=(x_{1},x_{2},\dots, x_{n-1},x_{n},x_{1})$ be a nearest neighbor loop on 
$\LG$ and let
$\hat{\wp}=(\hat{x}_{1},\hat{x}_{2},\dots, \hat{x}_{n-1},\hat{x}_{n},\hat{x}_{n+1})$ be a lift of $\wp$ on $\LG^{\rm db}_{\sigma}$ with respect to the covering map 
$\bpi_{\sigma}$:
$\bpi_{\sigma}(\hat{x}_{i})=x_{i}$ for $i\in\{1,2,\dots, n\}$,
$\bpi_{\sigma}(\hat{x}_{n+1})=x_{1}$,
and $\{\hat{x}_{i},\hat{x}_{i+1}\}\in E^{\rm db}_{\sigma}$
for $i\in\{1,2,\dots, n\}$.
Then $\hol^{\sigma}(\wp)=1$ if and only if $\hat{x}_{n+1}=\hat{x}_{1}$.
Otherwise $\hol^{\sigma}(\wp)= -1$ and 
$\hat{x}_{n+1}=\psi(\hat{x}_{1})$.
\end{lemma}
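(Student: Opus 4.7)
The plan is to track, along the lifted path, which of the two sheets of the double cover we are currently in, and to show that each edge traversal flips the sheet exactly when the corresponding $\sigma$-value is $-1$. Then the statement will follow by a direct induction on the length of the path.

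More concretely, I would define a sheet indicator $\varepsilon : V^{\rm db}\to\{-1,+1\}$ by setting $\varepsilon(\hat x)=+1$ if $\hat x\in V_{1}$ and $\varepsilon(\hat x)=-1$ if $\hat x\in V_{2}$. The key observation, read off directly from the definition of $E^{\rm db}_{\sigma}$, is that for any edge $\{\hat x,\hat y\}\in E^{\rm db}_{\sigma}$ lying above $\{x,y\}\in E$, one has
\begin{equation*}
\varepsilon(\hat y)=\sigma(x,y)\,\varepsilon(\hat x).
\end{equation*}
Indeed, if $\sigma(x,y)=1$ the admissible edges in the double cover are $\{x_{1},y_{1}\}$ and $\{x_{2},y_{2}\}$, which preserve the sheet, whereas if $\sigma(x,y)=-1$ the admissible edges are $\{x_{1},y_{2}\}$ and $\{x_{2},y_{1}\}$, which swap it.

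Applying this identity along the lifted loop $\hat{\wp}$ and iterating gives, by induction on $i$,
\begin{equation*}
\varepsilon(\hat x_{i+1})=\sigma(x_{i},x_{i+1})\,\varepsilon(\hat x_{i}),
\end{equation*}
and consequently
\begin{equation*}
\varepsilon(\hat x_{n+1})
=\sigma(x_{1},x_{2})\sigma(x_{2},x_{3})\cdots\sigma(x_{n-1},x_{n})\sigma(x_{n},x_{1})\,\varepsilon(\hat x_{1})
=\hol^{\sigma}(\wp)\,\varepsilon(\hat x_{1}).
\end{equation*}

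Finally, since $\pi_{\sigma}(\hat x_{n+1})=x_{1}=\pi_{\sigma}(\hat x_{1})$ and each fibre of $\pi_{\sigma}$ consists of exactly the two points $\hat x_{1}$ and $\psi(\hat x_{1})$ (which are distinguished precisely by the value of $\varepsilon$), the computation above yields $\hat x_{n+1}=\hat x_{1}$ exactly when $\hol^{\sigma}(\wp)=1$, and $\hat x_{n+1}=\psi(\hat x_{1})$ exactly when $\hol^{\sigma}(\wp)=-1$. I do not anticipate any genuine obstacle here; the only thing to double-check is that the sheet-flip rule is correctly extracted from the definition of $E^{\rm db}_{\sigma}$, which is a routine case analysis on the sign of $\sigma(e)$.
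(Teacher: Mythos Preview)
Your proof is correct and follows essentially the same approach as the paper: both track which sheet $V_{1}$ or $V_{2}$ the lifted path is in at each step, observe from the definition of $E^{\rm db}_{\sigma}$ that the sheet is preserved or swapped according to $\sigma(x_{i},x_{i+1})$, and conclude by parity. Your version is slightly more explicit in encoding the sheet as a $\{-1,+1\}$-valued function and writing the induction as a product, but the underlying argument is identical.
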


\begin{proof}
By construction, whenever $\sigma(x_{i},x_{i+1})=1$,
then $\hat{x}_{i}$ and $\hat{x}_{i+1}$ belong to the same copy $V_{1}$ or $V_{2}$.
Whenever $\sigma(x_{i},x_{i+1})= -1$ then either
$\hat{x}_{i}\in V_{1}$ and $\hat{x}_{i+1}\in V_{2}$,
or vice-versa.
So $\hat{x}_{n+1}=\hat{x}_{1}$ if and only even there is an even number of transitions from $V_{1}$ to $V_{2}$ or from $V_{2}$ to $V_{1}$,
that is to say that the number of $i\in\{1,2,\dots, n\}$
such that $\sigma(x_{i},x_{i+1})= -1$ is even, 
which exactly means that $\hol^{\sigma}(\wp)=1$.
\end{proof}

\begin{lemma}
\label{Lem iso cov}
Let $\sigma, \sigma'\in\{ -1,1\}^{E}$.
\begin{enumerate}
\item Assume that $\sigma$ and $\sigma'$ are gauge equivalent, and let
$\hat{\sigma}\in \{ -1,1\}^{V}$ such that
$\sigma' = \hat{\sigma}\cdot\sigma$.
Define $\psi_{\hat{\sigma}} :V^{\rm db}\rightarrow V^{\rm db}$
as follows. Let $\hat{x}\in V^{\rm db}$.
If $\hat{\sigma}(\bpi_{\sigma}(\hat{x})) = 1$,
then $\psi_{\hat{\sigma}}(\hat{x}) = \hat{x}$.
If $\hat{\sigma}(\bpi_{\sigma}(\hat{x})) = -1$,
then $\psi_{\hat{\sigma}}(\hat{x}) = \psi(\hat{x})$.
Then $\psi_{\hat{\sigma}}$ induces a graph isomorphism between 
$\LG^{\rm db}_{\sigma}$ and $\LG^{\rm db}_{\sigma'}$,
which is moreover a covering isomorphism, i.e.
$\bpi_{\sigma'} = \bpi_{\sigma}\circ \psi_{\hat{\sigma}}$.
\item Conversely, if there exists a covering isomorphism between 
$\LG^{\rm db}_{\sigma}$ et $\LG^{\rm db}_{\sigma'}$, 
then $\sigma$ and $\sigma'$ are gauge equivalent.
\end{enumerate}
\end{lemma}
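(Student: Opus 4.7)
My plan is to prove (1) and (2) by carefully tracking how $\sigma$ and $\hat\sigma$ act on the two-element fibers of the covers.

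For (1), I would first note that since the vertex set $V^{\rm db}=V_{1}\cup V_{2}$ and the projection $\pi_{\sigma}:V^{\rm db}\to V$ do not depend on $\sigma$, the map $\psi_{\hat\sigma}$ is automatically fiber-preserving: on each fiber $\{x_{1},x_{2}\}$ it acts either as the identity (when $\hat\sigma(x)=1$) or as the swap $\psi$ (when $\hat\sigma(x)=-1$). Thus $\pi_{\sigma}\circ\psi_{\hat\sigma}=\pi_{\sigma}=\pi_{\sigma'}$ and $\psi_{\hat\sigma}$ is an involutive bijection of $V^{\rm db}$. The real content is that $\psi_{\hat\sigma}$ sends $E^{\rm db}_{\sigma}$ to $E^{\rm db}_{\sigma'}$, which I would verify by a short case analysis on $\{x,y\}\in E$: a lift of $\{x,y\}$ in $E^{\rm db}_{\sigma}$ has its two endpoints in the same copy iff $\sigma(x,y)=1$, and $\psi_{\hat\sigma}$ flips the copy of an endpoint exactly when $\hat\sigma$ takes value $-1$ there. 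Hence the image lift is same-copy iff $\hat\sigma(x)\sigma(x,y)\hat\sigma(y)=1$, i.e.\ iff $\sigma'(x,y)=1$, which is exactly the condition for it to be an edge of $E^{\rm db}_{\sigma'}$.

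For (2), I would start from a covering isomorphism $\Psi:\LG^{\rm db}_{\sigma}\to\LG^{\rm db}_{\sigma'}$, that is, a graph isomorphism with $\pi_{\sigma'}\circ\Psi=\pi_{\sigma}$. Because each fiber has exactly two elements, the restriction of $\Psi$ to $\pi_{\sigma}^{-1}(x)$ is one of only two possible bijections: the identity or the swap $\psi$. I would then \emph{define} $\hat\sigma(x)\in\{-1,1\}$ to record this choice, setting $\hat\sigma(x)=1$ when $\Psi$ restricts to the identity on $\pi_{\sigma}^{-1}(x)$ and $\hat\sigma(x)=-1$ otherwise; this manifestly produces a well-defined element of $\{-1,1\}^{V}$. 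To conclude $\sigma'=\hat\sigma\cdot\sigma$, I would redo the same same-copy/opposite-copy bookkeeping as in (1): for any $\{x,y\}\in E$ and any lift $\{\hat x,\hat y\}\in E^{\rm db}_{\sigma}$, the edge $\{\Psi(\hat x),\Psi(\hat y)\}\in E^{\rm db}_{\sigma'}$ is same-copy iff $\sigma'(x,y)=1$, while the copy change between $\hat x$ and $\Psi(\hat x)$ (and similarly at $y$) is recorded precisely by $\hat\sigma$, giving $\sigma'(x,y)=\hat\sigma(x)\sigma(x,y)\hat\sigma(y)$.

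I do not anticipate any real obstacle: the whole argument is a direct unwinding of the parity convention in the definition of $E^{\rm db}_{\sigma}$ against the $\pm 1$-valued bookkeeping of the gauge transformation. The one point meriting care is confirming, in (2), that it is precisely $\Psi$ being a \emph{graph} isomorphism (not just a fiber-preserving set bijection) that forces $\hat\sigma(x)\sigma(x,y)\hat\sigma(y)$ to coincide with $\sigma'(x,y)$ on every edge, so that the resulting $\hat\sigma$ genuinely implements the gauge equivalence rather than an unrelated pointwise sign assignment.
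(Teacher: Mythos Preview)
Your proposal is correct and follows essentially the same approach as the paper's proof: both parts proceed by tracking whether endpoints of a lifted edge lie in the same or opposite copies and matching this against the sign $\hat\sigma(x)\sigma(x,y)\hat\sigma(y)$. Your write-up is in fact slightly more explicit than the paper's in spelling out the same-copy/opposite-copy bookkeeping, but the underlying argument is identical.
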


\begin{proof}
(1) The map $\psi_{\hat{\sigma}}$ is clearly a bijection from 
$V^{\rm db}$ to itself, and it is clear that 
$\bpi_{\sigma'} = \bpi_{\sigma}\circ \psi_{\hat{\sigma}}$.
One needs only to check that whenever $\{\hat{x},\hat{y}\}\in E^{\rm db}_{\sigma}$,
then also $\{\psi_{\hat{\sigma}}(\hat{x}),\psi_{\hat{\sigma}}(\hat{y})\}
\in E^{\rm db}_{\sigma'}$.
But this is clear from the definition of $\psi_{\hat{\sigma}}$
and the fact that
\begin{displaymath}
\sigma'(\bpi_{\sigma'}(\psi_{\hat{\sigma}}(\hat{x})),
\bpi_{\sigma'}(\psi_{\hat{\sigma}}(\hat{y})))
=
\sigma'(\bpi_{\sigma}(\hat{x}),
\bpi_{\sigma}(\hat{y}))
=
\hat{\sigma}(\bpi_{\sigma}(\hat{x}))
\sigma(\bpi_{\sigma}(\hat{x}),
\bpi_{\sigma}(\hat{y}))
\hat{\sigma}(\bpi_{\sigma}(\hat{y})).
\end{displaymath}

(2) Let $\hat{\psi}$ be a covering a covering isomorphism between 
$\LG^{\rm db}_{\sigma}$ et $\LG^{\rm db}_{\sigma'}$.
Since $\bpi_{\sigma'} = \bpi_{\sigma}\circ \hat{\psi}$,
for every $\hat{x}\in V^{\rm db}$,
\begin{itemize}
\item either $(\hat{\psi}(\hat{x}),\hat{\psi}(\psi(\hat{x})))
=(\hat{\psi}(\hat{x}),\hat{\psi}(\psi(\hat{x})))$,
\item or $(\hat{\psi}(\hat{x}),\hat{\psi}(\psi(\hat{x})))
=(\hat{\psi}(\psi(\hat{x})),\hat{\psi}(\hat{x}))$.
\end{itemize}
Note that here $\hat{x}$ and $\psi(\hat{x})$ play symmetric roles.
In the first case we set $\hat{\sigma}(\bpi_{\sigma}(\hat{x}))=1$.
In the second case we set $\hat{\sigma}(\bpi_{\sigma}(\hat{x}))= -1$.
Then it is easy to check that $\sigma' = \hat{\sigma}\cdot\sigma$.
\end{proof}

\begin{lemma}
\label{Lem cov triv}
Let $\sigma\in\{ -1,1\}^{E}$ be a gauge field and let $\LG^{\rm db}_{\sigma}$
be the corresponding double cover.
Then the graph $\LG^{\rm db}_{\sigma}$ is connected if and only if
$\sigma$ is non-trivial.
Otherwise $\LG^{\rm db}_{\sigma}$ consists of two disconnected copies of
$\LG$.
\end{lemma}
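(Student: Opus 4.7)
The plan is to prove the two implications by leveraging the two preliminary lemmas already established about the cover, namely Lemma \ref{Lem cov hol} (holonomy detects whether a lift of a loop closes up) and Lemma \ref{Lem iso cov} (gauge-equivalent fields produce covering-isomorphic double covers).

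First I would dispose of the easy direction. Suppose $\sigma$ is trivial, i.e., gauge-equivalent to the constant gauge field $\mathbf{1}\equiv 1$. From the definition of the edge set $E^{\rm db}_{\mathbf{1}}$, every edge of the cover stays within $V_1$ or within $V_2$, so $\LG^{\rm db}_{\mathbf{1}}$ consists of two disjoint copies of $\LG$. By Lemma \ref{Lem iso cov}(1) there is a covering isomorphism $\psi_{\hat\sigma}:\LG^{\rm db}_{\mathbf{1}}\to\LG^{\rm db}_{\sigma}$, so $\LG^{\rm db}_{\sigma}$ also consists of two disjoint copies of $\LG$, and in particular is disconnected.

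For the converse, suppose $\sigma$ is non-trivial. By Lemma \ref{Lem hol gauge equive}(3) there exists a loop $\wp=(x_1,x_2,\dots,x_n,x_1)$ in $\LG$ with $\hol^{\sigma}(\wp)=-1$. Pick any preimage $\hat x_1$ of $x_1$ in $V^{\rm db}$ and lift $\wp$ step by step along the edges of $\LG^{\rm db}_{\sigma}$; by Lemma \ref{Lem cov hol}, this lift ends at $\psi(\hat x_1)$. Hence $\hat x_1$ and $\psi(\hat x_1)$ lie in the same connected component of $\LG^{\rm db}_{\sigma}$. Now take any other vertex $y\in V$. Since $\LG$ is connected, there is a nearest-neighbor path from $x_1$ to $y$ in $\LG$. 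Lift this path starting at $\hat x_1$ to obtain a path in $\LG^{\rm db}_{\sigma}$ ending at some preimage $\hat y$ of $y$; in particular $\hat y$ is in the same component as $\hat x_1$. Because $\psi$ is a graph automorphism of $\LG^{\rm db}_{\sigma}$, applying $\psi$ to this lifted path yields a path from $\psi(\hat x_1)$ to $\psi(\hat y)$, and concatenating with the path from $\hat x_1$ to $\psi(\hat x_1)$ produced above, we conclude that $\psi(\hat y)$ also lies in the component of $\hat x_1$. As $y$ was arbitrary and every vertex of $V^{\rm db}$ is either some such $\hat y$ or some such $\psi(\hat y)$, the cover $\LG^{\rm db}_{\sigma}$ is connected.

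The two cases are exhaustive, which proves the equivalence. I expect no real obstacle here: the covering-theoretic content is entirely supplied by Lemma \ref{Lem cov hol}, and the only thing to watch is to invoke the automorphism $\psi$ to transfer the lifted path from one sheet to the other, so that one reaches both preimages of every vertex from a single base point.
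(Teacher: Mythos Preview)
Your proof is correct and follows essentially the same approach as the paper: both directions rely on Lemma~\ref{Lem iso cov} for the trivial case and on Lemma~\ref{Lem hol gauge equive} together with Lemma~\ref{Lem cov hol} for the non-trivial case. The only cosmetic difference is that the paper argues connectedness by noting that adjacent fibers $\pi_\sigma^{-1}(\{x\})$ and $\pi_\sigma^{-1}(\{y\})$ are always joined by edges, whereas you lift an explicit path and transport it with the automorphism $\psi$; these are equivalent ways of saying the same thing.
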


\begin{proof}
If $\sigma(e)=1$ for every $e\in E$,
then by construction, $V_{1}$ and $V_{2}$ are not connected in 
$\LG^{\rm db}_{\sigma}$ and one has just two copies $\LG$ with vertex sets
$V_{1}$ and $V_{2}$ respectively.
In the more general case of $\sigma$ trivial, one can apply 
Lemma \ref{Lem iso cov} to get that $\LG^{\rm db}_{\sigma}$
is isomorphic as a covering to the previous case.

Assume now that $\sigma$ is non-trivial. Let $x\in V$,
and let $x_{1}\in V_{1}$, $x_{2}\in V_{2}$ such that
$\bpi_{\sigma}(x_{1}) = \bpi_{\sigma}(x_{2}) = x$.
According to Lemma \ref{Lem hol gauge equive}, there is a loop $\wp$ rooted in $x$
such that $\hol^{\sigma}(\wp)= -1$.
Let $\hat{\wp}$ be a lift in $\LG^{\rm db}_{\sigma}$ of the loop $\wp$.
By Lemma \ref{Lem cov hol}, $\hat{\wp}$ is a path joining $x_{1}$ and $x_{2}$.
Thus $x_{1}$ and $x_{2}$ belong to the same connected component.
Further, by construction, if $\{x,y\}\in E$, then the two sets
$\bpi_{\sigma}^{-1}(\{ x\})$ and $\bpi_{\sigma}^{-1}(\{ y\})$
are connected to each other. 
Since $\LG$ is connected, then so is $\LG^{\rm db}_{\sigma}$.
\end{proof}

Further, we will endow the double covers $\LG^{\rm db}_{\sigma}$
with conductances as follows.
Given $\{\hat{x},\hat{y}\}\in E^{\rm db}_{\sigma}$,
\begin{equation}
\label{Eq conductance cov}
C(\hat{x},\hat{y}) = C(\bpi_{\sigma}(\hat{x}),\bpi_{\sigma}(\hat{y})).
\end{equation} 
Let be $V^{\rm db}_{\partial} = \bpi_{\sigma}^{-1}(V_{\partial})$ and
$V^{\rm db}_{\rm int} = \bpi_{\sigma}^{-1}(V_{\rm int})$.
We consider the nearest-neighbor Markov jump process on $\LG^{\rm db}_{\sigma}$
with the transition rates given by the conductances,
and killed upon hitting $V^{\rm db}_{\partial}$.
Let $p_{\sigma}(t,\hat{x},\hat{y})$ be the transition probabilities of this killed process.
The following is immediate.

\begin{lemma}
\label{Lem heat ker cov}
\begin{enumerate}
\item For every $\hat{x},\hat{y}\in V^{\rm db}_{\rm int}$,
\begin{displaymath}
p_{\sigma}(t,\hat{x},\hat{y}) + p_{\sigma}(t,\hat{x},\psi(\hat{y}))
= p(t,\bpi_{\sigma}(\hat{x}),\bpi_{\sigma}(\hat{y})).
\end{displaymath}
\item For every $\hat{x},\hat{y}\in V^{\rm db}_{\rm int}$,
such that $\hat{x}$ and $\hat{y}$ are both in $V_{1}$ or both in $V_{2}$,
\begin{displaymath}
\mathbb{P}_{t}^{\bpi_{\sigma}(\hat{x}),\bpi_{\sigma}(\hat{y})}
(\hol^{\sigma}(\wp)= -1)
= \dfrac{p_{\sigma}(t,\hat{x},\psi(\hat{y}))}
{p(t,\bpi_{\sigma}(\hat{x}),\bpi_{\sigma}(\hat{y}))},
\end{displaymath}
where $\mathbb{P}_{t}^{\bpi_{\sigma}(\hat{x}),\bpi_{\sigma}(\hat{y})}$
is the bridge probability measure for the Markov jump process on $\LG$
conditioned on staying in $V_{\rm int}$.
\end{enumerate}
\end{lemma}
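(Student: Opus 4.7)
The plan is to exploit the fact that, by the choice of conductances in \eqref{Eq conductance cov} and the definition $V^{\rm db}_{\partial} = \pi_{\sigma}^{-1}(V_{\partial})$, the Markov jump process on $\LG^{\rm db}_{\sigma}$ projects under $\pi_{\sigma}$ to the Markov jump process on $\LG$, with killing on one side corresponding exactly to killing on the other. Concretely, for any $\hat{x}\in V^{\rm db}_{\rm int}$, unique path lifting provides a rate-preserving bijection between nearest-neighbor paths on $\LG$ of duration $t$ starting at $\pi_{\sigma}(\hat{x})$ and avoiding $V_{\partial}$, and paths on $\LG^{\rm db}_{\sigma}$ of duration $t$ starting at $\hat{x}$ and avoiding $V^{\rm db}_{\partial}$. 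The first step of the proof is to record this correspondence and note that it intertwines the bridge/transition kernels.

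For part (1), fix $\hat{x},\hat{y}\in V^{\rm db}_{\rm int}$ and consider the fiber $\pi_{\sigma}^{-1}(\{\pi_{\sigma}(\hat{y})\}) = \{\hat{y},\psi(\hat{y})\}$. Each path $\wp$ on $\LG$ from $\pi_{\sigma}(\hat{x})$ to $\pi_{\sigma}(\hat{y})$ lifts uniquely to a path on $\LG^{\rm db}_{\sigma}$ started at $\hat{x}$ whose endpoint is either $\hat{y}$ or $\psi(\hat{y})$. Partitioning the total mass $p(t,\pi_{\sigma}(\hat{x}),\pi_{\sigma}(\hat{y}))$ according to which endpoint occurs yields the claimed identity.

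For part (2), I would extend the dichotomy of Lemma \ref{Lem cov hol} from loops to open paths: if $\hat{x}\in V_{i}$, then edges of $\wp$ with $\sigma = 1$ keep the lift in $V_{i}$ while edges with $\sigma = -1$ switch the copy, so the lift terminates in $V_{i}$ if and only if $\hol^{\sigma}(\wp) = 1$. When $\hat{x}$ and $\hat{y}$ lie in the same copy, this means the lift ends at $\hat{y}$ exactly when $\hol^{\sigma}(\wp) = 1$, and at $\psi(\hat{y})$ exactly when $\hol^{\sigma}(\wp) = -1$. Integrating the indicator of the latter event against $\mathbb{P}_{t}^{\pi_{\sigma}(\hat{x}),\pi_{\sigma}(\hat{y})}$ and multiplying by $p(t,\pi_{\sigma}(\hat{x}),\pi_{\sigma}(\hat{y}))$ recovers $p_{\sigma}(t,\hat{x},\psi(\hat{y}))$, giving the ratio.

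The argument is essentially tautological once the lifting correspondence is in place, so I do not anticipate a serious obstacle. The only points requiring care are checking that survival up to time $t$ transfers between the cover and base (which is immediate from $V^{\rm db}_{\partial} = \pi_{\sigma}^{-1}(V_{\partial})$), and observing that the parity-of-sign-flips argument in Lemma \ref{Lem cov hol} depends only on the edges traversed and therefore adapts verbatim to non-loop paths.
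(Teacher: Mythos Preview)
Your argument is correct and is exactly the natural path-lifting reasoning that the paper has in mind; note that the paper itself gives no proof at all for this lemma, simply declaring ``The following is immediate.'' Your write-up just makes explicit the covering-map correspondence (projection of the jump process, unique lifting of paths, and the parity argument from Lemma~\ref{Lem cov hol} extended to open paths) that justifies this immediacy.
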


A \textit{fundamental domain} of $\bpi_{\sigma}$ is a subset
$\mathcal{D}\subset V^{\rm db}$ such that $\bpi_{\sigma}$
induces a bijection from $\mathcal{D}$ to $V$.
By construction, $V_{1}$ and $V_{2}$
are both fundamental domains.
The following is an immediate consequence of
\eqref{Eq mu loop} and Lemma \ref{Lem heat ker cov}.

\begin{cor}
\label{Cor mu neg hol cov}
Let be a gauge field $\sigma\in\{ -1,1\}^{E}$.
Then
\begin{displaymath}
\mu^{\rm loop}(\{\text{Loops } \wp \text{ with } \hol^{\sigma}(\wp)=-1\} =
\sum_{\hat{x}\in \mathcal{D}\cap V_{\rm int}^{\rm db}}
\int_{0}^{+\infty} p_{\sigma}(t,\hat{x},\psi(\hat{x})) \dfrac{dt}{t},
\end{displaymath}
where $\mathcal{D}$ is any fundamental domain.
\end{cor}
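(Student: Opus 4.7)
The plan is to start from the defining formula \eqref{Eq mu loop} for $\mu^{\rm loop}$, restrict to the event $\{\hol^{\sigma}(\wp)=-1\}$, and then apply part (2) of Lemma \ref{Lem heat ker cov} inside the time integral. Since loops that stay at a single vertex make no jumps and so have empty holonomy product equal to $+1$, the event in question is automatically supported on loops visiting at least two vertices, and the contribution from the constant loops is zero. This justifies working directly with the first expression in \eqref{Eq mu loop}, which is a sum of bridge excursion measures weighted by $p(t,x,x)\,dt/t$.

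Next, I would rewrite
\begin{displaymath}
\mu^{\rm loop}(\{\hol^{\sigma}(\wp)=-1\})
=\sum_{x\in V_{\rm int}}\int_{0}^{+\infty}
\mathbb{P}_{t}^{x,x}(\hol^{\sigma}(\wp)=-1)\,p(t,x,x)\,\dfrac{dt}{t}.
\end{displaymath}
Then for each $x\in V_{\rm int}$ I pick the unique lift $\hat{x}\in\mathcal{D}\cap V^{\rm db}_{\rm int}$ with $\pi_{\sigma}(\hat{x})=x$, and apply Lemma \ref{Lem heat ker cov}(2) with $\hat{y}=\hat{x}$ (both lifts then lie in the same sheet since they are equal). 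The factor $p(t,x,x)$ cancels and I am left with
\begin{displaymath}
\sum_{x\in V_{\rm int}}\int_{0}^{+\infty}p_{\sigma}(t,\hat{x},\psi(\hat{x}))\,\dfrac{dt}{t}
=\sum_{\hat{x}\in\mathcal{D}\cap V^{\rm db}_{\rm int}}\int_{0}^{+\infty}p_{\sigma}(t,\hat{x},\psi(\hat{x}))\,\dfrac{dt}{t},
\end{displaymath}
using that $\pi_{\sigma}$ restricts to a bijection between $\mathcal{D}\cap V^{\rm db}_{\rm int}$ and $V_{\rm int}$.

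Finally, to justify the statement that the right-hand side does not depend on the choice of fundamental domain $\mathcal{D}$, I would note that changing $\mathcal{D}$ amounts to replacing some lifts $\hat{x}$ by $\psi(\hat{x})$; the corresponding integrand becomes $p_{\sigma}(t,\psi(\hat{x}),\hat{x})$, which equals $p_{\sigma}(t,\hat{x},\psi(\hat{x}))$ by the symmetry (reversibility) of the killed Markov jump process on $\LG^{\rm db}_{\sigma}$ with respect to the conductances \eqref{Eq conductance cov}. There is no real obstacle here: the only thing to be a little careful about is the case of constant loops, which is handled by the holonomy-$+1$ observation above; the rest is a direct substitution.
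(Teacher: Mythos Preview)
Your proof is correct and follows precisely the route the paper indicates: the paper simply states that the corollary is an immediate consequence of \eqref{Eq mu loop} and Lemma \ref{Lem heat ker cov}, and you have spelled out exactly that computation, including the harmless check on constant loops and the independence from the choice of fundamental domain via symmetry of $p_{\sigma}$.
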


Let be the space $\mathcal{S}_{0}^{\rm db}$:
\begin{displaymath}
\mathcal{S}_{0}^{\rm db} = 
\{ f\in\R^{V^{\rm db}} \vert \forall \hat{x}\in V^{\rm db}_{\partial},
f(\hat{x})=0\}.
\end{displaymath}
We endow $\mathcal{S}_{0}^{\rm db}$ with the positive definite inner product
\begin{displaymath}
\mathcal{E}^{\rm db}_{\sigma}(f,g)=
\sum_{\{\hat{x},\hat{y}\}\in E^{\rm db}_{\sigma}}
C(\hat{x},\hat{y})(f(\hat{y})-f(\hat{x}))(g(\hat{y})-g(\hat{x})).
\end{displaymath}
Let $\mathcal{S}_{0,+}^{\rm db}$ and $\mathcal{S}_{0,-}^{\rm db}$
be the following subspaces of $\mathcal{S}_{0}^{\rm db}$:
\begin{displaymath}
\mathcal{S}_{0,+}^{\rm db} = 
\{f\in \mathcal{S}_{0}^{\rm db}\vert 
f\circ\psi = f
\},
\qquad
\mathcal{S}_{0,-}^{\rm db} = 
\{f\in \mathcal{S}_{0}^{\rm db}\vert 
f\circ\psi = -f
\}.
\end{displaymath}
Note that $\dim \mathcal{S}_{0,+}^{\rm db} = \dim \mathcal{S}_{0,-}^{\rm db}
= \vert V_{\rm int}\vert$, and
$\dim \mathcal{S}_{0}^{\rm db} = 2\vert V_{\rm int}\vert$.

\begin{lemma}
\label{Lem decomp S db}
The space $\mathcal{S}_{0}^{\rm db}$ is a direct orthogonal 
(for $\mathcal{E}^{\rm db}_{\sigma}$) sum of the subspaces 
$\mathcal{S}_{0,+}^{\rm db}$ and $\mathcal{S}_{0,-}^{\rm db}$.
\end{lemma}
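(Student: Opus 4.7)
The plan is to exhibit an explicit decomposition and then verify orthogonality using the fact that $\psi$ is a conductance-preserving automorphism of $\LG^{\rm db}_{\sigma}$.

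First I would observe that for any $f\in \mathcal{S}_{0}^{\rm db}$, the functions
\begin{displaymath}
f_{+}(\hat{x}) = \tfrac{1}{2}(f(\hat{x})+f(\psi(\hat{x}))),
\qquad
f_{-}(\hat{x}) = \tfrac{1}{2}(f(\hat{x})-f(\psi(\hat{x})))
\end{displaymath}
lie respectively in $\mathcal{S}_{0,+}^{\rm db}$ and $\mathcal{S}_{0,-}^{\rm db}$ (they vanish on $V^{\rm db}_{\partial}$ because $\psi$ preserves $V^{\rm db}_{\partial}$), and $f=f_{+}+f_{-}$. The intersection $\mathcal{S}_{0,+}^{\rm db}\cap \mathcal{S}_{0,-}^{\rm db}$ is trivial, since a function satisfying both $f\circ\psi=f$ and $f\circ\psi=-f$ must be zero. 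This already gives the algebraic direct sum, and since the dimensions add up to $2|V_{\rm int}|=\dim \mathcal{S}_{0}^{\rm db}$, exhaustivity is automatic.

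The core step is orthogonality with respect to $\mathcal{E}^{\rm db}_{\sigma}$. The key point is that $\psi$ is a graph automorphism of $\LG^{\rm db}_{\sigma}$ (noted right after its definition), and it preserves conductances by \eqref{Eq conductance cov}, since $\pi_{\sigma}\circ\psi=\pi_{\sigma}$. Therefore, reindexing the sum defining $\mathcal{E}^{\rm db}_{\sigma}$ via $\{\hat{x},\hat{y}\}\mapsto \{\psi(\hat{x}),\psi(\hat{y})\}$, we get for all $f,g\in \mathcal{S}_{0}^{\rm db}$ the identity
\begin{displaymath}
\mathcal{E}^{\rm db}_{\sigma}(f\circ\psi,g\circ\psi)=\mathcal{E}^{\rm db}_{\sigma}(f,g).
\end{displaymath}
Applied to $f\in\mathcal{S}_{0,+}^{\rm db}$ and $g\in\mathcal{S}_{0,-}^{\rm db}$, this yields
\begin{displaymath}
\mathcal{E}^{\rm db}_{\sigma}(f,g)=\mathcal{E}^{\rm db}_{\sigma}(f\circ\psi,g\circ\psi)=\mathcal{E}^{\rm db}_{\sigma}(f,-g)=-\mathcal{E}^{\rm db}_{\sigma}(f,g),
\end{displaymath}
forcing $\mathcal{E}^{\rm db}_{\sigma}(f,g)=0$.

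There is no real obstacle here; the only point worth checking carefully is that $\psi$ really maps $E^{\rm db}_{\sigma}$ to itself (which was recorded in Section \ref{Subsec double cover}) and preserves the conductances defined in \eqref{Eq conductance cov}. Once these are in hand, the invariance of $\mathcal{E}^{\rm db}_{\sigma}$ under $f\mapsto f\circ\psi$ is an immediate change of variables, and orthogonality follows at once from the eigenspace characterization of $\mathcal{S}_{0,\pm}^{\rm db}$.
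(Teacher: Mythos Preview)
Your proof is correct and follows essentially the same approach as the paper. The only cosmetic difference is that the paper verifies orthogonality by observing that the individual edge contributions to $\mathcal{E}^{\rm db}_{\sigma}(f,g)$ cancel in pairs under $\{\hat{x},\hat{y}\}\leftrightarrow\{\psi(\hat{x}),\psi(\hat{y})\}$, whereas you package the same cancellation as the global invariance $\mathcal{E}^{\rm db}_{\sigma}(f\circ\psi,g\circ\psi)=\mathcal{E}^{\rm db}_{\sigma}(f,g)$ and then use the eigenspace relations; these are two phrasings of the identical computation.
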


\begin{proof}
It is clear that 
$\mathcal{S}_{0,+}^{\rm db}\cap\mathcal{S}_{0,-}^{\rm db}=\{ 0\}$,
and every $f\in \mathcal{S}_{0}^{\rm db}$ can be decomposed as
\begin{displaymath}
f = \dfrac{1}{2}(f+f\circ\psi) + \dfrac{1}{2}(f-f\circ\psi).
\end{displaymath}
So $\mathcal{S}_{0}^{\rm db}$ is a direct sum of 
$\mathcal{S}_{0,+}^{\rm db}$ and $\mathcal{S}_{0,-}^{\rm db}$.
To check that the decomposition is orthogonal,
consider $f\in \mathcal{S}_{0,+}^{\rm db}$
and $g\in \mathcal{S}_{0,-}^{\rm db}$.
Given $\{ \hat{x},\hat{y}\}\in E^{\rm db}_{\sigma}$,
then $\{\psi(\hat{x}),\psi(\hat{y})\}\in E^{\rm db}_{\sigma}$ and
\begin{displaymath}
(f(\hat{y})-f(\hat{x}))(g(\hat{y})-g(\hat{x})) =
- (f(\psi(\hat{y}))-f(\psi(\hat{x})))(g(\psi(\hat{y}))-g(\psi(\hat{x})).
\end{displaymath}
So $\mathcal{E}^{\rm db}_{\sigma}(f,g)=0$.
\end{proof}

Let $\Delta_{\sigma}^{\rm db}$ denote the discrete Laplacian on 
$\LG_{\sigma}^{\rm db}$:
\begin{displaymath}
(\Delta_{\sigma}^{\rm db} f)(\hat{x}) = 
\sum_{\substack{\hat{y}\in V^{\rm db}\\
\{\hat{x},\hat{y}\}\in E^{\rm db}_{\sigma}}} 
C(\hat{x},\hat{y})(f(\hat{y})-f(\hat{x})),
\qquad \hat{x}\in V^{\rm db}.
\end{displaymath}
One can see $\Delta_{\sigma}^{\rm db}$ as an operator on
$\mathcal{S}_{0}^{\rm db}$ by taking
$((\Delta_{\sigma}^{\rm db} f)(\hat{x}))_{\hat{x}\in V^{\rm db}_{\rm int}}$
and extending to $V^{\rm db}_{\partial}$ by $0$.
Then, clearly, the subspaces 
$\mathcal{S}_{0,+}^{\rm db}$ and $\mathcal{S}_{0,-}^{\rm db}$
are stable by $\Delta_{\sigma}^{\rm db}$.
Therefore,
\begin{displaymath}
\operatorname{det}_{\mathcal{S}_{0}^{\rm db}}(-\Delta_{\sigma}^{\rm db})
=\operatorname{det}_{\mathcal{S}_{0,+}^{\rm db}}(-\Delta_{\sigma}^{\rm db}) 
\operatorname{det}_{\mathcal{S}_{0,-}^{\rm db}}(-\Delta_{\sigma}^{\rm db}),
\end{displaymath}
where the subscripts indicate the space considered.

Let $f: V^{\rm db} \rightarrow \R$, 
and let $f_{1}$ denote the following function from $V$ to $\R$.
Given $x\in V$ and $x_{1}\in V_{1}$ such that
$\bpi_{\sigma}(x_{1})=x$,
\begin{displaymath}
f_{1}(x)=f(x_{1}).
\end{displaymath}
If $f\in \mathcal{S}_{0,+}^{\rm db}$, then
\begin{displaymath}
(\Delta_{\sigma}^{\rm db} f)(x_{1}) = (\Delta_{\LG} f_{1})(x),
\end{displaymath}
and if $f\in \mathcal{S}_{0,-}^{\rm db}$, then
\begin{displaymath}
(\Delta_{\sigma}^{\rm db} f)(x_{1}) = (\Delta_{\LG,\sigma} f_{1})(x).
\end{displaymath}
The following is an immediate consequence of the above.

\begin{cor}
\label{Cor dets db}
We have that
\begin{displaymath}
\operatorname{det}_{\mathcal{S}_{0,+}^{\rm db}}(-\Delta_{\sigma}^{\rm db}) =
\det((G(x,y))_{x,y\in V_{\rm int}})^{-1},
\qquad
\operatorname{det}_{\mathcal{S}_{0,-}^{\rm db}}(-\Delta_{\sigma}^{\rm db}) =
\det((G_{\sigma}(x,y))_{x,y\in V_{\rm int}})^{-1}.
\end{displaymath}
In particular,
\begin{displaymath}
\dfrac{\operatorname{det}_{\mathcal{S}_{0,+}^{\rm db}}(-\Delta_{\sigma}^{\rm db})}
{\operatorname{det}_{\mathcal{S}_{0,-}^{\rm db}}(-\Delta_{\sigma}^{\rm db})}
=
\exp\big(-2\mu^{\rm loop}(\{\text{Loops } \wp \text{ with } \hol^{\sigma}(\wp)=-1\})\big).
\end{displaymath}
\end{cor}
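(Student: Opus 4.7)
The plan is to reduce the computation of $\operatorname{det}_{\mathcal{S}_{0,+}^{\rm db}}(-\Delta_\sigma^{\rm db})$ and $\operatorname{det}_{\mathcal{S}_{0,-}^{\rm db}}(-\Delta_\sigma^{\rm db})$ to those of the ordinary Laplacians $-\Delta_{\LG}$ and $-\Delta_{\LG,\sigma}$ on $\R^{V_{\rm int}}$, via the linear map $f \mapsto f_1$ introduced just before the statement. The two intertwining identities displayed there, namely $(\Delta_\sigma^{\rm db} f)(x_1) = (\Delta_{\LG} f_1)(x)$ for $f \in \mathcal{S}_{0,+}^{\rm db}$ and $(\Delta_\sigma^{\rm db} f)(x_1) = (\Delta_{\LG,\sigma} f_1)(x)$ for $f \in \mathcal{S}_{0,-}^{\rm db}$, are exactly what is needed; the rest is careful bookkeeping of bases.

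Concretely, I would fix for each $x \in V_{\rm int}$ a lift $x_1 \in V_1$, and take as a basis of $\mathcal{S}_{0,+}^{\rm db}$ the symmetrised indicators $\hat{e}_x^+ := \ind_{\{x_1,\psi(x_1)\}}$, and as a basis of $\mathcal{S}_{0,-}^{\rm db}$ the antisymmetrised indicators $\hat{e}_x^- := \ind_{x_1} - \ind_{\psi(x_1)}$. With this choice the map $f \mapsto f_1$ sends both bases to the standard delta basis $(\delta_x)_{x\in V_{\rm int}}$ of functions vanishing on $V_\partial$. Combined with the two intertwining identities above, the matrix of $-\Delta_\sigma^{\rm db}|_{\mathcal{S}_{0,+}^{\rm db}}$ in the basis $(\hat{e}_x^+)$ coincides entry by entry with the matrix of $-\Delta_{\LG}$ on $V_{\rm int}$ in the basis $(\delta_x)$, and likewise in the minus case with $-\Delta_{\LG,\sigma}$. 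Since $(G(x,y))_{x,y\in V_{\rm int}}$ and $(G_\sigma(x,y))_{x,y\in V_{\rm int}}$ are by construction the inverse matrices of these two Laplacians acting on functions vanishing on $V_\partial$, the two displayed identities of the corollary follow at once by taking determinants.

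For the ratio formula, I would then simply divide the two equalities just obtained and invoke Corollary \ref{Cor ratio det}, which already provides $\det((G_\sigma(x,y)))/\det((G(x,y))) = \exp(-2\mu^{\rm loop}(\{\hol^\sigma = -1\}))$.

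There is no genuine obstacle here; the one point requiring vigilance is the normalisation of the bases $\hat{e}_x^{\pm}$. They must be chosen so that $f \mapsto f_1$ sends them exactly to $(\delta_x)_{x\in V_{\rm int}}$, with no extra factor, since any spurious constant $c$ in the rescaling would contribute a factor $c^{2|V_{\rm int}|}$ and destroy the matching with $\det((G(x,y)))^{-1}$ and $\det((G_\sigma(x,y)))^{-1}$. With the choice above everything lines up directly.
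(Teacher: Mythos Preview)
Your proof is correct and follows exactly the approach the paper leaves implicit: the paper simply declares the corollary ``an immediate consequence of the above'' intertwining identities, and your argument with the bases $\hat{e}_x^{\pm}$ and the isomorphism $f\mapsto f_1$ is precisely how one makes that immediacy explicit. The appeal to Corollary~\ref{Cor ratio det} for the ratio is also what the paper intends.

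One small remark: your closing caveat about normalisation is unnecessary. The determinant of a linear operator on a finite-dimensional vector space is basis-independent; since $f\mapsto f_1$ is a linear isomorphism intertwining $-\Delta_\sigma^{\rm db}\vert_{\mathcal{S}_{0,+}^{\rm db}}$ with $-\Delta_{\LG}$ (and similarly in the minus case), the determinants agree automatically, whatever basis you pick on either side. A rescaling $\hat{e}_x^{\pm}\mapsto c\,\hat{e}_x^{\pm}$ conjugates the matrix by a scalar multiple of the identity and leaves its determinant unchanged, so no factor $c^{2|V_{\rm int}|}$ can appear. This does not affect the validity of your argument, only the justification of why it works.
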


Let $(G^{\rm db}_{\sigma}(\hat{x},\hat{y}))_{\hat{x},\hat{y}\in V^{\rm db}}$
denote the Green's function on $\LG^{\rm db}_{\sigma}$ with
$0$ boundary conditions on $V^{\rm db}_{\partial}$.
Note that
$G^{\rm db}_{\sigma}(\psi(\hat{x}),\psi(\hat{y}))
=G^{\rm db}_{\sigma}(\hat{x},\hat{y})$

\begin{lemma}
\label{Lem Green cov}
Let $x,y\in V$ and let $x_{1},y_{1}\in V_{1}$ and
$y_{2}\in V_{2}$ such that
$\bpi_{\sigma}(x_{1})=x$ and
$\bpi_{\sigma}(y_{1})=\bpi_{\sigma}(y_{2})=y$.
Then
\begin{displaymath}
G(x,y) = G^{\rm db}_{\sigma}(x_{1},y_{1}) + G^{\rm db}_{\sigma}(x_{1},y_{2}),
\qquad
G_{\sigma}(x,y) = G^{\rm db}_{\sigma}(x_{1},y_{1}) 
- G^{\rm db}_{\sigma}(x_{1},y_{2}),
\end{displaymath}
or equivalently
\begin{displaymath}
G^{\rm db}_{\sigma}(x_{1},y_{1}) 
=\dfrac{1}{2}(G(x,y) + G_{\sigma}(x,y)),
\qquad
G^{\rm db}_{\sigma}(x_{1},y_{2}) 
=\dfrac{1}{2}(G(x,y) - G_{\sigma}(x,y)).
\end{displaymath}
\end{lemma}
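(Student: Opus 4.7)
The plan is to decompose the Green's function $G^{\rm db}_{\sigma}(x_{1},\cdot)$ according to the orthogonal splitting $\mathcal{S}_{0}^{\rm db}=\mathcal{S}_{0,+}^{\rm db}\oplus\mathcal{S}_{0,-}^{\rm db}$ from Lemma \ref{Lem decomp S db}, and to match the two components with the non-twisted and twisted Green's functions on $\LG$ via the identification $f\mapsto f_{1}$ already discussed before Corollary \ref{Cor dets db}.

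Concretely, I would fix $x_{1}\in V_{1}$ and set $x_{2}=\psi(x_{1})$. Using the $\psi$-symmetry $G^{\rm db}_{\sigma}(\psi(\hat x),\psi(\hat y))=G^{\rm db}_{\sigma}(\hat x,\hat y)$, I would introduce the two functions
\begin{displaymath}
f(\hat y) := G^{\rm db}_{\sigma}(x_{1},\hat y)+G^{\rm db}_{\sigma}(x_{1},\psi(\hat y)),
\qquad
g(\hat y) := G^{\rm db}_{\sigma}(x_{1},\hat y)-G^{\rm db}_{\sigma}(x_{1},\psi(\hat y)).
\end{displaymath}
Then $f\in \mathcal{S}_{0,+}^{\rm db}$ and $g\in \mathcal{S}_{0,-}^{\rm db}$, and
\begin{displaymath}
-\Delta^{\rm db}_{\sigma} f = \delta_{x_{1}}+\delta_{x_{2}},
\qquad
-\Delta^{\rm db}_{\sigma} g = \delta_{x_{1}}-\delta_{x_{2}},
\end{displaymath}
since $G^{\rm db}_{\sigma}(x_{1},\cdot)$ and $G^{\rm db}_{\sigma}(x_{2},\cdot)=G^{\rm db}_{\sigma}(x_{1},\psi(\cdot))$ solve $-\Delta^{\rm db}_{\sigma}\,\cdot=\delta_{x_{1}}$ and $\delta_{x_{2}}$ respectively, with zero boundary conditions on $V^{\rm db}_{\partial}$.

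Next, I apply the identification recalled before Corollary \ref{Cor dets db}: for $f\in\mathcal{S}_{0,+}^{\rm db}$, $(-\Delta^{\rm db}_{\sigma}f)(x_{1}) = (-\Delta_{\LG}f_{1})(x)$, and for $g\in\mathcal{S}_{0,-}^{\rm db}$, $(-\Delta^{\rm db}_{\sigma}g)(x_{1}) = (-\Delta_{\LG,\sigma}g_{1})(x)$, while both sides take opposite/equal values at $\psi$-related points. It follows that $f_{1}$ solves $-\Delta_{\LG}f_{1}=\delta_{x}$ with zero boundary conditions on $V_{\partial}$, hence $f_{1}=G(x,\cdot)$; similarly $g_{1}=G_{\sigma}(x,\cdot)$. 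Evaluating at $y$, and using $f(y_{1})=G^{\rm db}_{\sigma}(x_{1},y_{1})+G^{\rm db}_{\sigma}(x_{1},y_{2})$ and $g(y_{1})=G^{\rm db}_{\sigma}(x_{1},y_{1})-G^{\rm db}_{\sigma}(x_{1},y_{2})$ (since $\psi(y_{1})=y_{2}$), yields the two claimed identities. The equivalent rewriting follows by summing and subtracting.

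There is no serious obstacle here; the only point that needs care is checking that $f$ and $g$ really belong to $\mathcal{S}_{0,\pm}^{\rm db}$ (boundary condition and symmetry under $\psi$) and that $\Delta^{\rm db}_{\sigma}$ indeed preserves these subspaces so that the restrictions to $x_{1}$ determine everything. Both facts follow directly from the $\psi$-equivariance of $\LG^{\rm db}_{\sigma}$ and of $G^{\rm db}_{\sigma}$, so the argument reduces to the symmetry/antisymmetry book-keeping sketched above.
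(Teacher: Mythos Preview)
Your argument is correct, and it takes a genuinely different route from the paper's. The paper proceeds probabilistically: it writes each Green's function as a time integral of transition probabilities, invokes the Kassel--L\'evy holonomy representation (Theorem~\ref{Thm Green hol gauge}) for $G_{\sigma}$, and then uses Lemma~\ref{Lem heat ker cov} to decompose $p(t,x,y)$ according to whether the lifted path ends in the same sheet or in the opposite one. You instead work directly with the Laplacians: the symmetric and antisymmetric combinations of $G^{\rm db}_{\sigma}(x_{1},\cdot)$ lie in $\mathcal{S}^{\rm db}_{0,\pm}$, and the intertwining relations stated just before Corollary~\ref{Cor dets db}, together with uniqueness of the Green's function for $-\Delta_{\LG}$ and $-\Delta_{\LG,\sigma}$ under zero boundary conditions, finish the identification. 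Your approach is more self-contained in that it bypasses Theorem~\ref{Thm Green hol gauge} and the path measures entirely; the paper's approach, on the other hand, makes the path-lifting picture explicit at the level of heat kernels, which connects naturally with Corollary~\ref{Cor mu neg hol cov} and the loop-measure computations used later.
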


\begin{proof}
We have that
\begin{displaymath}
G^{\rm db}_{\sigma}(x_{1},y_{i})
=\int_{0}^{+\infty} p_{\sigma} (t,x_{1},y_{i})\, dt,
\qquad 
G(x,y) = \int_{0}^{+\infty} p(t,x,y)\,dt,
\end{displaymath}
and according to Theorem \ref{Thm Green hol gauge},
\begin{displaymath}
G_{\sigma}(x,y) = \int_{0}^{+\infty} p(t,x,y)
\mathbb{P}_{t}^{x,y}(\hol^{\sigma}(\wp)= -1)\,
dt.
\end{displaymath}
We conclude by Lemma \ref{Lem heat ker cov}.
\end{proof}

Let $(\phi^{\rm db}_{\sigma}(\hat{x}))_{\hat{x}\in V^{\rm db}}$
be the discrete GFF on the double cover $\LG^{\rm db}_{\sigma}$.
We have that
\begin{displaymath}
\E[\phi^{\rm db}_{\sigma}(\hat{x})] = 0,
\qquad
\E[\phi^{\rm db}_{\sigma}(\hat{x})\phi^{\rm db}_{\sigma}(\hat{y})]
= G^{\rm db}_{\sigma}(\hat{x},\hat{y}).
\end{displaymath}
Let $\phi^{\rm db}_{\sigma,+}$ and $\phi^{\rm db}_{\sigma,-}$
be the fields
\begin{displaymath}
\phi^{\rm db}_{\sigma,+} = 
\dfrac{1}{\sqrt{2}}(\phi^{\rm db}_{\sigma} + \phi^{\rm db}_{\sigma}\circ\psi),
\qquad
\phi^{\rm db}_{\sigma,-} = 
\dfrac{1}{\sqrt{2}}(\phi^{\rm db}_{\sigma} - \phi^{\rm db}_{\sigma}\circ\psi).
\end{displaymath}
The field $\phi^{\rm db}_{\sigma,+}$,
respectively $\phi^{\rm db}_{\sigma,-}$,
is a random element of $\mathcal{S}_{0,+}^{\rm db}$,
respectively $\mathcal{S}_{0,-}^{\rm db}$.
Since $\mathcal{S}_{0,+}^{\rm db}$ and $\mathcal{S}_{0,-}^{\rm db}$
are orthogonal for $\mathcal{E}^{\rm db}_{\sigma}$, the fields
$\phi^{\rm db}_{\sigma,+}$ and $\phi^{\rm db}_{\sigma,-}$
are independent.
Lemma \ref{Lem Green cov} immediately implies the following.

\begin{cor}
\label{Cor GFF cov}
Let $(\phi(x))_{x\in V}$ and $(\phi_{\sigma}(x))_{x\in V}$
be the following fields:
\begin{displaymath}
\phi(x) = \phi^{\rm db}_{\sigma,+}(x_{1}),
\qquad
\phi_{\sigma}(x) = \phi^{\rm db}_{\sigma,-}(x_{1}), 
\end{displaymath}
where $x_{1}\in V_{1}$ and $\bpi_{\sigma}(x_{1})=x$.
Then $\phi$ is distributed as the GFF on $\LG$ with $0$
boundary conditions \eqref{Eq phi},
and $\phi_{\sigma}$ is distributed as the $\sigma$-twisted GFF on $\LG$ with $0$
boundary conditions \eqref{Eq phi sigma}.
\end{cor}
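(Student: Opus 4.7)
The plan is to exploit that all the fields in play are centered Gaussians, so that matching covariances and boundary values suffices. Concretely, I would reduce the corollary to three statements: (i) $\phi$ and $\phi_\sigma$ vanish on $V_\partial$; (ii) $\E[\phi(x)\phi(y)]=G(x,y)$ for every $x,y\in V$; and (iii) $\E[\phi_\sigma(x)\phi_\sigma(y)]=G_\sigma(x,y)$ for every $x,y\in V$.

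Item (i) is immediate from the construction: if $x\in V_\partial$ then its lift $x_1$ lies in $V^{\rm db}_\partial=\pi_\sigma^{-1}(V_\partial)$, so $\phi^{\rm db}_\sigma(x_1)=0=\phi^{\rm db}_\sigma(\psi(x_1))$ by the zero boundary condition for the double-cover GFF, and hence both $\phi^{\rm db}_{\sigma,+}(x_1)$ and $\phi^{\rm db}_{\sigma,-}(x_1)$ vanish.

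For items (ii) and (iii), I would expand
\[
\E[\phi(x)\phi(y)] = \tfrac{1}{2}\,\E\bigl[(\phi^{\rm db}_\sigma(x_1)+\phi^{\rm db}_\sigma(\psi(x_1)))(\phi^{\rm db}_\sigma(y_1)+\phi^{\rm db}_\sigma(\psi(y_1)))\bigr]
\]
into four Green's function terms. The $\psi$-symmetry $G^{\rm db}_\sigma(\psi(\hat{x}),\psi(\hat{y}))=G^{\rm db}_\sigma(\hat{x},\hat{y})$, noted just before Lemma \ref{Lem Green cov}, collapses these four terms to $G^{\rm db}_\sigma(x_1,y_1)+G^{\rm db}_\sigma(x_1,\psi(y_1))$, which by Lemma \ref{Lem Green cov} is exactly $G(x,y)$. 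The computation for $\phi_\sigma$ is identical up to the sign flip built into $\phi^{\rm db}_{\sigma,-}$, yielding $G^{\rm db}_\sigma(x_1,y_1)-G^{\rm db}_\sigma(x_1,\psi(y_1))$, which by the same lemma equals $G_\sigma(x,y)$.

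I do not anticipate any real obstacle: the corollary is essentially a probabilistic rephrasing of Lemma \ref{Lem Green cov} under the orthogonal splitting $\mathcal{S}^{\rm db}_{0}=\mathcal{S}^{\rm db}_{0,+}\oplus\mathcal{S}^{\rm db}_{0,-}$ of Lemma \ref{Lem decomp S db}. As a bonus, that orthogonality, combined with the joint Gaussianity of $(\phi^{\rm db}_{\sigma,+},\phi^{\rm db}_{\sigma,-})$, automatically yields the independence of $\phi$ and $\phi_\sigma$ that was already recorded just above the statement.
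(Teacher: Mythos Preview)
Your proposal is correct and is precisely the argument the paper has in mind: the paper's entire proof is the one-line remark that the corollary follows immediately from Lemma \ref{Lem Green cov}, and your covariance computation is exactly how one unpacks that implication. The boundary-value check and the $\psi$-symmetry step are the routine details behind ``immediately,'' so there is nothing to add.
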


\subsection{Double covers of metric graphs and related GFFs}
\label{Subsec cov metric}

Consider a gauge field $\sigma \in \{ -1,1\}^{E}$.
Let be the associated double cover $\LG^{\rm db}_{\sigma}$,
endowed with the conductances \eqref{Eq conductance cov}.
We will consider the metric graph associated to $\LG^{\rm db}_{\sigma}$,
denoted by $\widetilde{\LG}^{\rm db}_{\sigma}$.
The projection $\bpi_{\sigma}$ can be extended into a covering map
$\bpi_{\sigma}: \widetilde{\LG}^{\rm db}_{\sigma}
\rightarrow \widetilde{\LG}$
which is locally an isometry.
In this way, $\widetilde{\LG}^{\rm db}_{\sigma}$ is a double cover of the 
metric graph $\widetilde{\LG}$.
Similarly, the map $\psi : V^{\rm db} \rightarrow V^{\rm db}$
extends into a covering automorphism
$\psi_{\sigma} : \widetilde{\LG}^{\rm db}_{\sigma} \rightarrow 
\widetilde{\LG}^{\rm db}_{\sigma}$
($\bpi_{\sigma}\circ \psi_{\sigma} = \bpi_{\sigma}$)
which interchanges the two sheets of the covering.
Lemma \ref{Lem cov triv} extends to the metric graph setting:
the metric space $\widetilde{\LG}^{\rm db}_{\sigma}$ is connected 
if and only if the gauge field $\sigma$ is non-trivial.
Otherwise it consists of two disjoint copies of $\widetilde{\LG}$.

Let $\tilde{\phi}^{\rm db}_{\sigma}$ be the metric graph GFF on 
$\widetilde{\LG}^{\rm db}_{\sigma}$ with $0$ boundary conditions on
$V^{\rm db}_{\partial}$. 
The field $\tilde{\phi}^{\rm db}_{\sigma}$ is continuous.
It's restriction to $V^{\rm db}$ is the field 
$\phi^{\rm db}_{\sigma}$ introduced previously.
Let $\tilde{\phi}^{\rm db}_{\sigma,+}$ and 
$\tilde{\phi}^{\rm db}_{\sigma,-}$ be the fields
\begin{displaymath}
\tilde{\phi}^{\rm db}_{\sigma,+} = 
\dfrac{1}{\sqrt{2}}(\tilde{\phi}^{\rm db}_{\sigma} 
+ \tilde{\phi}^{\rm db}_{\sigma}\circ\psi_{\sigma}),
\qquad
\tilde{\phi}^{\rm db}_{\sigma,-} = 
\dfrac{1}{\sqrt{2}}(\tilde{\phi}^{\rm db}_{\sigma} - 
\tilde{\phi}^{\rm db}_{\sigma}\circ\psi_{\sigma}).
\end{displaymath}
The fields $\tilde{\phi}^{\rm db}_{\sigma,+}$ and 
$\tilde{\phi}^{\rm db}_{\sigma,-}$ are again continuous and their restrictions
to $V^{\rm db}$ are 
$\phi^{\rm db}_{\sigma,+}$ and $\phi^{\rm db}_{\sigma,-}$ respectively.

\begin{lemma}
\label{Lem indep metric cov}
The fields $\tilde{\phi}^{\rm db}_{\sigma,+}$ 
and $\tilde{\phi}^{\rm db}_{\sigma,-}$
are independent.
\end{lemma}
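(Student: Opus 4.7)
The plan is to show that $\tilde{\phi}^{\rm db}_{\sigma,+}$ and $\tilde{\phi}^{\rm db}_{\sigma,-}$ are jointly Gaussian and uncorrelated, then invoke the standard fact that uncorrelated jointly Gaussian continuous fields are independent. Joint Gaussianity is immediate: any finite-dimensional marginal of the pair is a linear combination of finitely many values of $\tilde{\phi}^{\rm db}_{\sigma}$, which is itself a Gaussian field. So the heart of the proof is the covariance computation.

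The key structural input is that the covering automorphism $\psi_{\sigma}: \widetilde{\LG}^{\rm db}_{\sigma} \to \widetilde{\LG}^{\rm db}_{\sigma}$ is an isometry of the metric graph (it permutes fibers of $\pi_{\sigma}$, which carry equal lengths by \eqref{Eq conductance cov}) and preserves the boundary $V^{\rm db}_{\partial} = \pi_{\sigma}^{-1}(V_{\partial})$. Consequently, the law of the killed Brownian diffusion on $\widetilde{\LG}^{\rm db}_{\sigma}$ is invariant under $\psi_{\sigma}$, and the metric graph Green's function $\tilde{G}^{\rm db}_{\sigma}$ (which is the covariance of $\tilde{\phi}^{\rm db}_{\sigma}$ extended to $\widetilde{\LG}^{\rm db}_{\sigma} \times \widetilde{\LG}^{\rm db}_{\sigma}$) satisfies
\begin{displaymath}
\tilde{G}^{\rm db}_{\sigma}(\psi_{\sigma}(\hat{x}), \psi_{\sigma}(\hat{y})) = \tilde{G}^{\rm db}_{\sigma}(\hat{x}, \hat{y}), \qquad \hat{x},\hat{y}\in \widetilde{\LG}^{\rm db}_{\sigma}.
\end{displaymath}

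Given this, for any $\hat{x},\hat{y}\in\widetilde{\LG}^{\rm db}_{\sigma}$ one expands
\begin{displaymath}
2\,\E\bigl[\tilde{\phi}^{\rm db}_{\sigma,+}(\hat{x})\tilde{\phi}^{\rm db}_{\sigma,-}(\hat{y})\bigr]
= \tilde{G}^{\rm db}_{\sigma}(\hat{x},\hat{y}) - \tilde{G}^{\rm db}_{\sigma}(\hat{x},\psi_{\sigma}(\hat{y})) + \tilde{G}^{\rm db}_{\sigma}(\psi_{\sigma}(\hat{x}),\hat{y}) - \tilde{G}^{\rm db}_{\sigma}(\psi_{\sigma}(\hat{x}),\psi_{\sigma}(\hat{y})).
\end{displaymath}
Applying the invariance, the last term equals the first and the third equals the second (by further symmetry $\tilde{G}^{\rm db}_{\sigma}(\hat{a},\hat{b})=\tilde{G}^{\rm db}_{\sigma}(\hat{b},\hat{a})$ together with applying $\psi_{\sigma}$ to both arguments), so all four terms cancel in pairs and the covariance vanishes.

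Since the covariance between any value of $\tilde{\phi}^{\rm db}_{\sigma,+}$ and any value of $\tilde{\phi}^{\rm db}_{\sigma,-}$ is zero, every pair of finite-dimensional marginals of $(\tilde{\phi}^{\rm db}_{\sigma,+},\tilde{\phi}^{\rm db}_{\sigma,-})$ is a centered Gaussian vector with block-diagonal covariance, hence independent. Because both fields are continuous, independence of all finite-dimensional marginals upgrades to independence of the fields themselves. There is no serious obstacle; the one point to be mindful of is the invariance of $\tilde{G}^{\rm db}_{\sigma}$ under $\psi_{\sigma}$, which can also be seen at the discrete level via Lemma \ref{Lem heat ker cov} combined with the Brownian extension on each edge, and then transfers to $\widetilde{\LG}^{\rm db}_{\sigma}$ via the construction of $\tilde{\phi}^{\rm db}_{\sigma}$ as a Brownian-bridge interpolation of $\phi^{\rm db}_{\sigma}$.
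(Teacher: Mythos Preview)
Your proof is correct but takes a genuinely different route from the paper. The paper argues in two steps: first it invokes the already-established independence of the discrete restrictions $\phi^{\rm db}_{\sigma,+}$ and $\phi^{\rm db}_{\sigma,-}$ (which came from the orthogonality of $\mathcal{S}^{\rm db}_{0,+}$ and $\mathcal{S}^{\rm db}_{0,-}$ in Lemma~\ref{Lem decomp S db}), and then handles the interiors of edge-lines by noting that for two independent Brownian bridges $W_{1},W_{2}$, the pair $(W_{1}+W_{2})/\sqrt{2}$ and $(W_{1}-W_{2})/\sqrt{2}$ is again independent. You instead bypass the discrete/continuous split entirely and compute the covariance directly, using that the covering automorphism $\psi_{\sigma}$ is an isometry of $\widetilde{\LG}^{\rm db}_{\sigma}$ fixing $V^{\rm db}_{\partial}$, hence preserves the metric-graph Green's function, which forces the cross-covariance to vanish by a four-term cancellation. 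Your approach is cleaner and more conceptual; the paper's approach is more hands-on and reuses prior lemmas. One minor remark: in your cancellation of the second and third terms you do not actually need the symmetry of the Green's function, only the involution property $\psi_{\sigma}^{2}=\mathrm{Id}$ together with the invariance $\tilde{G}^{\rm db}_{\sigma}(\psi_{\sigma}(\cdot),\psi_{\sigma}(\cdot))=\tilde{G}^{\rm db}_{\sigma}(\cdot,\cdot)$.
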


\begin{proof}
We already know that the fields 
$\phi^{\rm db}_{\sigma,+}$ and $\phi^{\rm db}_{\sigma,-}$
are independent.
To conclude, 
we need to look at what happens inside the edge-lines.
Given $(W_{1}(u))_{0\leq u\leq C(e)^{-1}}$
and $(W_{2}(u))_{0\leq u\leq C(e)^{-1}}$
two independent Brownian bridges from $0$ to $0$ of the same length,
then $(W_{1}+W_{2})/\sqrt{2}$ and $(W_{1}-W_{2})/\sqrt{2}$
are indeed independent and again distributed as Brownian
bridges from $0$ to $0$.
\end{proof}

Since $\tilde{\phi}^{\rm db}_{\sigma,+}\circ\psi_{\sigma}
= \tilde{\phi}^{\rm db}_{\sigma,+}$,
there is a continuous field
$\tilde{\phi}$ on $\widetilde{\LG}$
(i.e. on the base of the covering)
such that 
\begin{displaymath}
\tilde{\phi}^{\rm db}_{\sigma,+}
=
\tilde{\phi}\circ\bpi_{\sigma}.
\end{displaymath}

\begin{lemma}
\label{Lem usual GFF}
The field $\tilde{\phi}$ is distributed as the metric graph GFF on
$\widetilde{\LG}$ with $0$ boundary conditions on
$V_{\partial}$ (see Section \ref{Subsec metric}),
hence the same notation, 
and in particular its distribution is the same whatever the
gauge field $\sigma$.
\end{lemma}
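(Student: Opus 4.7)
The plan is to verify that $\tilde\phi$ satisfies the two characterizing properties of the metric graph GFF on $\widetilde{\LG}$ with zero boundary conditions on $V_\partial$: first, that $(\tilde\phi(x))_{x\in V}$ has the law of the discrete GFF $\phi$ on $\LG$; second, that conditionally on these vertex values, inside each edge-line $I_e$ the field $\tilde\phi$ is a Brownian bridge of length $C(e)^{-1}$ between $\tilde\phi(e_-)$ and $\tilde\phi(e_+)$, with independence across edges. Both characterizing properties are manifestly independent of $\sigma$, so proving them will also yield the $\sigma$-independence asserted in the lemma.

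The first property is immediate from Corollary \ref{Cor GFF cov}: on $V^{\rm db}$ the field $\tilde{\phi}^{\rm db}_{\sigma,+}$ restricts to $\phi^{\rm db}_{\sigma,+}$, which under the identification of the corollary is exactly a discrete GFF on $\LG$. For the second property, fix $e=\{x,y\}\in E$. The preimage $\pi_\sigma^{-1}(I_e)$ consists of two edge-lines $\hat I_e^{(1)}, \hat I_e^{(2)}$ in $\widetilde{\LG}^{\rm db}_\sigma$ interchanged by $\psi_\sigma$. By the definition of the metric graph GFF $\tilde{\phi}^{\rm db}_{\sigma}$, conditionally on $\phi^{\rm db}_\sigma$ its restrictions to these two edge-lines are independent Brownian bridges of length $C(e)^{-1}$ between the appropriate endpoint values. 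Parametrizing $I_e$ by $u\in[0,C(e)^{-1}]$, denoting by $\hat z^{(i)}(u)$ the lift on $\hat I_e^{(i)}$ of the point $z(u)\in I_e$, and writing
\begin{displaymath}
\tilde{\phi}(z(u)) = \frac{1}{\sqrt{2}}\bigl(\tilde{\phi}^{\rm db}_{\sigma}(\hat z^{(1)}(u)) + \tilde{\phi}^{\rm db}_{\sigma}(\hat z^{(2)}(u))\bigr),
\end{displaymath}
I then invoke the classical identity, already used in the proof of Lemma \ref{Lem indep metric cov}, that a normalized sum of two independent Brownian bridges of the same length (after subtracting linear interpolants) is again a Brownian bridge of that length. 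Independence across distinct edges is inherited from the independence of the bridges on distinct edge-lines of the cover.

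The main obstacle — routine but requiring care — is checking that after symmetrization the endpoints agree with $\tilde{\phi}(x)$ and $\tilde{\phi}(y)$ in both cases $\sigma(e)=\pm 1$. When $\sigma(e)=+1$, $\hat I_e^{(1)}$ joins $x_1$ to $y_1$ and $\hat I_e^{(2)}$ joins $x_2$ to $y_2$; when $\sigma(e)=-1$, $\hat I_e^{(1)}$ joins $x_1$ to $y_2$ and $\hat I_e^{(2)}$ joins $x_2$ to $y_1$. In either case, at the $x$-endpoint the two lifts are $x_1$ and $x_2$, so the symmetrized sum evaluates to $\frac{1}{\sqrt{2}}(\phi^{\rm db}_\sigma(x_1)+\phi^{\rm db}_\sigma(x_2)) = \phi^{\rm db}_{\sigma,+}(x_1) = \tilde{\phi}(x)$; at the $y$-endpoint the lifts are $y_1$ and $y_2$ (their order differing between the two cases, but the sum is symmetric), so the same computation yields $\tilde{\phi}(y)$. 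The bookkeeping thus collapses, and $\tilde{\phi}$ is identified as the metric graph GFF on $\widetilde{\LG}$ with zero boundary conditions on $V_\partial$.
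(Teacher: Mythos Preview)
Your proof is correct and follows essentially the same approach as the paper's own proof: first invoke Corollary~\ref{Cor GFF cov} for the restriction to $V$, then use that the normalized sum of two independent Brownian bridges is again a Brownian bridge for the interior of edge-lines. Your version is more explicit than the paper's (which is quite terse), in particular spelling out the endpoint bookkeeping in the two cases $\sigma(e)=\pm 1$ and the passage from conditioning on $\phi^{\rm db}_\sigma$ to conditioning on $\tilde\phi|_V$, but the core argument is the same.
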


\begin{proof}
By Corollary \ref{Cor GFF cov} we already know that
the restriction of the field $\tilde{\phi}$ to $V$
is distributed as the discrete GFF with $0$ boundary conditions on $V_{\partial}$.
We need to check that the identity in law also extends inside the
edge-lines.
Given $(W_{1}(u))_{0\leq u\leq C(e)^{-1}}$
and $(W_{2}(u))_{0\leq u\leq C(e)^{-1}}$
two independent Brownian bridges from $0$ to $0$ of the same length,
then indeed $(W_{1}+W_{2})/\sqrt{2}$ is distributed as a Brownian
bridge from $0$ to $0$.
\end{proof}

Next we will introduce a section 
$\mathsf{s}_{\sigma} : 
\widetilde{\LG}\rightarrow \widetilde{\LG}^{\rm db}_{\sigma}$,
such that
$\bpi_{\sigma}\circ \mathsf{s}_{\sigma}
=\operatorname{Id}_{\widetilde{\LG}}$.
In general, $\mathsf{s}_{\sigma}$ will not be continuous
and will actually have finitely many discontinuity points.
The section $\mathsf{s}_{\sigma}$ will be defined by the following conditions:
\begin{enumerate}
\item $\bpi_{\sigma}\circ \mathsf{s}_{\sigma}
=\operatorname{Id}_{\widetilde{\LG}}$.
\item For every $x\in V$,
$\mathsf{s}_{\sigma}(x)\in V_{1}$.
\item For every edge $e\in E$,
the map $u\mapsto \mathsf{s}_{\sigma}(x_{e,u})$
(see Section \ref{Subsec subdivision} for the notations)
is continuous on the intervals
$[0,C(e)^{-1}/2)$ and $(C(e)^{-1}/2,C(e)^{-1}]$,
i.e. a discontinuity is possible only in the middle of the
edge-line at $u=C(e)^{-1}/2$.
\end{enumerate}
The above entirely specifies $\mathsf{s}_{\sigma}$
outside the middle points of edge-lines in $\widetilde{\LG}$.
Further, if $\{x,y\}\in E$ and $\sigma(x,y)=1$,
then $\{\mathsf{s}_{\sigma}(x),\mathsf{s}_{\sigma}(y)\}\in E^{\rm db}_{\sigma}$,
and thus $\mathsf{s}_{\sigma}$ extends continuously to the whole
edge-line $I_{\{x,y\}}$.
However, if $\sigma(x,y)= -1$, then 
$\{\mathsf{s}_{\sigma}(x),\mathsf{s}_{\sigma}(y)\}\not\in E^{\rm db}_{\sigma}$,
and thus $\mathsf{s}_{\sigma}$ has a discontinuity in the middle of the edge-line
$I_{\{x,y\}}$. 
We will not specify which is the value assumed by $\mathsf{s}_{\sigma}$
at such a discontinuity point, i.e. which of the left or right limit, 
as this will not be of any importance.

Now, consider on $\widetilde{\LG}$ the field
\begin{equation}
\label{Eq phi sigma cov}
\tilde{\phi}_{\sigma} = \tilde{\phi}^{\rm db}_{\sigma,-}\circ\mathsf{s}_{\sigma}.
\end{equation}

\begin{lemma}
The field $\tilde{\phi}_{\sigma}$ has the same distribution
as the Gaussian field on $\widetilde{\LG}$ introduced in Section
\ref{Subsec subdivision}, hence the same notation.
\end{lemma}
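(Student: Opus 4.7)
The plan is to compute $\tilde{\phi}^{\rm db}_{\sigma,-}\circ\mathsf{s}_{\sigma}$ edge by edge and check it matches Equations \eqref{Eq tilde phi sigma 1}--\eqref{Eq tilde phi sigma 3}. First, since $\tilde{\phi}^{\rm db}_{\sigma}$ is a metric graph GFF on $\widetilde{\LG}^{\rm db}_{\sigma}$, its restriction to $V^{\rm db}$ is $\phi^{\rm db}_{\sigma}$, and hence the restriction of $\tilde{\phi}^{\rm db}_{\sigma,-}$ to $V^{\rm db}$ is $\phi^{\rm db}_{\sigma,-}$. Because $\mathsf{s}_{\sigma}$ embeds $V$ into $V_{1}$, Corollary \ref{Cor GFF cov} identifies $\tilde{\phi}^{\rm db}_{\sigma,-}\circ\mathsf{s}_{\sigma}$ restricted to $V$ with $\phi_{\sigma}$, matching the first bullet of the Section \ref{Subsec subdivision} construction.

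Next I would fix $e=\{e_{-},e_{+}\}\in E$ and analyse $\tilde{\phi}^{\rm db}_{\sigma,-}\circ\mathsf{s}_{\sigma}$ on $I_{e}$. The edge has two lifts in $\widetilde{\LG}^{\rm db}_{\sigma}$, interchanged by $\psi_{\sigma}$, and conditional on $\phi^{\rm db}_{\sigma}$ the field $\tilde{\phi}^{\rm db}_{\sigma}$ restricted to each lift is an independent standard Brownian bridge of length $C(e)^{-1}$. Writing each such bridge as an affine interpolation of the $\phi^{\rm db}_{\sigma}$ endpoint values plus a centred Brownian bridge $W_{1}$, $W_{2}$ from $0$ to $0$, the identity $\tilde{\phi}^{\rm db}_{\sigma,-}=\tfrac{1}{\sqrt{2}}(\tilde{\phi}^{\rm db}_{\sigma}-\tilde{\phi}^{\rm db}_{\sigma}\circ\psi_{\sigma})$ produces, along $\mathsf{s}_{\sigma}$, a centred bridge $W_{e}:=\pm\tfrac{1}{\sqrt{2}}(W_{1}-W_{2})$ together with the affine interpolation of the corresponding $\phi^{\rm db}_{\sigma,-}$ endpoint values, which are exactly the $\phi_{\sigma}$ values at $e_{\pm}$ with appropriate signs. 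Independence of the $W_{e}$ across $e\in E$ and from $\phi^{\rm db}_{\sigma,-}$ follows from the Markov property of $\tilde{\phi}^{\rm db}_{\sigma}$ and the fact that $\tfrac{1}{\sqrt{2}}(W_{1}-W_{2})$ and $\tfrac{1}{\sqrt{2}}(W_{1}+W_{2})$ are independent standard Brownian bridges.

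It remains to carry out the case split. When $\sigma(e)=1$, the two lifts are $\{(e_{+})_{1},(e_{-})_{1}\}$ and $\{(e_{+})_{2},(e_{-})_{2}\}$, the section $\mathsf{s}_{\sigma}$ is continuous on $I_{e}$ and stays on the first one, and the computation above recovers \eqref{Eq tilde phi sigma 1}. When $\sigma(e)=-1$, the lifts cross to $\{(e_{+})_{1},(e_{-})_{2}\}$ and $\{(e_{-})_{1},(e_{+})_{2}\}$; $\mathsf{s}_{\sigma}$ traverses the first lift for $u\in[0,C(e)^{-1}/2)$ and the second for $u\in(C(e)^{-1}/2,C(e)^{-1}]$. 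On $[0,C(e)^{-1}/2)$ the endpoint data seen along the section consists of $\phi^{\rm db}_{\sigma,-}((e_{+})_{1})=\phi_{\sigma}(e_{+})$ at $u=0$ and $\phi^{\rm db}_{\sigma,-}((e_{-})_{2})=-\phi_{\sigma}(e_{-})$ at $u=C(e)^{-1}$, yielding precisely \eqref{Eq tilde phi sigma 2}. On $(C(e)^{-1}/2,C(e)^{-1}]$ the section has jumped to the other lift, the action of $\psi_{\sigma}$ reverses the sign of both the bridge contribution and the affine combination, and one recovers \eqref{Eq tilde phi sigma 3}, with the prescribed sign discontinuity at $u=C(e)^{-1}/2$.

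The only real obstacle is bookkeeping: picking consistent parametrizations along the two lifts, tracking how $\psi_{\sigma}$ acts at a point at distance $u$ from a chosen endpoint, and recognising that the affine coefficients of $\phi^{\rm db}_{\sigma,-}$ at $(e_{\pm})_{1},(e_{\pm})_{2}$ combine into exactly the coefficients $\pm C(e) u\,\phi_{\sigma}(e_{-})$ and $\pm(1-C(e)u)\phi_{\sigma}(e_{+})$ appearing in Section \ref{Subsec subdivision}. Once this bookkeeping is done for both cases, equality of the two Gaussian fields on $\widetilde{\LG}$ (outside the finitely many midpoints, where neither field is specified) is immediate, and independence across edges is inherited from the metric graph GFF on the double cover.
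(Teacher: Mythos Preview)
Your proposal is correct and follows essentially the same approach as the paper's proof: first identify the restriction to $V$ with $\phi_{\sigma}$ via Corollary~\ref{Cor GFF cov}, then analyse each edge-line by writing the double-cover GFF as an affine interpolation plus two independent Brownian bridges $W_{1},W_{2}$, form $W_{e}=(W_{1}-W_{2})/\sqrt{2}$, and case-split on $\sigma(e)$ to recover \eqref{Eq tilde phi sigma 1}--\eqref{Eq tilde phi sigma 3}. Your write-up is in fact more explicit than the paper's about the bookkeeping on the two lifts and about the independence of the $W_{e}$ across edges, but the underlying argument is identical.
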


\begin{proof}
By Corollary \ref{Cor GFF cov} we already know that
the restriction of the field $\tilde{\phi}_{\sigma}$ to $V$
is distributed as the discrete $\sigma$-twisted GFF $\phi_{\sigma}$ with $0$ 
boundary conditions on $V_{\partial}$.
We need only to check what happens inside the edge lines.
Given $(W_{1}(u))_{0\leq u\leq C(e)^{-1}}$
and $(W_{2}(u))_{0\leq u\leq C(e)^{-1}}$
two independent Brownian bridges from $0$ to $0$ of the same length,
then $W_{e}=(W_{1}-W_{2})/\sqrt{2}$ is again distributed as a Brownian
bridge from $0$ to $0$.
If $\sigma(e)=1$,
then $\mathsf{s}_{\sigma}$ is continuous on the edge-line $I_{e}$ and the restriction of $\tilde{\phi}_{\sigma}$
to $I_{e}$ is obtained by interpolating between
$\tilde{\phi}_{\sigma}(e_{-})$ and $\tilde{\phi}_{\sigma}(e_{+})$
with $W_{e}$ (plus the deterministic linear part).
If $\sigma(e)= -1$,
then $\mathsf{s}_{\sigma}$ has a discontinuity in the middle of 
$I_{e}$, and 
the restriction of $\tilde{\phi}_{\sigma}$
to $I_{e}$ is obtained by interpolating between
$-\tilde{\phi}_{\sigma}(e_{-})$ and $\tilde{\phi}_{\sigma}(e_{+})$
with $W_{e}$ (plus the deterministic linear part),
then by reflecting the bridge on half of $I_{e}$.
So this is indeed the same construction as the one given by the formulas
\eqref{Eq tilde phi sigma 1}, \eqref{Eq tilde phi sigma 2}
and \eqref{Eq tilde phi sigma 3}.
\end{proof}

The main point of the Sections \ref{Subsec double cover} and
\ref{Subsec cov metric} was actually to introduce the field
$\tilde{\phi}^{\rm db}_{\sigma,-}$.
The advantage of $\tilde{\phi}^{\rm db}_{\sigma,-}$ over $\tilde{\phi}_{\sigma}$
is that the former is always continuous, and in particular satisfies the intermediate value property.
This will be exploited in the forthcoming Section \ref{Subsec topo}.
See Lemma \ref{Lem T sigma}.

\subsection{Topological events for the metric graph GFF}
\label{Subsec topo}

Here we consider a gauge field $\sigma\in\{ -1,1\}^{E}$.
Given $U$ an open non-empty connected subset of the metric graph 
$\widetilde{\LG}$,
the inverse image $\bpi^{-1}_{\sigma}(U)\subset
\widetilde{\LG}_{\sigma}^{\rm db}$ is either connected or
consists of two isometric connected components.
It is easy to verify (see e.g. Lemmas \ref{Lem cov hol} and \ref{Lem cov triv})
that  $\bpi^{-1}_{\sigma}(U)$ is connected if and only there is a continuous
loop $\wp$ contained in $U$ such that
$\hol^{\sigma}(\wp) = -1$.
The holonomy of a continuous loop on the metric graph is given by the parity
of the number of crossings of edge-lines $I_{e}$ with $\sigma(e) =-1$.

In the example of Figure \ref{Fig non triv ann},
$\bpi^{-1}_{\sigma}(U)$ is connected if and only if
$U$ surrounds the inner hole of the domain.
In the example of Figure \ref{Fig 2 holes},
$\bpi^{-1}_{\sigma}(U)$ is connected if and only if
$U$ separates one hole from the other.
By \textit{separate} we mean separate as a subset of the continuum plane,
since the planar metric graph on the figure can be embedded into the plane.
However, if $U$ surrounds both holes without separating one form the other,
then $\bpi^{-1}_{\sigma}(U)$ is not connected.

So whether $\bpi^{-1}_{\sigma}(U)$ is connected or not is a topological,
or rather homotopical property of $U$.
It can be reformulated in a more abstract way as follows.
Let $\pi_{1}(\widetilde{\LG})$ be the fundamental group of the metric graph
$\widetilde{\LG}$
(the notation $\pi_{1}$ should not be confused with the notation 
$\bpi_{\sigma}$ for the covering map; 
$\pi_{1}$ is just the standard notation for the fundamental group).
Let $\pi_{1}(U)$ be the fundamental group of $U$.
Since $U$ is a subset of $\widetilde{\LG}$, 
there is a natural group homomorphism
$\theta_{U} : \pi_{1}(U)\rightarrow \pi_{1}(\widetilde{\LG})$,
obtained by considering the loops in $U$ as loops in $\widetilde{\LG}$
and rooting all the loops at a point $x_{0}\in U$.
The gauge field $\sigma$ induces a group homomorphism
$h_{\sigma} : \pi_{1}(\widetilde{\LG})\rightarrow \{-1,1\}$
obtained by taking the holonomy of loops.
The homomorphism $h_{\sigma}$ depends only on the gauge equivalence class of
$\sigma$. 
The gauge field $\sigma$ is trivial if and only if
$\ker h_{\sigma} = \pi_{1}(\widetilde{\LG})$.
Further, $\bpi^{-1}_{\sigma}(U)$ is \textbf{not} connected if and only if
$\ker h_{\sigma}\circ \theta_{U} = \pi_{1}(U)$.
This depends on $\sigma$ only through its gauge equivalence class,
and if $\sigma$ is trivial, 
$\bpi^{-1}_{\sigma}(U)$ cannot be connected.

Let $\mathcal{C}(\widetilde{\LG})$ denote the space of
continuous functions $\widetilde{\LG}\rightarrow\R$.
Given a function $f\in \mathcal{C}(\widetilde{\LG})$,
$\{ f\neq 0\}$ will be a short notation for the non-zero set of $f$:
\begin{displaymath}
\{ f\neq 0\} = \{x\in \widetilde{\LG}\vert f(x)\neq 0\}.
\end{displaymath}
Let $\LT_{\sigma}$ be the following subset of $\mathcal{C}(\widetilde{\LG})$:
\begin{displaymath}
\LT_{\sigma} = 
\{
f\in \mathcal{C}(\widetilde{\LG})\vert
~\forall~U \text{ connected component of } \{ f\neq 0\},
\bpi^{-1}_{\sigma}(U) \text{ is \textbf{not} connected}
\}.
\end{displaymath}
It is easy to see that $\LT_{\sigma}$ is a closed subset of 
$\mathcal{C}(\widetilde{\LG})$
for the uniform norm.
In the example of Figure \ref{Fig non triv ann},
$f\in \LT_{\sigma}$ if and only if
no connected component of $\{ f\neq 0\}$ surrounds the inner hole of the domain;
see Figure \ref{Fig ex ann}.
In the example of Figure \ref{Fig 2 holes},
$f\in \LT_{\sigma}$ if and only if no connected component of $\{ f\neq 0\}$
separates one hole from the other.
Again, $\LT_{\sigma}$ depends only on the gauge-equivalence class of
$\sigma$, and if $\sigma$ is trivial, then
$\LT_{\sigma}=\mathcal{C}(\widetilde{\LG})$.

We recall that, although the field $\tilde{\phi}_{\sigma}$ is usually not continuous, its absolute value $\vert\tilde{\phi}_{\sigma}\vert$ can always be 
continuously extended to $\widetilde{\LG}$.
Thus, we see $\vert\tilde{\phi}_{\sigma}\vert$ as a random element of
$\mathcal{C}(\widetilde{\LG})$.

\begin{lemma}
\label{Lem T sigma}
Almost surely, $\vert\tilde{\phi}_{\sigma}\vert\in \LT_{\sigma}$.
\end{lemma}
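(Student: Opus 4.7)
The plan is to lift the problem to the double cover $\widetilde{\LG}^{\rm db}_{\sigma}$ and exploit the continuity and antisymmetry of $\tilde{\phi}^{\rm db}_{\sigma,-}$. First I would observe that, although the section $\mathsf{s}_{\sigma}$ is not continuous, the identity \eqref{Eq phi sigma cov} together with $\tilde{\phi}^{\rm db}_{\sigma,-}\circ\psi_{\sigma}=-\tilde{\phi}^{\rm db}_{\sigma,-}$ yields
\begin{displaymath}
\vert\tilde{\phi}^{\rm db}_{\sigma,-}(\hat{y})\vert
=\vert\tilde{\phi}_{\sigma}(\pi_{\sigma}(\hat{y}))\vert
\qquad\text{for every }\hat{y}\in\widetilde{\LG}^{\rm db}_{\sigma},
\end{displaymath}
because the fiber $\pi_{\sigma}^{-1}(x)=\{\mathsf{s}_{\sigma}(x),\psi_{\sigma}(\mathsf{s}_{\sigma}(x))\}$ consists of two points on which $\tilde{\phi}^{\rm db}_{\sigma,-}$ takes opposite values, one of them being $\tilde{\phi}_{\sigma}(x)$. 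Thus, as elements of $\mathcal{C}(\widetilde{\LG}^{\rm db}_{\sigma})$, the zero sets match: $\{\vert\tilde{\phi}^{\rm db}_{\sigma,-}\vert=0\}=\pi_{\sigma}^{-1}(\{\vert\tilde{\phi}_{\sigma}\vert=0\})$.

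Next, fix a connected component $U$ of $\{\vert\tilde{\phi}_{\sigma}\vert\neq 0\}\subset\widetilde{\LG}$. Since $\pi_{\sigma}$ is a local homeomorphism, $\pi_{\sigma}^{-1}(U)$ is open in $\widetilde{\LG}^{\rm db}_{\sigma}$, and the identity above shows $\pi_{\sigma}^{-1}(U)\subset\{\tilde{\phi}^{\rm db}_{\sigma,-}\neq 0\}$. Moreover, $\pi_{\sigma}\circ\psi_{\sigma}=\pi_{\sigma}$ implies that $\psi_{\sigma}$ preserves $\pi_{\sigma}^{-1}(U)$ setwise.

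The goal is to show that $\pi_{\sigma}^{-1}(U)$ is not connected, which I would do by contradiction. Assume it is connected. The field $\tilde{\phi}^{\rm db}_{\sigma,-}$ is almost surely continuous on $\widetilde{\LG}^{\rm db}_{\sigma}$, hence satisfies the intermediate value property along any continuous path; consequently, on any connected subset of $\{\tilde{\phi}^{\rm db}_{\sigma,-}\neq 0\}$ its sign is constant. Thus $\tilde{\phi}^{\rm db}_{\sigma,-}$ has constant sign on $\pi_{\sigma}^{-1}(U)$. On the other hand, $\psi_{\sigma}(\pi_{\sigma}^{-1}(U))=\pi_{\sigma}^{-1}(U)$ and $\tilde{\phi}^{\rm db}_{\sigma,-}\circ\psi_{\sigma}=-\tilde{\phi}^{\rm db}_{\sigma,-}$ would force the sign to change, a contradiction. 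Hence $\pi_{\sigma}^{-1}(U)$ is disconnected, as required by the definition of $\LT_{\sigma}$.

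The only non-routine step is the bookkeeping between $\tilde{\phi}_{\sigma}$ and $\tilde{\phi}^{\rm db}_{\sigma,-}$ at the discontinuity points of $\mathsf{s}_{\sigma}$ (i.e. the midpoints of edges with $\sigma(e)=-1$); there one must argue that even though \eqref{Eq phi sigma cov} fails pointwise, the equality of absolute values extends by continuity of $\vert\tilde{\phi}_{\sigma}\vert$ and of $\tilde{\phi}^{\rm db}_{\sigma,-}$, so that the set-theoretic identity of zero sets is genuinely valid on all of $\widetilde{\LG}^{\rm db}_{\sigma}$. Beyond this, the proof is a clean consequence of the continuity of $\tilde{\phi}^{\rm db}_{\sigma,-}$ and its antisymmetry under the deck transformation $\psi_{\sigma}$.
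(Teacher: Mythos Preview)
Your proof is correct and follows essentially the same approach as the paper: lift to the double cover, use that $\tilde{\phi}^{\rm db}_{\sigma,-}$ is continuous and antisymmetric under $\psi_{\sigma}$, and derive a contradiction from the intermediate value property. The paper phrases the contradiction via a path from $\hat{x}$ to $\psi_{\sigma}(\hat{x})$ rather than via constancy of sign on a connected set, but this is the same argument.
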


\begin{proof}
We will show that a.s., for every $U$ connected component of
$\{\vert\tilde{\phi}_{\sigma}\vert\neq 0\}$,
and for every $\hat{x}\in \bpi_{\sigma}^{-1}(U)$,
the points $\hat{x}$ and $\psi_{\sigma}(\hat{x})$
are not connected inside $\bpi_{\sigma}^{-1}(U)$.
If this were not the case, there would be a point
$\hat{x}\in \widetilde{\LG}_{\sigma}^{\rm db}$
and a continuous path $(\hat{\wp}(t))_{0\leq t\leq 1}$
joining $\hat{x}$ and $\psi_{\sigma}(\hat{x})$
such that for every $t\in [0,1]$,
$\vert\tilde{\phi}_{\sigma}\vert (\bpi_{\sigma}(\hat{\wp}(t)))>0$.
Consider on $\widetilde{\LG}_{\sigma}^{\rm db}$ the field
$\tilde{\phi}^{\rm db}_{\sigma,-}$,
related to $\tilde{\phi}_{\sigma}$ through \eqref{Eq phi sigma cov}.
Then, for ever $t\in [0,1]$,
$\tilde{\phi}^{\rm db}_{\sigma,-}(\hat{\wp}(t))\neq 0$.
But the field $\tilde{\phi}^{\rm db}_{\sigma,-}$ is continuous, and
$\tilde{\phi}^{\rm db}_{\sigma,-}(\hat{\wp}(0)) =
-\tilde{\phi}^{\rm db}_{\sigma,-}(\hat{\wp}(1))$.
So this is a contradiction.
\end{proof}

Now consider the usual metric graph GFF $\tilde{\phi}$,
without the gauge field $\sigma$.

\begin{lemma}
\label{Lem phi norm T sigma}
We have that $\mathbb{P}(\tilde{\phi}\in\LT_{\sigma})>0$.
Moreover, if $\sigma$ is non-trivial,
$\mathbb{P}(\tilde{\phi}\in\LT_{\sigma})< 1$.
\end{lemma}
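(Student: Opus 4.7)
The plan is to prove each inequality by exhibiting an explicit event of positive probability on which the desired behavior of $\tilde{\phi}$ is forced, leveraging the Markov property of the metric graph GFF on edge-lines (Brownian bridges given the vertex values) together with the topological characterization recalled at the start of Section \ref{Subsec topo}, namely that $\pi^{-1}_{\sigma}(U)$ is connected if and only if $U$ contains a continuous loop of $\sigma$-holonomy $-1$.

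For $\mathbb{P}(\tilde{\phi}\in\LT_{\sigma})>0$ I would consider the event $A$ that for every edge $e\in E$ the field $\tilde{\phi}$ has a zero in the interior of $I_{e}$. Conditionally on $\phi=\tilde{\phi}_{\vert V}$, the restrictions of $\tilde{\phi}$ to distinct edge-lines are independent Brownian bridges of length $C(e)^{-1}$, and any nondegenerate Brownian bridge on an interval of positive length hits zero with strictly positive probability, so $\mathbb{P}(A\mid \phi)>0$ almost surely and hence $\mathbb{P}(A)>0$. Using the almost sure event $\{\tilde{\phi}(x)\neq 0~\forall x\in V_{\rm int}\}$, on $A$ each connected component $U$ of $\{\tilde{\phi}\neq 0\}$ is either an open subinterval of a single edge-line or a star around a single interior vertex, truncated at the nearest zero on each incident edge. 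Such a $U$ is topologically a tree, so every continuous loop in $U$ is null-homotopic and has $\sigma$-holonomy $1$; hence $\tilde{\phi}\in\LT_{\sigma}$ on $A$.

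For $\mathbb{P}(\tilde{\phi}\in\LT_{\sigma})<1$ when $\sigma$ is non-trivial, Lemma \ref{Lem hol gauge equive}(3) supplies a discrete loop $\wp=(x_{1},\dots,x_{n},x_{1})$ in $\LG$ with $\hol^{\sigma}(\wp)=-1$. Let $\tilde{\wp}$ be the continuous parametrization of $\wp$ in $\widetilde{\LG}$ obtained by concatenating the edge-lines $I_{\{x_{i},x_{i+1}\}}$; it is a closed continuous curve of holonomy $-1$ whose image is compact. I would then work with the event $B$ that $\tilde{\phi}>0$ on the image of $\tilde{\wp}$. Conditioning on $\phi$ taking sufficiently large positive values on $\{x_{1},\dots,x_{n}\}$ (a Gaussian event of positive probability), and then using that each of the finitely many independent conditional Brownian bridge segments with large positive endpoints stays positive with positive probability, one obtains $\mathbb{P}(B)>0$. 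On $B$ the image of $\tilde{\wp}$ lies in a single connected component $U$ of $\{\tilde{\phi}>0\}\subset\{\tilde{\phi}\neq 0\}$, and $\tilde{\wp}$ is a continuous loop in $U$ with $\hol^{\sigma}(\tilde{\wp})=-1$, so $\pi^{-1}_{\sigma}(U)$ is connected and $\tilde{\phi}\notin\LT_{\sigma}$, yielding the strict upper bound.

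I do not expect any serious obstacle: both halves reduce to standard positivity statements about Brownian bridges on finitely many edges combined with a direct topological reading of $\LT_{\sigma}$. The only mildly delicate step is the structural claim on the event $A$, namely that forcing a zero on every edge-line really cuts $\{\tilde{\phi}\neq 0\}$ into tree-like pieces with no nontrivial loops; this follows from the continuity of $\tilde{\phi}$ and the definition of connected components, once one observes that a component meeting two vertices would have to cross an edge-line containing a zero, a contradiction.
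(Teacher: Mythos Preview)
Your proposal is correct and follows essentially the same approach as the paper: for positivity you use the event that $\tilde{\phi}$ has a zero inside every edge-line (forcing all sign components to be trees), and for the strict upper bound you pick a loop of holonomy $-1$ and use that $\tilde{\phi}$ is nonzero along its range with positive probability. Your write-up simply spells out a few details (the star/tree structure on the event $A$, and the conditioning argument for $\mathbb{P}(B)>0$) that the paper leaves implicit.
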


\begin{proof}
Let us first prove that $\mathbb{P}(\tilde{\phi}\in\LT_{\sigma})>0$.
Let $A$ be the event that the field $\tilde{\phi}$ has zeroes inside every edge-line $I_{e}$ for $e\in E$. With the construction with Brownian bridges, it is clear that $\mathbb{P}(A)>0$.
Let us argue that on the event $A$, we have that $\tilde{\phi}\in\LT_{\sigma}$,
whatever the gauge field $\sigma$.
Indeed, on the event $A$, no connected component of 
$\{\tilde{\phi}\neq 0\}$ contains a full edge-line, and
thus cannot contain a loop with holonomy $-1$.

Assume now that the gauge field $\sigma$ is non-trivial.
Then there is $(\wp(t))_{0\leq t\leq 1}$ a continuous (deterministic) loop in 
$\widetilde{\LG}$ such that $\hol^{\sigma}(\wp) = -1$.
With positive probability, $\tilde{\phi}$ has no zeroes on the range of $\wp$.
On this event, the range of $\wp$ belongs to the same connected component of
$\{\tilde{\phi}\neq 0\}$, and because $\hol^{\sigma}(\wp) = -1$
we have that $\tilde{\phi}\not\in\LT_{\sigma}$.
\end{proof}

We are ready to state the main result of this paper.

\begin{thmm}
\label{Thm main 1}
Let be a gauge field $\sigma\in\{-1,1\}^{E}$.
Then
\begin{equation}
\label{Eq prob LT sigma}
\mathbb{P}(\tilde{\phi}\in\LT_{\sigma})
=
\exp\big(-\mu^{\rm loop}(\{\text{Loops } \wp \text{ with } \hol^{\sigma}(\wp)=-1\})\big).
\end{equation}
So in particular
(see Corollaries  \ref{Cor ratio det},
\ref{Cor mu neg hol cov} and \ref{Cor dets db}),
\begin{eqnarray*}
\mathbb{P}(\tilde{\phi}\in\LT_{\sigma})&=&
\dfrac{\det((G_{\sigma}(x,y))_{x,y\in V_{\rm int}})^{1/2}}
{\det((G(x,y))_{x,y\in V_{\rm int}})^{1/2}}
\\
&=&
\dfrac{\operatorname{det}_{\mathcal{S}_{0,+}^{\rm db}}(-\Delta_{\sigma}^{\rm db})^{1/2}}
{\operatorname{det}_{\mathcal{S}_{0,-}^{\rm db}}(-\Delta_{\sigma}^{\rm db})^{1/2}}
\\
&=&
\exp
\Big(
- \sum_{\hat{x}\in \mathcal{D}\cap V_{\rm int}^{\rm db}}
\int_{0}^{+\infty} p_{\sigma}(t,\hat{x},\psi(\hat{x})) \dfrac{dt}{t}
\Big),
\end{eqnarray*}
where $\mathcal{D}$ is any fundamental domain of the covering map $\bpi_{\sigma}$.
Moreover, conditionally on the event
$\{\tilde{\phi}\in\LT_{\sigma}\}$,
the field $\vert \tilde{\phi}\vert$ has the same distribution as
$\vert\tilde{\phi}_{\sigma}\vert$.
\end{thmm}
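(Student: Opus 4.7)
The plan is to prove both the probability formula and the conditional distribution identity simultaneously via a sign-flipping involution on the subdivided discrete graphs $\LG^{(N)}$, followed by passage to the metric graph limit using Proposition \ref{Prop sigma N metric}. First, I would define the discrete analogue $\LT^{(N)}_{\sigma^{(N)}} \subset \R^{V^{(N)}}$ to be the set of configurations $\varphi$ with $\varphi|_{V_\partial}=0$ such that, for every connected component $U$ (in $\LG^{(N)}$) of $\{\varphi \neq 0\}$, the restriction of $\sigma^{(N)}$ to the induced subgraph on $U$ is a trivial gauge field in the sense of Lemma \ref{Lem hol gauge equive}. Next, construct a measurable involution $T_N$ of $\LT^{(N)}_{\sigma^{(N)}}$ as follows: on each component $U$, triviality produces a function $\hat\sigma_U : U \to \{\pm 1\}$, unique up to overall sign, with $\hat\sigma_U(x)\hat\sigma_U(y) = \sigma^{(N)}(x,y)$ on every edge in $U$; fix the ambiguity by anchoring $\hat\sigma_U(v_U) = +1$ at the lowest-index vertex $v_U$ of $U$, extend by $+1$ outside the components, and set $T_N(\varphi)(x) = \hat\sigma(x)\varphi(x)$. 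Since $|T_N(\varphi)| = |\varphi|$, the map $T_N$ has unit Jacobian, preserves $\LT^{(N)}_{\sigma^{(N)}}$, and the canonical anchoring makes it an involution.

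The heart of the argument is the energy identity
\[
\mathcal E^{(N)}(T_N(\varphi)) = \mathcal E^{(N)}_{\sigma^{(N)}}(\varphi), \qquad \varphi \in \LT^{(N)}_{\sigma^{(N)}},
\]
which follows from the routine gauge transformation identity $\mathcal E_{\hat\sigma\cdot\sigma}(\hat\sigma\cdot\varphi) = \mathcal E_\sigma(\varphi)$ after noting that $\hat\sigma \cdot \sigma^{(N)}$ is trivial on all edges whose both endpoints lie in a common component, while edges with an endpoint where $\varphi = 0$ contribute $\varphi^2$ gauge-independently. Changing variables under $T_N$ then yields
\[
Z^{(N)}\, \mathbb{P}(\phi^{(N)} \in \LT^{(N)}_{\sigma^{(N)}})
= \int_{\LT^{(N)}_{\sigma^{(N)}}} e^{-\mathcal E^{(N)}(\psi)/2}\, d\psi
= \int_{\LT^{(N)}_{\sigma^{(N)}}} e^{-\mathcal E^{(N)}_{\sigma^{(N)}}(\varphi)/2}\, d\varphi
= Z^{(N)}_{\sigma^{(N)}}\, \mathbb{P}(\phi^{(N)}_{\sigma^{(N)}} \in \LT^{(N)}_{\sigma^{(N)}}).
\]
The ratio $Z^{(N)}_{\sigma^{(N)}}/Z^{(N)}$ is in fact independent of $N$ by the orthogonal decomposition in the proof of Proposition \ref{Prop sigma N}, and equals $\sqrt{\det G_\sigma/\det G}$ by standard Gaussian evaluation. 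Combined with Corollary \ref{Cor ratio det} this gives the claimed value $\exp(-\mu^{\rm loop}(\{\hol^{\sigma}=-1\}))$, and the remaining equivalent expressions follow directly from Corollaries \ref{Cor mu neg hol cov} and \ref{Cor dets db}. Since $|T_N(\psi)| = |\psi|$, the same bijection identifies the conditional law of $|\phi^{(N)}|$ given $\{\phi^{(N)} \in \LT^{(N)}_{\sigma^{(N)}}\}$ with that of $|\phi^{(N)}_{\sigma^{(N)}}|$ given $\{\phi^{(N)}_{\sigma^{(N)}} \in \LT^{(N)}_{\sigma^{(N)}}\}$.

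It remains to pass to the metric graph limit $N \to \infty$. Lemma \ref{Lem T sigma} gives $\mathbb{P}(\phi^{(N)}_{\sigma^{(N)}} \in \LT^{(N)}_{\sigma^{(N)}}) \to 1$, while a sample-wise argument based on the intermediate value property of $\tilde\phi$ and the density of the subvertices $V^{(N)}$ in $\widetilde{\LG}$ yields $\{\phi^{(N)} \in \LT^{(N)}_{\sigma^{(N)}}\} \to \{\tilde\phi \in \LT_\sigma\}$ almost surely; the conditional distribution identity follows from the same convergence applied to absolute values. I expect the main technical obstacle to lie precisely in this $N \to \infty$ passage: at any finite $N$ the discrete event $\LT^{(N)}_{\sigma^{(N)}}$ may misclassify configurations whose metric graph interpolation dips through zero inside a subdivided edge (so a component that is separated in the continuum can still appear merged in the discretization, or vice versa), and establishing that these finite-$N$ misclassifications happen on a set of probability tending to zero requires a careful analysis of the interpolating Brownian bridges near their zeros.
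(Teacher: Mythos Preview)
Your approach has a fatal gap at the discrete level, before any limit is taken. For the discrete GFF $\phi^{(N)}$ (and equally for $\phi^{(N)}_{\sigma^{(N)}}$), each interior value is a nondegenerate one-dimensional Gaussian, so almost surely $\phi^{(N)}(x)\neq 0$ for every $x\in V^{(N)}_{\rm int}$. Consequently the set $\{\phi^{(N)}\neq 0\}$ is a.s.\ exactly $V^{(N)}_{\rm int}$, and its connected components are determined by the graph structure alone, not by the sample. Whether $\sigma^{(N)}$ restricts to a trivial gauge field on each such component is then a deterministic yes/no, so $\mathbb{P}(\phi^{(N)}\in\LT^{(N)}_{\sigma^{(N)}})\in\{0,1\}$. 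In the interesting case where some loop with holonomy $-1$ lies inside $V^{(N)}_{\rm int}$, both sides of your displayed identity vanish; it reads $0=0$ and carries no information, and no amount of care in the $N\to\infty$ passage can extract the strictly positive number $\mathbb{P}(\tilde\phi\in\LT_\sigma)$ from a sequence of identically zero probabilities. The issue you flag as a ``technical obstacle'' is not a matter of misclassification near zeros: it is the structural fact that a discrete Gaussian vector has no zeros at all, whereas the event $\LT_\sigma$ is built entirely out of the zero set of the continuous field.

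The paper's proof confronts this by working on the metric graph throughout, but excising a small open interval $J_{e,\varepsilon}$ from the middle of each edge-line $I_e$ with $\sigma(e)=-1$. On the cut metric graph $\widetilde{\LG}_{\sigma,\varepsilon}$ there is a reference GFF $\tilde\phi_{\sigma,\varepsilon}$ with respect to which both $\tilde\phi_{\vert\widetilde{\LG}_{\sigma,\varepsilon}}$ and $\tilde\phi_{\sigma\vert\widetilde{\LG}_{\sigma,\varepsilon}}$ have explicit densities, differing only by the sign inside $(\tilde\varphi(x^+_{e,\varepsilon})\mp\tilde\varphi(x^-_{e,\varepsilon}))^2$. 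Your sign-flip is then implemented as a bicoloring map $\TT_{\sigma,\varepsilon}$ acting on the finitely many cut-points $x^\pm_{e,\varepsilon}$; it is well defined precisely on the positive-probability event $\LT_{\sigma,\varepsilon}$, interchanges the two densities, and preserves the reference law. Sending $\varepsilon\to 0$ yields both the probability formula and the conditional-law identity. So your energy/sign-flip idea is morally the right one, but it has to be run in a setting where the topological event has positive measure; physically cutting the $-1$ edges, rather than subdividing them, is what makes that possible.
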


\begin{rem}
\label{Rem not iso}
We would like to emphasize that
Theorem \ref{Thm main 1} is not a direct consequence of
the isomorphism identity for $\tilde{\phi}$ (Theorem \ref{Thm Lupu iso}). 
Indeed, if $\tilde{\phi}\in\LT_{\sigma}$, then
$\widetilde{\mathcal{L}}^{1/2}$ cannot contain loops
$\wp$ with $\hol^{\sigma}(\wp)=-1$.
However, even if 
$\widetilde{\mathcal{L}}^{1/2}$ does not contain loops
$\wp$ with $\hol^{\sigma}(\wp)=-1$,
it can still form clusters $\mathcal{C}$ out of loops 
$\wp$ with $\hol^{\sigma}(\wp)= +1$
such that $\bpi_{\sigma}^{-1}(\mathcal{C})$ 
is connected.
See Figure \ref{Fig chain ann}.
In particular,
\begin{displaymath}
\mathbb{P}(\tilde{\phi}\in\LT_{\sigma})
\neq
\mathbb{P}\big(\forall \wp\in \widetilde{\mathcal{L}}^{1/2}, \hol^{\sigma}(\wp)= 1
\big).
\end{displaymath}
More precisely,
\begin{displaymath}
\mathbb{P}(\tilde{\phi}\in\LT_{\sigma})
=
\mathbb{P}\big(\forall \wp\in \widetilde{\mathcal{L}}^{1/2}, \hol^{\sigma}(\wp)= 1
\big)^{2}.
\end{displaymath}
However, this difference in exponents can be explained through the isomorphism
identity for the gauge-twisted GFF $\tilde{\phi}_{\sigma}$:
see Section \ref{Subsec iso topo} and the identity \eqref{Eq KL metric alpha 1}.
This factor $2$ in the exponent plays a crucial role in our intensity doubling conjecture; 
see Section \ref{Sec conj high dim} and Conjecture \ref{Conj double}.
\end{rem}

\begin{figure}
\includegraphics[scale=0.3]{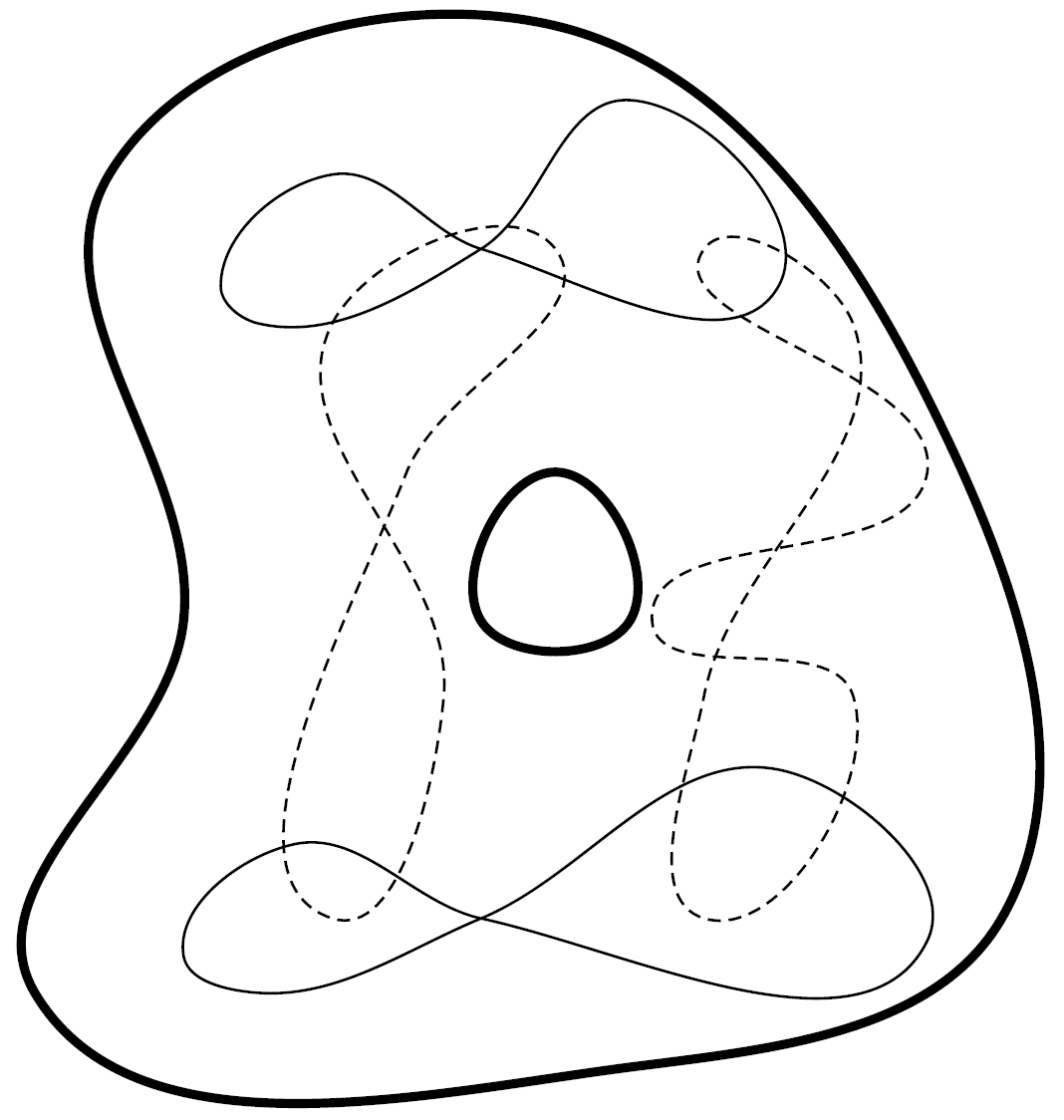}
\caption{
A non-contractible chain in an annular domain formed by contractible loops.}
\label{Fig chain ann}
\end{figure}

\begin{rem}
\label{Rem moments}
Theorem \ref{Thm main 1} implies in particular that for every
$x\in \widetilde{\LG}$,
\begin{displaymath}
\E[\tilde{\phi}(x)^{2}\vert \tilde{\phi}\in\LT_{\sigma}]
= G_{\sigma}(x,x).
\end{displaymath}
Further, for any $k\in\mathbb{N}\setminus\{0\}$
and $x_{1},x_{2},\dots, x_{k}\in \widetilde{\LG}$,
for the conditional moments (products of squares)
\begin{displaymath}
\E[\tilde{\phi}(x_{1})^{2}\tilde{\phi}(x_{2})^{2}
\dots \tilde{\phi}(x_{k})^{2}\vert \tilde{\phi}\in\LT_{\sigma}]
\end{displaymath}
the Wick's formula with kernel $G_{\sigma}$ holds.
\end{rem}

\begin{rem}
\label{Rem interact bos}
Theorem \ref{Thm main 1} immediately extends to interacting bosonic fields
on the metric graph where the interaction potential depends only on the absolute value of the field.
For instance, one can consider the $\varphi^{4}$ fields on $\widetilde{\LG}$.
Let $g>0$ be a coupling constant and consider the relative partition functions
\begin{displaymath}
Z^{\varphi^{4}}_{g} = 
\E
\Big[
\exp\Big(
-g\int_{\widetilde{\LG}}\tilde{\phi}(x)^{4}\,\tilde{m}(dx)
\Big)
\Big],
\qquad
Z^{\varphi^{4}}_{g,\sigma} = 
\E
\Big[
\exp\Big(
-g\int_{\widetilde{\LG}}\tilde{\phi}_{\sigma}(x)^{4}\,\tilde{m}(dx)
\Big)
\Big].
\end{displaymath}
Let $\tilde{\rho}$ be the field on $\widetilde{\LG}$
with density
\begin{displaymath}
\dfrac{1}{Z^{\varphi^{4}}_{g}}
\exp\Big(
-g\int_{\widetilde{\LG}}\tilde{\varphi}(x)^{4}\,\tilde{m}(dx)
\Big)
\end{displaymath}
with respect to the GFF $\tilde{\phi}$.
This is the $\varphi^{4}$ field on the metric graph.
Similarly, let $\tilde{\rho}_{\sigma}$ be the field on $\widetilde{\LG}$
with density
\begin{displaymath}
\dfrac{1}{Z^{\varphi^{4}}_{g,\sigma}}
\exp\Big(
-g\int_{\widetilde{\LG}}\tilde{\varphi}(x)^{4}\,\tilde{m}(dx)
\Big)
\end{displaymath}
with respect to the GFF $\tilde{\phi}_{\sigma}$.
This is the $\sigma$-twisted $\varphi^{4}$ field.
Then Theorem \ref{Thm main 1} implies that
\begin{equation}
\label{Eq ratio phi 4}
\mathbb{P}(\tilde{\rho}\in \LT_{\sigma})
=
\dfrac{\det((G_{\sigma}(x,y))_{x,y\in V_{\rm int}})^{1/2} 
Z^{\varphi^{4}}_{g,\sigma}}
{\det((G(x,y))_{x,y\in V_{\rm int}})^{1/2} Z^{\varphi^{4}}_{g}},
\end{equation}
and that conditionally on $\tilde{\rho}\in \LT_{\sigma}$,
the field $\vert \tilde{\rho}\vert$ is distributed as
$\vert \tilde{\rho}_{\sigma}\vert$.
In \eqref{Eq ratio phi 4} one again gets a ratio of partition functions,
this time for the $\varphi^{4}$ field and its $\sigma$-twisted version.
One can replace the interaction potential $\varphi^{4}$
by any other potential $\mathcal{V}(\vert \varphi\vert)$ bounded from below and 
depending only on the absolute value of the field.
\end{rem}

The rest of this section is dedicated to the proof of Theorem \ref{Thm main 1}.
Given a gauge field $\sigma\in\{-1,1\}^{E}$, we will denote
\begin{displaymath}
E_{\sigma,-} = 
\{e\in E\vert \sigma(e) = -1\}.
\end{displaymath}
The proof will be done under the following additional assumption.
\begin{assume}
\label{Assume pin}
Every connected component of the subgraph $(V, E\setminus E_{\sigma,-})$
of $\LG$ intersects $V_{\partial}$.
\end{assume}
The above assumption is needed because in the proof we will consider GFFs on subgraphs of $\LG$ (and sub-metric-graphs of $\widetilde{\LG}$), and thus each connected component of the subgraph needs to be connected to the boundary $V_{\partial}$ in order to pin the corresponding field.
The above assumption is however not restrictive at all.
Indeed, if a couple of graph and gauge field $(\LG,\sigma)$
does not satisfy Assumption \ref{Assume pin},
then one can enlarge $\LG$ by adding an extra site $\dagger$ to $V_{\partial}$
and connect $\dagger$ to every vertex in $x\in V_{\rm int}$ by an edge
$\{x,\dagger\}$, with a conductance
$C(x,\dagger)=\delta>0$;
and extend the gauge field $\sigma$ with
$\sigma(x,\dagger) = 1$ for every $x\in V_{\rm int}$.
Then the extended graph and gauge field now satisfy Assumption \ref{Assume pin}.
Further, if one has Theorem  \ref{Thm main 1} for the extended graph and gauge field,
one gets Theorem \ref{Thm main 1} for the initial graph and gauge field by letting
$\delta\to 0$ (recall that $\delta$ is the conductance to the extra boundary vertex 
$\dagger$).
In essence, adding $\dagger$ is equivalent to considering massive GFFs with a small square-mass $\delta$.

\medskip

Let be
\begin{displaymath}
\underline{r} = \min_{e\in E} C(e)^{-1}.
\end{displaymath}
Given $e\in E$, $x_{e}^{\rm m}$ will denote the middle point of the edge-line
$I_{e}$:
\begin{displaymath}
x_{e}^{\rm m} = x_{e, C(e)^{-1}/2}.
\end{displaymath}
For $\varepsilon \leq C(e)^{-1}/2$, we will denote by
$x_{e,\varepsilon}^{-}$ and $x_{e,\varepsilon}^{+}$ the two points of
$I_{e}$ at distance $\varepsilon$ from $x_{e}^{\rm m}$:
\begin{displaymath}
x_{e,\varepsilon}^{-} = x_{e, C(e)^{-1}/2 + \varepsilon},
\qquad 
x_{e,\varepsilon}^{+} = x_{e, C(e)^{-1}/2 - \varepsilon}.
\end{displaymath}
The points $x_{e,\varepsilon}^{-}$ and $x_{e,\varepsilon}^{+}$
will play symmetric roles, the distinction is just for the notations.
Let $J_{e,\varepsilon}$ denote the open subinterval of $I_{e}$
delimited by $x_{e,\varepsilon}^{-}$ and $x_{e,\varepsilon}^{+}$:
\begin{displaymath}
J_{e,\varepsilon} = 
\{ x_{e,u} \vert C(e)^{-1}/2 - \varepsilon < u < C(e)^{-1}/2 + \varepsilon\}.
\end{displaymath}
See Figure \ref{Fig interval}.

\begin{figure}
\includegraphics[scale=0.5]{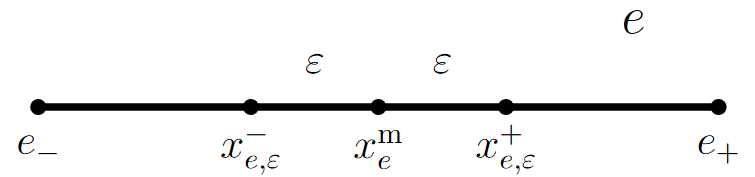}
\caption{Inside an edge-line.}
\label{Fig interval}
\end{figure}

For $\varepsilon\in (0,\underline{r}/2)$, 
let $\widetilde{\LG}_{\sigma,\varepsilon}$ denote
\begin{displaymath}
\widetilde{\LG}_{\sigma,\varepsilon} = 
\widetilde{\LG}\setminus \bigcup_{e\in E_{\sigma,-}}J_{e,\varepsilon}.
\end{displaymath}
In this way, $\widetilde{\LG}_{\sigma,\varepsilon}$ is a closed subset of the metric graph $\widetilde{\LG}$, and is itself a metric graph, 
but not necessarily connected.
Consider the discrete graph 
$\LG_{\sigma,\varepsilon} = (V_{\sigma,\varepsilon},E_{\sigma,\varepsilon})$
constructed as follows.
The set of vertices is given by
\begin{displaymath}
V_{\sigma,\varepsilon} = 
V \cup \{x_{e,\varepsilon}^{-}\vert e\in E_{\sigma,-}\}
\cup\{x_{e,\varepsilon}^{+}\vert e\in E_{\sigma,-}\},
\end{displaymath}
and the set of edges given by
\begin{displaymath}
E_{\sigma,\varepsilon} =
(E\setminus  E_{\sigma,-})\cup
\{\{x_{e,\varepsilon}^{-},e_{-}\}\vert e\in E_{\sigma,-}\}
\cup
\{\{x_{e,\varepsilon}^{+},e_{+}\}\vert e\in E_{\sigma,-}\}.
\end{displaymath}
The graph $\LG_{\sigma,\varepsilon}$ is endowed with the following conductances:
for $e\in E\setminus  E_{\sigma,-}$, we keep the same conductance
$C(e)$. 
Further, for $e\in E_{\sigma,-}$, we set
\begin{displaymath}
C(x_{e,\varepsilon}^{-},e_{-}) = 
C(x_{e,\varepsilon}^{+},e_{+})
= (C(e)^{-1}/2 - \varepsilon)^{-1}.
\end{displaymath}
Then $\widetilde{\LG}_{\sigma,\varepsilon}$ is actually the metric graph associated to the electrical network $\LG_{\sigma,\varepsilon}$.
We will denote by $\tilde{\phi}_{\sigma,\varepsilon}$
the metric graph GFF on $\widetilde{\LG}_{\sigma,\varepsilon}$
with $0$ boundary conditions on $V_{\partial}$.
Assumption \ref{Assume pin} ensures that $\tilde{\phi}_{\sigma,\varepsilon}$
is well defined.
The field $\tilde{\phi}_{\sigma,\varepsilon}$ is a usual metric graph GFF,
i.e. \textbf{not} gauge-twisted.
We will also consider the restrictions of the metric graph
GFFs $\tilde{\phi}$ and $\tilde{\phi}_{\sigma}$ to
$\widetilde{\LG}_{\sigma,\varepsilon}$.

For $x,y\in \widetilde{\LG}$, we will denote
\begin{displaymath}
G(x,y) = \E[\tilde{\phi}(x),\tilde{\phi}(y)],
\qquad
G_{\sigma}(x,y) = \E[\tilde{\phi}_{\sigma}(x),\tilde{\phi}_{\sigma}(y)].
\end{displaymath}
In this way we have a natural extension of Green's functions 
$G$ and $G_{\sigma}$ beyond $V\times V$.
note that $G$ defined in this way is continuous on 
$\widetilde{\LG}\times \widetilde{\LG}$, but 
$G_{\sigma}$ has discontinuities at the middle points 
$x_{e}^{\rm m}$ for $e\in E_{\sigma,-}$.
We will also denote by $G_{\sigma,\varepsilon}(x,y)$ the
covariance kernel of $\tilde{\phi}_{\sigma,\varepsilon}$ on 
$\widetilde{\LG}_{\sigma,\varepsilon}\times \widetilde{\LG}_{\sigma,\varepsilon}$.

\begin{lemma}
\label{Lem abs cont}
The restrictions $\tilde{\phi}_{\vert\widetilde{\LG}_{\sigma,\varepsilon}}$ and 
$\tilde{\phi}_{\sigma \vert \widetilde{\LG}_{\sigma,\varepsilon}}$
are both absolutely continuous with respect to $\tilde{\phi}_{\sigma,\varepsilon}$.
The Radon-Nikodym derivatives are given by 
\begin{equation}
\label{Eq density phi eps}
\dfrac{d\mathbb{P}_{\tilde{\phi}_{\vert\widetilde{\LG}_{\sigma,\varepsilon}}}}
{d\mathbb{P}_{\tilde{\phi}_{\sigma,\varepsilon}}}
(\tilde{\varphi})
=
\dfrac{(\det (G_{\sigma,\varepsilon}(x,y))_{x,y\in V_{\sigma,\varepsilon}\setminus V})^{1/2}}
{(\det (G(x,y))_{x,y\in V_{\sigma,\varepsilon}\setminus V})^{1/2}}
\exp\Big(
-
\dfrac{1}{4\varepsilon}\sum_{e\in E_{\sigma,-}}
(\tilde{\varphi}(x_{e,\varepsilon}^{+})-\tilde{\varphi}(x_{e,\varepsilon}^{-}))^{2}
\Big),
\end{equation}
\begin{equation}
\label{Eq density phi sigma eps}
\dfrac{d\mathbb{P}_{\tilde{\phi}_{\sigma\vert\widetilde{\LG}_{\sigma,\varepsilon}}}}
{d\mathbb{P}_{\tilde{\phi}_{\sigma,\varepsilon}}}
(\tilde{\varphi})
=
\dfrac{(\det (G_{\sigma,\varepsilon}(x,y))_{x,y\in V_{\sigma,\varepsilon}\setminus V})^{1/2}}
{(\det (G_{\sigma}(x,y))_{x,y\in V_{\sigma,\varepsilon}\setminus V})^{1/2}}
\exp\Big(
-
\dfrac{1}{4\varepsilon}\sum_{e\in E_{\sigma,-}}
(\tilde{\varphi}(x_{e,\varepsilon}^{+})+\tilde{\varphi}(x_{e,\varepsilon}^{-}))^{2}
\Big).
\end{equation}
\end{lemma}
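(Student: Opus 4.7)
The plan is to exploit the Markov property of the metric graph GFF to reduce both identities to a finite-dimensional Gaussian density calculation on $V_{\sigma,\varepsilon}$. First, I claim that conditionally on their values at $V_{\sigma,\varepsilon}$, all three fields $\tilde{\phi}_{\vert\widetilde{\LG}_{\sigma,\varepsilon}}$, $\tilde{\phi}_{\sigma\vert\widetilde{\LG}_{\sigma,\varepsilon}}$ and $\tilde{\phi}_{\sigma,\varepsilon}$ are independent standard Brownian bridges on the edge-intervals of $\LG_{\sigma,\varepsilon}$. For $\tilde{\phi}$ and $\tilde{\phi}_{\sigma,\varepsilon}$ this is the usual metric graph Markov property. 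For $\tilde{\phi}_{\sigma}$ it follows from the double-cover picture of Section \ref{Subsec cov metric}: the section $\mathsf{s}_{\sigma}$ is continuous on every connected component of $\widetilde{\LG}_{\sigma,\varepsilon}$, as its only discontinuities lie inside the removed intervals $J_{e,\varepsilon}$, so the pull-back of the continuous field $\tilde{\phi}^{\rm db}_{\sigma,-}$ inherits the Markov and Brownian bridge structure. Consequently both Radon--Nikodym derivatives depend on $\tilde{\varphi}$ only through its values at $V_{\sigma,\varepsilon}$.

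Second, I identify the three marginal laws on $V_{\sigma,\varepsilon}$. By subdividing edges, $\tilde{\phi}_{\vert V_{\sigma,\varepsilon}}$ is the discrete GFF on the electrical network $\LG_{\sigma,\varepsilon}^{+}$ obtained from $\LG_{\sigma,\varepsilon}$ by adjoining, for each $e\in E_{\sigma,-}$, a cross-bridge $\{x_{e,\varepsilon}^{+},x_{e,\varepsilon}^{-}\}$ of conductance $(2\varepsilon)^{-1}$; its covariance is $(G(x,y))_{x,y\in V_{\sigma,\varepsilon}}$. The field $\tilde{\phi}_{\sigma,\varepsilon}$ restricted to $V_{\sigma,\varepsilon}$ is the discrete GFF on $\LG_{\sigma,\varepsilon}$ with covariance $(G_{\sigma,\varepsilon}(x,y))_{x,y\in V_{\sigma,\varepsilon}}$. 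The field $\tilde{\phi}_{\sigma\vert V_{\sigma,\varepsilon}}$, transported from the double cover, is the $\sigma$-twisted discrete GFF on $\LG_{\sigma,\varepsilon}^{+}$ where the cross-bridges carry the gauge sign $-1$, with covariance $(G_{\sigma}(x,y))_{x,y\in V_{\sigma,\varepsilon}}$.

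Third, the Radon--Nikodym derivative between two centered Gaussians on $\R^{V_{\sigma,\varepsilon}\setminus V_{\partial}}$ is given by the standard ratio-of-densities formula. The precision difference is exactly the Laplacian of the cross-bridges, whose quadratic form gives $(2\varepsilon)^{-1}\sum_{e\in E_{\sigma,-}}(\tilde{\varphi}(x_{e,\varepsilon}^{+})-\tilde{\varphi}(x_{e,\varepsilon}^{-}))^{2}$ in the untwisted case and $(2\varepsilon)^{-1}\sum_{e\in E_{\sigma,-}}(\tilde{\varphi}(x_{e,\varepsilon}^{+})+\tilde{\varphi}(x_{e,\varepsilon}^{-}))^{2}$ in the $\sigma$-twisted case, matching the exponentials in \eqref{Eq density phi eps} and \eqref{Eq density phi sigma eps} after the $1/2$ factor. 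The direct calculation produces determinant ratios over $V_{\sigma,\varepsilon}\setminus V_{\partial}$, and these equal the claimed ratios over $V_{\sigma,\varepsilon}\setminus V$ by a Schur-complement argument: the conditional covariance of the field on $V_{\rm int}$ given the enlarged boundary $V_{\partial}\cup(V_{\sigma,\varepsilon}\setminus V)$ is the Green's function of the interior walk with the $x_{e,\varepsilon}^{\pm}$ absorbed as boundary, and because the cross-bridges then connect two boundary vertices they play no role; hence this conditional covariance agrees under all three fields and cancels in the ratio.

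The main obstacle is the rigorous handling of the genuinely discontinuous $\tilde{\phi}_{\sigma}$, which I defer to the continuous lifted field $\tilde{\phi}^{\rm db}_{\sigma,-}$: the Markov property and Brownian bridge decomposition are applied on $\widetilde{\LG}^{\rm db}_{\sigma}$ and transported downstairs via $\mathsf{s}_{\sigma}$, using that $\mathsf{s}_{\sigma}$ is continuous precisely where $\widetilde{\LG}_{\sigma,\varepsilon}$ is. The sign flip of the cross-bridges in the twisted discrete GFF on $\LG_{\sigma,\varepsilon}^{+}$ then produces the switch from $(\varphi_{+}-\varphi_{-})^{2}$ to $(\varphi_{+}+\varphi_{-})^{2}$ in the exponential factor.
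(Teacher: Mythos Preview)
Your proof is correct and follows essentially the same route as the paper: introduce the augmented network $\LG_{\sigma,\varepsilon}^{+}$ (the paper writes $\LG_{\sigma,\varepsilon}^{\ast}$) with the cross-bridges $\{x_{e,\varepsilon}^{-},x_{e,\varepsilon}^{+}\}$ of conductance $(2\varepsilon)^{-1}$, identify the restrictions of $\tilde{\phi}$ and $\tilde{\phi}_{\sigma}$ to $V_{\sigma,\varepsilon}$ as the untwisted, respectively $\sigma^{\ast}$-twisted, discrete GFF on that network, read off the Gaussian density from the extra edge terms, and extend to the metric graph by the common Brownian-bridge interpolation.

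Two small points of comparison. First, you justify the Markov/bridge structure of $\tilde{\phi}_{\sigma}$ via the continuous lift $\tilde{\phi}^{\rm db}_{\sigma,-}$ on the double cover, whereas the paper identifies $\tilde{\phi}_{\sigma\vert V_{\sigma,\varepsilon}}$ directly as the $\sigma^{\ast}$-twisted GFF on $\LG_{\sigma,\varepsilon}^{\ast}$ by analogy with Proposition~\ref{Prop sigma N metric}; both arguments give the same conclusion. Second, you add a Schur-complement step to reduce the determinant index set from $V_{\sigma,\varepsilon}\setminus V_{\partial}$ to $V_{\sigma,\varepsilon}\setminus V$, observing that once the $x_{e,\varepsilon}^{\pm}$ are absorbed as boundary the cross-bridges (and their gauge signs) become irrelevant, so the conditional covariance on $V_{\rm int}$ is the same for all three fields and cancels in the ratio. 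The paper simply asserts the normalization constant without this reduction, so your argument is in fact more complete on this point.
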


\begin{proof}
Let us introduce the auxiliary electrical network
$\LG_{\sigma,\varepsilon}^{\ast}$ as follows.
Its set of vertices is $V_{\sigma,\varepsilon}$, the same as for
$\LG_{\sigma,\varepsilon}$.
Its set of edges is
\begin{displaymath}
E_{\sigma,\varepsilon}^{\ast} = 
E_{\sigma,\varepsilon}\cup
\{\{x_{e,\varepsilon}^{-},x_{e,\varepsilon}^{+}\}\vert e\in E_{\sigma,-} \}.
\end{displaymath}
The conductances on $E_{\sigma,\varepsilon}^{\ast}$ are as follows.
If $e\in E_{\sigma,\varepsilon}$, then we keep the same conductance $C(e)$.
Moreover, we set
\begin{displaymath}
C(x_{e,\varepsilon}^{-},x_{e,\varepsilon}^{+})
= \dfrac{1}{2\varepsilon},
\end{displaymath}
for $e\in E_{\sigma,-}$.
Then the metric graph associated to the elctrical network
$\LG_{\sigma,\varepsilon}^{\ast}$ is again 
$\widetilde{\LG}$, the same as for $\LG$.
This is because for every $e\in E_{\sigma,-}$,
\begin{multline*}
C(e_{-},x_{e,\varepsilon}^{-})^{-1}
+ C(x_{e,\varepsilon}^{-},x_{e,\varepsilon}^{+})^{-1}
+ C(x_{e,\varepsilon}^{+},e_{+})^{-1}
\\
= (C(e)^{-1}/2 - \varepsilon) + 2\varepsilon + (C(e)^{-1}/2 - \varepsilon)
= C(e)^{-1},
\end{multline*}
as for resistors in series.
Therefore, the restriction of $\tilde{\phi}$ to 
$V_{\sigma,\varepsilon}$ is the discrete GFF on the electrical network 
$\LG_{\sigma,\varepsilon}^{\ast}$,
with $0$ boundary conditions on $V_{\partial}$.
The right-hand side of \eqref{Eq density phi eps}
is then the density of $(\tilde{\phi}(x))_{x\in V_{\sigma,\varepsilon}}$
with respect to 
$(\tilde{\phi}_{\sigma,\varepsilon}(x))_{x\in V_{\sigma,\varepsilon}}$,
both being discrete GFFs, on $\LG_{\sigma,\varepsilon}^{\ast}$ and
$\LG_{\sigma,\varepsilon}$ respectively.
Indeed, compared to $\LG_{\sigma,\varepsilon}$, 
$\LG_{\sigma,\varepsilon}^{\ast}$ contains only the extra edges
$\{x_{e,\varepsilon}^{-},x_{e,\varepsilon}^{+}\}$ for $e\in E_{\sigma,-}$,
with conductance $1/(2\varepsilon)$ each, 
so the corresponding factors appear in the density.
The ratio of determinants of Green's functions to the power $1/2$ is the normalization factor. 
Finally, the expression of the density is the same when we consider the whole metric graph $\widetilde{\LG}_{\sigma,\varepsilon}$,
because both for $\tilde{\phi}$ and $\tilde{\phi}_{\sigma,\varepsilon}$
we interpolate with the same independent Brownian bridges. 

Now let us deal with $\tilde{\phi}_{\sigma}$.
We consider on $\LG_{\sigma,\varepsilon}^{\ast}$
the gauge-field $\sigma^{\ast}\in\{-1,1\}^{E_{\sigma,\varepsilon}^{\ast}}$
defined as follows.
For $e\in E\setminus E_{\sigma,-}$,
$\sigma^{\ast}(e) = \sigma(e) = 1$.
Further,
\begin{displaymath}
\sigma^{\ast}(e_{-},x_{e,\varepsilon}^{-})
=
\sigma^{\ast}(e_{+},x_{e,\varepsilon}^{+})
= 1,
\end{displaymath}
and
\begin{displaymath}
\sigma^{\ast}(x_{e,\varepsilon}^{-},x_{e,\varepsilon}^{+}) = -1.
\end{displaymath}
In this way, for every $\wp$
nearest-neighbor path in $\LG_{\sigma,\varepsilon}^{\ast}$
joining two points, $x,y\in V$,
\begin{displaymath}
\hol^{\sigma^{\ast}}(\wp) = 
\hol^{\sigma}(\wp_{\LG}),
\end{displaymath}
where $\wp_{\LG}$ is the nearest-neighbor path in $\LG$ from $x$ to $y$
obtained by taking the trace of $\wp$ on $V$.
The restriction of $\tilde{\phi}_{\sigma}$ to $V_{\sigma,\varepsilon}$
is the $\sigma^{\ast}$-twisted GFF on $\LG_{\sigma,\varepsilon}^{\ast}$
with $0$ boundary conditions on $V_{\partial}$.
This is similar to Proposition \ref{Prop sigma N metric}.
Then \eqref{Eq density phi sigma eps} is just the density of
 $(\tilde{\phi}_{\sigma}(x))_{x\in V_{\sigma,\varepsilon}}$
with respect to 
$(\tilde{\phi}_{\sigma,\varepsilon}(x))_{x\in V_{\sigma,\varepsilon}}$.
Because of $\sigma^{\ast}$,
there is a $+$ instead of a $-$ in 
\eqref{Eq density phi sigma eps} compared to
\eqref{Eq density phi eps}.
\end{proof}

Note that Lemma \ref{Lem abs cont} implies in particular that
$\tilde{\phi}_{\sigma \vert \widetilde{\LG}_{\sigma,\varepsilon}}$ is absolutely continuous with respect to 
$\tilde{\phi}_{\vert \widetilde{\LG}_{\sigma,\varepsilon}}$.
However, if one considers the whole metric graph $\widetilde{\LG}$,
then $\tilde{\phi}_{\sigma}$ is singular with respect to 
$\tilde{\phi}$, unless $E_{\sigma,-}=\emptyset$.
For instance, $\tilde{\phi}_{\sigma}$ is a.s. discontinuous at the middle points
$x_{e}^{\rm m}$ for $e\in E_{\sigma,-}$.
So that is the reason why we restrict to $\varepsilon$-far away from these middle points.

Let $\mathcal{C}(\widetilde{\LG}_{\sigma,\varepsilon})$ denote the space of
continuous real-valued functions on $\widetilde{\LG}_{\sigma,\varepsilon}$.
For $f\in \mathcal{C}(\widetilde{\LG}_{\sigma,\varepsilon})$,
$\{ f\neq 0\}$ will denote the non-zero set of $f$,
with $\{ f\neq 0\}\subset\widetilde{\LG}_{\sigma,\varepsilon}$.
Let $\LT_{\sigma,\varepsilon}$ be the following subset of 
$\mathcal{C}(\widetilde{\LG}_{\sigma,\varepsilon})$:
\begin{displaymath}
\LT_{\sigma,\varepsilon} = 
\Big\{
f \in \mathcal{C}(\widetilde{\LG}_{\sigma,\varepsilon})
\Big\vert~\forall~ U
\text{ connected component of }
\Big(\{ f\neq 0\}\bigcup_{e\in E_{\sigma,-}} J_{e,\varepsilon}\Big),
\bpi_{\sigma}^{-1}(U)
\text{ is \textbf{not} connected}
\Big\}.
\end{displaymath}
In the above definition, we enlarge $\{ f\neq 0\}$ with the 
intervals $J_{e,\varepsilon}$ for $e\in E_{\sigma,-}$,
and then consider the connected components.

Given $f\in \mathcal{C}(\widetilde{\LG}_{\sigma,\varepsilon})$,
we will define an equivalence relation $\sim_{f}$ on the set
\begin{equation}
\label{Eq x e pm}
\{x^{-}_{e,\varepsilon}\vert e\in E_{\sigma,-}\}\cup
\{x^{+}_{e,\varepsilon}\vert e\in E_{\sigma,-}\}
=V_{\sigma,\varepsilon}\setminus V.
\end{equation}
Two points $x,x'$ in the above set are in the same class if they are in the same connected component of $\{ f\neq 0\}$.
If $f(x)=0$, then $x$ is alone in its class.
We will denote by $\mathtt{V}_{f}$ the set \eqref{Eq x e pm}
quotiented by the equivalence relation $\sim_{f}$,
and by $[x]_{f}$ the equivalence class of a point $x$.
Next we will introduce an undirected multigraph $\mathtt{\Gamma}_{f}$ 
induced by the function $f$.
Note that in general $\mathtt{\Gamma}_{f}$ is not connected
and may have self-loops and multiple edges.
The set of vertices of $\mathtt{\Gamma}_{f}$ is the quotient set
$\mathtt{V}_{f}$.
Further, for each $e\in E_{\sigma,-}$, 
add to the multigraph $\mathtt{\Gamma}_{f}$
and edge with the ends $[x^{-}_{e,\varepsilon}]_{f}$ and 
$[x^{+}_{e,\varepsilon}]_{f}$.
So the number of edges of $\mathtt{\Gamma}_{f}$ is $\vert E_{\sigma,-}\vert$.

\begin{lemma}
\label{Lem bipartite graph}
Let $f\in \mathcal{C}(\widetilde{\LG}_{\sigma,\varepsilon})$.
Then the multigraph $\mathtt{\Gamma}_{f}$ is bipartite,
i.e. does not contain cycles with odd number of edges,
if and only if $f\in \LT_{\sigma,\varepsilon}$.
\end{lemma}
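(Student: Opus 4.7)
The plan is to translate the topological condition $f \in \LT_{\sigma,\varepsilon}$ into a purely combinatorial statement about odd closed walks in $\mathtt{\Gamma}_f$, and then invoke the standard equivalence (bipartite $\Leftrightarrow$ no odd cycle $\Leftrightarrow$ no odd closed walk). First I would unpack the definition of $\LT_{\sigma,\varepsilon}$: setting
\[
\mathcal{N}_f \;:=\; \{f\neq 0\}\,\cup\,\bigcup_{e\in E_{\sigma,-}}J_{e,\varepsilon},
\]
the condition $f\in\LT_{\sigma,\varepsilon}$ says that $\pi_\sigma^{-1}(U)$ is disconnected for every connected component $U$ of $\mathcal{N}_f$. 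By the characterization recalled at the start of Section~\ref{Subsec topo}, this is equivalent to saying that no such $U$ contains a continuous loop $\wp$ with $\hol^\sigma(\wp) = -1$. Since, on $\widetilde{\LG}$, the holonomy is $(-1)^k$ where $k$ is the number of crossings of edge-lines $I_e$ with $\sigma(e) = -1$, and since these crossings are exactly the crossings of the intervals $J_{e,\varepsilon}$, our target reduces to: no component of $\mathcal{N}_f$ admits a loop that crosses an odd total number of the $J_{e,\varepsilon}$.

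Next I would set up the bijective dictionary between loops in $\mathcal{N}_f$ and closed walks in $\mathtt{\Gamma}_f$. Given a continuous loop $\wp$ in a component $U$ of $\mathcal{N}_f$, record the successive crossings of the intervals $J_{e,\varepsilon}$; each crossing is an edge of $\mathtt{\Gamma}_f$, and between two consecutive crossings the loop stays in $\{f \neq 0\} \cap \widetilde{\LG}_{\sigma,\varepsilon}$, thereby certifying that the relevant entry and exit points $x^\pm_{e,\varepsilon}$ lie in the same $\sim_f$-class. This produces a closed walk in $\mathtt{\Gamma}_f$ whose length equals the number of $J_{e,\varepsilon}$ crossings, hence whose parity is $\hol^\sigma(\wp)$. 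Conversely, any closed walk in $\mathtt{\Gamma}_f$ lifts to a continuous loop in $\mathcal{N}_f$: connect consecutive endpoints lying in the same class by a path inside their common connected component of $\{f\neq 0\}$, and traverse each edge of $\mathtt{\Gamma}_f$ by going through the corresponding $J_{e,\varepsilon}$. The resulting loop has holonomy $(-1)^k$, where $k$ is the length of the walk.

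Combining the two directions, the existence of a loop of holonomy $-1$ inside some component of $\mathcal{N}_f$ is equivalent to the existence of a closed walk of odd length in $\mathtt{\Gamma}_f$. Since a multigraph contains an odd closed walk if and only if it contains an odd cycle, this is in turn equivalent to $\mathtt{\Gamma}_f$ being non-bipartite. This yields the desired equivalence.

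The main subtlety to watch for is the handling of self-loops and multiple edges in $\mathtt{\Gamma}_f$. A self-loop at $[x^{\pm}_{e,\varepsilon}]_f$ corresponds to the case $[x^-_{e,\varepsilon}]_f = [x^+_{e,\varepsilon}]_f$, and genuinely produces a length-$1$ odd cycle whose lift is a loop crossing a single $J_{e,\varepsilon}$ with holonomy $-1$; a pair of parallel edges produces a length-$2$ even cycle, consistent with the two crossings cancelling. One should also observe that connected components of $\{f \neq 0\}$ disjoint from every $x^{\pm}_{e,\varepsilon}$ create no difficulty, because they lie entirely in $\widetilde{\LG}_{\sigma,\varepsilon}$ and thus contain no loop that crosses any $J_{e,\varepsilon}$, so the corresponding $\pi_\sigma^{-1}$ is automatically disconnected and these components do not contribute vertices or edges to $\mathtt{\Gamma}_f$. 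Once these bookkeeping points are checked, the argument above is complete.
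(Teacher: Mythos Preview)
Your proof is correct and follows essentially the same strategy as the paper's: build a correspondence between continuous loops in $\mathcal{N}_f$ and closed walks in $\mathtt{\Gamma}_f$, identify the holonomy with the parity of the walk length, and invoke the standard bipartite characterization. One small imprecision worth flagging: crossings of $I_e$ need not coincide \emph{exactly} with crossings of $J_{e,\varepsilon}$ (a loop may cross $J_{e,\varepsilon}$ several times while traversing $I_e$ once), but the parities agree, which is all your argument actually uses; the paper spells out this parity point explicitly.
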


\begin{proof}
Given a continuous loop $\wp$ in the subset
\begin{equation}
\label{Eq f neq 0 augmented}
\Big(\{ f\neq 0\}\bigcup_{e\in E_{\sigma,-}} J_{e,\varepsilon}\Big),
\end{equation}
it induces a nearest-neighbor loop in the multigraph $\mathtt{\Gamma}_{f}$,
which we will denote by $[\wp]_{f}$ by a slight abuse of notation.
If $\wp$ does not visit any point in $V_{\sigma,\varepsilon}\setminus V$
\eqref{Eq x e pm},
then $[\wp]_{f}$ is just the empty loop.
Conversely, every nearest-neighbor loop in $\mathtt{\Gamma}_{f}$
can be lifted to a continuous loop in \eqref{Eq f neq 0 augmented}.
This follows from the way $\mathtt{\Gamma}_{f}$ has been constructed.
The edges visited by $[\wp]_{f}$ correspond to the intervals
$J_{e,\varepsilon}$ (with $e\in E_{\sigma,-}$) crossed by $\wp$ from one end to the opposite.
The holonomy $\hol^{\sigma}(\wp)$ is given by the parity of the number of
edge-lines $I_{e}$, for $e\in E_{\sigma,-}$,
crossed by $\wp$.
This parity is the same as for the crossings of $J_{e,\varepsilon}$,
for $e\in E_{\sigma,-}$, although the number of crossings itself might be different.
This is because each time $\wp$ crosses an edge-line $I_{e}$,
it will cross the corresponding subinterval $J_{e,\varepsilon}$
an odd number of times.
Thus, $\hol^{\sigma}(\wp)$ is also given by the parity of the number of edges of
multigraph loop $[\wp]_{f}$.
Thus, $\mathtt{\Gamma}_{f}$ is bipartite if and only if the subset
\eqref{Eq f neq 0 augmented} does not contain loops $\wp$ with
$\hol^{\sigma}(\wp) = -1$, 
which is the same as $f\in \LT_{\sigma,\varepsilon}$.
\end{proof}

Note that although $\mathcal{C}(\widetilde{\LG}_{\sigma,\varepsilon})$
is infinite, the number of different partitions of
$V_{\sigma,\varepsilon}\setminus V$ \eqref{Eq x e pm} 
induced by the functions $f$ can be only finite.
Given $f\in \LT_{\sigma,\varepsilon}$, 
one can choose a ``coloring''
$\bic_{f}: V_{\sigma,\varepsilon}\setminus V
\rightarrow \{ -1,1\}$ satisfying the following two properties.
\begin{enumerate}
\item The value of $\bic_{f}$ is the same across every
equivalence class for $\sim_{f}$.
\item For every $e\in E_{\sigma,-}$,
$\bic_{f}(x^{-}_{e,\varepsilon}) =  - \bic_{f}(x^{+}_{e,\varepsilon})$.
\end{enumerate}
The existence of such $\bic_{f}$ is ensured by the fact that
$\mathtt{\Gamma}_{f}$ is bipartite.
Moreover, one can choose $\bic_{f}$ to depend only on the partition
$\mathtt{V}_{f}$ induced by $\sim_{f}$.

Next we define a transformation $\TT_{\sigma,\varepsilon}$
acting on $\LT_{\sigma,\varepsilon}$.
Given $f\in \LT_{\sigma,\varepsilon}$,
the function $\TT_{\sigma,\varepsilon}f$ is defined as follows.
\begin{enumerate}
\item For every $x\in \widetilde{\LG}_{\sigma,\varepsilon}$
such that $f(x)=0$, we also have $(\TT_{\sigma,\varepsilon}f)(x)=0$.
\item On every connected component $U$ of $\{ f\neq 0\}$ such that $U$ does not intersect the set $V_{\sigma,\varepsilon}\setminus V$,
we have $(\TT_{\sigma,\varepsilon}f)_{\vert U} = f_{\vert U}$. 
\item On every connected component $U$ of $\{ f\neq 0\}$ intersecting 
$V_{\sigma,\varepsilon}\setminus V$,
we have $(\TT_{\sigma,\varepsilon}f)_{\vert U} = \bic_{f}(U)f_{\vert U}$,
where $\bic_{f}(U)$ is the common value of $\bic_{f}$ on $U$.
\end{enumerate}
So, in essence, $\TT_{\sigma,\varepsilon}f$ is obtained by flipping the signs
on some connected components of $\{ f\neq 0\}$.
In this way, $\TT_{\sigma,\varepsilon}f$ is continuous too, and
$\vert \TT_{\sigma,\varepsilon}f\vert \equiv f$.
Therefore, $\TT_{\sigma,\varepsilon}f\in \LT_{\sigma,\varepsilon}$ too.
Moreover, the equivalence relation 
$\sim_{\TT_{\sigma,\varepsilon}f}$ is the same as 
$\sim_{f}$.
Thus, 
$\TT_{\sigma,\varepsilon}^{2}f=\TT_{\sigma,\varepsilon}(\TT_{\sigma,\varepsilon}f) 
= f$.

Further, we will extend $\TT_{\sigma,\varepsilon}$ to the whole
$\mathcal{C}(\widetilde{\LG}_{\sigma,\varepsilon})$.
For $f\in \mathcal{C}(\widetilde{\LG}_{\sigma,\varepsilon})\setminus 
\LT_{\sigma,\varepsilon}$,
we set $\TT_{\sigma,\varepsilon}f = f$.

Let us denote by $\tilde{\xi}_{\varepsilon}$ and $\tilde{\eta}_{\varepsilon}$ 
the fields
\begin{displaymath}
\tilde{\xi}_{\varepsilon} = 
\TT_{\sigma,\varepsilon}(\tilde{\phi}_{\vert\widetilde{\LG}_{\sigma,\varepsilon}}),
\qquad 
\tilde{\eta}_{\varepsilon} =
\TT_{\sigma,\varepsilon}(\tilde{\phi}_{\sigma,\varepsilon}).
\end{displaymath}

\begin{lemma}
\label{Lem abs cont 2}
The field $\tilde{\xi}_{\varepsilon}$ is absolutely continuous with respect to 
$\tilde{\eta}_{\varepsilon}$, and the corresponding density is
\begin{displaymath}
\dfrac{d\mathbb{P}_{\tilde{\xi}_{\varepsilon}}}
{d\mathbb{P}_{\tilde{\eta}_{\varepsilon}}}
(\tilde{\varphi})
=
\dfrac{(\det (G_{\sigma,\varepsilon}(x,y))_{x,y\in V_{\sigma,\varepsilon}\setminus V})^{1/2}}
{(\det (G(x,y))_{x,y\in V_{\sigma,\varepsilon}\setminus V})^{1/2}}
\exp\Big(
-
\dfrac{1}{4\varepsilon}\sum_{e\in E_{\sigma,-}}
(\tilde{\varphi}(x_{e,\varepsilon}^{+})
-(-1)^{\ind{\tilde{\varphi}\in \LT_{\sigma,\varepsilon}}}
\tilde{\varphi}(x_{e,\varepsilon}^{-}))^{2}
\Big).
\end{displaymath}
\end{lemma}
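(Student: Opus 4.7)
The plan is to deduce the formula from Lemma~\ref{Lem abs cont} by exploiting the fact that $\TT_{\sigma,\varepsilon}$ is a measurable involution on $\mathcal{C}(\widetilde{\LG}_{\sigma,\varepsilon})$. First I would verify carefully that $\TT_{\sigma,\varepsilon}^{2}=\mathrm{Id}$. Outside of $\LT_{\sigma,\varepsilon}$ the map is the identity by definition. On $\LT_{\sigma,\varepsilon}$, flipping $f$ on a connected component of $\{f\neq 0\}$ does not change $\{f\neq 0\}$, nor the partition $\mathtt{V}_{f}$; hence $\sim_{\TT_{\sigma,\varepsilon}f}$ coincides with $\sim_{f}$, and the rule chosen to pick $\omega_{f}$ from the partition gives $\omega_{\TT_{\sigma,\varepsilon}f}=\omega_{f}$, so applying $\TT_{\sigma,\varepsilon}$ a second time undoes the sign flips. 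Measurability of $\TT_{\sigma,\varepsilon}$ follows from the fact that the map $f\mapsto\mathtt{V}_{f}$ takes only finitely many values and each preimage is Borel (one uses that $\LT_{\sigma,\varepsilon}$ is Borel in $\mathcal{C}(\widetilde{\LG}_{\sigma,\varepsilon})$), with a deterministic choice rule for $\omega_{f}$ depending only on $\mathtt{V}_{f}$.

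Second, since $\tilde{\xi}_{\varepsilon}$ and $\tilde{\eta}_{\varepsilon}$ are pushforwards of $\tilde{\phi}_{\vert\widetilde{\LG}_{\sigma,\varepsilon}}$ and $\tilde{\phi}_{\sigma,\varepsilon}$ under the same involution $\TT_{\sigma,\varepsilon}$, the general change of variables for Radon-Nikodym derivatives under a measurable involution gives
\[
\frac{d\mathbb{P}_{\tilde{\xi}_{\varepsilon}}}{d\mathbb{P}_{\tilde{\eta}_{\varepsilon}}}(\tilde{\varphi})
=
\frac{d\mathbb{P}_{\tilde{\phi}_{\vert\widetilde{\LG}_{\sigma,\varepsilon}}}}{d\mathbb{P}_{\tilde{\phi}_{\sigma,\varepsilon}}}(\TT_{\sigma,\varepsilon}\tilde{\varphi}),
\]
and Lemma~\ref{Lem abs cont} supplies the right-hand side. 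The prefactor involving determinants is deterministic, so it is unaffected by the substitution $\tilde{\varphi}\mapsto\TT_{\sigma,\varepsilon}\tilde{\varphi}$, and already matches the claim.

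Third, I would evaluate the exponent on $\TT_{\sigma,\varepsilon}\tilde{\varphi}$. If $\tilde{\varphi}\notin\LT_{\sigma,\varepsilon}$, then $\TT_{\sigma,\varepsilon}\tilde{\varphi}=\tilde{\varphi}$, which yields the $-$ sign case and matches $(-1)^{\ind{\tilde{\varphi}\in\LT_{\sigma,\varepsilon}}}=1$. If $\tilde{\varphi}\in\LT_{\sigma,\varepsilon}$, property~(2) of $\omega_{\tilde{\varphi}}$ gives
$(\TT_{\sigma,\varepsilon}\tilde{\varphi})(x^{\pm}_{e,\varepsilon})=\omega_{\tilde{\varphi}}(x^{\pm}_{e,\varepsilon})\tilde{\varphi}(x^{\pm}_{e,\varepsilon})$ with
$\omega_{\tilde{\varphi}}(x^{+}_{e,\varepsilon})=-\omega_{\tilde{\varphi}}(x^{-}_{e,\varepsilon})$ for every $e\in E_{\sigma,-}$. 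Hence
\[
(\TT_{\sigma,\varepsilon}\tilde{\varphi})(x^{+}_{e,\varepsilon})-(\TT_{\sigma,\varepsilon}\tilde{\varphi})(x^{-}_{e,\varepsilon})
=\omega_{\tilde{\varphi}}(x^{+}_{e,\varepsilon})\bigl(\tilde{\varphi}(x^{+}_{e,\varepsilon})+\tilde{\varphi}(x^{-}_{e,\varepsilon})\bigr),
\]
whose square equals $(\tilde{\varphi}(x^{+}_{e,\varepsilon})+\tilde{\varphi}(x^{-}_{e,\varepsilon}))^{2}$, matching $(-1)^{\ind{\tilde{\varphi}\in\LT_{\sigma,\varepsilon}}}=-1$.

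The main subtlety to handle carefully is the consistent choice of $\omega_{\tilde{\varphi}}$ on the event where some $\tilde{\varphi}(x^{\pm}_{e,\varepsilon})$ vanishes or where $x^{+}_{e,\varepsilon}$ and $x^{-}_{e,\varepsilon}$ lie in different components (affecting which class $\omega_{\tilde{\varphi}}$ sees). These configurations form a $\mathbb{P}_{\tilde{\eta}_{\varepsilon}}$-negligible set because under $\mathbb{P}_{\tilde{\phi}_{\sigma,\varepsilon}}$ the values at vertices in $V_{\sigma,\varepsilon}\setminus V$ have a joint Gaussian density, and the pushforward by $\TT_{\sigma,\varepsilon}$ keeps this set negligible. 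Hence the identity above holds $\mathbb{P}_{\tilde{\eta}_{\varepsilon}}$-a.e., which is enough to establish the claimed density.
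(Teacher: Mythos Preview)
Your proposal is correct and follows essentially the same route as the paper: both arguments reduce to the identity
\[
\frac{d\mathbb{P}_{\tilde{\xi}_{\varepsilon}}}{d\mathbb{P}_{\tilde{\eta}_{\varepsilon}}}(\tilde{\varphi})
=
\frac{d\mathbb{P}_{\tilde{\phi}_{\vert\widetilde{\LG}_{\sigma,\varepsilon}}}}{d\mathbb{P}_{\tilde{\phi}_{\sigma,\varepsilon}}}(\TT_{\sigma,\varepsilon}\tilde{\varphi}),
\]
then plug in \eqref{Eq density phi eps} and use $\omega_{\tilde\varphi}(x_{e,\varepsilon}^{+})=-\omega_{\tilde\varphi}(x_{e,\varepsilon}^{-})$ on $\LT_{\sigma,\varepsilon}$ to turn the difference into a sum. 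You are slightly more explicit than the paper in justifying this pushforward identity via the involution property of $\TT_{\sigma,\varepsilon}$ (the paper records $\TT_{\sigma,\varepsilon}^{2}=\mathrm{Id}$ just before the lemma and then simply invokes that $\TT_{\sigma,\varepsilon}$ is ``deterministic and measurable''), and you add a remark on null sets that the paper omits; but the substance is identical.
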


\begin{proof}
Since $\tilde{\phi}_{\vert\widetilde{\LG}_{\sigma,\varepsilon}}$
is absolutely continuous with respect to $\tilde{\phi}_{\sigma,\varepsilon}$
(Lemma \ref{Lem abs cont})
and $\TT_{\sigma,\varepsilon}$ is a deterministic and measurable transformation,
we get that  $\tilde{\xi}_{\varepsilon}$ is absolutely continuous with respect to 
$\tilde{\eta}_{\varepsilon}$, with
\begin{displaymath}
\dfrac{d\mathbb{P}_{\tilde{\xi}_{\varepsilon}}}
{d\mathbb{P}_{\tilde{\eta}_{\varepsilon}}}
(\tilde{\varphi})
=
\dfrac{d\mathbb{P}_{\tilde{\phi}_{\vert\widetilde{\LG}_{\sigma,\varepsilon}}}}
{d\mathbb{P}_{\tilde{\phi}_{\sigma,\varepsilon}}}
(\TT_{\sigma,\varepsilon}\tilde{\varphi}).
\end{displaymath}
Further we use the expression \eqref{Eq density phi eps}.
Take $e\in E_{\sigma,-}$.
For $\tilde{\varphi}\not\in \LT_{\sigma,\varepsilon}$,
\begin{displaymath}
((\TT_{\sigma,\varepsilon}\tilde{\varphi})(x_{e,\varepsilon}^{+})
-
(\TT_{\sigma,\varepsilon}\tilde{\varphi})(x_{e,\varepsilon}^{-}))^{2}
=
(\tilde{\varphi}(x_{e,\varepsilon}^{+})
-
\tilde{\varphi}(x_{e,\varepsilon}^{-}))^{2}.
\end{displaymath}
For $\tilde{\varphi}\in \LT_{\sigma,\varepsilon}$,
\begin{displaymath}
((\TT_{\sigma,\varepsilon}\tilde{\varphi})(x_{e,\varepsilon}^{+})
-
(\TT_{\sigma,\varepsilon}\tilde{\varphi})(x_{e,\varepsilon}^{-}))^{2}
=
(\tilde{\varphi}(x_{e,\varepsilon}^{+})
+
\tilde{\varphi}(x_{e,\varepsilon}^{-}))^{2}.
\end{displaymath}
This because 
$\bic_{f}(x^{-}_{e,\varepsilon})\bic_{f}(x^{+}_{e,\varepsilon}) = -1$.
\end{proof}

\begin{lemma}
\label{Lem same law}
The field $\tilde{\eta}_{\varepsilon} =
\TT_{\sigma,\varepsilon}(\tilde{\phi}_{\sigma,\varepsilon})$
has the same distribution as $\tilde{\phi}_{\sigma,\varepsilon}$.
\end{lemma}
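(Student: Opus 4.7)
The plan is to invoke Lupu's isomorphism (Theorem \ref{Thm Lupu iso}), applied component by component to the standard (untwisted) metric graph GFF $\tilde{\phi}_{\sigma,\varepsilon}$ on $\widetilde{\LG}_{\sigma,\varepsilon}$, and to combine it with the observation that $\TT_{\sigma,\varepsilon}$ acts only by flipping signs on some connected components of $\{f\neq 0\}$, with the flip pattern being a measurable functional of $|f|$ alone. Since $|\TT_{\sigma,\varepsilon} f|\equiv |f|$, the absolute value of the field is trivially preserved; the content of the lemma is therefore that the signs attached to the sign clusters remain, after the transformation, i.i.d.\ uniform in $\{-1,+1\}$ conditionally on the absolute value.

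The first step is to verify that the entire construction of $\TT_{\sigma,\varepsilon}$ factors through $|f|$. The non-zero set $\{f\neq 0\}=\{|f|>0\}$, its connected components, the equivalence relation $\sim_{f}$, the partition $\mathtt{V}_{f}$, and the multigraph $\mathtt{\Gamma}_{f}$ are all determined by $|f|$. To upgrade the existential ``one can choose $\omega_{f}$ depending only on $\mathtt{V}_{f}$'' into a genuine measurable selection, I would fix once and for all, for each of the finitely many partitions of the finite set $V_{\sigma,\varepsilon}\setminus V$ whose associated $\mathtt{\Gamma}$ is bipartite, a canonical valid coloring (e.g.\ the lexicographically smallest one with $\omega(x_{e_{0},\varepsilon}^{+})=+1$ at some reference edge). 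Then $\omega_{f}$ becomes a deterministic function of the partition and hence of $|f|$, so that the multiplicative factor $\omega(\mathcal{C})\in\{-1,+1\}$ that $\TT_{\sigma,\varepsilon}$ attaches to a cluster $\mathcal{C}$ of $\tilde{\phi}_{\sigma,\varepsilon}$ is measurable with respect to the $\sigma$-algebra generated by $|\tilde{\phi}_{\sigma,\varepsilon}|$.

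With this in hand, the conclusion is a short conditioning argument. By Theorem \ref{Thm Lupu iso}, conditionally on $|\tilde{\phi}_{\sigma,\varepsilon}|$, the cluster signs $(s_{\mathcal{C}})_{\mathcal{C}}$ of $\tilde{\phi}_{\sigma,\varepsilon}$ are i.i.d.\ uniform in $\{-1,+1\}$; hence so are the products $(\omega(\mathcal{C})s_{\mathcal{C}})_{\mathcal{C}}$, because the $\omega(\mathcal{C})$ are deterministic given $|\tilde{\phi}_{\sigma,\varepsilon}|$. Reading Lupu's isomorphism in the reverse direction, the field that carries absolute value $|\tilde{\phi}_{\sigma,\varepsilon}|$ and cluster signs $\omega(\mathcal{C})s_{\mathcal{C}}$, namely $\TT_{\sigma,\varepsilon}(\tilde{\phi}_{\sigma,\varepsilon})$, has the same conditional law as $\tilde{\phi}_{\sigma,\varepsilon}$ given $|\tilde{\phi}_{\sigma,\varepsilon}|$, and therefore the same unconditional law. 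The only mildly delicate point is the measurable-selection step for $\omega_{f}$, which is painless because all the relevant combinatorial data lives in a finite set; the rest of the argument is purely structural.
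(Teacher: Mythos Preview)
Your proof is correct and follows essentially the same approach as the paper: both argue that $\TT_{\sigma,\varepsilon}$ preserves $|f|$, that the sign-flip pattern $\omega_{f}$ is measurable with respect to $|f|$ alone, and that multiplying i.i.d.\ uniform $\{-1,1\}$ cluster signs by $|f|$-measurable signs leaves the conditional law unchanged. The only cosmetic difference is that the paper cites the lighter fact that cluster signs are conditionally i.i.d.\ uniform given the absolute value (\cite[Lemma~3.2]{Lupu2016Iso}) rather than invoking the full loop-soup isomorphism of Theorem~\ref{Thm Lupu iso}, but the logical content is identical.
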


\begin{proof}
First, by construction,
$\vert\TT_{\sigma,\varepsilon}(\tilde{\phi}_{\sigma,\varepsilon})\vert
\equiv \vert \tilde{\phi}_{\sigma,\varepsilon}\vert$.
Further, the signs $\bic_{\tilde{\phi}_{\sigma,\varepsilon}}$
(that is to say $\bic_{f}$ with
$f=\tilde{\phi}_{\sigma,\varepsilon}$)
are by construction measurable with respect to 
$\vert \tilde{\phi}_{\sigma,\varepsilon}\vert$.
Then, we use the fact that the signs of $\tilde{\phi}_{\sigma,\varepsilon}$
(do not confuse them with $\bic_{\tilde{\phi}_{\sigma,\varepsilon}}$)
are distributed, conditionally on $\vert \tilde{\phi}_{\sigma,\varepsilon}\vert$,
as independent uniform $\{-1,1\}$-valued r.v.s, 
one for each connected component of
$\{\tilde{\phi}_{\sigma,\varepsilon}\neq 0\}$;
see \cite[Lemma 3.2]{Lupu2016Iso}.
This implies that the product of signs
$\bic_{\tilde{\phi}_{\sigma,\varepsilon}}
\operatorname{sign}(\tilde{\phi}_{\sigma,\varepsilon})$
has the same conditional distribution given 
$\vert \tilde{\phi}_{\sigma,\varepsilon}\vert$
as $\operatorname{sign}(\tilde{\phi}_{\sigma,\varepsilon})$.
Therefore, $\TT_{\sigma,\varepsilon}(\tilde{\phi}_{\sigma,\varepsilon})$
has the same distribution as $\tilde{\phi}_{\sigma,\varepsilon}$.
\end{proof}

\begin{lemma}
\label{Lem const ratio}
The function
\begin{displaymath}
\varepsilon\mapsto
\dfrac{(\det (G_{\sigma}(x,y))_{x,y\in V_{\sigma,\varepsilon}\setminus V})^{1/2}}
{(\det (G(x,y))_{x,y\in V_{\sigma,\varepsilon}\setminus V})^{1/2}}
\end{displaymath}
is constant on $(0,\underline{r}/2)$.
More precisely,
for every $\varepsilon\in (0,\underline{r}/2)$,
\begin{eqnarray*}
\dfrac{(\det (G_{\sigma}(x,y))_{x,y\in V_{\sigma,\varepsilon}\setminus V})^{1/2}}
{(\det (G(x,y))_{x,y\in V_{\sigma,\varepsilon}\setminus V})^{1/2}}
&=&
\dfrac{(\det (G_{\sigma}(x,y))_{x,y\in V_{\rm int}})^{1/2}}
{(\det (G(x,y))_{x,y\in V_{\rm int}})^{1/2}}
\\
&=&
\exp\big(-\mu^{\rm loop}(\{\text{Loops } \wp \text{ with } \hol^{\sigma}(\wp)=-1\})\big).
\end{eqnarray*}
\end{lemma}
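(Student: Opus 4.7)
The plan is to prove the first equality of the lemma by combining a Schur-complement decomposition with the Markov property of the metric graph GFF; the remaining equalities then follow from Corollary~\ref{Cor ratio det} (combined with Corollaries~\ref{Cor mu neg hol cov} and \ref{Cor dets db}) by taking square roots. Let $S=V_{\sigma,\varepsilon}\setminus V$, and view the extensions of $G$ and $G_{\sigma}$ to the metric graph as symmetric positive-definite kernels on $V_{\rm int}\cup S$. Applying the block-matrix determinant formula to both $G$ and $G_{\sigma}$ yields
\[
\det(G|_{V_{\rm int}})\cdot\det(C_{1}) = \det(G|_{V_{\rm int}\cup S}) = \det(G|_{S})\cdot\det(C_{2}),
\]
where $C_{1}$ is the conditional covariance of $(\tilde{\phi}(x))_{x\in S}$ given $\tilde{\phi}|_{V}$ and $C_{2}$ is the conditional covariance of $(\tilde{\phi}(x))_{x\in V_{\rm int}}$ given $\tilde{\phi}|_{S\cup V_{\partial}}$, with analogous quantities $C_{1}^{\sigma},C_{2}^{\sigma}$ associated to $\tilde{\phi}_{\sigma}$. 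The target identity is equivalent to $\det C_{1}\cdot\det C_{2}^{\sigma} = \det C_{1}^{\sigma}\cdot\det C_{2}$, and I will establish the two separate equalities $\det C_{1}=\det C_{1}^{\sigma}$ and $C_{2}=C_{2}^{\sigma}$.

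For $\det C_{1}=\det C_{1}^{\sigma}$, the Markov property at $V$ decomposes both matrices into a direct sum of $2\times 2$ blocks indexed by $e\in E_{\sigma,-}$. Inspecting the defining formulas \eqref{Eq tilde phi sigma 1}--\eqref{Eq tilde phi sigma 3}, one finds that the centered $\tilde{\phi}_{\sigma}$-values at $(x_{e,\varepsilon}^{+},x_{e,\varepsilon}^{-})$ are obtained from the centered $\tilde{\phi}$-values by flipping the sign at the second coordinate, since on the second half of the edge the Brownian bridge $W_{e}$ appears with a minus sign. Thus $C_{1}^{\sigma}=D\,C_{1}\,D$ for some diagonal matrix $D$ with entries $\pm 1$, and $D^{2}=I$ gives the determinant equality.

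For $C_{2}=C_{2}^{\sigma}$, the Markov property at $S\cup V_{\partial}$ identifies the conditional distributions of $\tilde{\phi}|_{V_{\rm int}}$ and $\tilde{\phi}_{\sigma}|_{V_{\rm int}}$ as Gaussian vectors whose covariance equals the Green's function of $\widetilde{\LG}_{\sigma,\varepsilon}$ with Dirichlet boundary $S\cup V_{\partial}$, restricted to $V_{\rm int}\times V_{\rm int}$. By construction the gauge $\sigma$ puts $-1$ only on the excised middle sub-edges and is trivial on the whole surviving electrical network $\LG_{\sigma,\varepsilon}$, so the conditional covariance is the same for both fields and the matrix equality $C_{2}=C_{2}^{\sigma}$ follows. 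Combining this with the previous step yields the $\varepsilon$-independent ratio identity, and Corollary~\ref{Cor ratio det} together with Corollary~\ref{Cor dets db} supplies the expressions as a loop-measure exponential and as determinants on the double cover.

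The main potential obstacle is that the Markov property for the discontinuous field $\tilde{\phi}_{\sigma}$ has not been stated as such in the preceding sections. The cleanest route to it is via the continuous lift $\tilde{\phi}^{\rm db}_{\sigma,-}$: this field is an orthogonal component of the standard metric graph GFF on the double cover $\widetilde{\LG}^{\rm db}_{\sigma}$, hence satisfies the usual strong Markov property, and this property transports along the section $\mathsf{s}_{\sigma}$ through \eqref{Eq phi sigma cov} to give the required property for $\tilde{\phi}_{\sigma}$ at every point of $\widetilde{\LG}$ away from the mid-points $x_{e}^{\rm m}$, which is all that is used above.
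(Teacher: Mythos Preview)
Your proof is correct and takes a genuinely different route from the paper. The paper works entirely through the metric graph loop measure $\tilde{\mu}^{\rm loop}$: it observes that each determinant ratio equals $\exp\big(-\tilde{\mu}^{\rm loop}(\{\text{loops with }\hol^{\sigma}=-1\text{ visiting the chosen vertex set}\})\big)$ (the metric-graph analogue of Corollary~\ref{Cor ratio det}), and then notes that any continuous loop with $\hol^{\sigma}(\wp)=-1$ must cross some $I_{e}$ with $e\in E_{\sigma,-}$ and therefore visit \emph{both} $V$ and $V_{\sigma,\varepsilon}\setminus V$; hence the two loop sets, and the two ratios, coincide. Your argument instead stays in Gaussian linear algebra: the Schur--complement identity reduces the question to $\det C_{1}=\det C_{1}^{\sigma}$ and $C_{2}=C_{2}^{\sigma}$, and you dispatch these with the bridge sign-flip and the observation that the gauge is trivial on $\LG_{\sigma,\varepsilon}$ once the middle segments are excised. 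This is more self-contained---it avoids invoking the Brownian loop measure on the metric graph---while the paper's argument is more in keeping with the loop-soup framework used throughout. One small remark: the cleanest justification of the Markov property for $\tilde{\phi}_{\sigma}$ here is not really the double-cover lift you sketch in the last paragraph but rather Lemma~\ref{Lem abs cont} itself (or its proof): since the density of $\tilde{\phi}_{\sigma\vert\widetilde{\LG}_{\sigma,\varepsilon}}$ with respect to $\tilde{\phi}_{\sigma,\varepsilon}$ depends only on the values at $S$, conditioning on those values makes the two fields agree in law on $V_{\rm int}$, giving $C_{2}^{\sigma}=C_{2}$ directly.
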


\begin{proof}
Consider the measure $\tilde{\mu}^{\rm loop}$ \eqref{Eq mu metric}
on Brownian loops on the metric graph $\widetilde{\LG}$.
Similarly to Corollary \eqref{Cor ratio det},
we have that
\begin{displaymath}
\dfrac{(\det (G_{\sigma}(x,y))_{x,y\in V_{\sigma,\varepsilon}\setminus V})^{1/2}}
{(\det (G(x,y))_{x,y\in V_{\sigma,\varepsilon}\setminus V})^{1/2}}
=
\exp\big(-\tilde{\mu}^{\rm loop}(\{\text{Loops } \wp \text{ visiting }
V_{\sigma,\varepsilon}\setminus V \text{ and with } \hol^{\sigma}(\wp)=-1
\})\big).
\end{displaymath}
Moreover, 
\begin{eqnarray*}
\dfrac{\det((G_{\sigma}(x,y))_{x,y\in V_{\rm int}})^{1/2}}
{\det((G(x,y))_{x,y\in V_{\rm int}})^{1/2}}
&=&
\exp\big(-\mu^{\rm loop}(\{\text{Loops } \wp \text{ with } \hol^{\sigma}(\wp)=-1\})\big)
\\&=&
\exp\big(-\tilde{\mu}^{\rm loop}(\{\text{Loops } \wp \text{ visiting }
 V \text{ and with } \hol^{\sigma}(\wp) = -1
\})\big),
\end{eqnarray*}
where the second equality is due to the fact that the measure on discrete loops
$\mu^{\rm loop}$ can be obtained by taking the trace on $V$ of metric graph
loops under $\tilde{\mu}^{\rm loop}$.
We conclude by using the fact that any continuous loop $\wp$ on $\widetilde{\LG}$
with $\hol^{\sigma}(\wp) = -1$ has to visit both $V$ and 
$V_{\sigma,\varepsilon}\setminus V$. 
That is to say a loop not visiting either of two subsets has holonomy $1$,
because then it cannot cross any of the edge-lines $I_{e}$ for
$e\in E_{\sigma,-}$.
Therefore,
\begin{multline*}
\tilde{\mu}^{\rm loop}(\{\text{Loops } \wp \text{ visiting }
V_{\sigma,\varepsilon}\setminus V \text{ and with } \hol^{\sigma}(\wp)=-1\})
=
\tilde{\mu}^{\rm loop}(\{\text{Loops } \wp \text{ with } \hol^{\sigma}(\wp)=-1\})
\\
=
\tilde{\mu}^{\rm loop}(\{\text{Loops } \wp \text{ visiting }
 V \text{ and with } \hol^{\sigma}(\wp) = -1\})
=
\mu^{\rm loop}(\{\text{Loops } \wp \text{ with } \hol^{\sigma}(\wp)=-1\}).
\qedhere
\end{multline*}
\end{proof}

\begin{proof}[Proof of Theorem \ref{Thm main 1}]
By Lemma \ref{Lem abs cont},
the field $\tilde{\phi}_{\sigma \vert \widetilde{\LG}_{\sigma,\varepsilon}}$
is absolutely continuous with respect to the field 
$\tilde{\phi}_{\sigma,\varepsilon}$.
By Lemma \ref{Lem same law}, 
the field $\tilde{\eta}_{\varepsilon}$ has the same distribution
as $\tilde{\phi}_{\sigma,\varepsilon}$.
By Lemma \ref{Lem abs cont 2}, the fields
$\tilde{\eta}_{\varepsilon}$ and $\tilde{\xi}_{\varepsilon}$
are mutually absolutelly continuous, the corresponding Radon–Nikodym 
derivative being positive on $\mathcal{C}(\widetilde{\LG}_{\sigma,\varepsilon})$.
Therefore, the field 
$\tilde{\phi}_{\sigma \vert \widetilde{\LG}_{\sigma,\varepsilon}}$
is absolutely continuous with respect to the field $\tilde{\xi}_{\varepsilon}$
and the Radon–Nikodym derivative is given by
\begin{multline}
\label{Eq RN event eps}
\dfrac{d\mathbb{P}_{\tilde{\phi}_{\sigma\vert\widetilde{\LG}_{\sigma,\varepsilon}}}}
{d\mathbb{P}_{\tilde{\xi}_{\varepsilon}}}
(\tilde{\varphi})
= \\
\dfrac{(\det (G(x,y))_{x,y\in V_{\sigma,\varepsilon}\setminus V})^{1/2}}
{(\det (G_{\sigma}(x,y))_{x,y\in V_{\sigma,\varepsilon}\setminus V})^{1/2}}
\Big(\ind_{\tilde{\varphi}\in\LT_{\sigma,\varepsilon}}
+\ind_{\tilde{\varphi}\not\in\LT_{\sigma,\varepsilon}}
\exp\Big(
-\dfrac{1}{\varepsilon}\sum_{e\in E_{\sigma,-}}
\tilde{\varphi}(x_{e,\varepsilon}^{+})
\tilde{\varphi}(x_{e,\varepsilon}^{-})
\Big)
\Big).
\end{multline}
Note that $\tilde{\xi}_{\varepsilon}\in \LT_{\sigma,\varepsilon}$
if and only if 
$\tilde{\phi}_{\vert \widetilde{\LG}_{\sigma,\varepsilon}}
\in \LT_{\sigma,\varepsilon}$.
So in particular,
\begin{multline*}
\mathbb{P}(\tilde{\phi}_{\vert \widetilde{\LG}_{\sigma,\varepsilon}}
\in \LT_{\sigma,\varepsilon})
+
\E\Big[
\ind_{\tilde{\phi}_{\vert \widetilde{\LG}_{\sigma,\varepsilon}}
\not\in \LT_{\sigma,\varepsilon}}
\exp\Big(
-\dfrac{1}{\varepsilon}\sum_{e\in E_{\sigma,-}}
\tilde{\phi}(x_{e,\varepsilon}^{+})
\tilde{\phi}(x_{e,\varepsilon}^{-})
\Big)
\Big]
\\
=
\dfrac{(\det (G_{\sigma}(x,y))_{x,y\in V_{\sigma,\varepsilon}\setminus V})^{1/2}}
{(\det (G(x,y))_{x,y\in V_{\sigma,\varepsilon}\setminus V})^{1/2}}
=\exp\big(-\mu^{\rm loop}(\{\text{Loops } \wp \text{ with } \hol^{\sigma}(\wp)=-1\})\big),
\end{multline*}
where the second equality is due to Lemma \ref{Lem const ratio}.
Further,
\begin{displaymath}
\lim_{\varepsilon\to 0}
\mathbb{P}(\tilde{\phi}_{\vert \widetilde{\LG}_{\sigma,\varepsilon}}
\in \LT_{\sigma,\varepsilon})
=
\mathbb{P}(\tilde{\phi}\in \LT_{\sigma})
\end{displaymath}
and
\begin{displaymath}
\E\Big[
\ind_{\tilde{\phi}_{\vert \widetilde{\LG}_{\sigma,\varepsilon}}
\not\in \LT_{\sigma,\varepsilon}}
\exp\Big(
-\dfrac{1}{\varepsilon}\sum_{e\in E_{\sigma,-}}
\tilde{\phi}(x_{e,\varepsilon}^{+})
\tilde{\phi}(x_{e,\varepsilon}^{-})
\Big)
\Big]=
\dfrac{(\det (G_{\sigma}(x,y))_{x,y\in V_{\sigma,\varepsilon}\setminus V})^{1/2}}
{(\det (G(x,y))_{x,y\in V_{\sigma,\varepsilon}\setminus V})^{1/2}}
\mathbb{P}(
\vert\tilde{\phi}_{\sigma\vert \widetilde{\LG}_{\sigma,\varepsilon}}\vert
\not\in \LT_{\sigma,\varepsilon}),
\end{displaymath}
with
\begin{displaymath}
\lim_{\varepsilon\to 0}
\mathbb{P}(
\vert\tilde{\phi}_{\sigma\vert \widetilde{\LG}_{\sigma,\varepsilon}}\vert
\not\in \LT_{\sigma,\varepsilon})
=
\mathbb{P}(
\vert\tilde{\phi}_{\sigma}\vert\not\in \LT_{\sigma})
= 0;
\end{displaymath}
see Lemma \ref{Lem T sigma}. So we get \eqref{Eq prob LT sigma}.

Since $\vert\tilde{\xi}_{\varepsilon}\vert
=\vert\tilde{\phi}_{\vert \widetilde{\LG}_{\sigma,\varepsilon}}\vert$,
the field 
$\vert\tilde{\phi}_{\sigma\vert \widetilde{\LG}_{\sigma,\varepsilon}}\vert$
is absolutely continuous with respect to 
$\vert\tilde{\phi}_{\vert \widetilde{\LG}_{\sigma,\varepsilon}}\vert$,
and
\begin{displaymath}
\lim_{\varepsilon\to 0}
\dfrac{d\mathbb{P}_{\vert\tilde{\phi}_{\sigma\vert\widetilde{\LG}_{\sigma,\varepsilon}}\vert}}
{d\mathbb{P}_{\vert\tilde{\phi}_{\vert \widetilde{\LG}_{\sigma,\varepsilon}}\vert}}
(\tilde{\varphi})
= \dfrac{1}{\mathbb{P}(\tilde{\varphi}\in\LT_{\sigma})}
\ind_{\tilde{\varphi}\in\LT_{\sigma}},
\end{displaymath}
with convergence in $L^{1}$.
So $\mathbb{P}(\tilde{\varphi}\in\LT_{\sigma})^{-1}\ind_{\tilde{\varphi}\in\LT_{\sigma}}$ is the density of $\vert\tilde{\phi}_{\sigma}\vert$
with respect to $\vert\tilde{\phi}\vert$.
\end{proof}

Next we explain how to get the field $\tilde{\phi}_{\sigma}$
given the field $\tilde{\phi}$ conditionned on 
$\tilde{\phi}\in\LT_{\sigma}$,
that is to say how to get the signs, not just the absolute value.
In essence, one has to flip the signs on some of the connected components of
$\{\tilde{\phi}\neq 0\}\setminus\{x_{e}^{\rm m}\vert e\in E_{\sigma, -}\}$.

To avoid trivialities, we assume that the gauge field $\sigma$ is not uniformly $1$. Let us denote
\begin{displaymath}
\FM_{\sigma} = \{x_{e}^{\rm m}\vert e\in E_{\sigma, -}\}.
\end{displaymath}
We endow the subset $\widetilde{\LG}\setminus\FM_{\sigma}$ with the correspondent 
length metric $d'$, which is not the same as the distance inherited from
$\widetilde{\LG}$ since now we are not allowed to cross the points
in $\FM_{\sigma}$.
The metric space $(\widetilde{\LG}\setminus\FM_{\sigma},d')$ is not complete,
but we can consider its completion for $d'$, which is not $\widetilde{\LG}$.
This completion is
\begin{displaymath}
(\widetilde{\LG}\setminus\FM_{\sigma})\cup\FM_{\sigma}^{\pm},
\end{displaymath}
with
\begin{displaymath}
\FM_{\sigma}^{\pm} =
\{x_{e}^{\rm m,+}\vert e\in E_{\sigma, -}\}
\cup
\{x_{e}^{\rm m,-}\vert e\in E_{\sigma, -}\},
\end{displaymath}
where $x_{e}^{\rm m,-}$ and $x_{e}^{\rm m,+}$
are to be understood as left and right infinitesimal neighborhoods of 
$x_{e}^{\rm m}$:
\begin{displaymath}
x_{e}^{\rm m,-} = \lim_{\varepsilon\to 0} x^{-}_{e,\varepsilon},
\qquad
x_{e}^{\rm m,+} = \lim_{\varepsilon\to 0} x^{+}_{e,\varepsilon}.
\end{displaymath}
The fields $\tilde{\phi}$ and $\tilde{\phi}_{\sigma}$
can be both seen as continuous fields on 
$(\widetilde{\LG}\setminus\FM_{\sigma})\cup\FM_{\sigma}^{\pm}$,
with
\begin{displaymath}
\tilde{\phi}(x_{e}^{\rm m,-}) = \tilde{\phi}(x_{e}^{\rm m,+}),
\qquad 
\tilde{\phi}_{\sigma}(x_{e}^{\rm m,-}) = 
-\tilde{\phi}_{\sigma}(x_{e}^{\rm m,+}),
\qquad
e\in E_{\sigma, -}.
\end{displaymath}

A sample of $\tilde{\phi}$ induces a partition $\mathtt{V}_{\tilde{\phi}}$
of $\FM_{\sigma}^{\pm}$, where two points 
$x,x'\in \FM_{\sigma}^{\pm}$ are in the same class if they are in the same connected component of $\{\tilde{\phi}\neq 0\}$
seen as a subset of $\widetilde{\LG}\setminus\FM_{\sigma})\cup\FM_{\sigma}^{\pm}$
and not as a subset of $\widetilde{\LG}$.
We will denote by $[x]_{\tilde{\phi}}$
the equivalence class of $x$ for $x\in \FM_{\sigma}^{\pm}$.
The condition $\tilde{\phi}\in  \LT_{\sigma}$
is equivalent to $\mathtt{V}_{\tilde{\phi}}$ being bicolorable
in the following sense:
there is a map $\bic : \mathtt{V}_{\tilde{\phi}} \rightarrow \{ -1,1\}$,
such that for every $e\in E_{\sigma, -}$,
$\bic([x_{e}^{\rm m,-}]_{\tilde{\phi}}) 
= - \bic([x_{e}^{\rm m,+}]_{\tilde{\phi}})$.
This is similar to Lemma \ref{Lem bipartite graph}.
The number of different bicolorings is $2^{k}$ where $k$ is the number of
connected components of $\{\tilde{\phi}\neq 0\}$ in 
$\widetilde{\LG}$ 
(not in $(\widetilde{\LG}\setminus\FM_{\sigma})\cup\FM_{\sigma}^{\pm}$ !)
that intersect $\FM_{\sigma}$.
Indeed, there are two different colorings per such connected component, 
one being the opposite of the other.

Let $\operatorname{Bic}(\FM_{\sigma})$ denote the set of bicolorable
partitions of $\FM_{\sigma}$.
To each such partition $\mathbf{p}\in \operatorname{Bic}(\FM_{\sigma})$
we will associate (in a deterministic way)
a bicoloring $\bic_{\mathbf{p}}$.
We define the random field $\tilde{\xi}$ on $\widetilde{\LG}$ as follows.
On the event $\tilde{\phi}\not\in \LT_{\sigma}$,
we set $\tilde{\xi} = \tilde{\phi}$.
On the event $\tilde{\phi}\not\in \LT_{\sigma}$,
$\tilde{\xi}$ is defined by the following.
\begin{enumerate}
\item For every $x\in \widetilde{\LG}$,
$\vert \tilde{\xi}(x) \vert = \vert \tilde{\phi}(x) \vert$.
\item On every connected component $U$ of
$\{\tilde{\phi}\neq 0\}$, such that 
$U\cap \FM_{\sigma} = \emptyset$,
we have $\tilde{\xi}_{\vert U} =  \tilde{\phi}_{\vert U}$.
\item On every connected component $U$ of
$\{\tilde{\phi}\neq 0\}\setminus \FM_{\sigma}$ such that
$\overline{U}\cap \FM_{\sigma} \neq \emptyset$,
$\tilde{\xi}_{\vert U} = \bic_{\mathtt{V}_{\tilde{\phi}}}(U) 
\tilde{\phi}_{\vert U}$,
where $\bic_{\mathtt{V}_{\tilde{\phi}}}$ is
$\bic_{\mathbf{p}}$ with $\mathbf{p}$ being the random partition
$\mathtt{V}_{\tilde{\phi}}$, and
$\bic_{\mathtt{V}_{\tilde{\phi}}}(U)$ is the common value of 
$\bic_{\mathtt{V}_{\tilde{\phi}}}([x]_{\tilde{\phi}})$
for $x\in \overline{U}\cap \FM_{\sigma}$.
\end{enumerate}
So $\tilde{\xi}$ is obtain from $\tilde{\phi}$ through a deterministic
transformation. 
It corresponds to flipping the signs of some of the connected components 
of $\{\tilde{\phi}\neq 0\}\setminus \FM_{\sigma}$,
on the event $\tilde{\phi}\in \LT_{\sigma}$,
so as to achieve a bicoloring.
See Figure \ref{Fig bicoloring}.

\begin{figure}
\includegraphics[scale=0.48]{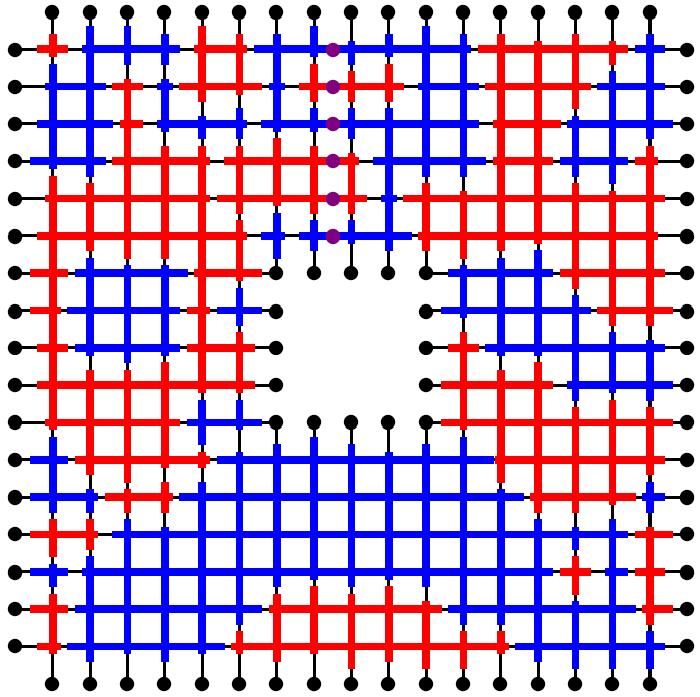}
\qquad
\includegraphics[scale=0.48]{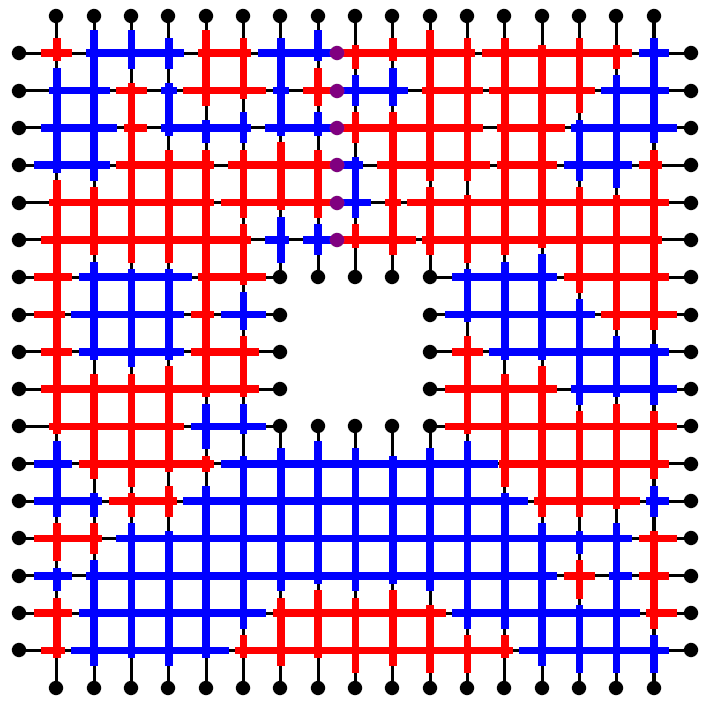}
\caption{Left: the field $\tilde{\phi}$ on the event 
$\tilde{\phi}\in \LT_{\sigma}$,
which in this example means that there are no sign clusters surrounding the inner hole of the domain.
Right: the associated $\tilde{\xi}$ field.
The black dots represent the boundary
$V_{\partial}$.
The violet dots represent $\FM_{\sigma}$.
The positive, resp. negative values of the fields are
in red, resp. blue.}
\label{Fig bicoloring}
\end{figure}

\begin{cor}
\label{Cor bicolor tilde phi}
The conditional distribution of $\tilde{\xi}$ on the event
$\tilde{\phi}\in \LT_{\sigma}$,
is that of $\tilde{\phi}_{\sigma}$.
\end{cor}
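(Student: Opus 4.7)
The plan is to upgrade Theorem~\ref{Thm main 1} from absolute values to signed fields by revisiting the argument that produced it. Equation \eqref{Eq RN event eps} already gives, for each $\varepsilon\in(0,\underline{r}/2)$, the signed-field Radon--Nikodym derivative of $\tilde{\phi}_{\sigma\mid\widetilde{\LG}_{\sigma,\varepsilon}}$ with respect to $\tilde{\xi}_{\varepsilon}=\TT_{\sigma,\varepsilon}(\tilde{\phi}_{\mid\widetilde{\LG}_{\sigma,\varepsilon}})$. In the proof of Theorem~\ref{Thm main 1} the limit $\varepsilon\to 0$ was extracted only after passing to absolute values. The strategy is to extract it directly at the level of the signed fields and to identify the limit of $\tilde{\xi}_{\varepsilon}$ with the deterministic transform $\tilde{\xi}$ defined in this section.

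First I would couple the arbitrary bicoloring conventions used to define $\TT_{\sigma,\varepsilon}$ and $\tilde{\xi}$. Fix once and for all a map $\mathbf{p}\mapsto\omega_{\mathbf{p}}$ on bicolorable partitions $\mathbf{p}$ of $\FM_{\sigma}^{\pm}$. The natural identification $x_{e,\varepsilon}^{\pm}\leftrightarrow x_{e}^{\mathrm{m},\pm}$ pushes $\omega_{\mathbf{p}}$ forward to a bicoloring on $V_{\sigma,\varepsilon}\setminus V$, which I declare to be the $\omega_{f}$ used in the definition of $\TT_{\sigma,\varepsilon}$ when $f=\tilde{\phi}_{\mid\widetilde{\LG}_{\sigma,\varepsilon}}$. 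For any realization of $\tilde{\phi}$ in $\LT_{\sigma}$, the continuity of $\tilde{\phi}$ and the openness of $\{\tilde{\phi}\neq 0\}$ imply that for all sufficiently small $\varepsilon$ the induced partition $\mathtt{V}_{\tilde{\phi}_{\mid\widetilde{\LG}_{\sigma,\varepsilon}}}$ coincides with the pullback of $\mathtt{V}_{\tilde{\phi}}$ under this identification. With this coupling, on the event $\tilde{\phi}\in\LT_{\sigma}$ one has $\tilde{\xi}_{\varepsilon}\to\tilde{\xi}$ pointwise on $\widetilde{\LG}\setminus\FM_{\sigma}$, and uniformly on compact subsets.

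Second, I would pass to the limit in \eqref{Eq RN event eps}. The ratio of determinants converges to $\mathbb{P}(\tilde{\phi}\in\LT_{\sigma})^{-1}$ by Lemma~\ref{Lem const ratio} combined with \eqref{Eq prob LT sigma}. The indicator $\ind_{\tilde{\varphi}\in\LT_{\sigma,\varepsilon}}$, evaluated at $\tilde{\varphi}=\tilde{\xi}_{\varepsilon}$, converges to $\ind_{\tilde{\phi}\in\LT_{\sigma}}$. The remaining exponential factor vanishes off $\LT_{\sigma}$: on $\{\tilde{\phi}\notin\LT_{\sigma}\}$ no sign flip occurs, so for at least one $e\in E_{\sigma,-}$ the product $\tilde{\xi}_{\varepsilon}(x_{e,\varepsilon}^{+})\tilde{\xi}_{\varepsilon}(x_{e,\varepsilon}^{-})$ tends to $\tilde{\phi}(x_{e}^{\mathrm{m}})^{2}>0$, so that $\exp(-\varepsilon^{-1}\cdots)\to 0$. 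Combining this with the pointwise convergence $\tilde{\xi}_{\varepsilon}\to\tilde{\xi}$, for any bounded continuous functional $F$ of the field on a compact subset of $\widetilde{\LG}\setminus\FM_{\sigma}$,
\begin{equation*}
\E[F(\tilde{\phi}_{\sigma})]
=\lim_{\varepsilon\to 0}\E\Bigl[F(\tilde{\xi}_{\varepsilon})\,\tfrac{d\mathbb{P}_{\tilde{\phi}_{\sigma\mid\widetilde{\LG}_{\sigma,\varepsilon}}}}{d\mathbb{P}_{\tilde{\xi}_{\varepsilon}}}(\tilde{\xi}_{\varepsilon})\Bigr]
=\dfrac{\E\bigl[F(\tilde{\xi})\,\ind_{\tilde{\phi}\in\LT_{\sigma}}\bigr]}{\mathbb{P}(\tilde{\phi}\in\LT_{\sigma})},
\end{equation*}
which is exactly the claimed conditional identity. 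Both $\tilde{\phi}_{\sigma}$ and $\tilde{\xi}$ are specified on $\FM_{\sigma}$ only up to the choice of one-sided limit, so the identification automatically extends to all of $\widetilde{\LG}$.

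The step I expect to be most delicate is the stabilization of the partitions in the coupling of bicolorings: one must verify that for a.e.\ realization of $\tilde{\phi}\in\LT_{\sigma}$ the discrete partitions $\mathtt{V}_{\tilde{\phi}_{\mid\widetilde{\LG}_{\sigma,\varepsilon}}}$ eventually agree with the pullback of $\mathtt{V}_{\tilde{\phi}}$. Pathological configurations where a sign cluster touches a point of $\FM_{\sigma}^{\pm}$ tangentially, or where two distinct clusters accumulate on the same middle point of an edge, must be excluded by Gaussian non-degeneracy. Once this is settled, the remainder is routine dominated convergence together with the uniform integrability provided by the explicit form of the density.
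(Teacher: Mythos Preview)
Your proposal is correct and follows essentially the same approach as the paper: the paper's proof is the two-line sketch ``the couple $(\tilde{\phi},\tilde{\xi})$ can be obtained as limit in law as $\varepsilon\to 0$ of $(\tilde{\phi}_{\vert \widetilde{\LG}_{\sigma,\varepsilon}},\tilde{\xi}_{\varepsilon})$; then pass the density \eqref{Eq RN event eps} to the limit,'' and you have supplied exactly the details that make this precise, including the coupling of bicoloring conventions so that the convergence is almost sure rather than merely in law. Your discussion of the stabilization of partitions and the exclusion of tangential configurations is a genuine point the paper leaves implicit; it is indeed handled by the fact that a.s.\ $\tilde{\phi}(x_{e}^{\rm m})\neq 0$ for each $e\in E_{\sigma,-}$, which makes the cluster structure near $\FM_{\sigma}$ locally constant in $\varepsilon$.
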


\begin{proof}
The couple $(\tilde{\phi},\tilde{\xi})$
can be obtained as limit in law as $\varepsilon\to 0$ of
$(\tilde{\phi}_{\vert \widetilde{\LG}_{\sigma,\varepsilon}},
\tilde{\xi}_{\varepsilon})$. Then the result is obtained by passing the density
\eqref{Eq RN event eps} to the limit.
\end{proof}

\subsection{Around the isomorphism for $\tilde{\phi}_{\sigma}$}
\label{Subsec iso topo}

Recall the measure on metric graph loops
$\tilde{\mu}^{\rm loop}$ \eqref{Eq mu metric}.
Let $\sigma\in\{-1,1\}^{E}$.
Let the signed measure on metric graph loops be
\begin{displaymath}
\tilde{\mu}^{\rm loop}_{\sigma}(d\wp) =
\hol^{\sigma}(\wp)\tilde{\mu}^{\rm loop}(d\wp),
\end{displaymath}
which can be decomposed according to the sign into
\begin{displaymath}
\tilde{\mu}^{\rm loop}_{\sigma}
=
\tilde{\mu}^{\rm loop}_{\sigma,+}
-
\tilde{\mu}^{\rm loop}_{\sigma,-}.
\end{displaymath}
Let $\widetilde{\LL}^{1/2}_{\sigma,+}$, respectively
$\widetilde{\LL}^{1/2}_{\sigma,-}$,
be the Poisson point processes of metric graph loops on
$\widetilde{\LG}$ with intensity measure 
$\frac{1}{2}\tilde{\mu}^{\rm loop}_{\sigma,+}$,
respectively $\frac{1}{2}\tilde{\mu}^{\rm loop}_{\sigma,-}$.
The version on Le Jan's isomorphism due to Kassel and Lévy 
\eqref{Eq KL Le Jan gauge}
extends in a straightforward way to the metric graphs:
\begin{equation}
\label{Eq Kassel Levy metric}
(\ell^{x}(\widetilde{\LL}^{1/2}_{\sigma ,+}))_{x\in \widetilde{\LG}}
\stackrel{(\text{law})}{=}
\Big(\dfrac{1}{2}\tilde{\phi}_{\sigma}(x)^{2}
+\ell^{x}(\widetilde{\LL}^{1/2}_{\sigma ,-})\Big)_{x\in \widetilde{\LG}},
\end{equation}
where $\tilde{\phi}_{\sigma}$ and $\widetilde{\LL}^{1/2}_{\sigma ,-}$
are taken independent,
and $\ell^{x}$ above denotes the Brownian local times.
One can use for instance an approximation from discrete through a subdivision of
edges as in Section \ref{Subsec subdivision}.
We also know that
\begin{equation}
\label{Eq Le Jan metric bis}
(\ell^{x}(\widetilde{\LL}^{1/2}))_{x\in \widetilde{\LG}}
=
(\ell^{x}(\widetilde{\LL}^{1/2}_{\sigma ,+})
+\ell^{x}(\widetilde{\LL}^{1/2}_{\sigma ,-}))_{x\in \widetilde{\LG}}
\stackrel{(\text{law})}{=}
\Big(\dfrac{1}{2}\tilde{\phi}(x)^{2}
\Big)_{x\in \widetilde{\LG}},
\end{equation}
where $\widetilde{\LL}^{1/2}_{\sigma ,+}$ and 
$\widetilde{\LL}^{1/2}_{\sigma ,-}$
are taken independent.
By combining \eqref{Eq Kassel Levy metric} and 
\eqref{Eq Le Jan metric bis} we get the following identity in law:
\begin{equation}
\label{Eq KL metric alpha 1}
\Big(\dfrac{1}{2}\tilde{\phi}(x)^{2}
\Big)_{x\in \widetilde{\LG}}
\stackrel{(\text{law})}{=}
\Big(\dfrac{1}{2}\tilde{\phi}_{\sigma}(x)^{2}
+\ell^{x}(\widetilde{\LL}^{1}_{\sigma ,-})\Big)_{x\in \widetilde{\LG}},
\end{equation}
where $\widetilde{\LL}^{1}_{\sigma ,-}$ is a Poisson point process with intensity measure $\tilde{\mu}^{\rm loop}_{\sigma,-}$,
that is to say with intensity parameter
$\alpha = 1 = 2\times \dfrac{1}{2}$,
and $\widetilde{\LL}^{1}_{\sigma ,-}$ being independent from
$\tilde{\phi}_{\sigma}$.

The identity \eqref{Eq KL metric alpha 1} provides another proof
for Theorem \ref{Thm main 1}.
Indeed, $\tilde{\phi}\in\LT_{\sigma}$ if and only if
$\tilde{\phi}^{2}\in\LT_{\sigma}$, which, by \eqref{Eq KL metric alpha 1},
is equivalent to 
$\big(\frac{1}{2}\tilde{\phi}_{\sigma}(x)^{2}
+\ell^{x}(\widetilde{\LL}^{1}_{\sigma ,-})\big)_{x\in \widetilde{\LG}}$
belonging to $\LT_{\sigma}$.
A necessary condition for the latter is 
$\widetilde{\LL}^{1}_{\sigma ,-}=\emptyset$, since $\widetilde{\LL}^{1}_{\sigma ,-}$
consists precisely of loops with holonomy $-1$.
This condition is also sufficient,
since $\tilde{\phi}_{\sigma}^{2}\in\LT_{\sigma}$ a.s.
(Lemma \ref{Lem T sigma}).
Thus, we get that
\begin{displaymath}
\mathbb{P}(\tilde{\phi}\in\LT_{\sigma}) = 
\mathbb{P}(\widetilde{\LL}^{1}_{\sigma ,-}=\emptyset),
\end{displaymath}
which is precisely the probability appearing in Theorem \ref{Thm main 1}.
We also get that conditionally on the event $\{\tilde{\phi}\in\LT_{\sigma}\}$,
$\tilde{\phi}^{2}$ is distributed as $\tilde{\phi}_{\sigma}^{2}$.

Let us now return to \eqref{Eq Kassel Levy metric}.
Consider $\mathcal{C}$ a connected component of
\begin{displaymath}
\Big\{ x\in \widetilde{\LG}
\Big\vert 
\dfrac{1}{2}\tilde{\phi}_{\sigma}(x)^{2}
+\ell^{x}(\widetilde{\LL}^{1/2}_{\sigma ,-})
\neq 0 \Big\} .
\end{displaymath}
Then $\bpi^{-1}_{\sigma}(\mathcal{C})$ is either connected or not.
If $\bpi^{-1}_{\sigma}(\mathcal{C})$ is not connected, 
than $\mathcal{C}$ cannot contain any loop with holonomy $-1$,
and in particular, $\mathcal{C}$ cannot contain any loop in
$\widetilde{\LL}^{1/2}_{\sigma ,-}$.
So in this case, $\mathcal{C}$ is actually a connected component of
$\{\vert\tilde{\phi}_{\sigma}\vert\neq 0\}$.
We summarize this remark in the corollary below.

\begin{cor}
\label{Cor iso sigma metric}
One can couple on the same probability space the metric graph loop soups
$\widetilde{\LL}^{1/2}_{\sigma ,+}$ and $\widetilde{\LL}^{1/2}_{\sigma ,-}$,
and the field $\tilde{\phi}_{\sigma}$,
such that all of the following conditions hold.
\begin{enumerate}
\item The field $\tilde{\phi}_{\sigma}$
and the loop soup $\widetilde{\LL}^{1/2}_{\sigma ,-}$
are independent.
\item For every $x\in \widetilde{\LG}$,
\begin{displaymath}
\ell^{x}(\widetilde{\LL}^{1/2}_{\sigma ,+})
=
\dfrac{1}{2}\tilde{\phi}_{\sigma}(x)^{2}
+\ell^{x}(\widetilde{\LL}^{1/2}_{\sigma ,-}).
\end{displaymath}
\item For every cluster $\mathcal{C}$ of
$\widetilde{\LL}^{1/2}_{\sigma ,+}$ such that
$\bpi^{-1}_{\sigma}(\mathcal{C})$ is not connected, 
$\mathcal{C}$ is also a connected component of
$\{\vert\tilde{\phi}_{\sigma}\vert\neq 0\}$
and $\ell^{x}(\widetilde{\LL}^{1/2}_{\sigma ,+})$
coincides with $\frac{1}{2}\tilde{\phi}_{\sigma}(x)^{2}$
on $\mathcal{C}$.
\end{enumerate}

In particular, one can couple on the same probability space
the metric graph loop soups $\widetilde{\LL}^{1/2}_{\sigma ,+}$
and the field $\tilde{\phi}_{\sigma}$,
such that the following conditions hold.
\begin{enumerate}
\item For every $x\in \widetilde{\LG}$,
\begin{displaymath}
\dfrac{1}{2}\tilde{\phi}_{\sigma}(x)^{2}
\leq
\ell^{x}(\widetilde{\LL}^{1/2}_{\sigma ,+}).
\end{displaymath}
\item For every cluster $\mathcal{C}$ of
$\widetilde{\LL}^{1/2}_{\sigma ,+}$ such that
$\bpi^{-1}_{\sigma}(\mathcal{C})$ is not connected, 
$\mathcal{C}$ is also a connected component of
$\{\vert\tilde{\phi}_{\sigma}\vert\neq 0\}$
and $\ell^{x}(\widetilde{\LL}^{1/2}_{\sigma ,+})$
coincides with $\frac{1}{2}\tilde{\phi}_{\sigma}(x)^{2}$
on $\mathcal{C}$.
In other words, $\ell^{x}(\widetilde{\LL}^{1/2}_{\sigma ,+})$
and $\frac{1}{2}\tilde{\phi}_{\sigma}(x)^{2}$
can differ only on clusters $\mathcal{C}$ of
$\widetilde{\LL}^{1/2}_{\sigma ,+}$ such that
$\bpi^{-1}_{\sigma}(\mathcal{C})$ is connected.
\end{enumerate}
\end{cor}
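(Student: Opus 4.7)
The plan is to take the metric-graph Kassel--Lévy isomorphism displayed just above the corollary as input, and deduce the three conditions from it together with the topological criterion for the connectedness of $\pi^{-1}_\sigma(U)$ recalled at the start of Section \ref{Subsec topo}. Conditions (1) and (2) of the first part come essentially for free: it suffices to realize both sides of the isomorphism on a single probability space in such a way that the equality in law of occupation fields becomes an almost sure pointwise equality, with $\tilde{\phi}_\sigma$ and $\widetilde{\LL}^{1/2}_{\sigma,-}$ independent by construction of the right-hand side.

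The core of the argument is (3). Fix a cluster $\mathcal{C}$ of $\widetilde{\LL}^{1/2}_{\sigma,+}$ such that $\pi_\sigma^{-1}(\mathcal{C})$ is disconnected. By the criterion of Section \ref{Subsec topo}, $\mathcal{C}$ contains no continuous loop of $\sigma$-holonomy $-1$. The key step I would establish is that no loop $\wp\in\widetilde{\LL}^{1/2}_{\sigma,-}$ has range meeting $\mathcal{C}$. Were such a $\wp$ to exist, its range would be a connected subset of $\{\ell^\cdot(\widetilde{\LL}^{1/2}_{\sigma,-})>0\}$, hence by the pointwise identity also of $\{\ell^\cdot(\widetilde{\LL}^{1/2}_{\sigma,+})>0\}$, whose connected components are precisely the clusters of $\widetilde{\LL}^{1/2}_{\sigma,+}$. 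Connectedness of the range would then force $\wp\subset\mathcal{C}$, producing a loop in $\mathcal{C}$ of holonomy $-1$ --- a contradiction. Consequently $\ell^x(\widetilde{\LL}^{1/2}_{\sigma,-})\equiv 0$ on $\mathcal{C}$, and the identity in (2) reduces to $\ell^x(\widetilde{\LL}^{1/2}_{\sigma,+})=\tfrac{1}{2}\tilde{\phi}_\sigma(x)^{2}$ on $\mathcal{C}$.

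It then remains to upgrade the inclusion $\mathcal{C}\subset\{|\tilde{\phi}_\sigma|\neq 0\}$ to an equality of connected components. For this I would let $\mathcal{C}'$ denote the connected component of $\{|\tilde{\phi}_\sigma|\neq 0\}$ containing $\mathcal{C}$; the pointwise inequality $\tfrac{1}{2}\tilde{\phi}_\sigma^{2}\leq\ell^\cdot(\widetilde{\LL}^{1/2}_{\sigma,+})$ (which is condition (1) of the second half of the corollary) places $\mathcal{C}'$ inside the support of $\ell^\cdot(\widetilde{\LL}^{1/2}_{\sigma,+})$, and since $\mathcal{C}'$ is connected and meets the cluster $\mathcal{C}$, it must coincide with $\mathcal{C}$. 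The second ``in particular'' part of the statement is then immediate by forgetting the auxiliary loop soup $\widetilde{\LL}^{1/2}_{\sigma,-}$.

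The truly analytic step --- and thus the main obstacle --- is the metric-graph Kassel--Lévy identity itself, which is asserted above the corollary but not proved in detail. The natural route is to approximate $\widetilde{\LG}$ by the subdivided networks $\LG^{(N)}$ of Section \ref{Subsec subdivision} equipped with the induced gauge lifts $\sigma^{(N)}$, apply the discrete Theorem \ref{Thm KL Le Jan gauge} there, and pass to the limit $N\to\infty$; the inner-edge contributions are then recovered through the same Brownian-bridge interpolation that defines $\tilde{\phi}_\sigma$ in Section \ref{Subsec subdivision}, and the convergence of occupation fields is controlled exactly as in \cite{Lupu2016Iso}.
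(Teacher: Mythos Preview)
Your proposal is correct and follows essentially the same route as the paper. The paper's argument is the short remark immediately preceding the corollary: once the metric-graph Kassel--L\'evy identity is in hand, one observes that a cluster $\mathcal{C}$ of $\widetilde{\LL}^{1/2}_{\sigma,+}$ with $\pi_\sigma^{-1}(\mathcal{C})$ disconnected cannot contain any loop of holonomy $-1$, hence no loop of $\widetilde{\LL}^{1/2}_{\sigma,-}$, and therefore coincides with a sign component of $\tilde{\phi}_\sigma$. You spell out two steps that the paper leaves implicit --- why a loop of $\widetilde{\LL}^{1/2}_{\sigma,-}$ merely \emph{intersecting} $\mathcal{C}$ must lie entirely in $\mathcal{C}$ (via the pointwise inequality and connectedness of its range), and why the containment $\mathcal{C}\subset\{|\tilde{\phi}_\sigma|\neq 0\}$ upgrades to equality of components --- but these are exactly the details one would fill in, and your treatment of the approximation for the metric-graph identity matches the paper's one-line indication via subdivision.
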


In the example of Figure \ref{Fig non triv ann},
$\widetilde{\LL}^{1/2}_{\sigma ,+}$ consists of loops that turn around the inner hole an even number of times, including those that do not surround it,
and $\widetilde{\LL}^{1/2}_{\sigma ,-}$ consists of loops that turn around the inner hole an odd number of times.
Further, $\ell^{x}(\widetilde{\LL}^{1/2}_{\sigma ,+})$ and 
$\frac{1}{2}\tilde{\phi}_{\sigma}(x)^{2}$
coincide on clusters of $\widetilde{\LL}^{1/2}_{\sigma ,+}$ that do not surround the inner hole.
At the risk of being redundant, let us emphasize that the dichotomy for loops
in $\widetilde{\LL}^{1/2}$ and the dichotomy for clusters of 
$\widetilde{\LL}^{1/2}_{\sigma ,+}$ are different.
For loops in $\widetilde{\LL}^{1/2}$ one distinguishes between the loops that turn an even number of times around the hole, and the loops that turn an odd number of time around the hole. In this way, the loops that turn twice around the hole are in the same class as the loops that do not surround the hole at all.
For the clusters of loops however, the dichotomy is just surrounding or not
surrounding the inner hole.

In view of the above corollary, 
perhaps it is worth pointing out the difference between the clusters of
$\widetilde{\LL}^{1/2}_{\sigma ,+}$ on the metric graph $\widetilde{\LG}$ 
and the clusters of $\LL^{1/2}_{\sigma ,+}$ on the discrete graph
$\LG$.

\begin{prop}
\label{Prop sprinklig sigma plus}
The discrete loop soup $\LL^{1/2}_{\sigma ,+}$
is obtained, up to a rerooting of the loops,
from the metric graph loop soup $\widetilde{\LL}^{1/2}_{\sigma ,+}$
by taking the trace of loops on $V$
with the time change $A^{-1}$ \eqref{Eq CAF}.
By doing this, one only takes into account the loops in 
$\widetilde{\LL}^{1/2}_{\sigma ,+}$ that visits at least one vertex.
In particular, for every $x\in V$,
$\ell^{x}(\widetilde{\LL}^{1/2}_{\sigma ,+})=
\ell^{x}(\LL^{1/2}_{\sigma ,+})$.
Further, the crossings of edges-lines $I_{e}$ by 
$\widetilde{\LL}^{1/2}_{\sigma ,+}$ correspond to the jumps through discrete edges
$e$ by $\LL^{1/2}_{\sigma ,+}$.

If an edge  $e\in E$ is visited by $\LL^{1/2}_{\sigma ,+}$,
then $\forall x\in I_{e},\ell^{x}(\widetilde{\LL}^{1/2}_{\sigma ,+})>0$ a.s.
Moreover, for every $e\in E$,
\begin{multline}
\label{Eq sprinkling sigma plus}
\mathbb{P}
\Big(\forall x\in I_{e},\ell^{x}(\widetilde{\LL}^{1/2}_{\sigma ,+})>0
\Big\vert \LL^{1/2}_{\sigma ,+},
e \text{ not visited by } \LL^{1/2}_{\sigma ,+}\Big)
\\=
1-
\exp\Big(-2C(e)(
\ell^{e_{-}}(\widetilde{\LL}^{1/2}_{\sigma ,+})
\ell^{e_{+}}(\widetilde{\LL}^{1/2}_{\sigma ,+})
)^{1/2}\Big),
\end{multline}
with conditional independence (given $\LL^{1/2}_{\sigma ,+}$)
across the edges $e\in E$.
\end{prop}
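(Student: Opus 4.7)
The plan is to derive Proposition \ref{Prop sprinklig sigma plus} by reducing it to the analogous statement for the untwisted pair $(\LL^{1/2},\widetilde{\LL}^{1/2})$ established in \cite{Lupu2016Iso}. For the first part, I would decompose $\tilde{\mu}^{\rm loop}$ into loops visiting $V$ and loops supported in the interior of a single edge-line. The latter have trivial holonomy automatically, contribute no local time at any $x\in V$, and are invisible to the time-change $A^{-1}$. For the former, the key point is that the holonomy of a continuous loop on $\widetilde{\LG}$ equals the holonomy of its discrete trace on $V$, since both are given by the parity of the number of crossings of edge-lines $I_e$ with $\sigma(e)=-1$; this is essentially the same parity argument as in Lemma \ref{Lem bipartite graph}. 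Combining with the standard identity between the trace of $\widetilde{\LL}^{1/2}$ and $\LL^{1/2}$ yields the trace correspondence for $(\LL^{1/2}_{\sigma,+},\widetilde{\LL}^{1/2}_{\sigma,+})$, the identity $\ell^x(\widetilde{\LL}^{1/2}_{\sigma,+})=\ell^x(\LL^{1/2}_{\sigma,+})$ for $x\in V$, and the correspondence between edge-line crossings and discrete jumps. The claim that $\ell^x(\widetilde{\LL}^{1/2}_{\sigma,+})>0$ throughout $I_e$ whenever $e$ is crossed is immediate from the continuity of the loop trajectory and the positivity of Brownian local times along their range.

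For the sprinkling formula I would condition on $\LL^{1/2}_{\sigma,+}$ and on the event that $e$ is not crossed. Under this conditioning, the restriction of $\widetilde{\LL}^{1/2}_{\sigma,+}$ to $I_e$ is the superposition of two independent ingredients: the ``no-crossing'' Brownian excursion decorations into $I_e$ from $e_-$ and $e_+$ attached to the loops of $\LL^{1/2}_{\sigma,+}$, and the purely internal loops of $\widetilde{\LL}^{1/2}_{\sigma,+}$ sitting in the interior of $I_e$. The internal loops all have trivial holonomy, hence form exactly the same Poisson process as in the untwisted $\widetilde{\LL}^{1/2}$. The excursion decorations depend only on the local times $\ell^{e_\pm}(\LL^{1/2}_{\sigma,+})$, since the Itô excursion measure of the metric-graph diffusion at a vertex is a local object insensitive to any global topological label on the discrete loops being decorated. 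Because $\LL^{1/2}_{\sigma,+}$ and $\LL^{1/2}_{\sigma,-}$ are independent (their Poisson intensities have disjoint supports), further conditioning on $\LL^{1/2}_{\sigma,-}$ is not needed. The problem is thereby reduced to the standard one, and the coverage probability is given by $1-\exp(-2C(e)\sqrt{\ell^{e_-}\ell^{e_+}})$ from the classical coverage identity on an interval \cite{Lupu2016Iso}. Conditional independence across edges is inherited from the independence of the excursion decorations and internal loop soups on disjoint edge-lines.

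The main obstacle is the clean justification of this locality statement in the presence of the holonomy label. The most robust route is through the subdivision networks $\LG^{(N)}$ of Section \ref{Subsec subdivision}: on each $\LG^{(N)}$ the gauge-twisted loop soup splits transparently via $\mu^{\rm loop}_{\sigma^{(N)}}=\mu^{\rm loop}_{\sigma^{(N)},+}-\mu^{\rm loop}_{\sigma^{(N)},-}$ and the desired sprinkling identity reduces to a purely discrete computation on a chain of $N$ resistors in series, for which the conditional coverage of the full chain given no crossing admits a closed form converging to $1-\exp(-2C(e)\sqrt{\ell^{e_-}\ell^{e_+}})$ as $N\to\infty$, in parallel with the convergence $\phi^{(N)}_{\sigma^{(N)}}\to\tilde{\phi}_\sigma$ already used in the paper.
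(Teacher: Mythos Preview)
Your argument is correct and matches the paper's own proof: both reduce to the untwisted result of \cite{Lupu2016Iso} via the two observations that (i) the holonomy of a metric-graph loop equals that of its discrete trace, so restricting $\widetilde{\LL}^{1/2}\to\LL^{1/2}$ to holonomy $+1$ gives $\widetilde{\LL}^{1/2}_{\sigma,+}\to\LL^{1/2}_{\sigma,+}$, and (ii) the three local ingredients on $I_e$ (excursions from $e_-$, excursions from $e_+$, internal loops) are unaffected by the holonomy constraint since non-crossing excursions contribute nothing to holonomy and internal loops all have holonomy $1$. The paper stops there; your fallback through the subdivided networks $\LG^{(N)}$ is sound but unnecessary, as the locality you flag as the ``main obstacle'' is already handled by observation (ii).
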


\begin{proof}
These are the properties already satisfied by the loop soups
$\widetilde{\LL}^{1/2}$ and $\LL^{1/2}$,
proven in \cite{Lupu2016Iso}.
The fact that $\LL^{1/2}$ is the trace of $\widetilde{\LL}^{1/2}$ 
on the vertices $V$ immediately implies that
$\LL^{1/2}_{\sigma ,+}$ is the trace of $\widetilde{\LL}^{1/2}_{\sigma ,+}$ on $V$,
as well as that 
$\LL^{1/2}_{\sigma,-}$ is the trace of $\widetilde{\LL}^{1/2}_{\sigma,-}$ on $V$.

As for the formula \eqref{Eq sprinkling sigma plus},
given $e\in E$ not visited by $\LL^{1/2}_{\sigma ,+}$,
the connection between the two endpoints of $e$ can be created by a superposition of three objects: the Brownian excursions from $e_{+}$ to $e_{+}$
inside $I_{e}$, the Brownian excursions from $e_{-}$ to $e_{-}$
inside $I_{e}$, and the Brownian loops of $\widetilde{\LL}^{1/2}_{\sigma ,+}$
that stay inside $I_{e}$.
The formula \eqref{Eq sprinkling sigma plus}
is then the same as for $\widetilde{\LL}^{1/2}$ and $\LL^{1/2}$
appearing in \cite{Lupu2016Iso} since the same three types of Brownian paths appear in both settings. This is in particular due to the fact that all the Brownian
loops in $\widetilde{\LL}^{1/2}$ that stay inside $I_{e}$ have a holonomy $1$,
and thus also appear in $\widetilde{\LL}^{1/2}_{\sigma ,+}$.
Note that for $\widetilde{\LL}^{1/2}_{\sigma ,-}$ and $\LL^{1/2}_{\sigma ,-}$,
a similar formula is no longer true, precisely because the Brownian
loops staying inside $I_{e}$ no longer participate to connecting the two ends of 
$e$, as they all have a wrong holonomy.
\end{proof}

\section{Relation to disordered Ising model}
\label{Sec rel Ising}

The goal of this Section is to explain how Theorem \ref{Thm main 1}
can be alternatively derived from a similar result for the FK-Ising model.

\subsection{Spin Ising, FK-Ising and Edwards-Sokal coupling}
\label{Subsec intro Ising}

So as to avoid confusion with our notation $\sigma$ for the $\{-1,1\}$-gauge fields,
we will denote the Ising spins by $\zeta$.
Let $\LG=(V,E)$ be a finite connected graph as in
Section \ref{Subsec gauge}.
The edges $\{x,y\}\in E$ are endowed with weights
$\beta(x,y) = \beta(y,x)\geq 0$.
The \textit{spin Ising} field is a random collection of signs
$(\hat{\zeta}(x))_{x\in V}\in\{-1,1\}^{V}$,
with the distribution given by
\begin{displaymath}
\mathbb{P}((\hat{\zeta}(x))_{x\in V}=(\zeta(x))_{x\in V})
=\dfrac{1}{Z^{\rm Isg}_{\beta}}
\exp\Big(\sum_{\{x,y\}\in E}
\beta(x,y)\zeta(x)\zeta(y)\Big).
\end{displaymath}

Further, the \textit{FK-Ising random cluster model} \cite{Grim2006FK}
is a random configuration of edges
$(\hat{\omega}(e))_{e\in E}\in\{0,1\}^{E}$, 
where $\hat{\omega}(e)=1$ corresponds to an open edge
and $\hat{\omega}(e)=0$ corresponds to a closed edge.
The probability distribution of the FK-Ising is given by
\begin{equation}
\label{Eq FK Ising}
\mathbb{P}((\hat{\omega}(e))_{e\in E}=(\omega(e))_{e\in E})
=\dfrac{1}{Z^{\rm FK-Isg}_{\beta}}
2^{\# \text{ clusters of } \omega}
\prod_{e\in E}(1-e^{-2\beta(e)})^{\omega(e)}(e^{-2\beta(e)})^{1-\omega(e)} ,
\end{equation}
where $\# \text{ clusters of } \omega$ is the number of clusters induced by the open edges of $\omega$.

The spin Ising and the FK-Ising models are related through the Edwards-Sokal coupling
\cite{EdwardsSokal88Ising}.

\begin{thm}[Edwards-Sokal, \cite{EdwardsSokal88Ising}]
\label{Thm Edwards-Sokal}
The following holds.
\begin{enumerate}
\item The two partition function are related through
\begin{displaymath}
Z^{\rm Isg}_{\beta} = 
Z^{\rm FK-Isg}_{\beta}
\prod_{e\in E} e^{\beta(e)}.
\end{displaymath}
\item The spin Ising configuration $(\hat{\zeta}(x))_{x\in V}$
and the FK-Ising configuration $(\hat{\omega}(e))_{e\in E}$
can be coupled as follows.
On first samples the FK-Ising configuration $(\hat{\omega}(e))_{e\in E}$,
and then one samples an independent uniform sign for each cluster induced by
$\hat{\omega}$.
\item The coupling above can be alternatively described as follows.
One first samples the spin Ising configuration $(\hat{\zeta}(x))_{x\in V}$.
Then, for each edge $\{x,y\}\in E$,
one sets $\hat{\omega}(x,y)=0$ if $\hat{\zeta}(x)\hat{\zeta}(y)=-1$,
and sets $\hat{\omega}(x,y)=1$ with conditional probability
$1-e^{-2\beta(x,y)}$ if $\hat{\zeta}(x)\hat{\zeta}(y) = 1$.
\end{enumerate}
\end{thm}

\subsection{Disordered Ising and topological probabilities for FK-Ising}
\label{Subsec disord Ising}

Let $\sigma\in\{-1,1\}^{E}$ be a gauge field as in Section \ref{Subsec gauge}.
The \textit{disordered spin Ising} field is a random
configuration of signs 
$(\hat{\zeta}_{\sigma}(x))_{x\in V}\in\{-1,1\}^{V}$ with the following probability distribution:
\begin{displaymath}
\mathbb{P}((\hat{\zeta}_{\sigma}(x))_{x\in V}=(\zeta(x))_{x\in V})
=\dfrac{1}{Z^{\rm Isg}_{\beta, \sigma}}
\exp\Big(\sum_{\{x,y\}\in E}
\beta(x,y)\zeta(x)\sigma(x,y)\zeta(y)\Big).
\end{displaymath}
The terminology originates from \cite{KadanoffCeva71DisorderIsing}.
The field $\hat{\zeta}_{\sigma}$ is covariant under gauge tranformations on 
$\sigma$ just as in the GFF case \eqref{Eq cov GFF gauge transfo}.

Theorem \ref{Thm main 1} has an analogue in the Ising setting.
Recall the notations of Section \ref{Subsec double cover}:
$\bpi_{\sigma}: \LG^{\rm db}_{\sigma}\rightarrow \LG$
is the double cover induced by $\sigma$.
Let $\Triv_{\sigma}\subset \{0,1\}^{E}$ denote the subset of edge configurations
$\omega$ such that every $\mathcal{C}$ cluster induced by the open edges of $\omega$,
$\bpi_{\sigma}^{-1}(\mathcal{C})$ is \textbf{not} connected.

\begin{thm}
\label{Thm disord Ising}
Let $(\hat{\omega}(e))_{e\in E}\in\{0,1\}^{E}$ be an FK-Ising configuration distributed according to \eqref{Eq FK Ising}.
The probability $\mathbb{P}(\hat{\omega}\in\Triv_{\sigma})$ can be expressed as a ratio of Ising partition functions:
\begin{displaymath}
\mathbb{P}(\hat{\omega}\in\Triv_{\sigma}) = 
\dfrac{Z^{\rm Isg}_{\beta, \sigma}}{Z^{\rm Isg}_{\beta}}.
\end{displaymath}
Moreover, the configuration $\hat{\omega}$ conditioned on 
$\{\hat{\omega}\in\Triv_{\sigma}\}$
can be sampled as follows.
\begin{enumerate}
\item First sample a disordered spin Ising configuration $\hat{\zeta}_{\sigma}$.
\item For each edge $e=\{x,y\}\in E$ such that
$\zeta(x)\sigma(x,y)\zeta(y) = -1$, set the edge $e$ to closed (i.e. $0$).
\item For each edge $e=\{x,y\}\in E$ such that
$\zeta(x)\sigma(x,y)\zeta(y) = 1$, set the edge $e$ to open (i.e. $1$)
with conditional probability $1-e^{-2\beta(x,y)}$.
\end{enumerate}
\end{thm}

Theorem \ref{Thm disord Ising} is a gauge-twisted version of the Edwards-Sokal coupling (Theorem \ref{Thm Edwards-Sokal}),
and just as the latter, can be derived through an elementary computation.
We would like to emphasize that just like Theorem \ref{Thm main 1},
Theorem \ref{Thm disord Ising} does not rely on planarity at all.
This result has been communicated to us by Marcin Lis (TU Wien, Vienna)
after the prepublication of the first version of this paper.
It also seems to be common knowledge among the Ising community,
but we did not find a good reference for it in the literature.

Alternatively, Theorem \ref{Thm disord Ising} can be derived from the GFF case as in
Remark \ref{Rem interact bos},
by considering double-well $\varphi^{4}$ interactions
$g\sum_{x\in V}(\tilde{\phi}(x)^{2}-1)^{2}$
and letting the coupling constant $g\to +\infty$.

Note that in the GFF setting (Theorem \ref{Thm main 1}) the partition functions and thus the topological probabilities are way more explicit and tractable than in the Ising setting (Theorem \ref{Thm disord Ising}).

\subsection{Relation between Ising and the GFF}
\label{Subsec rel Ising GFF}

Consider an electrical network $\LG=(V,E)$ as in
Section \ref{Subsec gauge}
endowed with conductances  $C(x,y)=C(y,x)>0$ for $\{x,y\}\in E$.
Given a non-negative function $h:V\rightarrow \R_{+}$,
we will denote by $\beta_{h}$ the weights
\begin{equation}
\label{Eq beta h}
\beta_{h}(x,y) = C(x,y) h(x) h(y), \qquad \{x,y\}\in E.
\end{equation}

In the sequel we will consider $\phi$ a discrete GFF on $\LG$ with $0$ boundary conditions, and its metric graph extension $\tilde{\phi}$
($\tilde{\phi}_{\vert V} = \phi$).
From the density \eqref{Eq phi} it is clear that conditionally on
$(\vert\phi(x)\vert)_{x\in V}$,
the signs $(\sgn(\phi(x)))_{x\in V_{\rm int}}$
are distributed as an Ising spin field with weights
\begin{displaymath}
\beta_{\vert\phi\vert}(x,y) = C(x,y) \vert\phi(x)\vert \vert\phi(y)\vert.
\end{displaymath}
Note that $\beta_{\vert\phi\vert}(x,y)=0$ if
$x$ or $y$ is in $V_{\partial}$.
As observed by Lupu and Werner in \cite{LupuWernerIsing},
the metric graph extension $\tilde{\phi}$ also naturally enters this picture and can be interpreted in terms of the FK-Ising.
Denote by $\omega_{\tilde{\phi}}\in\{0,1\}^{E}$ the following edge configuration.
We set $\omega_{\tilde{\phi}}(e) = 1$ if $\tilde{\phi}$ has no zeroes on the edge-line
$I_{e}$, and $\omega_{\tilde{\phi}}(e) = 0$ otherwise.
Note that if $e$ is adjacent to the boundary $V_{\partial}$,
then $\omega_{\tilde{\phi}}(e) = 0$ a.s.

\begin{prop}[Lupu-Werner, \cite{LupuWernerIsing}]
\label{Prop metric GFF Ising}
Let $h:V\rightarrow \R_{+}$ be a random non-negative field distributed as the absolute value of the discrete GFF $(\vert\phi(x)\vert)_{x\in V}$.
Let $(\hat{\zeta}(x))_{x\in V}\in\{-1,1\}^{V}$
be a random spin field distributed, conditionally on $h$,
as spin Ising with weights $\beta_{h}$.
Let $(\hat{\omega}(e))_{e\in E}\in\{0,1\}^{E}$
be a random edge configuration distributed, conditionally on $h$,
as FK-Ising with weights $\beta_{h}$.
We further assume that conditionally on $h$,
$\hat{\zeta}$ and $\hat{\omega}$ are coupled as in Edwards-Sokal coupling
(Theorem \ref{Thm Edwards-Sokal}).
Then $((\hat{\zeta}(x)h(x))_{x\in V},(\hat{\omega}(e))_{e\in E})$
are jointly distributed as
$((\phi(x))_{x\in V},(\omega_{\tilde{\phi}}(e))_{e\in E})$.
\end{prop}

\subsection{From FK-Ising topological probabilities to GFF topological probabilities}
\label{Subsec 3rd proof}

Here we will sketch an alternative proof of Theorem \ref{Thm main 1}
that relies on Proposition \ref{Prop metric GFF Ising} and
Theorem \ref{Thm disord Ising}.

The probability distribution of the absolute value of the discrete GFF 
$(\vert\phi(x)\vert)_{x\in V}$ can be written as
\begin{displaymath}
\dfrac{1}{Z}
Z^{\rm Isg}_{\beta_{h}}
\prod_{x\in V_{\rm int}}e^{-\frac{1}{2}W(x) h(x)^{2}}
\prod_{x\in V_{\rm int}} d h(x),
\end{displaymath}
where the weights $\beta_{h}$ are given by \eqref{Eq beta h},
with the convention $h_{\vert V_{\partial}}\equiv 0$,
$Z^{\rm Isg}_{\beta_{h}}$ is the spin Ising partition function for weights
$\beta_{h}$,
$Z$ is the GFF partition function,
and $W(x)$ is given by \eqref{Eq W sum}.
Now, let be a gauge field $\sigma\in\{ -1,1\}$.
Then $\tilde{\phi}\in\LT_{\sigma}$ if and only if
$\omega_{\tilde{\phi}}\in\Triv_{\sigma}$.
Therefore,
\begin{displaymath}
\mathbb{P}(\tilde{\phi}\in\LT_{\sigma}\,\vert\,
(\vert\phi(x)\vert)_{x\in V}
) =
\mathbb{P}(\omega_{\tilde{\phi}}\in\Triv_{\sigma}\,\vert\,
(\vert\phi(x)\vert)_{x\in V}
)
=
\dfrac{Z^{\rm Isg}_{\beta_{\vert\phi\vert},\sigma}}
{Z^{\rm Isg}_{\beta_{\vert\phi\vert}}}.
\end{displaymath}
Further,
\begin{eqnarray*}
\mathbb{P}(\tilde{\phi}\in\LT_{\sigma})
&=&
\E\big[\mathbb{P}(\tilde{\phi}\in\LT_{\sigma}\,\vert\,
(\vert\phi(x)\vert)_{x\in V})\big]
=
\E\left[\dfrac{Z^{\rm Isg}_{\beta_{\vert\phi\vert},\sigma}}
{Z^{\rm Isg}_{\beta_{\vert\phi\vert}}}\right]
\\
&=&\dfrac{1}{Z}
\int
\dfrac{Z^{\rm Isg}_{\beta_{h},\sigma}}{Z^{\rm Isg}_{\beta_{h}}}
Z^{\rm Isg}_{\beta_{h}}
\prod_{x\in V_{\rm int}}e^{-\frac{1}{2}W(x) h(x)^{2}}
\prod_{x\in V_{\rm int}} d h(x)
\\&=&
\dfrac{1}{Z}
\int
Z^{\rm Isg}_{\beta_{h},\sigma}
\prod_{x\in V_{\rm int}}e^{-\frac{1}{2}W(x) h(x)^{2}}
\prod_{x\in V_{\rm int}} d h(x)
=\dfrac{Z_{\sigma}}{Z}.
\end{eqnarray*}
The conditional distribution of $\tilde{\phi}$ on the event
$\{\tilde{\phi}\in\LT_{\sigma}\}$ can be obtained by similar arguments.

\section{Interpretations and implications of the result}
\label{Sec interpret}

\subsection{GFF on annular domains and exploration from inside}
\label{Subsec annular domain}

In the introduction (Figure \ref{Fig ex ann}) we considered the example of planar annular domains and the event that $\tilde{\phi}$ has a sign cluster that surrounds the inner hole.
Here we will recall how annular domains naturally arise in a
``simply connected" context.

For simplicity, let us consider a two-dimensional discrete box
$\LG = (V,E)$ with
\begin{displaymath}
V = \{-n, -n +1,\dots, -1,0,1,\dots, n-1,n\}^{2},
\end{displaymath}
$V_{\rm int} = \{-n +1,\dots, -1,0,1,\dots, n-1\}^{2}$,
$V_{\partial} = V\setminus V_{\rm int}$ and
the edges formed by 
$z,w\in V$ such that $\vert z-w\vert = 1$ (square lattice).
Let $\widetilde{\LG}$ be the metric graph associated to $\LG$.
Let $\tilde{\phi}$ the metric graph GFF (non-twisted) on $\widetilde{\LG}$
with $0$ boundary conditions on $V_{\partial}$.

Let $\widetilde{K}_{0}$ be a deterministic non-empty compact subset of 
$\widetilde{\LG}$
such that $\widetilde{K}_{0}$ is connected and $d(\widetilde{K}_{0}, V_{\partial})>1$.
Given a sample of $\tilde{\phi}$, we define the following random
subset  $\widetilde{K}_{1}$ of $\widetilde{\LG}$,
depending on $\widetilde{K}_{0}$ and $\tilde{\phi}$:
\begin{displaymath}
\widetilde{K}_{1} = 
\overline{\widetilde{K}_{0}\cup \{z\in \widetilde{\LG}\setminus \widetilde{K}_{0}
\vert \exists \text{ continuous path } \wp \text{ in } \LG,
z\stackrel{\wp}{\longleftrightarrow}\widetilde{K}_{0}
\text{ and } \vert\tilde{\phi}\vert_{\vert\wp}>0\}}.
\end{displaymath}
In other words, $\widetilde{K}_{1}$ is made of $\widetilde{K}_{0}$ and all the points that can be connected to $\widetilde{K}_{0}$ by a path $\wp$ on which $\tilde{\phi}$
does not go through $0$, except possibly at the extremity.
The random compact subset $\widetilde{K}_{1}$ is a so-called
\textit{stopping set} for the GFF $\tilde{\phi}$ : 
given any deterministic open subset $U$ of $\widetilde{\LG}$,
the event $\{\widetilde{K}_{1}\subset U\}$
is measurable with respect to the restriction $\tilde{\phi}_{\vert U}$.
The subset $\widetilde{K}_{1}$ can be obtained by first discovering
$\tilde{\phi}$ on $\widetilde{K}_{0}$ and then exploring from there in all the directions and stopping an exploration branch whenever the value of $\tilde{\phi}$
on this branch reaches $0$.
By construction, $\widetilde{K}_{1}$ is connected and $\tilde{\phi}$ is $0$
on $\partial \widetilde{K}_{1}$.
It is easy to see that with positive probability,
$d(\widetilde{K}_{1}, V_{\partial})>1$.
We further introduce another random set $\widetilde{K}_{2}$,
obtained by filling the inner holes of $\widetilde{K}_{1}$:
\begin{displaymath}
\widetilde{K}_{2} =
\widetilde{K}_{1}
\cup\bigcup_{\substack{O \text{ connected}\\ \text{component} \\ 
\text{of } \widetilde{\LG}\setminus \widetilde{K}_{1},~
O\cap V_{\partial} = \emptyset}} O.
\end{displaymath}
Then $\widetilde{K}_{2}$ is again a stopping set for $\tilde{\phi}$.
By construction,
$d(\widetilde{K}_{2}, V_{\partial}) = d(\widetilde{K}_{1}, V_{\partial})$,
and again, $\tilde{\phi}$ is $0$ on $\partial \widetilde{K}_{2}$.

On the event $\{ d(\widetilde{K}_{2}, V_{\partial})>1\}$,
which has a positive probability,
the sub-metric-graph $\overline{\widetilde{\LG}\setminus \widetilde{K}_{2}}$
is annular, that is to say it contains one hole,
which is $\widetilde{K}_{2}\setminus \partial \widetilde{K}_{2}$.
The values of $\tilde{\phi}$ on 
$\partial (\widetilde{\LG}\setminus \widetilde{K}_{2})$ are $0$ by construction.
Since $\widetilde{K}_{2}$ is a stopping set,
by the strong Markov property of $\tilde{\phi}$ (see \cite{Lupu2016Iso}),
conditionally on $(\widetilde{K}_{2}, \tilde{\phi}_{\vert \widetilde{K}_{2}})$,
the field $\tilde{\phi}_{\vert \overline{\widetilde{\LG}\setminus \widetilde{K}_{2}}}$
is distributed as the GFF on the metric graph 
$\overline{\widetilde{\LG}\setminus \widetilde{K}_{2}}$.
Therefore, Theorem \ref{Thm main 1} gives the conditional probability for a
sign cluster of 
$\tilde{\phi}_{\vert \overline{\widetilde{\LG}\setminus \widetilde{K}_{2}}}$
to surround $\widetilde{K}_{2}$
given $(\widetilde{K}_{2}, \tilde{\phi}_{\vert \widetilde{K}_{2}})$.
The conditional probability given just $\widetilde{K}_{2}$ is the same,
since $\tilde{\phi}_{\vert \overline{\widetilde{\LG}\setminus \widetilde{K}_{2}}}$
and $\tilde{\phi}_{\vert \widetilde{K}_{2}}$ are independent conditionally on
$\widetilde{K}_{2}$.

\subsection{GFF on annular domains: continuum limits}
\label{Subsec annular continuum}

Here we will consider what happens in the scaling limit on doubly connected (annular) domains in the scaling limit.

Here we will call an \textit{annular domain} an open bounded subset
$A\subset\C$ such that $\C\setminus A$ has two connected components,
one being necessarily unbounded and the other one bounded (the inner hole),
with the additional condition that the hole is not reduced to one point.
We will denote by $\partial_{o} A$ and $\partial_{i} A$
the outer and the inner boundaries of $A$.
The conformal equivalence classes of annular domains are parametrized
by the \textit{extremal distance}, or \textit{extremal length}, 
between the outer and the inner boundary
$\ED(\partial_{o} A, \partial_{i} A)$.
The quantity
$\ED(\partial_{o} A, \partial_{i} A)$
is really nothing else than the electrical resistance between
$\partial_{o} A$ and $\partial_{i} A$.
Given a circular annulus
\begin{displaymath}
A_{r_{1},r_{2}} = \{ z\in\C\vert r_{1}<\vert z\vert< r_{2}\},
\end{displaymath}
the corresponding extremal distance is
\begin{displaymath}
\ED(\partial_{o} A_{r_{1},r_{2}}, \partial_{i} A_{r_{1},r_{2}})
=  \dfrac{1}{2\pi} \log (r_{2}/r_{1}).
\end{displaymath}
So every annular domain $A$ is conformally equivalent to a circular annulus
$A_{r,1}$, where
\begin{displaymath}
r = \exp(-2\pi \ED(\partial_{o} A, \partial_{i} A)).
\end{displaymath}
For details, we refer to \cite[Chapter 4]{Ahlfors2010ConfInv}.

\subsubsection{Probability of non-contractible sign clusters in the scaling limit}
\label{Subsubsec Proba ann}

Let $A$ be an annular domain as above, and $\Phi$ a continuum GFF on $A$ with $0$
boundary conditions, both on $\partial_{o} A$ and $\partial_{i} A$.
In \cite{ALS3},
Aru-Lupu-Sep{\'u}lveda considered the non-contractible interior \textit{level lines} of $\Phi$ with step $2\lambda$, 
where $2\lambda$ is the Schramm-Sheffield \textit{height gap} 
\cite{SchSh,SchSh2} of the 2D continuum GFF.
More precisely, one has a random sequence of simple (Jordan) loops
$\lvl_{1},\dots ,\lvl_{N}$, with $N$ also random,
and a sequence of labels
$v_{1},\dots, v_{N}\in 2\lambda\Z$, where:
\begin{itemize}
\item by convention,
$\lvl_{0} = \partial_{o}A$, 
$\lvl_{N+1} = \partial_{i}A$,
$v_{0} = v_{N+1} = 0$;
\item $N\geq 0$ and $N$ finite and even;
\item for all $i\in \{ 1,\dots, N\}$,
$\lvl_{i}$ is a simple loop in $A$ that separates 
$\partial_{o} A$ and $\partial_{i} A$;
\item for all $i\in \{0, 1,\dots, N\}$,
$\lvl_{i}$ surrounds $\lvl_{i+1}$;
\item for all $i\in \{0,1,\dots, N\}$,
$v_{i+1} - v_{i}\in \{-2\lambda, 2\lambda\}$;
\item the family of random variables
$(N, \lvl_{1},\dots ,\lvl_{N}, v_{1},\dots, v_{N})$
is measurable with respect to $\Phi$;
\item for all $i\in \{ 1,\dots, N\}$, $v_{i}$ is the value of $\Phi$ on the outer side of $\lvl_{i}$ and $v_{i+1}$ is the value of 
$\Phi$ on the inner side of $\lvl_{i}$.
\end{itemize}
Each of the loops $\lvl_{i}$ locally look like an SLE$_{4}$ curve.
See \cite[Section 4.1]{ALS3} for details.

Now, with positive probability, $N=0$.
On the event $\{N=0\}$, all the non-contractible level lines 
$\lvl_{1},\dots ,\lvl_{N}$ and the labels
$v_{1},\dots, v_{N}$ do not exist.
The event $\{N=0\}$ is the continuum analogue of the event of no sign cluster surrounding the inner hole, that is to say of the left side of 
Figure \ref{Fig ex ann}.
The probability $\mathbb{P}(N=0)$ can be expressed explicitly by applying 
SLE and local set tools, as detailed for instance in
\cite[Proposition 2.18]{ALS3}.
This is also the same probability for a 2D Brownian loop soup
$\LL^{1/2}_{A}$ in $A$ (intensity parameter $\alpha = 1/2$)
not having a non-contractible cluster.
The probability $\mathbb{P}(N=0)$ can be expressed through one-dimensional Brownian bridges.
Let $(\widehat{W}_{t})_{0\leq t\leq L}$ be a standard Brownian bridge on $\R$ from $0$ to $0$, of time-length $L=\ED(\partial_{o} A, \partial_{i} A)$.
Then the following holds.

\begin{prop}[Aru-Lupu-Sep{\'u}lveda, \cite{ALS3}]
\label{Prop ALS N 0}
One has the equality
\begin{displaymath}
\mathbb{P}(N=0) = 
\mathbb{P}((\widehat{W}_{t})_{0\leq t\leq L} \text{ stays in } 
(-\sqrt{\pi/2},\sqrt{\pi/2})).
\end{displaymath}
\end{prop}

Note that $\sqrt{\pi/2}$ is the value of the height gap $2\lambda$ in an appropriate normalization of the GFF $\Phi$.
The probability $\mathbb{P}(N=0)$ depends on $A$ only through the extremal distance
$L=\ED(\partial_{o} A, \partial_{i} A)$,
that it to say only through the conformal equivalence class of $A$.
This is consistent with the conformal invariance of $\Phi$.

Now consider $\widetilde{A}^{(n)}$ metric graph approximations of the annular domain $A$ in the square lattice $\frac{1}{n}\Z^{2}$, 
and let $\tilde{\phi}_{n}$ be the GFF on 
$\widetilde{A}^{(n)}$ with $0$ boundary conditions.
We are interested to verify that
\begin{displaymath}
\lim_{n\to 0}\mathbb{P}(\tilde{\phi}_{n} \text{ has no sign clusters surrounding the inner hole of } \widetilde{A}^{(n)})
= \mathbb{P}(N=0).
\end{displaymath}
Of course, this can be deduced from the abstract arguments on the convergence of sign clusters.
But this is not our goal here.
Our goal is to compare the exact formulas given on one hand by
Theorem \ref{Thm main 1},
and on the other hand by Proposition \ref{Prop ALS N 0}
and check that they indeed match.
Consider this as a sanity check.

For this we will need to introduce the Brownian loop measure on $A$;
see \cite{LawlerWerner2004ConformalLoopSoup} and
\cite[Section 5.6]{LawlerConformallyInvariantProcesses}.
It is an infinite measure given by
\begin{equation}
\label{Eq mu loop ann}
\mu^{\rm loop}_{A}(d\wp)
=
\int_{A}
\int_{0}^{+\infty}\mathbb{P}^{z,z}_{A,t}(d\wp) p_{A}(t,z,z) \dfrac{dt}{t}
\,d^{2} z,
\end{equation}
where $p_{A}(t,z,w)$ denotes the heat kernel on $A$ with $0$ boundary conditions on
$\partial A$,
and $\mathbb{P}^{z,z}_{A,t}$ are the 2D Brownian bridge probability measures where the bridge is conditioned on staying in $A$.
Theorem \ref{Thm main 1} implies that
\begin{multline*}
\lim_{n\to 0}\mathbb{P}(\tilde{\phi}_{n} \text{ has no sign clusters surrounding the inner hole of } \widetilde{A}^{(n)})\\ = 
\exp\big(-\mu^{\rm loop}_{A}(\{\text{Loops that wind an odd number of times arround the hole of } A \})\big).
\end{multline*}
For the convergence of the loop measure from discrete to continuum, we refer to
\cite{LawlerFerreras07RWLoopSoup}.
So our goal is to verify the following.

\begin{prop}
\label{Prop bridges 1D 2D}
The following identity holds:
\begin{multline*}
\exp\big(-\mu^{\rm loop}_{A}(\{\text{Loops that wind an odd number of times arround the hole of } A \})\big)
\\ =
\mathbb{P}((\widehat{W}_{t})_{0\leq t\leq L} \text{ stays in } 
(-\sqrt{\pi/2},\sqrt{\pi/2})).
\end{multline*}
\end{prop}

Note that in the identity above, the left-hand side involves 2D Brownian bridges,
and the right-hand side 1D Brownian bridges.
We do not have a direct probabilistic interpretation for this identity,
other than relying on the content of this paper and all the knowledge on the level lines of the 2D continuum GFF. 
So our verification will proceed through explicit computations.
These computations are however rather sophisticated and ultimately rely on the Jacobi triple product identity
\eqref{Eq Jacobi triple product}.

Denote by $p_{\R}$ the heat kernel on $\R$, and by
$p_{(-\sqrt{\pi/2},\sqrt{\pi/2})}$ the heat kernel on $(-\sqrt{\pi/2},\sqrt{\pi/2})$
with $0$ boundary conditions at $\pm \sqrt{\pi/2}$.
Then
\begin{displaymath}
\mathbb{P}((\widehat{W}_{t})_{0\leq t\leq L} \text{ stays in } 
(-\sqrt{\pi/2},\sqrt{\pi/2})) = 
\dfrac{p_{(-\sqrt{\pi/2},\sqrt{\pi/2})}(L,0,0)}{p_{\R}(L,0,0)}
= \sqrt{2\pi L} p_{(-\sqrt{\pi/2},\sqrt{\pi/2})}(L,0,0).
\end{displaymath}
The quantity $p_{(-\sqrt{\pi/2},\sqrt{\pi/2})}(L,0,0)$ can be decomposed into series in two different ways.
By using the reflection principle, one gets
\begin{equation}
\label{Eq heat ker reflect}
p_{(-\sqrt{\pi/2},\sqrt{\pi/2})}(L,0,0) = 
\dfrac{1}{\sqrt{2\pi L}}
\sum_{k\in\Z}
\Big(e^{-\frac{4 k^{2}\pi }{L}}
-e^{-\frac{(2k+1)^{2}\pi }{L}}\Big).
\end{equation}
This is for instance Formula 3.0.2 in 
\cite[Section 1.3]{BorodinSalminen2015}.
By rather using the Fourier decomposition of the heat kernel, we get
\begin{equation}
\label{Eq heat ker Fourier}
p_{(-\sqrt{\pi/2},\sqrt{\pi/2})}(L,0,0) = 
\sqrt{\dfrac{2}{\pi}}\sum_{j\geq 0}
e^{-\frac{(2j+1)^{2}\pi L}{4}}.
\end{equation}
The two series \eqref{Eq heat ker reflect} and \eqref{Eq heat ker Fourier}
are related by the Poisson summation formula.
The quantity $p_{(-\sqrt{\pi/2},\sqrt{\pi/2})}(L,0,0)$
can also be written in terms of
Jacobi Theta functions, or rather Theta Nullwert functions;
see \cite[Section 16.27]{AbramowitzStegun84}.
With the standard notations, let be
\begin{displaymath}
\theta_{2}(q) = \sum_{j\in \Z} q^{(j+1/2)^{2}},
\qquad
\theta_{4}(q) = \sum_{k\in\Z}(-1)^{k} q^{k^{2}}.
\end{displaymath}
We have that
\begin{eqnarray}
\nonumber
\mathbb{P}((\widehat{W}_{t})_{0\leq t\leq L} \text{ stays in } 
(-\sqrt{\pi/2},\sqrt{\pi/2}))
&=&
\sqrt{2\pi L} p_{(-\sqrt{\pi/2},\sqrt{\pi/2})}(L,0,0)
\\&=&
\label{Eq bridge theta}
\theta_{4}(q= e^{-\frac{\pi}{L}})
=\sqrt{L}\theta_{2}(q=e^{-\pi L}).
\end{eqnarray}

Now let us perform the computations on the 2D loop side.
Let be
\begin{displaymath}
r = \exp(-2\pi \ED(\partial_{o} A, \partial_{i} A)) = e^{-2\pi L}.
\end{displaymath}
Let $\widehat{A}$ be the circular annulus
$\widehat{A} = A_{r,1}$,
so that $A$ is conformally equivalent to $\widehat{A}$.
Let $\mu^{\rm loop}_{\widehat{A}}$ be the Brownian loop measure on $\widehat{A}$.
The conformal invariance of the Brownian loop measure
(see \cite[Proposition 5.27]{LawlerConformallyInvariantProcesses}) ensures that
the following identity holds:
\begin{multline*}
\mu^{\rm loop}_{A}(\{\text{Loops that wind an odd number of times arround the hole of } A \})
\\ =
\mu^{\rm loop}_{\widehat{A}}(\{\text{Loops that wind an odd number of times arround the hole of } \widehat{A} \}).
\end{multline*}
Rather than performing computations on the annulus $\widehat{A}$,
we will lift everything up to its universal cover via the $\log$ map.
However, for doing this, it is much more convenient to endow $\widehat{A}$
with the cylindrical metric 
$\vert z\vert^{-2} d^{2}z$ rather than the Euclidean metric $d^{2} z$.
Indeed, consider the strip
\begin{equation}
\label{Eq strip}
S = \{u+iv\vert -2\pi L < u <0, v\in\R\}.
\end{equation}
The $\exp$ map induces a covering of $\widehat{A}$ by $S$,
and it sends the Euclidean metric on a fundamental domain in $S$ to
the cylindrical metric $\vert z\vert^{-2} d^{2}z$ on $\widehat{A}$.
So consider the time-changed Brownian motion with infinitesimal generator
$\frac{1}{2}\vert z\vert^{2} \Delta$,
killed upon hitting $\partial \widehat{A}$.
This process is symmetric with respect to the cylindrical metric 
$\vert z\vert^{-2} d^{2}z$.
Let $\stackrel{\circ}{p}_{\widehat{A}}(t,z,w)$
be its transition densities (with condition $0$ on $\partial \widehat{A}$)
with respect to $\vert z\vert^{-2} d^{2}z$,
and $\stackrel{\circ}{P}^{z,z}_{\widehat{A},t}$
the corresponding bridge probability measures conditioned on staying in 
$\widehat{A}$.
The Brownian loop measure on $\widehat{A}$ for the cylindrical metric
$\vert z\vert^{-2} d^{2}z$ is given by
\begin{displaymath}
\stackrel{\circ}{\mu}^{\rm loop}_{\widehat{A}}(d\wp)
=
\int_{\widehat{A}}
\int_{0}^{+\infty}\stackrel{\circ}{P}^{z,z}_{\widehat{A},t}(d\wp) 
\stackrel{\circ}{p}_{\widehat{A}}(t,z,z) \dfrac{dt}{t}
\,\dfrac{d^{2} z}{\vert z\vert^{2}}.
\end{displaymath}
Since the cylindrical metric $\vert z\vert^{-2} d^{2}z$
is conformally equivalent to the Euclidean metric $d^{2}z$,
the conformal invariance of the Brownian loop measure ensures that
\begin{multline*}
\mu^{\rm loop}_{\widehat{A}}(\{\text{Loops that wind an odd number of times arround the hole of } \widehat{A} \})
\\=
\stackrel{\circ}{\mu}^{\rm loop}_{\widehat{A}}(\{\text{Loops that wind an odd number of times arround the hole of } \widehat{A} \})
.
\end{multline*}
Now, lifting from $\widehat{A}$ up to $S$ through the $\log$ map,
one gets that for every $z\in \widehat{A}$ and $t>0$,
\begin{multline*}
\stackrel{\circ}{P}^{z,z}_{\widehat{A},t}(\text{Bridge winds an odd number of times arround the hole of } \widehat{A})
\stackrel{\circ}{p}_{\widehat{A}}(t,z,z)
\\=
\sum_{k\in\mathbb{Z}}
p_{S}(t,u+iv,u+i(v + (2k+1)2\pi)),
\end{multline*}
where $p_{S}$ is the heat kernel on the strip $S$ with $0$ boundary conditions on
$\partial S$,
and $e^{u+iv} = z$, with $v\in [0,2\pi)$.
So, at the end of the day,
\begin{multline}
\label{Eq Ann univ cover}
\mu^{\rm loop}_{A}(\{\text{Loops that wind an odd number of times arround the hole of } A \})
\\ =
\int_{-2\pi L}^{0}
\int_{0}^{2\pi}
\int_{0}^{+\infty}
\sum_{k\in\mathbb{Z}}
p_{S}(t,u+iv,u+i(v + (2k+1)2\pi))\,
\dfrac{dt}{t}\,dv\,du.
\end{multline}

Further, one can factorize the heat kernel on $S$ by separating real and imaginary parts:
\begin{displaymath}
p_{S}(t,u+iv,u'+iv') = p_{(-2\pi L, 0)}(t,u,u') p_{\R}(t,v,v')
=p_{(-2\pi L, 0)}(t,u,u') \dfrac{1}{\sqrt{2\pi t}}e^{-\frac{(v'-v)^{2}}{2t}},
\end{displaymath}
where $p_{(-2\pi L, 0)}(t,u,u')$ denotes the heat kernel on
$(-2\pi L, 0)$ with boundary condition $0$ in $-2\pi L$ and $0$.
Thus, \eqref{Eq Ann univ cover} equals
\begin{displaymath}
2\pi
\int_{0}^{+\infty}
\int_{-2\pi L}^{0}
p_{(-2\pi L, 0)}(t,u,u)\,du
\sum_{k\in\mathbb{Z}}
e^{-\frac{2(2k+1)^{2}\pi^{2}}{t}}
\,\dfrac{dt}{\sqrt{2\pi}t^{3/2}}.
\end{displaymath}
Further, similarly to \eqref{Eq heat ker Fourier}, one can write
\begin{eqnarray*}
p_{(-2\pi L, 0)}(t,u,u) &=& 
\dfrac{1}{\pi L}\sum_{j\geq 0}
\cos\Big(\dfrac{2j+1}{2L}(u+\pi L)\Big)^{2} e^{-\frac{(2j+1)^{2} t}{8 L^{2}}}
\\ &&+
\dfrac{1}{\pi L}\sum_{j\geq 1}
\sin\Big(\dfrac{j}{L}(u+\pi L)\Big)^{2}
e^{-\frac{j^{2} t}{2L^{2}}}.
\end{eqnarray*}
Thus,
\begin{displaymath}
\int_{-2\pi L}^{0}
p_{(-2\pi L, 0)}(t,u,u)\,du =
\sum_{j\geq 0}e^{-\frac{(2j+1)^{2} t}{8 L^{2}}}
+ \sum_{j\geq 1}e^{-\frac{j^{2} t}{2L^{2}}}
=\sum_{j\geq 1}e^{-\frac{j^{2} t}{8 L^{2}}}.
\end{displaymath}
Thus, \eqref{Eq Ann univ cover} equals
\begin{displaymath}
\sqrt{2\pi}
\sum_{j\geq 1}\sum_{k\in\Z}
\int_{0}^{+\infty}e^{-\frac{j^{2} t}{8L^{2}}-\frac{2(2k+1)^{2}\pi^{2}}{t}}
\,\dfrac{dt}{t^{3/2}}.
\end{displaymath}

\begin{lemma}
\label{Lem IG}
For every $a,b>0$,
\begin{displaymath}
\int_{0}^{+\infty}e^{-at - b t^{-1}}\,\dfrac{dt}{t^{3/2}}
=\sqrt{\dfrac{\pi}{b}} e^{-2\sqrt{ab}}.
\end{displaymath}
\end{lemma}

\begin{proof}
This identity seems to be classic.
For instance,
\begin{displaymath}
\ind_{t\geq 0}
\sqrt{\dfrac{\lambda}{2\pi t^{3}}}
\exp\Big(-\dfrac{\lambda(t-\mu)^{2}}{2\mu^{2}t}\Big)
\end{displaymath}
is the density of the Inverse Gaussian distribution
$\operatorname{IG}(\mu,\lambda)$ \cite{InverseGaussian},
and in particular, its integral on $(0,+\infty)$ equals $1$.
\end{proof}

By applying Lemma \ref{Lem IG}, we get that \eqref{Eq Ann univ cover} equals
\begin{eqnarray*}
\sum_{j\geq 1}\sum_{k\in\Z}
\dfrac{1}{\vert 2k+1\vert}e^{-\frac{j\vert 2k+1\vert\pi}{L}}
&=&
2\sum_{j\geq 1}\sum_{k\geq 0}
\dfrac{1}{2k+1}e^{-\frac{j (2k+1)\pi}{L}}
\\
&=&
-\sum_{j\geq 1}\big(2\log\big(1-e^{-\frac{j\pi}{L}}) 
- \log(1-e^{-\frac{2j\pi}{L}}\big)\big)
.
\end{eqnarray*}
By recombining the terms in the sum, this is turn equals
\begin{displaymath}
-\sum_{l\geq 1}\big(\log\big(1-e^{-\frac{2l\pi}{L}}\big)
+2 \log\big(1-e^{-\frac{(2l-1)\pi}{L}}\big)\big).
\end{displaymath}
So, we get
\begin{multline*}
\exp\big(-\mu^{\rm loop}_{A}(\{\text{Loops that wind an odd number of times arround the hole of } A \})\big)
\\ =
\prod_{l \geq 1}\big(1-e^{-\frac{2l\pi}{L}}\big)
\prod_{l \geq 1}\big(1-e^{-\frac{(2l-1)\pi}{L}}\big)^{2}.
\end{multline*}
By comparing with \eqref{Eq bridge theta},
to conclude to Proposition \ref{Prop bridges 1D 2D},
we need the following identity.
For every $q\in [0,1)$,
\begin{equation}
\label{Eq Jacobi triple product}
\prod_{l \geq 1}(1-q^{2l})
\prod_{l \geq 1}(1-q^{2l-1})^{2}
=\sum_{k\in\Z}(-1)^{k} q^{k^{2}}.
\end{equation}
The identity \eqref{Eq Jacobi triple product} is a special case of the Jacobi triple product identity; see \cite{Andrews65JacobiTripleProd}.

\subsubsection{Decomposition through contractible CLE$_{4}$ on annular domains.}
\label{Subsubsec MS annulus}

Here we will see that the Miller-Sheffield coupling has an analogue on annular domains. First, we present the construction of the gauge twisted GFF on the annulus.

We start by considering a circular annulus
$\widehat{A} = A_{r,1}$,
with as previously $L=\ED(\partial_{o}\widehat{A},\partial_{i}\widehat{A})
=\frac{1}{2\pi}\log r^{-1}$.
Given an angle $\theta\in [0,2\pi)$, we will denote by
$\gamma_{\theta}$ the ray $(ue^{i\theta})_{r\leq u\leq 1}$.
Let $\sigma_{\theta}$ be the $\{-1,1\}$-valued gauge field on $\widehat{A}$
corresponding to the \textit{defect line} $\gamma_{\theta}$:
Given a continuous path $(\wp(t))_{0\leq t\leq T}$ in $\widehat{A}$
with $\wp(0), \wp(T)\not\in \gamma_{\theta}$,
the holonomy $\hol^{\sigma_{\theta}}(\wp)$ is defined as follows.
Through the $\log$ map, one can lift $\wp$ to a continuous path 
$\hat{\wp}$ on the universal cover $S$ \eqref{Eq strip} of $\widehat{A}$, 
with endpoints $u(0)e^{i v(0)}$ and $u(T)e^{i v(T)}$.
If there are an even number of points of $2\pi \Z + \theta$ between
$v(0)$ and $v(T)$, 
then $\hol^{\sigma_{\theta}}(\wp)=1$.
If this number is odd, then $\hol^{\sigma_{\theta}}(\wp)= -1$.
Roughly speaking, $\hol^{\sigma_{\theta}}(\wp)$ is given by the parity of the number of crossings of the defect line $\gamma_{\theta}$ by the path $\wp$,
with the caveat that this number may well be infinite, as for instance in the case of
$\wp$ being a Brownian path; still the parity is well defined even in the case of infinite crossings.
Now, if the path $\wp$ is a closed loop ($\wp(0) = \wp(T)$),
then one can remove the restrictive condition $\wp(0), \wp(T)\not\in \gamma_{\theta}$.
In this case, $\hol^{\sigma_{\theta}}(\wp)= (-1)^{(v(T)-v(0))/2\pi}$.

One immediately remarks that for a closed loop $\wp$, the holonomy
$\hol^{\sigma_{\theta}}(\wp)$ is the same whatever the value of $\theta$.
And indeed, all the gauge fields $\sigma_{\theta}$ belong to the same gauge equivalence class.
Given $0\leq \theta_{1}<\theta_{2}<2\pi$, define
$\hat{\sigma}_{\theta_{1},\theta_{2}}$ on 
$\widehat{A}\setminus(\gamma_{\theta_{1}}\cup \gamma_{\theta_{2}})$ as follows:
$\hat{\sigma}_{\theta_{1},\theta_{2}}$ equals $1$ on
$\{ue^{i v}\vert r<u<1, 0\leq v<\theta_{1} \text{ or }
\theta_{2}<v<2\pi\}$,
and equals $-1$ on 
$\{ue^{i v}\vert r<u<1, \theta_{1}<v<\theta_{2}\}$.
Then for every continuous path $(\wp(t))_{0\leq t\leq T}$ in $\widehat{A}$
(not necessarily a closed loop),
with $\wp(0), \wp(T)\not\in \gamma_{\theta_{1}}\cup \gamma_{\theta_{2}}$,
\begin{displaymath}
\hol^{\sigma_{\theta_{2}}}(\wp)=
\hat{\sigma}_{\theta_{1},\theta_{2}}(\wp(0))
\hol^{\sigma_{\theta_{1}}}(\wp)
\hat{\sigma}_{\theta_{1},\theta_{2}}(\wp(T)).
\end{displaymath}

Further, the defect line does not have at all to be a straight ray
$\gamma_{\theta}$.
One can take any continuous simple curve 
$(\gamma(s))_{0\leq s\leq 1}$ such that
$\vert\gamma(0)\vert = r$, $\vert\gamma(1)\vert = 1$,
and $\vert\gamma(s)\vert \in (r,1)$
for every $s\in (0,1)$.
On the universal cover $S$, the connected components of
$(\exp)^{-1}(\widehat{A}\setminus \gamma)$ are naturally ordered:
one can define a function 
$\operatorname{ord}: (\exp)^{-1}(\widehat{A}\setminus \gamma)\mapsto\Z$
which is constant on each connected component of
$(\exp)^{-1}(\widehat{A}\setminus \gamma)$ and such that for every
$w\in (\exp)^{-1}(\widehat{A}\setminus \gamma)$,
$\operatorname{ord}(w+2\pi i) = \operatorname{ord} (w)+1$.
Then the gauge field $\sigma_{\gamma}$ associated to the defect line $\wp$ is defined as follows.
Let be a continuous path $(\wp(t))_{0\leq t\leq T}$ in $\widehat{A}$
with $\wp(0), \wp(T)\not\in \gamma$.
Through the $\log$ map, we lift $\wp$ to a continuous path 
$\hat{\wp}$ on the universal cover $S$ of $\widehat{A}$.
Then
\begin{displaymath}
\hol^{\sigma_{\gamma}}(\wp) = 
(-1)^{\operatorname{ord}(\hat{\wp}(t))-
\operatorname{ord}(\hat{\wp}(0))}.
\end{displaymath}
The gauge field $\sigma_{\gamma}$ is in the same gauge equivalence class as all the
$\sigma_{\theta}$.

Let be $\widehat{A}^{\rm db}$ be the circular annulus
$\widehat{A}^{\rm db} = A_{\sqrt{r},1}$.
Then the square map $\bpi(w) = w^{2}$ induces a double cover of
$\widehat{A}$ by $\widehat{A}^{\rm db}$.
For this covering map $\bpi$, the covering automorphism that
interchanges the two sheets of the covering is simply given by
$\psi(w) = -w$.
The maps $\bpi$ and $\psi$ are both holomorphic,
a fact that we will need later.
Let $\widehat{\Phi}^{\rm db}$ be the continuum GFF on $\widehat{A}^{\rm db}$
with $0$ boundary conditions,
normalized so that its covariance function is the Green's function of
$-\Delta$ with $0$ Dirichlet boundary conditions,
denoted $G_{\widehat{A}^{\rm db}}(z,w)$.
As in Sections \ref{Subsec double cover} and \ref{Subsec cov metric},
let be
\begin{displaymath}
\widehat{\Phi}^{\rm db}_{+} = 
\dfrac{1}{\sqrt{2}}(\widehat{\Phi}^{\rm db}+\widehat{\Phi}^{\rm db}\circ\psi),
\qquad
\widehat{\Phi}^{\rm db}_{-} = 
\dfrac{1}{\sqrt{2}}(\widehat{\Phi}^{\rm db}-\widehat{\Phi}^{\rm db}\circ\psi).
\end{displaymath}
Note that the fields $\widehat{\Phi}^{\rm db}_{+}$ and 
$\widehat{\Phi}^{\rm db}_{-}$ are independent.

From now on we will assume that the defect line $\gamma$ has zero Lebesgue (area) measure. 
We want to construct $\widehat{\Phi}_{\sigma_{\gamma}}$, the continuum GFF on 
$\widehat{A}$ twisted by the gauge field $\sigma_{\gamma}$,
but we do not want to specify the values of $\widehat{\Phi}_{\sigma_{\gamma}}$
on the defect line $\gamma$ itself.
So the zero Lebesgue measure ensures that the specification on $\gamma$ does not matter.
Note that this condition is not automatically satisfied by continuous simple curves.
As a counterexample, there are the so called Osgood curves; 
see \cite[Chapter 8]{Sagan94Curves}.
Let $\mathsf{s}_{\gamma}: \widehat{A}\setminus\gamma\rightarrow \widehat{A}^{\rm db}$
be a section of the covering map $\bpi$ (for all $z\in \widehat{A}\setminus\gamma$,
$\bpi(\mathsf{s}_{\gamma}(z))=z$)
that is continuous on $\widehat{A}\setminus\gamma$.
The section $\mathsf{s}_{\gamma}$ is a determination of the square root on
$\widehat{A}\setminus\gamma$, 
and in particular it is holomorphic.

Since $\mathsf{s}_{\gamma}$ and $\psi$ are holomorphic,
the field $\widehat{\Phi}^{\rm db}_{+}\circ \mathsf{s}_{\gamma}$
is distributed like the usual continuum GFF on $\widehat{A}$
with $0$ boundary conditions,
$\widehat{\Phi}$.
Now define $\widehat{\Phi}_{\sigma_{\gamma}} = \widehat{\Phi}^{\rm db}_{-}\circ \mathsf{s}_{\gamma}$.
Then $\widehat{\Phi}_{\sigma_{\gamma}}$ is a Gaussian field on
$\widehat{A}$ with covariance function given by
\begin{displaymath}
G_{\widehat{A},\sigma_{\gamma}}(z,w)= 
G_{\widehat{A}^{\rm db}}(\mathsf{s}_{\gamma}(z),\mathsf{s}_{\gamma}(w))
-
G_{\widehat{A}^{\rm db}}(\mathsf{s}_{\gamma}(z),\psi(\mathsf{s}_{\gamma}(w))).
\end{displaymath}
The field $\widehat{\Phi}_{\sigma_{\gamma}}$ is the
$\sigma_{\gamma}$-gauge-twisted GFF on $\widehat{A}$ with $0$ boundary conditions.
Note the discontinuity of $G_{\widehat{A},\sigma_{\gamma}}(z,w)$
when either of the variables crosses the defect line $\gamma$.

Regarding the regularity of the field $\widehat{\Phi}_{\sigma_{\gamma}}$,
one can work in the Sobolev space $\mathcal{H}^{-1}(\widehat{A})$;
see \cite[Section 2.3]{Sheffield07GFF} and \cite[Section 4.2]{Dubedat09PartitionFunc}.
The $\mathcal{H}^{-1}(\widehat{A})$ norm is given by
\begin{displaymath}
\Vert f\Vert^{2}_{\mathcal{H}^{-1}(\widehat{A})} = 
\iint_{\widehat{A}\times \widehat{A}}
f(z) G_{\widehat{A}}(z,w) f(w)\,d^{2}z\,d^{2}w,
\end{displaymath}
where $G_{\widehat{A}}$ is the Green's function of $-\Delta$ on
$\widehat{A}$ with $0$ boundary conditions.
Then $\widehat{\Phi}_{\sigma_{\gamma}}$ can be seen as a random element of
$\mathcal{H}^{-1}(\widehat{A})$, with
\begin{displaymath}
\E[\Vert \widehat{\Phi}_{\sigma_{\gamma}}\Vert^{2}_{\mathcal{H}^{-1}(\widehat{A})}]
\leq 
\E[\Vert \widehat{\Phi}\Vert^{2}_{\mathcal{H}^{-1}(\widehat{A})}]
<+\infty.
\end{displaymath}
This is due to the fact that
\begin{displaymath}
\iint_{\widehat{A}\times \widehat{A}}
G_{\widehat{A}}(z,w)\vert G_{\widehat{A},\sigma_{\gamma}}(z,w)\vert \,d^{2}z\,d^{2}w
\leq 
\iint_{\widehat{A}\times \widehat{A}}
G_{\widehat{A}}(z,w)^{2} \,d^{2}z\,d^{2}w
<+\infty.
\end{displaymath}

Now consider the case of a general annular domain $A$ that is conformally equivalent to $\widehat{A}$.
Let be $\varrho : A\rightarrow \widehat{A}$ be a conformal mapping.
Let $(\gamma(s))_{0\leq s\leq 1}$ be a continuous simple curve such that
$\gamma(0)\in\partial_{i}A$, $\gamma(1)\in\partial_{o}A$,
and $\gamma(s)\in A$ for every $s\in (0,1)$.
We also assume that $\gamma$ has $0$ Lebesgue measure.
The gauge-twisted GFF $\Phi_{\sigma_{\gamma}}$ on $A$ associated to the defect line 
$\gamma$,
with $0$ boundary conditions,
can be constructed as
$\Phi_{\sigma_{\gamma}} = \widehat{\Phi}_{\sigma_{\varrho(\gamma)}}\circ\varrho$.
For the continuous extension of $\varrho$ along $\gamma$ up to the extremities
$\gamma(0)$ and $\gamma(1)$, 
we refer to \cite[Proposition 2.14]{Pommerenke}. 

\medskip 

Next we would like to point out that Corollary \ref{Cor bicolor tilde phi}
has a continuum version that involves the contractible CLE$_{4}$ conformal loop ensemble on annular domains.
It can be interpreted as a gauge-twisted version of the Miller-Sheffield coupling.
For the original Miller-Sheffield on simply connected domains we refer to 
\cite{WaWu17LLGFF,ASW}. 
The contractible version of CLE$_{4}$ on annular domains
has been first introduced in \cite{HSW}.

So let $A$ be an annular domain as previously.
The contractible CLE$_{4}$ on $A$ is a random countable collection 
$\mathfrak{C}$ of Jordan loops in $A$ where each loop is simple,
contractible (does not surround the inner hole of $A$),
and any two different loops do not intersect and do not surround one another.
Each loop looks locally like and SLE$_{4}$.
Given a loop $\lvl\in \mathfrak{C}$, we will denote
by $\Int(\lvl)$ the open domain
surrounded by $\lvl$.
The domain $\Int(\lvl)$ is simply connected by construction.

Now consider the following additional random objects.
\begin{itemize}
\item Let $(\Phi_{\lvl})_{\lvl\in \mathfrak{C}}$ be a family of fields such that
each $\Phi_{\lvl}$ is distributed, conditionally on $\mathfrak{C}$,
as a GFF on $\Int(\lvl)$ with $0$ boundary conditions on $\lvl$ 
(and $0$ outside $\Int(\lvl)$);
and with the fields being conditionally independent given $\mathfrak{C}$.
\item Let $(\zeta_{\lvl})_{\lvl\in \mathfrak{C}}$
be random family of sign ($\zeta_{\lvl}\in\{-1,1\}$)
that are, conditionally on $\mathfrak{C}$,
i.i.d. and uniform ($\mathbb{P}(\zeta_{\lvl}=1) = \mathbb{P}(\zeta_{\lvl}= -1) =1/2$),
and independent of the fields $(\Phi_{\lvl})_{\lvl\in \mathfrak{C}}$.
\end{itemize}
Recall that $2\lambda$ denotes the height gap of the GFF.
With our normalization, $2\lambda = \sqrt{\pi/2}$.
Let $\Phi^{\ast}$ be the field on $A$ obtained as
\begin{displaymath}
\Phi^{\ast} = \sum_{\lvl\in \mathfrak{C}}
\ind_{\Int(\lvl)}(\Phi_{\lvl} + \zeta_{\lvl} 2\lambda).
\end{displaymath}
Unlike in the Miller-Sheffield coupling on simply connected domains,
on the annular domain $A$, 
$\Phi^{\ast}$ is not distributed as a GFF, but as a conditioned GFF.
Given the $0$ boundary GFF $\Phi$ on $A$,
let $E_{\rm contractible}$ be the event that $\Phi$ has no non-contractible
$\pm 2\lambda$-level lines.
The event $E_{\rm contractible}$ is exactly the event $\{N=0\}$
from Section \ref{Subsubsec Proba ann}.
Then the field $\Phi^{\ast}$ is distributed as the GFF $\Phi$ conditioned on the event
$E_{\rm contractible}$; see \cite[Section 4.4]{ALS3}.

The conditioned GFF $\Phi^{\ast}$ is \textit{a priori} non-Gaussian.
However, we shall see that there is a transformation that makes $\Phi^{\ast}$
into a Gaussian field.
As previously, let $\gamma$ be a (deterministic) defect line on $A$ with $0$ Lebesgue measure.
Denote
\begin{displaymath}
\mathfrak{C}_{\gamma} = \{\lvl\in\mathfrak{C}\vert \Int(\lvl)\cap\gamma\neq \emptyset\}.
\end{displaymath}
Next we argue that the defect line $\gamma$ induces a bipartite structure on
$\Int(\lvl)$ for each loop
$\lvl\in \mathfrak{C}_{\gamma}$.
Let $\upsilon : S\rightarrow A$ be a $2\pi i$-periodic covering map of the annular domain $A$ by the strip $S$ \eqref{Eq strip} (universal cover).
For instance, one can take
$\upsilon = \varrho^{-1}\circ\exp$,
where as previously, 
$\varrho : A\rightarrow \widehat{A}$ is a conformal mapping
from the annular domain $A$ to the circular annulus $\widehat{A}$.
Let $\operatorname{ord} : \upsilon^{-1}(A\setminus\gamma)\rightarrow\Z$
be an order on the connected components of $\upsilon^{-1}(A\setminus\gamma)$
such that $\operatorname{ord}(w+2\pi i) = \operatorname{ord}(w) + 1$.
Given a CLE loop $\lvl\in \mathfrak{C}$,
since $\lvl$ is contractible,
any lift of $\lvl$ to $S$ is a Jordan loop in $S$, and the different lifts differ by
$2\pi k i$ translations for $k\in\Z$.
For each $\lvl\in \mathfrak{C}$,
we choose a particular lift, denoted $\Lift(\lvl)$, given by the condition
\begin{displaymath}
\inf\{\operatorname{ord}(w)\vert w\in\Int(\Lift(\lvl))\} = 0,
\end{displaymath}
where $\Int(\Lift(\lvl))$ is the interior surrounded by $\Lift(\lvl)$ in $S$.
This convention is somewhat arbitrary, but we need to choose a particular lift one way or another.
For $\lvl\in \mathfrak{C}\setminus \mathfrak{C}_{\gamma}$,
$\operatorname{ord}$ is constant equal to $0$ on $\Int(\Lift(\lvl))$.
However, for $\lvl\in \mathfrak{C}_{\gamma}$, $\operatorname{ord}$
is not constant on $\Int(\Lift(\lvl))\setminus \upsilon^{-1}(\gamma)$.
For $z\in \Int(\lvl)$, we will denote by
$\upsilon_{0}^{-1}(z)$ the unique point in 
$\Int(\Lift(\lvl))\cap\upsilon^{-1}(\{z\})$.
Let us define the function $\zeta_{\mathfrak{C},\gamma}:
A\setminus\gamma\rightarrow\{-1,1\}$
as follows.
The function $\zeta_{\mathfrak{C},\gamma}$ equals $1$ on
\begin{displaymath}
(A\setminus\gamma)\setminus\bigcup_{\lvl\in \mathfrak{C}_{\gamma}}\Int(\lvl).
\end{displaymath}
For $\lvl\in \mathfrak{C}_{\gamma}$ and $z\in \Int(\lvl)\setminus\gamma$, we set
\begin{displaymath}
\zeta_{\mathfrak{C},\gamma}(z) = (-1)^{\operatorname{ord}(\upsilon_{0}^{-1}(z))}.
\end{displaymath}
Now, this definition of $\zeta_{\mathfrak{C},\gamma}$ may look abstract.
What we are simply saying is that the sign $\zeta_{\mathfrak{C},\gamma}(z)$
changes to the opposite as $z$ crosses the defect line
$\gamma$ by moving inside $\Int(\lvl)$.
See Figure \ref{Fig CLE_4 bicolor} for an illustration.
The sign function $\zeta_{\mathfrak{C},\gamma}$ only depends on the defect line $\gamma$ and on the realization of $\mathfrak{C}$.
By taking Corollary \ref{Cor bicolor tilde phi} to the scaling limit, one gets the following.

\begin{figure}
\includegraphics[scale=0.4]{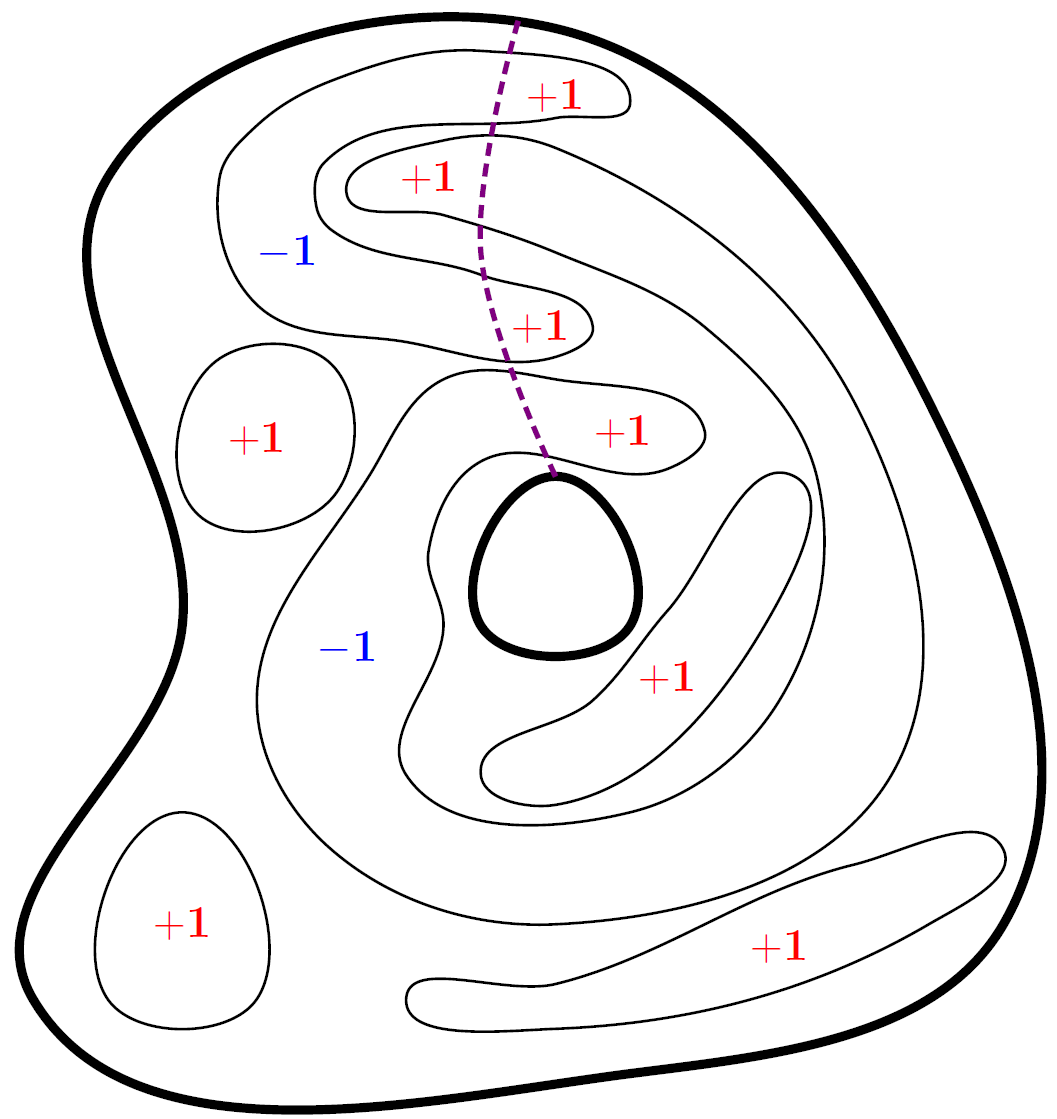}
\caption{
Conceptual depiction of the signs $\zeta_{\mathfrak{C},\gamma}$.
The defect line $\gamma$ is represented in violet dashed.}
\label{Fig CLE_4 bicolor}
\end{figure}

\begin{cor}
\label{Cor twisted MS}
The field $\zeta_{\mathfrak{C},\gamma}\Phi^{\ast}$ is distributed as the
gauge-twisted GFF $\Phi_{\sigma_{\gamma}}$, and
in particular, it is Gaussian.
\end{cor}

The convergence from metric graph to the continuum limit in dimension 2 is at this point standard;
see \cite{lupu2018convergence,ALS2,ALS4}.

Corollary \ref{Cor twisted MS} in particular implies that conditionally on the event
$E_{\rm contractible}$, the even local functions of the GFF $\Phi$ are related to even local functions of the gauge-twisted GFF $\Phi_{\sigma_{\gamma}}$.
Let us first detail the case of the renormalized hyperbolic cosine.

For $\beta\in(-2\sqrt{2\pi},2\sqrt{2\pi})$,
the renormalized exponential of $\Phi$
(Gaussian multiplicative chaos, GMC)
is given by the limit
\begin{equation}
\label{Eq GMC}
:e^{\beta\Phi}: = 
\lim_{\varepsilon\to 0}
e^{\beta\Phi_{\varepsilon} - 
\frac{\beta^{2}}{2}\Var(\Phi_{\varepsilon})},
\end{equation}
where $\Phi_{\varepsilon}$ are mollifications of $\Phi$.
The renormalized exponential $:e^{\beta\Phi}:$ is actually a random finite
Borel measure on $A$.
We refer to \cite{RhodesVargas14GMCReview,BerestyckiGMC,DuplantierSheffield} 
for details.
The unusual range $(-2\sqrt{2\pi},2\sqrt{2\pi})$ for $\beta$
is due to our choice of normalization for $\Phi$,
which is different from the one usually used in the GMC literature.
Similarly, for $\beta\in(-2\sqrt{2\pi},2\sqrt{2\pi})$, one can define the
renormalized exponential of $\Phi_{\sigma_{\gamma}}$:
\begin{equation}
\label{Eq GMC twisted}
:e^{\beta\Phi_{\sigma_{\gamma}}}: = 
\lim_{\varepsilon\to 0}
e^{\beta\Phi_{\sigma_{\gamma},\varepsilon} - 
\frac{\beta^{2}}{2}\Var(\Phi_{\sigma_{\gamma},\varepsilon})},
\end{equation}
where $\Phi_{\sigma_{\gamma},\varepsilon}$ are mollifications of 
$\Phi_{\sigma_{\gamma}}$.
Indeed, $\Phi_{\sigma_{\gamma}}$ is just another logarithmically correlated Gaussian field, with
\begin{displaymath}
G_{A,\sigma_{\gamma}}(z,w)\stackrel{w\to z}{\sim}
G_{A}(z,w),
\end{displaymath}
and the finite discontinuity of $G_{A,\sigma_{\gamma}}$ on the defect line
$\gamma$ is not an issue.
Note however that the renormalization factors in \eqref{Eq GMC} and 
\eqref{Eq GMC twisted} differ by a bounded factor.
The relevant quantity is given by
\begin{displaymath}
\Diff_{A}(z) = \lim_{w\to z} G_{A}(z,w) - G_{A,\sigma_{\gamma}}(z,w),
\qquad z\in A.
\end{displaymath}
In case of a circular annulus $\widehat{A}$,
\begin{displaymath}
\Diff_{\widehat{A}}(\bpi(w)) = 
2 G_{\widehat{A}^{\rm db}}(w,\psi(w)) = 2 G_{\widehat{A}^{\rm db}}(w, -w)
=2 G_{\widehat{A}^{\rm db}}(\vert w\vert, -\vert w\vert).
\end{displaymath}
Further, given a uniformization $\varrho : A\rightarrow \widehat{A}$,
\begin{displaymath}
\Diff_{A}(z) = \Diff_{\widehat{A}}(\varrho(z)).
\end{displaymath}
In particular, $\Diff_{A}(z)$ depends only on the conformal type of the domain with one marked interior point $(A,z)$,
and $\Diff_{A}(z)>0$.
Also note that $\Diff_{A}(z)$ does not depend on the particular choice of the defect line $\gamma$.
Further,
\begin{displaymath}
\Diff_{A}(z) = \lim_{\varepsilon\to 0}
\Var(\Phi_{\varepsilon}(z))-\Var(\Phi_{\sigma_{\gamma},\varepsilon}(z)).
\end{displaymath}

Now consider the renormalized hyperbolic cosines
$\frac{1}{2}(:e^{\beta\Phi}:+:e^{-\beta\Phi}:)$
and
$\frac{1}{2}(:e^{\beta\Phi_{\sigma_{\gamma}}}:+{:e^{-\beta\Phi_{\sigma_{\gamma}}}:})$.
While the fields $\Phi_{\sigma_{\gamma}}$,
$:e^{\beta\Phi_{\sigma_{\gamma}}}:$ and 
$:e^{-\beta\Phi_{\sigma_{\gamma}}}:$ each depend on the particular choice of the defect line $\gamma$, the hyperbolic cosine
$\frac{1}{2}(:e^{\beta\Phi_{\sigma_{\gamma}}}:+{:e^{-\beta\Phi_{\sigma_{\gamma}}}:})$
does not.
From Corollary \ref{Cor twisted MS},
and also directly from Theorem \ref{Thm main 1} by taking the scaling limit,
we obtain the following

\begin{cor}
\label{Cor twisted cosh}
Conditionally on the event $E_{\rm contractible}$,
the family of random measures
\\
${(\frac{1}{2}(:e^{\beta\Phi}:+{:e^{-\beta\Phi}:}))_{0\leq\beta <2\sqrt{2\pi}}}$ 
is distributed as
\begin{equation}
\label{Eq twisted cosh D A}
\Big(\dfrac{1}{2}
e^{-\frac{\beta^{2}}{2}\Diff_{A}}
(:e^{\beta\Phi_{\sigma_{\gamma}}}:+{:e^{-\beta\Phi_{\sigma_{\gamma}}}:})
\Big)_{0\leq\beta <2\sqrt{2\pi}}.
\end{equation}
\end{cor}

We further consider the renormalized (Wick) powers.
Let $(H_{n})_{n\geq 0}$ be the family of probabilistic Hermite polynomials,
that is to say unitary polynomials orthogonal for the Gaussian measure
$e^{-\frac{1}{2}x^{2}}\,dx$.
One can explicitly write
\begin{displaymath}
H_{n}(x) = \sum_{k=0}^{\lfloor n/2\rfloor}
(-1)^{k}\dfrac{n!}{2^{k} k! (n-2k)!}\,x^{n-2k}.
\end{displaymath}
One obtains the Wick powers as the limits
\begin{eqnarray*}
:\Phi^{n}: &=& \lim_{\varepsilon\to 0} 
\Var(\Phi_{\varepsilon})^{n/2}
H_{n}\big(
\Var(\Phi_{\varepsilon})^{-1/2}\Phi_{\varepsilon}\big),
\\
:\Phi_{\sigma_{\gamma}}^{n}: &=& \lim_{\varepsilon\to 0} 
\Var(\Phi_{\sigma_{\gamma},\varepsilon})^{n/2}
H_{n}\big(
\Var(\Phi_{\sigma_{\gamma},\varepsilon})^{-1/2}\Phi_{\sigma_{\gamma},\varepsilon}\big).
\end{eqnarray*}
We refer to \cite{Simon74EQFT,Janson1997GaussHilbSpaces,LeJan2011Loops,
LacoinRhodesVargas2017semiclassical}.
The Wick powers are random generalized functions.
The degree $0$ power is the constant $1$, and the degree $1$ power is the Gaussian field itself.
Note that the even Wick powers are not positive (except for $n=0$) because of the counterterms in the renormalization.
For the gauge-twisted GFF $\Phi_{\sigma_{\gamma}}$,
the odd Wick powers $:\Phi_{\sigma_{\gamma}}^{2k+1}:$ depend on the particular choice of the defect line $\gamma$,
but the even Wick powers $:\Phi_{\sigma_{\gamma}}^{2k}:$ do not.

For $\vert\beta\vert<\sqrt{2\pi}$ (the $L^{2}$ regime), 
one can expand the renormalized exponential into the Wick powers:
\begin{displaymath}
:e^{\beta\Phi}: = 
\sum_{n\geq 0}\dfrac{\beta^{n}}{n!}:\Phi^{n}:,
\qquad
:e^{\beta\Phi_{\sigma_{\gamma}}}: = 
\sum_{n\geq 0}\dfrac{\beta^{n}}{n!}:\Phi_{\sigma_{\gamma}}^{n}:.
\end{displaymath}
See \cite[Section 2.3.1]{LacoinRhodesVargas2017semiclassical}. 
In particular,
\begin{displaymath}
\frac{1}{2}(:e^{\beta\Phi}:+{:e^{-\beta\Phi}:})
= \sum_{k\geq 0}\dfrac{\beta^{2k}}{(2k)!}:\Phi^{2k}:,
\end{displaymath}
and similarly for $\Phi_{\sigma_{\gamma}}$.
Note however, that in the expansion of \eqref{Eq twisted cosh D A},
one needs to take into account the contribution of
$e^{-\frac{\beta^{2}}{2}\Diff_{A}}$.
We have that
\begin{displaymath}
\dfrac{1}{2}
e^{-\frac{\beta^{2}}{2}\Diff_{A}}
(:e^{\beta\Phi_{\sigma_{\gamma}}}:+{:e^{-\beta\Phi_{\sigma_{\gamma}}}:})
=
\sum_{k\geq 0}\dfrac{\beta^{2k}}{(2k)!}
\sum_{j=0}^{k}
(-1)^{k-j}
\dfrac{(2k)!}{2^{k-j}(k-j)!(2j)!}
\Diff_{A}^{k-j}:\Phi_{\sigma_{\gamma}}^{2j}:
.
\end{displaymath}
By identifying the terms in the expansions, 
we get the following.

\begin{cor}
\label{Cor twisted Wick}
Conditionally on the event $E_{\rm contractible}$,
the family of random fields
$(:\Phi^{2k}:)_{k\geq 1}$ 
is jointly distributed as
\begin{displaymath}
\Big(\sum_{j=0}^{k}
(-1)^{k-j}
\dfrac{(2k)!}{2^{k-j}(k-j)!(2j)!}
\Diff_{A}^{k-j}:\Phi_{\sigma_{\gamma}}^{2j}:
\Big)_{k\geq 1}
.
\end{displaymath}
\end{cor}

\begin{rem}
It would be interesting to have a proof of Corollaries \ref{Cor twisted MS},
\ref{Cor twisted cosh} and \ref{Cor twisted Wick}
directly in continuum, 
that does not rely on approximation by metric graphs.
\end{rem}

\begin{rem}
Corollaries \ref{Cor twisted cosh} and \ref{Cor twisted Wick} are also true on more general two-dimensional domains, not necessarily annular, such as the multiply connected domains considered in Section \ref{Subsec many holes}.
\end{rem}

\subsection{The planar case with multiple holes}
\label{Subsec many holes}

First, consider $D\subset \C$ an open bounded domain with two holes,
i.e. $\C\setminus D$ having three connected components, two of them being bounded.
We also assume that the holes are not reduced to one point (not punctures).
Let $\pi_{1}(D)$ denote the fundamental group of $D$.
It is isomorphic to $\mathbb{F}_{2}$, the free group with two generators.
In particular, there are $4=2^{2}$ group homomorphisms from 
$\pi_{1}(D)$ to $\{-1,1\}$,
one being the trivial homomorphism, and the three other being non-trivial.
Each of these homomorphisms corresponds to a gauge equivalence class for
$\{-1,1\}$-gauge fields on $D$ that are locally trivial, but not globally trivial in general.
One Figure \ref{Fig two holes} the three non-trivial equivalence classes are represented through defect lines: the holonomy is multiplied by $-1$ as one crosses a defect line.
Therefore, Theorem \ref{Thm main 1} provides in the scaling limit the probabilities of three different topological events on the continuum GFF on $D$ with $0$ boundary conditions.
These topological events deal with the sign excursion clusters of the GFF
(in the language of \cite{ALS4}), which are also the clusters of the Brownian loop soup on $D$ with parameter $\alpha=1/2$.
The defect line on top left on Figure \ref{Fig two holes} detects the sign excursion clusters that separate the first hole of $D$ from the outer boundary of $D$.
The defect line on top right detects the sign excursion clusters that separate the second hole from the outer boundary.
The defect line on the bottom detects the sign excursion clusters that separate
one hole from the other.
Note however that these three probabilities provide only a partial information on the distribution of the homotopical types of the clusters.
For instance, Figure \ref{Fig triple} depicts a cluster which is non-trivial simultaneously for all the three defect lines. 
Theorem \ref{Thm main 1} does not provide the probability of the existence of such a cluster.

\begin{figure}
\includegraphics[scale=0.3]{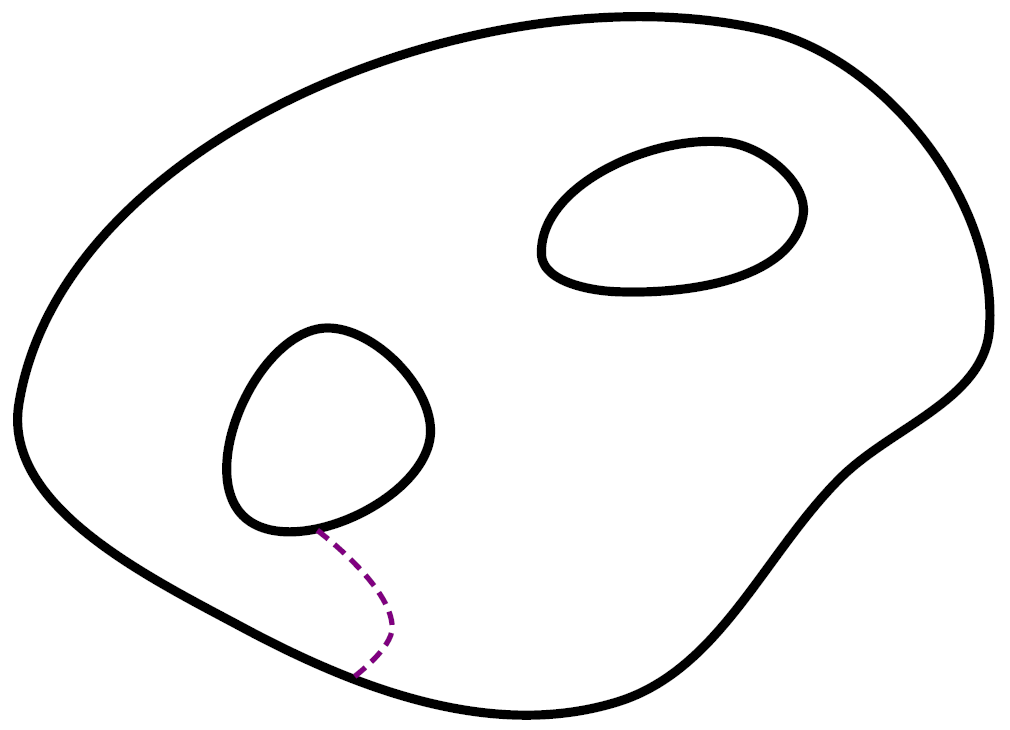}
\qquad
\includegraphics[scale=0.3]{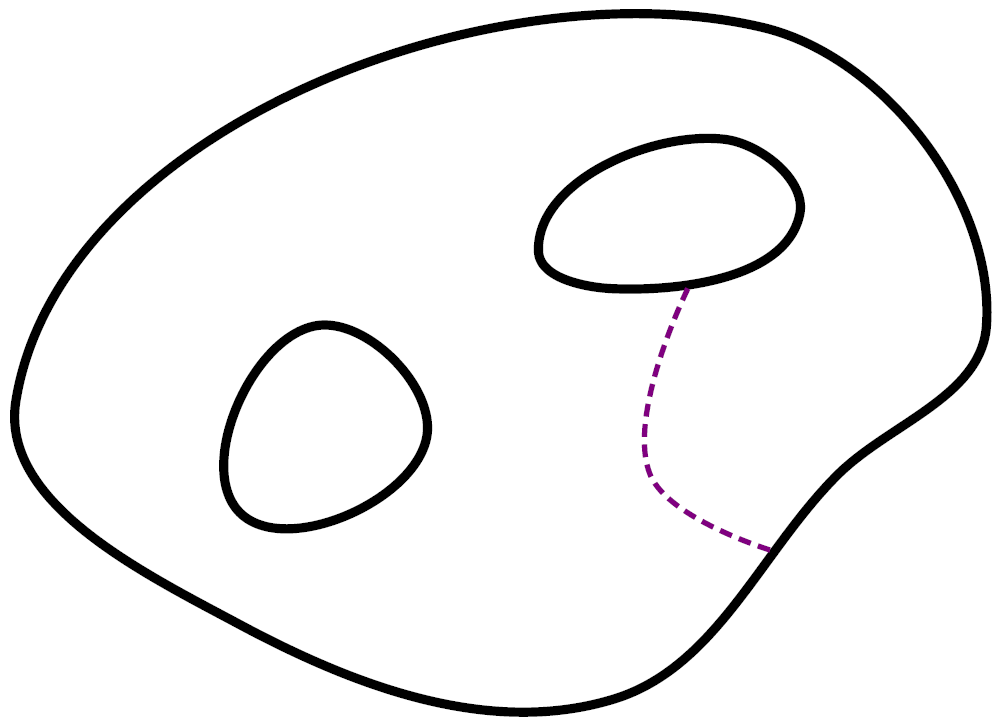}

\vspace{0.5cm}

\includegraphics[scale=0.3]{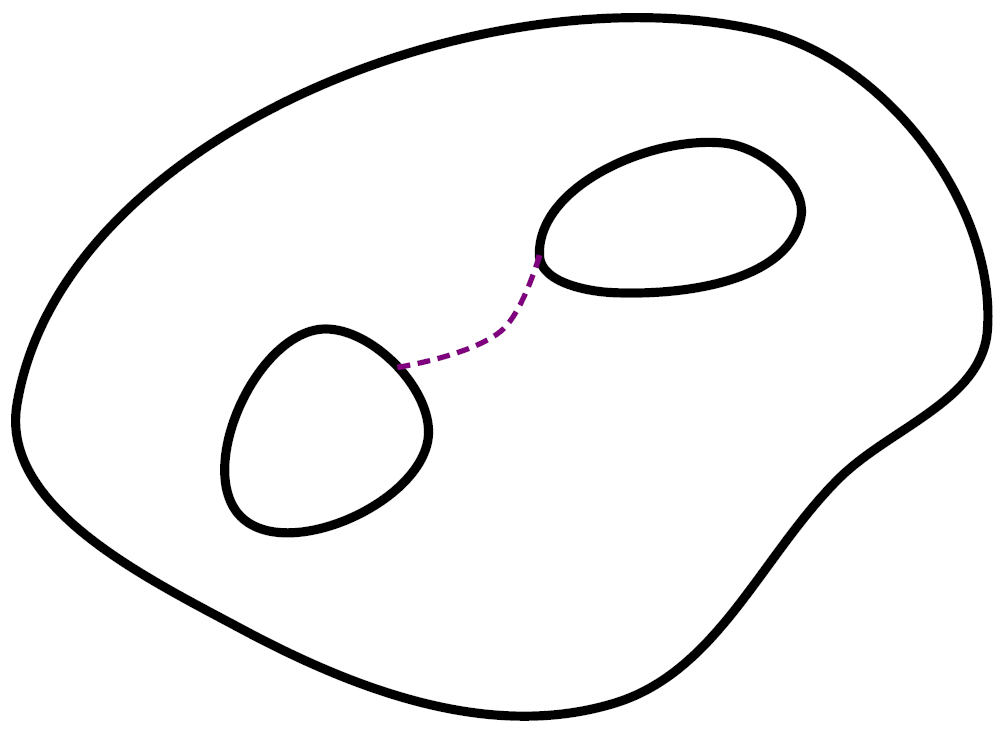}
\caption{
Depiction of 3 non-equivalent gauge fields on a planar domain with 2 holes.
The defect lines are in violet dashed.
}
\label{Fig two holes}
\end{figure}

\begin{figure}
\includegraphics[scale=0.3]{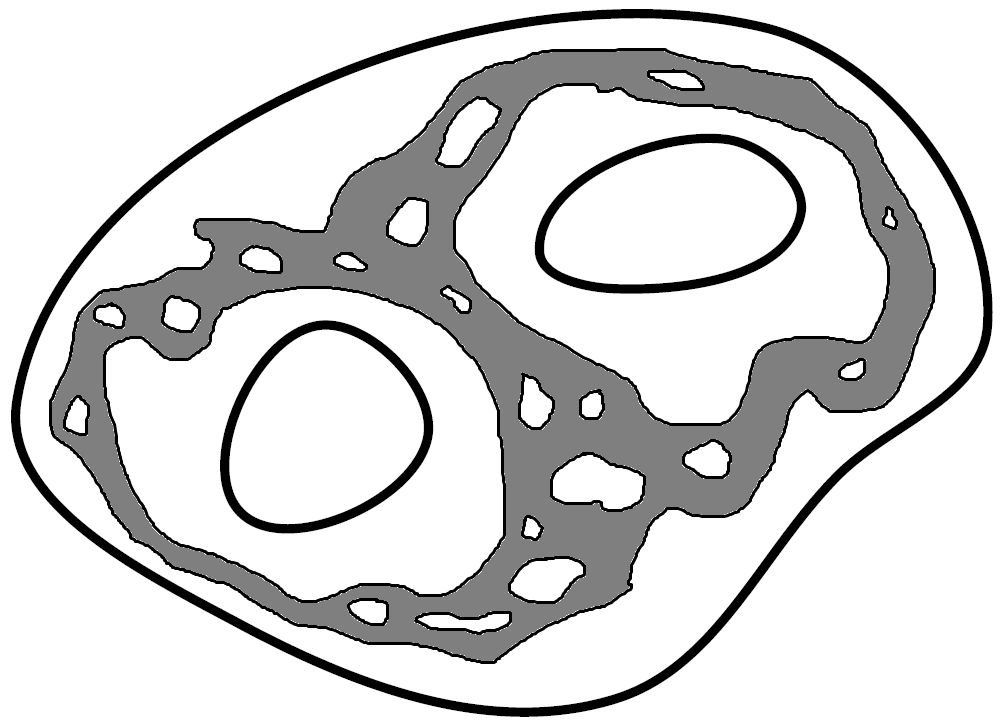}
\caption{
Depiction in grey of a cluster that is non-trivial for simultaneously all the 3 defect lines on Figure \ref{Fig two holes}.
}
\label{Fig triple}
\end{figure}

More generally, consider $D\subset \C$ an open bounded domain with $n$ holes,
the holes not being reduced to points (not punctures).
The fundamental group $\pi_{1}(D)$ is isomorphic to
$\mathbb{F}_{n}$, the free group with $n$ generators.
In particular, there are $2^{n}$ group homomorphisms from 
$\pi_{1}(D)$ to $\{-1,1\}$,
one of them being the trivial homomorphism.
Therefore, by applying Theorem \ref{Thm main 1} in the scaling limit,
one gets access to  probabilities of $2^{n}-1$ different topological events.
By contrast, the number of different homotopical types of a cluster in $D$
grows superexponentially with $n$.
Indeed, given $K$ a connected compact subset of  $D$,
the complementary $D\setminus K$ induces a partition on the connected components of the boundary $\partial D$.
Therefore, one looks at the number of partitions of
$\{0,1,\dots,n\}$ which is the Bell number $B_{n+1}$.
We have the asymptotic
$\log B_{n+1} \sim n\log n$;
see \cite[Section 6.3]{DeBruijn81Analysis}.

In the works of Dubédat \cite{Dubedat19DoubleDimers} and then Basok-Chelkak \cite{BasokChelkak18taufunc},
the authors obtained the probabilities of all possible homotopical
events in a nested CLE$_{4}$ on a simply connected domain with finitely many punctures,
through the use of isomonodromic deformations.
To our knowledge, the probabilities of homotopical events in case of macroscopic holes and not punctures were not known. 
Our method gives some of them.
Our method applies in general to compact bordered Riemann surfaces,
but we do not detail this here.

\subsection{The case of dimensions $d\geq 3$}
\label{Subsec dim geq 3}

As already mentioned several times, Theorem \ref{Thm main 1} does not
require planarity, and for instance holds on lattices in any dimension.
However, in a non-planar setting, 
the interpretation of the event $\{\tilde{\phi}\in\LT_{\sigma}\}$ is more
subtle, and this is something to keep in mind.
We would like to illustrate this on an example.

Fix $d\geq 3$ and $n\geq 1$.
Consider the hypercube
$Q = (-4n,4n)^{d}$
and the rectangular parallelotope
$Q' = (-n,n)^{2}\times(-4n,4n)^{d-2}$.
Let $D$ be the domain
$D=Q\setminus\overline{Q}'$.
Note that the fundamental group $\pi_{1}(D)$ is isomorphic to $\Z$,
as a loop in $D$ can surround several times 
the inner parallelotope $Q'$.
Let $\LG=(V,E)$ be the following graph.
We set $V=\overline{D}\cap\Z^{d}$, with
$V_{\rm int} = D\cap \Z^{d}$ and
$V_{\partial} = \partial D\cap \Z^{d}$.
The set of edges is given by
$E=\{\{x,y\}\vert x,y\in V, \Vert y-x\Vert =1\}$.
The edges are endowed with unit conductance:
$C(x,y) = 1$.
Now consider the following gauge field $\sigma$.
We set $\sigma(e) = -1$ for edges $e\in E$ of form
\begin{displaymath}
\{(k_{1},0,k_{3},\dots, k_{d}),(k_{1},1,k_{3},\dots, k_{d})\}.
\end{displaymath}
For all other edges $e$, we set $\sigma(e) = 1$.
Given a closed loop $\wp$ in $\LG$,
the holonomy $\hol^{\sigma}(\wp)$ equals $-1$ if
$\wp$ surrounds the inner parallelotope $Q'$ an odd number of times,
and it equals $1$ if $\wp$ surrounds $Q'$ and even number of times,
including if $\wp$ does not surrounds $Q'$ at all.
For instance, consider the loop
$\wp $ corresponding to a square with the following four corners:
\begin{displaymath}
(2n,-2n,0,\dots,0), 
(2n,2n,0,\dots,0),
(-2n,2n,0,\dots,0),
(-2n,-2n,0,\dots,0).
\end{displaymath}
Then $\hol^{\sigma}(\wp)=-1$.

Now here is what distinguished the non-planar setting from the planar setting.
For instance, on an annular-shaped metric graph as on
Figure \ref{Fig ex ann},
if there is a connected subset $\widetilde{K}$ that surrounds the inner hole,
then one can draw a loop $\wp$ in $\widetilde{K}$ that winds around the inner hole exactly once.
But this is a feature of the planarity, that is not true in general.
For instance, in our higher dimensional setting, we will give an example of a simple loop $\wp$ in $\LG$ that winds around $Q'$ twice.
Since $\wp$ is simple, it cannot have a sub-loop that would wind once around $Q'$.
In particular, we simultaneously have that $\wp$ is not contractible in the continuum domain $D$ and $\bpi^{-1}_{\sigma}(\wp)$ is not connected.
Again, such counter-example is impossible in a planar setting: if a loop winds twice around the hole of an annular domain, it cannot be simple.

So here is our example. 
We will take points $x_{1},x_{2},\dots ,x_{10}$ in $V$ and
straight line segments $\wp_{1},\wp_{2},\dots, \wp_{10}$,
where $\wp_{i}$ is a line from $x_{i}$ to $x_{i+1}$ for
$1\leq i\leq 9$, and $\wp_{10}$ is a line from $x_{10}$ to $x_{1}$.
Our loop $\wp$ will be the concatenation
\begin{displaymath}
\wp = \wp_{1}\wedge\wp_{2}\wedge\dots\wedge\wp_{10}.
\end{displaymath}
So here are the coordinates of the points $x_{i}$:
\begin{align*}
x_{1} &= (2n,-2n,0,\dots,0), &
&x_{2} = (2n,2n,0,\dots,0), &
&x_{3} = (-2n,2n,0,\dots,0),
\\
x_{4} &= (-2n,-3n,0,\dots,0), &
&x_{5} = (3n,-3n,0,\dots,0), &
\\
x_{6} &= (3n,-3n,n,0,\dots,0), &
&x_{7} = (3n,3n,n,0,\dots,0), &
&x_{8} = (-3n,3n,n,0,\dots,0),
\\
x_{9} &= (-3n,-2n,n,0,\dots,0), &
&x_{10} = (2n,-2n,n,0,\dots,0). &
\end{align*}
The projection of $\wp$ on the first two coordinates winds twice around
$(-n,n)^{2}$, but is not a simple loop.
We use the third coordinate to avoid self-intersections.

The example above shows that the dichotomy on clusters induced by the gauge field 
$\sigma$ is not contractible/non-contractible in $D$.
The contractible clusters are indeed trivial for $\sigma$,
but $\sigma$ detects only a certain type of non-contractible clusters,
those that allow to wind around $Q'$ an odd number of times.

\section{Intensity doubling conjecture in high dimension}
\label{Sec conj high dim}

Let be a dimension $d\geq 2$.
Let be the hypercube $Q = (-1,1)^{d}$.
Let $\LG^{(n)} = (V^{(n)},E^{(n)})$ be an approximation of $Q$ in the rescaled
hypercubic lattice $\frac{1}{n}\Z^{d}$.
More precisely,
we set $V^{(n)} = \overline{Q}\cap \frac{1}{n}\Z^{d}$,
$V^{(n)}_{\rm int} = Q\cap \frac{1}{n}\Z^{d}$,
$V^{(n)} = (\partial Q)\cap \frac{1}{n}\Z^{d}$,
and $E^{(n)} = \{\{x,y\}\vert x,y\in V^{(n)}, \Vert y-x\Vert = \frac{1}{n}\}$.
All the conductances are uniform equal to
$C(x,y) = \frac{1}{n^{d-2}}$.
In this regime, $\tilde{\phi}^{(n)}$, the metric graph GFF on
$\widetilde{\LG}^{(n)}$ with $0$ boundary conditions,
converges in law towards $\Phi$, the continuum GFF on $Q$
with $0$ boundary conditions on $\partial Q$.
Let $\widetilde{\LL}^{1/2}_{n}$ denote the metric graph loop soup on
$\widetilde{\LG}^{(n)}$ that is related to $\tilde{\phi}^{(n)}$
through isomorphism (Theorem \ref{Thm Lupu iso}).

We will further consider $(d-2)$-dimensional affine subspaces on $\R^{d}$,
that is to say like linear, but without the condition of going through $0$.
Given $\ax$ such an affine subspace, and $\varepsilon>0$,
we will denote by $\tub_{\varepsilon}(\ax)$ the hypertube
\begin{displaymath}
\tub_{\varepsilon}(\ax) = 
\{x\in\R^{d}\vert d(x,\ax)\leq\varepsilon\}.
\end{displaymath}
The fundamental group
$\pi_{1}(\R^{d}\setminus \tub_{\varepsilon}(\ax))$ is isomorphic to $\Z$,
since a loop can wind around the hypertube $\tub_{\varepsilon}(\ax)$.
The fundamental group
$\pi_{1}(Q\setminus \tub_{\varepsilon}(\ax))$ is also isomorphic to $\Z$,
provided that $Q\cap \ax \neq \emptyset$ and
$\varepsilon$ is small enough.
We will denote by $\widetilde{\LG}^{(n)}_{\ax,\varepsilon}$
a sub-metric-graph of $\widetilde{\LG}^{(n)}$
that reasonably approximates $Q\setminus \tub_{\varepsilon}(\ax)$
as $n\to +\infty$.
We will denote by $\tilde{\phi}^{(n)}_{\ax,\varepsilon}$
the metric graph GFF on $\widetilde{\LG}^{(n)}_{\ax,\varepsilon}$
with $0$ boundary conditions,
that is to say $0$ both on $\partial Q$ and
$\partial \tub_{\varepsilon}(\ax)$.
On $\widetilde{\LG}^{(n)}_{\ax,\varepsilon}$, one can choose a
$\{-1,1\}$ gauge field $\sigma_{n,\ax,\varepsilon}$
that detects the odd number of windings around $\tub_{\varepsilon}(\ax)$
(see also Section \ref{Subsec dim geq 3}),
provided that it is at all possible to wind around $\tub_{\varepsilon}(\ax)$ in
$Q$.
By applying Theorem \ref{Thm main 1} to $\sigma_{n,\ax,\varepsilon}$,
we get the value of
\begin{equation}
\label{Eq prob n odd}
\mathbb{P}(\exists \mathcal{C} \text{ sign cluster of } 
\tilde{\phi}^{(n)}_{\ax,\varepsilon} \text{ with an odd winding around } \tub_{\varepsilon}(\ax)).
\end{equation}
Further, one can express the limit of \eqref{Eq prob n odd} in terms of Brownian loop soups in $Q$.
For this we first recall the Brownian loop measure on $\R^{d}$.

The natural Brownian loop measure on $\R^{d}$ is given by
\begin{displaymath}
\mu^{\rm loop}_{\R^{d}}(d\wp) =
\int_{\R^{d}}\int_{0}^{+\infty}\mathbb{P}^{x,x}_{\R^{d},t}(d\wp)
\dfrac{dt}{(2\pi)^{d/2} t^{d/2 +1}}
dx,
\end{displaymath}
where $\mathbb{P}^{x,x}_{\R^{d},t}$ is the Brownian bridge probability measure in time $t$ from $x$ to $x$.
The measure is obviously invariant by translations and rotations,
but it is also scale invariant, provided one scales both space and time according to the Brownian scaling.
This scaling invariance imposes the power $t^{-(d/2+1)}$.
The total mass of $\mu^{\rm loop}_{\R^{d}}$ is infinite.
However, it gives a finite mass to families of loops of form
$\{\wp \text{ loop } \vert \wp\subset(-R,R)^{d}, \diam(\wp)\geq\delta\}$.
Given $D$ an open subset of $\R^{d}$, we will denote by
$\mu^{\rm loop}_{D}$ the restriction of the measure $\mu^{\rm loop}_{\R^{d}}$
to the loops that stay in $D$.
Note that this is consistent with the previous definition 
\eqref{Eq mu loop ann} in dimension 2.
Given $\alpha>0$,
we will denote by $\LL^{\alpha}_{D}$ the Poisson point process of Brownian loops in
$D$ with intensity $\alpha\mu^{\rm loop}_{D}$.
In particular, for every $\delta>0$, the Poisson collection
$\{\wp\in \widetilde{\LL}^{1/2}_{n}\vert \diam(\wp)>\delta\}$
converges in law as $n\to +\infty$ towards
$\{\wp\in \LL^{1/2}_{Q}\vert \diam(\wp)>\delta\}$.

Now, the limit of \eqref{Eq prob n odd} converges as $n\to+\infty$ towards
\begin{displaymath}
1-\exp\Big(
-\mu^{\rm loop}_{Q}\{\wp \text{ loop }\vert
\wp\cap  \tub_{\varepsilon}(\ax)=\emptyset,
\wp\text{ has an odd winding around }\tub_{\varepsilon}(\ax)\}
\Big).
\end{displaymath}
This in turn can be rewritten as
\begin{equation}
\label{Eq odd winding alpha 1}
\mathbb{P}(\exists \wp\in \LL^{1}_{Q}
\text{ with an odd winding around }\tub_{\varepsilon}(\ax)).
\end{equation}
Note that the intensity parameter $\alpha$ that appears in
\eqref{Eq odd winding alpha 1} is not
$\alpha = 1/2$ as in the isomorphism theorems
(Theorems \ref{Thm Le Jan iso} and \ref{Thm Lupu iso}),
but the double, $\alpha=1$.
This has been already noted in our Remark \ref{Rem not iso},
and further developed in the identity \eqref{Eq KL metric alpha 1}.
%The fact that the intensity $\alpha=1/2$ should not appear here is totally natural if one keeps Figure \ref{Fig chain ann} in mind.
%The fact that the parameter $\alpha=1$ appears instead is a consequence of our computations.
Now, in general, the sign clusters of a metric graph GFF are not Poisson, as they repel each other.
This is for instance understood precisely in dimension $2$ with the
CLE$_{4}$.
However, in high enough dimension $d$, the sign clusters should be in a sense approximately Poisson, as conjectured by Werner in
\cite{Werner21HigherD}.
Here we will argue that as soon as $d>8$
(and perhaps as soon as $d>6$),
asymptotically, as $n\to +\infty$,
the clusters formed by the microscopic loops in $\widetilde{\LL}^{1/2}_{n}$
give rise to an independent copy of the macroscopic loops of
$\widetilde{\LL}^{1/2}_{n}$.
Previously, the case of dimensions $d>6$
has been studied both at an heuristic and rigorous level by
Werner \cite{Werner21HigherD} and 
Cai and Ding \cite{CaiDing23HighD}.
Our conjecture emerged as an attempt to conciliate the above remark with the vision of the high-dimensional picture sketched by Werner and 
further confirmed by Cai and Ding.
It is also natural in view of the identity \eqref{Eq KL metric alpha 1},
where the intensity $\alpha=1$ appears explicitly.

To state our conjecture precisely, we will need a notion of size for the loops which is different from the diameter.
We will call it the \textit{encompass}.
For $\wp$ continuous loop in $\R^{d}$, set
\begin{multline*}
\enc(\wp) = 
\sup
\{
\varepsilon>0
\vert
\exists \ax \text{ affine subspace of } \R^{d},
\dim \ax  = d-2,
\\
\wp\cap \tub_{\varepsilon}(\ax) = \emptyset,
\wp \text{ non-contractible in } \R^{d}\setminus \tub_{\varepsilon}(\ax)
\}.
\end{multline*}
Note that trivially,
$\diam(\gamma)\geq 2 \enc(\wp)$.
But a converse control does not hold.
Even a simple loop with a fixed diameter can have an arbitrarily small
encompass.
Further, given a compact connected subset $K\subset\R^{d}$, we will denote
\begin{displaymath}
\widehat{\enc}(K) = 
\sup\{\enc(\wp)\vert \wp \text{ continuous loop}, \wp\subset K\}.
\end{displaymath}
If $K$ does not contain a loop with positive encompass,
then we set $\widehat{\enc}(K)=0$.
Again, $\diam(2)\geq 2 \widehat{\enc}(K)$.
But in the other direction, a tree, however large, has a zero $\widehat{\enc}$.
Note that if $\widehat{\enc}(K)>\varepsilon$,
this does not mean that there is an affine subspace $\ax$ of dimension $d-2$
such that $K\cap \tub_{\varepsilon}(\ax) = \emptyset$ and
$K$ non-contractible in $\R^{d}\setminus \tub_{\varepsilon}(\ax)$.
In particular, for a continuous but non-simple loop $\wp$,
the quantities $\enc(\wp)$ and
$\widehat{\enc}(\wp)$ are not necessarily equal,
and $\widehat{\enc}(\wp)$ may be strictly larger than $\enc(\wp)$ if
$\wp$ contains a sub-loop with a larger encompass.
As an other example, the $\widehat{\enc}$ of a ball equals its radius.

\begin{conj}[Intensity doubling]
\label{Conj double}
Assume that $d>8$.
Fix $\varepsilon>0$.
Consider the following two families of random objects.
First, the family of loops
\begin{equation}
\label{Eq loops enc}
\{\wp\in \widetilde{\LL}^{1/2}_{n}\vert \enc(\wp)>\varepsilon\}.
\end{equation}
Second, the family of clusters
\begin{equation}
\label{Eq clusters enc}
\{\mathcal{C} \text{ cluster of } \widetilde{\LL}^{1/2}_{n}\vert 
\widehat{\enc}(\mathcal{C})>\varepsilon\}.
\end{equation}
Note that by definition, the clusters corresponding to the loops in
\eqref{Eq loops enc} belong to the family \eqref{Eq clusters enc}.
We conjecture that as $n\to +\infty$,
\eqref{Eq loops enc} and \eqref{Eq clusters enc}
jointly converge in law for the Hausdorff distance towards
$(\LL_{\infty,\varepsilon},\mathcal{K}_{\infty,\varepsilon})$
with the following properties.
\begin{enumerate}
\item $\LL_{\infty,\varepsilon}$ is a Poisson point process of Brownian loop with intensity measure
\begin{displaymath}
\mathbf{\dfrac{1}{2}}
\times \ind_{\enc{\wp}>\varepsilon}
\mu^{\rm loop}_{Q}(d\wp).
\end{displaymath}
This point, concerning the convergence of
\eqref{Eq loops enc} alone, is known.
\item $\mathcal{K}_{\infty,\varepsilon}$ is an a.s. finite Poisson point process of compact connected subsets of $\overline{Q}$.
\item Each $K\in \mathcal{K}_{\infty,\varepsilon}$ is of dimension $4$
(Hausdorff and Minkowski),
and in particular is polar for the Brownian motion.
\item The compacts in $\mathcal{K}_{\infty,\varepsilon}$ are pairwise disjoint.
\item Each $K\in \mathcal{K}_{\infty,\varepsilon}$ contains a single simple loop 
which we denote $\Loop(K)$.
We will also denote $\Loop(\mathcal{K}_{\infty,\varepsilon})=
\{\Loop(K)\vert K\in \mathcal{K}_{\infty,\varepsilon}\}$.
\item We have that $\LL_{\infty,\varepsilon}\subset \Loop(\mathcal{K}_{\infty,\varepsilon})$.
\item The family of loops $\Loop(\mathcal{K}_{\infty,\varepsilon})$ is distributed as a Poisson point process with intensity
\begin{displaymath}
\mathbf{1}
\times \ind_{\enc{\wp}>\varepsilon}
\mu^{\rm loop}_{Q}(d\wp).
\end{displaymath}
In particular, with positive probability,
$\LL_{\infty,\varepsilon}$ is a strict subset of $\Loop(\mathcal{K}_{\infty,\varepsilon})$.
\item The two subfamilies
$\{K\in \mathcal{K}_{\infty,\varepsilon}\vert \Loop(K)\in \LL_{\infty,\varepsilon}\}$
and 
$\{K\in \mathcal{K}_{\infty,\varepsilon}\vert \Loop(K)\not\in \LL_{\infty,\varepsilon}\}$
have the same distribution and are independent.
\end{enumerate}
\end{conj}

The dimension $4$ is that of the Integrated Super-Brownian Excursions (ISE) 
\cite{DawsonPerkins98ISE,LeGall99ISE},
which is conjectured to appear in the scaling limit of different critical lattice models in high dimension \cite{Slade99ISE}.
Here we took $d>8 = 4+4$ to ensure that the compacts in 
$\mathcal{K}_{\infty,\varepsilon}$ stay pairwise disjoint.
We believe that the Poisson structure for the clusters in the scaling limit
could already emerge in dimensions $d\in\{7,8\}$.
Since the clusters are expected to be asymptotically polar also in these dimensions,
and thus invisible for the GFF, there should be no macroscopic repulsion between clusters.
However, for $d\in\{7,8\}$, two disjoint clusters might intersect in the scaling limit, and a cluster might also get additional cycles in the scaling limit that are not there at the metric graph level.
Yet, if one focuses solely on the cycles that are already there at the metric graph level,
the intensity doubling should also occur in dimensions $d\in\{7,8\}$.

Next we bring arguments for Conjecture \ref{Conj double}.
The first question is whether it provides the correct limit
\eqref{Eq odd winding alpha 1}.
One should pay attention that in \eqref{Eq prob n odd}
appears not the GFF on $\widetilde{\LG}^{(n)}$ but the GFF on
$\widetilde{\LG}^{(n)}_{\ax,\varepsilon}$.
However, if $K$ is a compact in $\mathcal{K}_{\infty,\varepsilon_{0}}$
such that $\Loop(K)\cap \tub_{\varepsilon}(\ax) = \emptyset$,
then the connected component of $\Loop(K)$ in
$\overline{K\setminus\tub_{\varepsilon}(\ax)}$
is the scaling limit of a sign cluster of the GFF on 
$\widetilde{\LG}^{(n)}_{\ax,\varepsilon}$.
Indeed, in case $\Loop(K)\in \LL_{\infty,\varepsilon_{0}}$,
the metric graph cluster approximating $K$ contains just one macroscopic loop
of $\widetilde{\LL}^{1/2}_{n}$, 
the one approximating $\Loop(K)$, 
and no macroscopic loop of $\widetilde{\LL}^{1/2}_{n}$
in case $\Loop(K)\not\in \LL_{\infty,\varepsilon_{0}}$.
The rest of the cluster is made of microscopic loops.
By removing the microscopic loops that intersect $\tub_{\varepsilon}(\ax)$,
one changes the cluster only on the hypertube $\tub_{\varepsilon}(\ax)$
and its infinitesimal neighborhood.
The fact that each cluster in \eqref{Eq clusters enc} asymptotically contains at most one macroscopic loop of $\widetilde{\LL}^{1/2}_{n}$ is also consistent with the
asymptotic polarity of the clusters.

Let be $d>6$. For fixed $\delta>0$,
Werner showed that the number of clusters of $\widetilde{\LL}^{1/2}_{n}$
of diameter larger than $\delta$ is of order at least $n^{d-6 + o(1)}$
\cite[Proposition 4]{Werner21HigherD}.
So, in particular, the $\delta$-large clusters proliferate as $n\to +\infty$.
A typical large cluster is believed to look like a tree at a macroscopic scale,
with a scaling limit describable in terms of superbrownian excursions.
In contrast, according to our Conjecture \ref{Conj double},
the number of clusters with large $\widehat{\enc}$ is tight,
and therefore such clusters are untypical among clusters of large diameter.

Now let us fix a hypertube $\tub_{\varepsilon}(\ax)$ 
such that $\pi_{1}(Q\setminus\tub_{\varepsilon}(\ax))\cong\Z$.
We look at the clusters in $\widetilde{\LG}^{(n)}_{\ax,\varepsilon}$ of diameter
larger then $2\varepsilon$.
The number of those diverges as $n\to +\infty$.
As for the number of loops in $\widetilde{\LL}^{1/2}_{n}$
with an odd winding around $\tub_{\varepsilon}(\ax)$, it is tight.
So the number of clusters in $\widetilde{\LG}^{(n)}_{\ax,\varepsilon}$ of diameter
larger then $2\varepsilon$ and that do not contain loops of 
$\widetilde{\LL}^{1/2}_{n}$ with an odd winding around $\tub_{\varepsilon}(\ax)$ 
also diverges as $n\to +\infty$.
What our Theorem \ref{Thm main 1} shows is that the clusters of the latter type
still can achieve an odd winding around $\tub_{\varepsilon}(\ax)$,
and this happens with a probability bounded away from $0$ and $1$.
Since this probability is bounded away from $1$, whereas the number of clusters proliferates, we infer that for an individual cluster it is increasingly costly as 
$n\to+\infty$ to achieve such an odd winding,
and this odd winding is achieved at all only because many clusters try to do so.
So heuristically, we interpret things in terms of a Bernoulli scheme as follows.
\begin{itemize}
\item The number of clusters that try to achieve an odd winding around
$\tub_{\varepsilon}(\ax)$ diverges as $n\to +\infty$.
\item In high enough dimension $d$, each of these clusters tries to achieve this odd winding almost independently from the other clusters.
\item As $n\to +\infty$, the probability that at least one cluster achieves an odd winding around $\tub_{\varepsilon}(\ax)$ converges to a value in
$(0,1)$.
\end{itemize}
So we are in the regime of the law of rare events, and the limit distribution of the number of clusters that achieve an odd winding around $\tub_{\varepsilon}(\ax)$
is Poisson.
The parameter of the corresponding Poisson distribution is the one given by the Brownian loop soup, in agreement with the limit probability of success.

Now, let us take a second hypertube $\tub_{\varepsilon'}(\ax')$.
A typical macroscopic cluster does not contain macroscopic cycles at all.
It is already exceptional for such a cluster to have a cycle around
$\tub_{\varepsilon}(\ax)$,
and it would be exceptional squared for a cluster to have a second cycle around 
$\tub_{\varepsilon'}(\ax')$, unless it is the same cycle that achieves an odd winding
both around $\tub_{\varepsilon}(\ax)$ and $\tub_{\varepsilon'}(\ax')$.
So asymptotically, a cluster can contain at most one non-trivial macroscopic cycle.

Now, since the Poisson distribution in the limit is true for an arbitrary choice
of the hypertube $\tub_{\varepsilon}(\ax)$,
this determines not only the limit law of the number of clusters with prescribed topology,
but also the very shape of the non-trivial macroscopic cycles that can be drawn inside the clusters of $\widetilde{\LL}^{1/2}_{n}$.
In the limit, these macroscopic cycles are distributed as a 
Brownian loop soup with intensity parameter
$\alpha = 1 = 2\times \dfrac{1}{2}$,
in accordance with \eqref{Eq odd winding alpha 1}.
So this is our heuristic reasoning.
Figure \ref{Fig concept high D} depicts our understand of clusters in high dimension.

\begin{figure}
\includegraphics[scale=0.5]{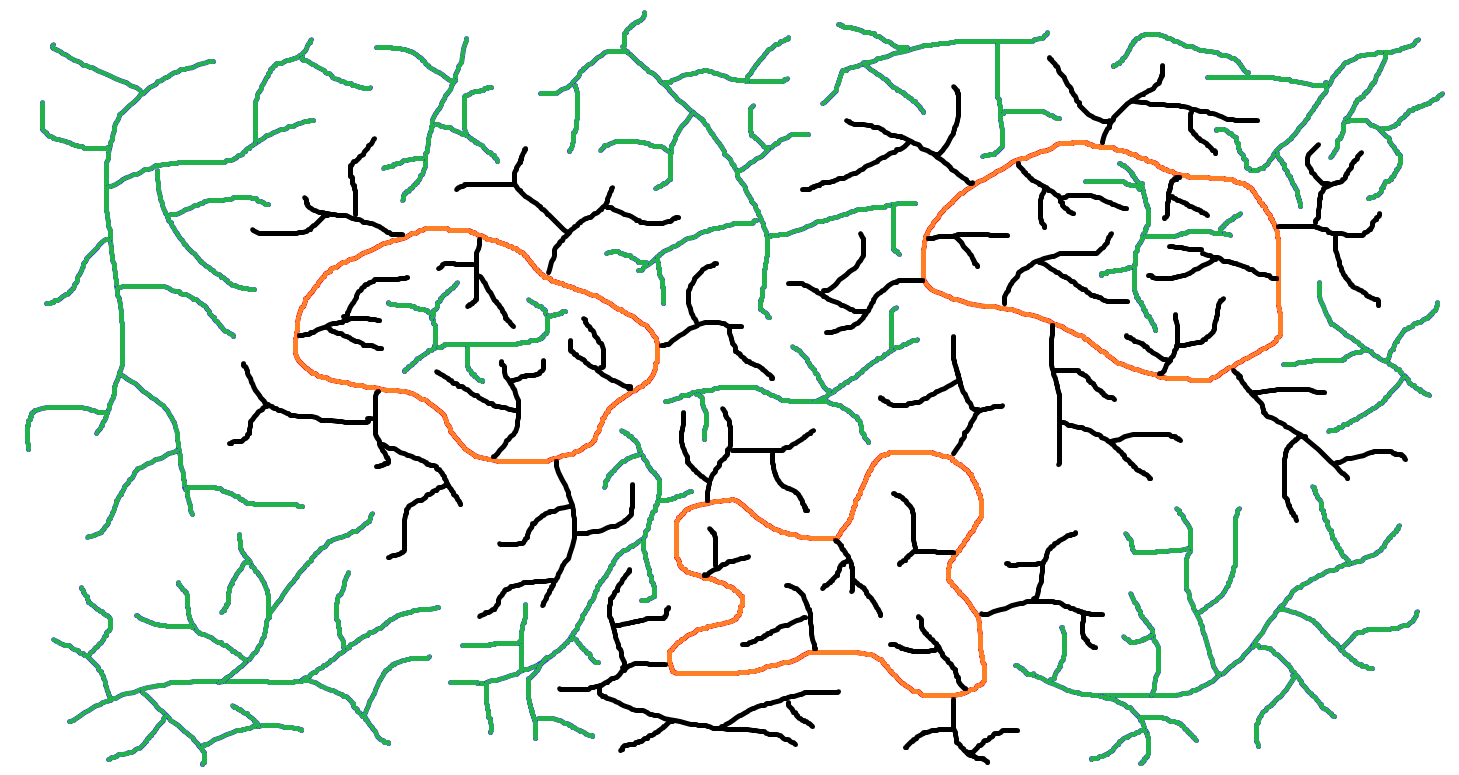}
\caption{
Conceptual depiction of the sign clusters of the metric GFF in high dimension.
The trees are not exactly so, but are actually made of microscopic loops at the metric graph level.
Most large clusters are macroscopically tree-like, as the trees in green.
Their number diverges in the scaling limit.
However, there is a tight number per scale of macroscopic cycles, that are represented in orange.
The limit distribution of the macroscopic cycles is that of a Brownian loop soup of intensity parameter $\alpha = 1$,
that is to say double of the intensity appearing in isomorphism theorems.
}
\label{Fig concept high D}
\end{figure}

\begin{rem}
The picture on Figure \ref{Fig concept high D} is somewhat reminiscent of Kenyon's cycle-rooted spanning forests (CRSF) \cite{Kenyon2011CRSF}.
We expect that in high enough dimension,
the cycles in a CRSF also scale to a Poisson point process of Brownian loops,
but with a different intensity measure.
If the unitary gauge field (values in $\mathbb{S}^{1}$) inducing the CRSF in discrete has
a continuum fine mesh limit $\mathcal{U}$, then the intensity measure for the limit Poisson point process should be
$(1-\operatorname{Re}\hol^{\mathcal{U}}(\wp))\mu^{\rm loop}_{\R^{d}}(d\wp)$.
\end{rem}

\section*{Acknowledgments}
\label{Sec ackno}

The author thanks Marcin Lis (TU Wien, Vienna) for pointing out the topological probabilities for the FK-Ising model (Section \ref{Subsec disord Ising}).
The author thanks Juhan Aru (EPFL), Adrien Kassel (ENS Lyon) and Thierry Lévy (Sorbonne Université) for inspiring discussions.

\bibliographystyle{alpha}
\bibliography{titusbib}

\end{document}